\newcommand{\w}{\omega}
\newcommand{\e}{\varepsilon}
\newcommand{\F}{\mathcal F}
\newcommand{\A}{\mathcal A}
\newcommand{\IZ}{\mathbb Z}
\newcommand{\IN}{\mathbb N}
\newcommand{\IR}{\mathbb R}
\newcommand{\IT}{\mathbb T}
\newcommand{\IC}{\mathbb C}
\newcommand{\IQ}{\mathbb Q}
\newcommand{\conv}{\mathrm{conv}}
\newcommand{\pack}{\mathrm{pack}}
\newcommand{\cov}{\mathrm{cov}}
\newcommand{\supp}{\mathrm{supp}}
\newcommand{\is}{\mathsf{is}}
\newcommand{\Si}{\mathsf{Si}}
\newcommand{\us}{\mathsf{us}}
\newcommand{\iss}{\mathsf{iss}}
\newcommand{\uss}{\mathsf{uss}}
\newcommand{\sis}{\mathsf{sis}}
\newcommand{\his}{\widehat{\mathsf{\i s}}}
\newtheorem{theorem}{Theorem}[section]
\newtheorem{corollary}[theorem]{Corollary}
\newtheorem{proposition}[theorem]{Proposition}
\newtheorem{openprob}[theorem]{Open Problem}
\newtheorem{lemma}[theorem]{Lemma}
\newtheorem{problem}[theorem]{Problem}
\theoremstyle{definition}
\newtheorem{definition}[theorem]{Definition}
\newtheorem{fact}[theorem]{Fact}
\newtheorem{observation}[theorem]{Observation}
\newtheorem{question}[theorem]{Question}
\newtheorem{example}[theorem]{Example}
\newtheorem{remark}[theorem]{Remark}
\newtheorem{exercise}[theorem]{Exercise}
\title[Extremal density and measures on groups]{Extremal densities and measures on groups and $G$-spaces\\ and their combinatorial applications}
\author{Taras Banakh}
\begin{document}
\begin{abstract}
This text contains lecture notes of the course taught to Ph.D. students of Jagiellonian University in Krak\'ow on 25-28 November, 2013. The lecture course is based on the preprints \cite{Ban} and \cite{BPS}.
\end{abstract}

\maketitle

\section{Densities, submeasures and measures on sets}

Let $X$ be a set and $\mathcal P(X)$ be the Boolean algebra of all subsets of $X$. A function $\mu:P(X)\to[0,1]$ is called
\begin{itemize}
\item a {\em density} on $X$ if $\mu(\emptyset)=0$, $\mu(X)=1$ and $\mu$ is {\em monotone} in the sense that for any sets $A\subset B$ of $X$ we get $\mu(A)\le \mu(B)$;
\item a {\em submeasure} if $\mu$ is a {\em subadditive} density, which means that $\mu(A\cup B)\le\mu(A)+\mu(B)$ for any subsets $A,B\subset X$;
\item a {\em measure} on $X$ if $\mu$ is an {\em additive} density, i.e., $\mu(A\cup B)=\mu(A)+\mu(B)$ for any disjoint sets $A,B\subset X$.
\end{itemize}

\subsection{Some trivial examples of measures}

Any point $x$ of a set $X$ supports the {\em Dirac measure}
$\delta_x:\mathcal P(X)\to[0,1]$ defined by
$$\delta_x(A)=\begin{cases}0&\mbox{if $x\notin A$}\\
1&\mbox{if $x\in A$}.
\end{cases}
$$
A convex combination $\sum_{i=1}^n\alpha_i\delta_{x_i}$ of Dirac measures is called a {\em finitely supported measure}. The set of all finitely supported measures on $X$ will be denoted by $P_\w(X)$.
An infinite convex combination $\sum_{i=1}^\infty\alpha_i\delta_{x_i}$ of Dirac measures is countably supported measure on $X$.

It is natural to ask

\begin{question} Are there measures which are not countably supported?
\end{question}

The answer is affirmative and will be obtained by taking limits of finitely supported measures by ultrafilters.

\subsection{Filters and ultrafilters}

By a {\em filter} on a set $X$ we understand a family $\F$ of subsets of $X$ such that
\begin{itemize}
\item $\emptyset\notin\F$;
\item $A\cap B\in\mathcal F$ for any sets $A,B\in\F$;
\item $A\cup B\in\mathcal F$ for any sets $A\in\F$ and $B\subset X$.
\end{itemize}

\begin{example} For every $x\in X$ the family $\F_x=\{A\subset X:x\in A\}$ is a filter, called the {\em principal filter} generated by $x$.
\end{example}

\begin{observation} A filter $\F$ on a set $X$ is principal if and only if $\bigcap\F=\{x\}$ is a singleton.
\end{observation}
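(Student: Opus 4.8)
The plan is to prove the two implications separately, taking \emph{principal} in the only sense made available by the preceding example, namely that $\F=\F_x$ for some point $x\in X$. Throughout I would use the third filter axiom in its upward-closed form: since $C=A\cup C$ whenever $A\subseteq C$, any superset of a member of $\F$ again lies in $\F$.

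First I would dispatch the easy implication. Assume $\F=\F_x$. Every $A\in\F_x$ contains $x$, so $x\in\bigcap\F$. Conversely, for any $y\neq x$ the set $X\setminus\{y\}$ contains $x$ and hence belongs to $\F_x$, while $y\notin X\setminus\{y\}$; thus no point $y\neq x$ survives the intersection. Therefore $\bigcap\F=\{x\}$, a singleton.

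For the converse, assume $\bigcap\F=\{x\}$. One inclusion is immediate: if $A\in\F$ then $x\in\bigcap\F\subseteq A$, so $A\in\F_x$, giving $\F\subseteq\F_x$. The substance of the statement is the reverse inclusion $\F_x\subseteq\F$, and by upward closure this reduces entirely to the single membership $\{x\}\in\F$: once $\{x\}\in\F$, every superset of $\{x\}$, that is, every member of $\F_x$, lies in $\F$.

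I expect the step $\{x\}\in\F$ to be the main obstacle, and I suspect it cannot be derived from $\bigcap\F=\{x\}$ by the filter axioms alone: a filter is closed only under \emph{finite} intersections, whereas $\{x\}$ may be an irreducibly infinite intersection of members. Indeed, the filter of all cofinite subsets of an infinite $X$ that contain $x$ has intersection exactly $\{x\}$ yet does not contain $\{x\}$, so it is not of the form $\F_x$. To close the converse I would therefore invoke the extra hypothesis that the intersection is attained, $\bigcap\F\in\F$ (which is automatic when $X$ is finite); then $\{x\}=\bigcap\F\in\F$ and upward closure finishes the proof. I would consequently expect the intended statement to carry, explicitly or implicitly, the assumption $\bigcap\F\in\F$.
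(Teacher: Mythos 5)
Your analysis is correct, and it exposes a genuine defect in the statement rather than in your argument: the paper offers no proof of this Observation, and the ``if'' direction is false as literally written. Your forward implication is fine. Your counterexample to the converse is valid: for infinite $X$ the family $\F=\{A\subset X: x\in A$ and $X\setminus A$ is finite$\}$ satisfies all three filter axioms, has $\bigcap\F=\{x\}$ (since $X\setminus\{y\}\in\F$ for every $y\ne x$), yet $\{x\}\notin\F$, so $\F\ne\F_x$. You have also correctly isolated the exact missing hypothesis: the converse reduces to $\{x\}\in\F$ via upward closure, and this follows from $\bigcap\F=\{x\}$ only if one additionally assumes $\bigcap\F\in\F$ (automatic for finite $X$).

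One supplementary remark worth recording: the Observation \emph{is} true if ``filter'' is replaced by ``ultrafilter,'' which is likely closer to the intended use in these notes (the surrounding text is building toward the principal/free dichotomy for ultrafilters). Indeed, if $\F$ is an ultrafilter with $\bigcap\F=\{x\}$, apply the partition criterion to $X=\{x\}\cup(X\setminus\{x\})$: if $X\setminus\{x\}\in\F$ then $x\notin\bigcap\F$, a contradiction, so $\{x\}\in\F$ and $\F=\F_x$. Your cofinite-sets-through-$x$ example is precisely a non-maximal filter, so it does not contradict this. In short: your proposal is sound, and the statement should either carry the hypothesis $\bigcap\F\in\F$ or be asserted for ultrafilters only.
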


A filter $\F$ with empty intersection $\bigcap\F$ is called {\em free}.

\begin{example} For any infinite set $X$ the family $\F=\{X\setminus F:F\subset X$ is finite$\}$ is a free filter, called {\em the Fr\'echet filter} on $A$.
\end{example}

The family $Fil(X)$ of all filters on $X$ is partially ordered by the inclusion relation and by Zorn's Lemma has maximal elements, called  ultrafilters. More precisely, an {\em ultrafilter} is a filter $\F$ on $X$ which is not contained in any strictly larger filter.

\begin{example} Any principal filter $\mathcal F_x$ on $X$ is an ultrafilter, which can be called the {\em Dirac ultrafilter}.
\end{example}

Zorn's Lemma implies:

\begin{theorem}[Tarski] Each filter on a set $X$ is contained in some ultrafilter. In particular, the Fr\'echet filter on each infinite set can be enlarged to some free ultrafilter.
\end{theorem}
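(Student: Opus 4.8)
The plan is to deduce both statements from Zorn's Lemma applied to a suitable poset of filters. For the first part, I would fix a filter $\F$ on $X$ and consider the family $\mathcal C=\{G\in Fil(X):G\supseteq\F\}$ of all filters containing $\F$, partially ordered by inclusion. Since $\F\in\mathcal C$, this poset is nonempty. The crucial verification is that every chain (totally ordered subfamily) $\{G_i\}_{i\in I}$ in $\mathcal C$ admits an upper bound, for which the natural candidate is the union $G=\bigcup_{i\in I}G_i$.

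The main work lies in checking that $G$ is again a filter containing $\F$. Upward closure is immediate: if $A\in G$ then $A\in G_i$ for some $i$, and for any $B\subset X$ we have $A\cup B\in G_i\subseteq G$. Closure under intersection is where the chain hypothesis enters: given $A,B\in G$ with $A\in G_i$ and $B\in G_j$, totality of the chain lets us assume $G_i\subseteq G_j$, so both sets lie in $G_j$ and hence $A\cap B\in G_j\subseteq G$. Finally $\emptyset\notin G$, because $\emptyset$ belongs to none of the $G_i$. Thus $G\in\mathcal C$ is an upper bound for the chain, and Zorn's Lemma yields a maximal element $\mathcal U\in\mathcal C$.

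It remains to recognize $\mathcal U$ as an ultrafilter, i.e. a filter maximal in all of $Fil(X)$, not merely among filters above $\F$. This is the one conceptual point to watch: if some filter $H$ satisfied $\mathcal U\subsetneq H$, then $H\supseteq\mathcal U\supseteq\F$ would place $H$ in $\mathcal C$, contradicting the maximality of $\mathcal U$ in $\mathcal C$. Hence $\mathcal U$ is an ultrafilter containing $\F$, which proves the first assertion.

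For the second assertion, I would apply the first part to an infinite set $X$ with $\F$ the Fr\'echet filter, obtaining an ultrafilter $\mathcal U\supseteq\F$, and then verify that $\mathcal U$ is free. For each $x\in X$ the set $X\setminus\{x\}$ is cofinite, hence belongs to $\F\subseteq\mathcal U$; consequently $x\notin\bigcap\mathcal U$. As $x$ was arbitrary, $\bigcap\mathcal U=\emptyset$, so $\mathcal U$ is free by definition. No genuine obstacle arises here beyond this short cofiniteness observation; the entire difficulty of the theorem is concentrated in the chain-union verification, which is routine once the three filter axioms are matched against the construction of $G$.
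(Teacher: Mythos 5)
Your proof is correct and follows exactly the route the paper intends: the paper offers no details beyond the remark ``Zorn's Lemma implies,'' and your chain-union verification together with the cofiniteness argument for freeness is the standard way to fill that in. Nothing to add.
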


\begin{theorem}[Posp\'\i \v sil] On any infinite set $X$ there are $2^{2^{|X|}}$ (free) ultrafilters.
\end{theorem}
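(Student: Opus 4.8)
The plan is to prove the two inequalities separately. The upper bound is immediate: every ultrafilter $\F$ on $X$ is a subfamily of $\mathcal P(X)$, hence an element of $\mathcal P(\mathcal P(X))$, so the number of ultrafilters is at most $|\mathcal P(\mathcal P(X))|=2^{2^{|X|}}$. The whole content of the theorem is therefore the lower bound, and the route I would take is through \emph{independent families} of sets.

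Recall that a family $\A\subseteq\mathcal P(X)$ is \emph{independent} if every Boolean combination $A_1\cap\dots\cap A_k\cap(X\setminus B_1)\cap\dots\cap(X\setminus B_m)$ with pairwise distinct members $A_1,\dots,A_k,B_1,\dots,B_m\in\A$ is nonempty; I will in fact want all such combinations to be infinite. The heart of the argument is the Hausdorff construction of a large independent family: first I would prove that on a set $D$ of cardinality $|X|$ there is an independent family of size $2^{|X|}$ whose Boolean combinations are all infinite. The clean way to do this is to take $D$ to be the set of all pairs $(F,\mathcal G)$ with $F\subseteq X$ finite and $\mathcal G\subseteq\mathcal P(F)$ (so that $|D|=|X|$ for infinite $X$), and to assign to each $A\subseteq X$ the set $\hat A=\{(F,\mathcal G)\in D: A\cap F\in\mathcal G\}$. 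Checking independence reduces to the observation that, given finitely many distinct sets $A_i$ and any sufficiently large finite $F$ separating them, one can choose $\mathcal G$ to realize any prescribed membership pattern; since $F$ may be taken arbitrarily large, this simultaneously yields infinitude of the combinations. Transporting along a bijection $D\to X$ gives an independent family $\A$ of size $2^{|X|}$ on $X$ itself.

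With $\A$ in hand I would manufacture the ultrafilters. For each function $f:\A\to\{0,1\}$ set
$$\mathcal B_f=\{A\in\A: f(A)=1\}\cup\{X\setminus A: A\in\A,\ f(A)=0\}.$$
Independence guarantees that every finite subfamily of $\mathcal B_f$ has infinite intersection, so $\mathcal B_f$ together with the Fr\'echet filter still has the finite intersection property and hence generates a filter; by Tarski's theorem this filter extends to an ultrafilter $u_f$, which is free because it refines the Fr\'echet filter. Distinct $f\neq g$ produce distinct ultrafilters: choosing $A\in\A$ with, say, $f(A)=1$ and $g(A)=0$ we get $A\in u_f$ while $X\setminus A\in u_g$, so $A\notin u_g$. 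The map $f\mapsto u_f$ is therefore injective, and since there are $2^{|\A|}=2^{2^{|X|}}$ functions $f$, we obtain at least $2^{2^{|X|}}$ free ultrafilters, matching the upper bound.

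The main obstacle is the existence of the independent family of the maximal size $2^{|X|}$; everything after that is bookkeeping. In particular I expect the only delicate verification to be that the Boolean combinations of the sets $\hat A$ are not merely nonempty but infinite, which is exactly what is needed to splice in the Fr\'echet filter and thereby force the ultrafilters to be free rather than principal.
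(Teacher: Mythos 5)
Your proof is correct and complete in all essentials: the trivial upper bound, the Fichtenholz--Kantorovich--Hausdorff construction of an independent family of size $2^{|X|}$ via the pairs $(F,\mathcal G)$ (including the point that the separating set $F$ can be enlarged arbitrarily, which makes every Boolean combination infinite and hence lets you adjoin the Fr\'echet filter), and the injection $f\mapsto u_f$ from $\{0,1\}^{\A}$ into the free ultrafilters. The paper states Posp\'\i\v sil's theorem without any proof, so there is nothing to compare against; what you give is the standard argument, and I see no gaps in it.
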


\begin{theorem}[Criterion of an ultrafilter] For a filter $\F$ on a set $X$ the following conditions are equivalent:
\begin{itemize}
\item $\F$ is an ultrafilter;
\item for any partition $X=A\cup B$ either $A$ or $B$ belongs to $\F$;
\item for any finite partition $X=A_1\cup\dots\cup A_n$ one of the sets $A_i$ belongs to $\F$;
\item for any set $F\in\F$ and  a partition $F=A\cup B$ either $A$ or $B$ belongs to $\F$.
\end{itemize}
\end{theorem}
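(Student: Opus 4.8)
The plan is to establish the four equivalences through a short cycle of implications, isolating the single substantive step. Throughout I assume, as is standard, that $\F$ is a nonempty (proper) filter, so that $X\in\F$ (apply upward closure to any member of $\F$); note that condition $(2)$ forces $X\in\F$ anyway, by applying it to the partition $X=X\cup\emptyset$. The implications $(3)\Rightarrow(2)$ and $(2)\Rightarrow(3)$ are nearly immediate, as is the pair $(2)\Leftrightarrow(4)$; the genuine content lies in $(1)\Leftrightarrow(2)$, i.e.\ in the fact that maximality is equivalent to \emph{deciding every subset}. I repeatedly use that a filter cannot contain $\emptyset$, so it can contain at most one of two disjoint sets.

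For the routine links, first note $(3)\Rightarrow(2)$ by taking $n=2$. For $(2)\Rightarrow(3)$, given a finite partition $X=A_1\cup\dots\cup A_n$, suppose no $A_i\in\F$; then by $(2)$ each complement $X\setminus A_i\in\F$, and intersecting these finitely many members of $\F$ yields $\bigcap_i(X\setminus A_i)=\emptyset\in\F$, a contradiction. For $(4)\Rightarrow(2)$, apply $(4)$ with $F=X$. For $(2)\Rightarrow(4)$, given $F\in\F$ and a partition $F=A\cup B$, apply $(2)$ to $X=A\cup(X\setminus A)$: if $A\in\F$ we are done, while if $X\setminus A\in\F$ then $F\cap(X\setminus A)=B\in\F$, using $A\cap B=\emptyset$ together with closure under finite intersection.

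The heart of the argument is $(1)\Leftrightarrow(2)$. The easy direction $(2)\Rightarrow(1)$ goes by contraposition: if some filter $\F'\supsetneq\F$ exists, pick $A\in\F'\setminus\F$; then $(2)$ forces $X\setminus A\in\F\subset\F'$, so $\F'$ contains the disjoint pair $A,X\setminus A$ and hence $\emptyset$, which is impossible. For $(1)\Rightarrow(2)$ I would first prove the key extension lemma: for any $A\subset X$ meeting every member of $\F$ (that is, $A\cap F\neq\emptyset$ for all $F\in\F$), the family
$$\F[A]=\{C\subset X: A\cap F\subset C\text{ for some }F\in\F\}$$
is a filter containing both $\F$ and $A$. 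Granting this, suppose $\F$ is an ultrafilter and $X=A\cup B$ is a partition with $A\notin\F$. If some $F\in\F$ were disjoint from $A$, then $F\subset B$ and upward closure would give $B\in\F$, as desired. Otherwise $A$ meets every member of $\F$, so $\F[A]$ is a filter strictly larger than $\F$ (it contains $A\notin\F$), contradicting maximality; hence this case cannot occur and $B\in\F$.

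The main obstacle I anticipate is the verification of the extension lemma, specifically checking that $\F[A]$ is \emph{proper}, i.e.\ does not contain $\emptyset$: this is exactly where the hypothesis that $A$ meets every member of $\F$ is used, since $\emptyset\in\F[A]$ would mean $A\cap F=\emptyset$ for some $F\in\F$. That $A\in\F[A]$ requires $X\in\F$ (take $F=X$), and $\F\subset\F[A]$ follows by taking $F=C$; the remaining filter axioms for $\F[A]$ (upward closure and closure under finite intersections, via $(A\cap F_1)\cap(A\cap F_2)=A\cap(F_1\cap F_2)$ with $F_1\cap F_2\in\F$) are routine.
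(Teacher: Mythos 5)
Your proof is correct and complete; the paper itself leaves this theorem as an exercise, and your argument is precisely the standard one that the exercise intends, with the only substantive point (maximality implies decidability of every subset, via the extension filter $\F[A]$ generated by a set meeting all members of $\F$) correctly identified and carried out. The routine implications among (2), (3), (4) are also handled properly, so there is nothing to add.
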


\begin{proof} Exercise.
\end{proof}

\begin{remark} This theorem is crucial for applications of ultrafilters in Ramsey Theory, see \cite{HS}, \cite{Protbook}.
\end{remark}

\begin{fact} For any ultrafilter $\F$ on a set $X$ its characteristic function $\mu_\F:\mathcal P(X)\to\{0,1\}$ is a $\{0,1\}$-valued measure on $X$.
Conversely, for each $\{0,1\}$-valued measure $\mu:\mathcal P(X)\to\{0,1\}$ the family $\F=\mu^{-1}(1)$ is an ultrafilter on $X$.
\end{fact}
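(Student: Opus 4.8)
The plan is to treat the two implications separately, reducing each to the definition of a measure (a monotone, additive density with $\mu(\emptyset)=0$ and $\mu(X)=1$) and to the partition form of the Criterion of an ultrafilter proved above. In both directions the value set $\{0,1\}$ is what makes the arithmetic rigid, so the work is really about matching the filter axioms with the additivity/monotonicity axioms.

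For the forward direction, given an ultrafilter $\F$ I set $\mu_\F(A)=1$ if $A\in\F$ and $\mu_\F(A)=0$ otherwise. The normalisation $\mu_\F(\emptyset)=0$ is the axiom $\emptyset\notin\F$, and $\mu_\F(X)=1$ follows from upward closure once one notes $\F\neq\emptyset$. Monotonicity is again upward closure: if $A\subset B$ and $A\in\F$ then $B\in\F$. The only substantive point is additivity on disjoint $A,B$. First I would observe that $A,B$ cannot both lie in $\F$, since then $A\cap B=\emptyset\in\F$ would contradict the filter axioms; hence $\mu_\F(A)+\mu_\F(B)\in\{0,1\}$. If one of them, say $A$, lies in $\F$, then $A\subset A\cup B$ forces $A\cup B\in\F$ and both sides equal $1$. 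The delicate case is when neither belongs to $\F$: I must then show $A\cup B\notin\F$. Applying the partition clause of the Criterion of an ultrafilter to $F=A\cup B$ with its disjoint decomposition into $A$ and $B$, membership $A\cup B\in\F$ would force one of $A,B$ into $\F$, a contradiction; thus $A\cup B\notin\F$ and both sides equal $0$.

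For the converse I put $\F=\mu^{-1}(1)$, verify that $\F$ is a filter, and then invoke the same criterion. The axiom $\emptyset\notin\F$ is $\mu(\emptyset)=0$, and upward closure is immediate from monotonicity, since $A\subset B$ gives $1=\mu(A)\le\mu(B)\le\mu(X)=1$. The one step requiring an argument is closure under intersection. For $A,B\in\F$ I split into the disjoint pieces $A\setminus B$, $B\setminus A$, $A\cap B$ and apply additivity: writing $a=\mu(A\setminus B)$, $b=\mu(B\setminus A)$, $c=\mu(A\cap B)$, the relations $a+c=\mu(A)=1$, $b+c=\mu(B)=1$, and $a+b+c=\mu(A\cup B)\le\mu(X)=1$ force $c=1$, i.e. $A\cap B\in\F$. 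Once $\F$ is known to be a filter, any partition $X=A\cup B$ satisfies $\mu(A)+\mu(B)=\mu(X)=1$, so exactly one value equals $1$ and exactly one of $A,B$ lies in $\F$; by the partition form of the criterion, $\F$ is an ultrafilter.

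I expect the main obstacle to be the additivity check in the forward direction, and specifically the sub-case in which neither set belongs to $\F$. This is the unique place where genuine maximality, and not merely the filter axioms, is used: for the Fr\'echet filter on an infinite set the characteristic function fails additivity (splitting $X$ into two coinfinite halves), so some appeal to the ultrafilter property is unavoidable. The analogous pressure point in the converse is intersection-closure, which is dispatched cleanly by the three-piece additivity computation above.
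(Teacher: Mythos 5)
Your proof is correct and complete; the paper states this Fact without providing any proof, and your argument is exactly the standard one the author intends, reducing additivity of $\mu_\F$ to the partition clause of the Criterion of an ultrafilter and intersection-closure of $\mu^{-1}(1)$ to the three-piece additivity computation. The only point worth flagging is that $\mu_\F(X)=1$ needs $X\in\F$, which you correctly obtain from upward closure together with the (easily checked) fact that a maximal filter is nonempty.
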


So, measures are more complicated objects than ultrafilters!

\subsection{Limits by filters and ultrafilters}

Let $A$ be a set, $\F$ be a filter on $A$, and $(x_\alpha)_{\alpha\in A}$ be a ``sequence'' in a topological space $X$. We shall say that a point $x$ is an {\em $\F$-limit} of the sequence $(x_\alpha)_{\alpha\in A}$ if for any neighborhood $O_x\subset X$ of $x$ the set $\{\alpha\in A:x_\alpha\in O_x\}$ belongs to the filter $\F$. In this case we shall also say that $(x_\alpha)_{\alpha\in A}$ is {\em $\F$-convergent} to $x$ and will denote the $\F$-limit $x$ by $\lim_{\alpha\to\F}x_\alpha$.

\begin{example} A sequence $(x_n)_{n\in\w}$ in a topological space $X$ converges to a point $x$ if and only if it is $\F$-convergent to $x$ for the Fr\'echet filter $\F$ on $\w$.
\end{example}

\begin{fact} In a Hausdorff topological space any $A$-sequence $(x_\alpha)_{\alpha\in A}$ has at most one $\F$-limit.
\end{fact}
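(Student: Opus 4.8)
The plan is to argue by contradiction, exploiting the two structural features of a filter recorded in the definition above --- namely that $\emptyset\notin\F$ and that $\F$ is closed under finite intersections --- together with the separation property of the Hausdorff space $X$. Suppose, towards a contradiction, that the sequence $(x_\alpha)_{\alpha\in A}$ has two distinct $\F$-limits $x$ and $y$ with $x\neq y$. The first step is to invoke the Hausdorff property to choose open neighborhoods $O_x$ of $x$ and $O_y$ of $y$ that are disjoint, i.e. $O_x\cap O_y=\emptyset$.

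Next I would unwind the definition of $\F$-limit at each of the two points. Since $x$ is an $\F$-limit of $(x_\alpha)_{\alpha\in A}$, the index set $A_x:=\{\alpha\in A:x_\alpha\in O_x\}$ belongs to $\F$; since $y$ is an $\F$-limit, the index set $A_y:=\{\alpha\in A:x_\alpha\in O_y\}$ likewise belongs to $\F$. Because $\F$ is a filter, it is closed under intersections, so $A_x\cap A_y\in\F$.

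The final step is to identify this intersection explicitly. A point $x_\alpha$ lies in $A_x\cap A_y$ exactly when $x_\alpha\in O_x$ and $x_\alpha\in O_y$, that is, when $x_\alpha\in O_x\cap O_y$; but $O_x\cap O_y=\emptyset$, so $A_x\cap A_y=\emptyset$. This contradicts the filter axiom $\emptyset\notin\F$, and therefore the assumption $x\neq y$ is untenable. Hence any two $\F$-limits coincide, so $(x_\alpha)_{\alpha\in A}$ has at most one $\F$-limit.

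I expect no genuine obstacle in carrying this out: the argument is essentially the classical uniqueness-of-limits proof rephrased in the language of filters, and each step uses exactly one hypothesis (Hausdorffness for disjoint neighborhoods, closure under intersection for membership of $A_x\cap A_y$, and $\emptyset\notin\F$ for the contradiction). The only point deserving a moment's care is the observation that \emph{disjointness} of the two neighborhoods forces the corresponding index sets to intersect in the empty set, which is precisely the juncture at which the filter axioms bite.
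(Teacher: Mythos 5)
Your argument is correct and complete: the paper states this Fact without proof, and your reasoning (disjoint neighborhoods from Hausdorffness, both index sets in $\F$, their empty intersection contradicting $\emptyset\notin\F$) is exactly the standard intended argument. The only nitpick is a harmless abuse of language: it is the index $\alpha$, not the point $x_\alpha$, that lies in $A_x\cap A_y$.
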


\begin{theorem} For any ultrafilter $\F$ on a set $A$ any sequence $(x_\alpha)_{\alpha\in A}$ in a compact Hausdorff space $X$ converges to some (unique) point $x\in X$.
\end{theorem}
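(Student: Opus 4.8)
The plan is to treat uniqueness and existence separately. Uniqueness requires no new work: since $X$ is Hausdorff, the earlier Fact guarantees that any sequence $(x_\alpha)_{\alpha\in A}$ has at most one $\F$-limit, so it suffices to produce one. For existence I would argue by contradiction, exploiting the defining feature of an ultrafilter — that for every subset $S\subset A$ either $S$ or its complement $A\setminus S$ lies in $\F$ — together with the compactness of $X$.

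First I would assume, toward a contradiction, that no point of $X$ is an $\F$-limit of $(x_\alpha)_{\alpha\in A}$. Unpacking the definition, this means that for every $x\in X$ there is an open neighborhood $O_x\subset X$ of $x$ whose ``preimage index set'' $S_x:=\{\alpha\in A:x_\alpha\in O_x\}$ fails to belong to $\F$. Here is where the ultrafilter hypothesis enters: since $\F$ is an ultrafilter and $S_x\notin\F$, the complement $A\setminus S_x=\{\alpha\in A:x_\alpha\notin O_x\}$ must belong to $\F$. I would record this for every $x\in X$.

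Next I would invoke compactness. The family $\{O_x:x\in X\}$ is an open cover of $X$, so it admits a finite subcover $O_{x_1},\dots,O_{x_n}$ with $X=\bigcup_{i=1}^nO_{x_i}$. Each of the sets $A\setminus S_{x_i}=\{\alpha:x_\alpha\notin O_{x_i}\}$ lies in $\F$ by the previous step, and a filter is closed under finite intersections, so $\bigcap_{i=1}^n(A\setminus S_{x_i})\in\F$. But an index $\alpha$ lies in this intersection exactly when $x_\alpha\notin O_{x_i}$ for all $i$, i.e. when $x_\alpha\notin\bigcup_{i=1}^nO_{x_i}=X$, which is impossible. Hence the intersection is empty, and we would conclude $\emptyset\in\F$, contradicting the very definition of a filter. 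This contradiction shows that an $\F$-limit exists.

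I expect no genuinely hard step here: the argument is a clean two-ingredient proof, and the only point demanding care is the logical pivot in the second paragraph, where the failure of the $\F$-limit property at each $x$ is converted, via the ultrafilter dichotomy, into membership of the complementary index set in $\F$. An alternative phrasing would replace this dichotomy-plus-finite-intersection step by a single appeal to the Criterion of an ultrafilter: the finitely many sets $S_{x_1},\dots,S_{x_n}$ cover $A$, so one of them must belong to $\F$, immediately contradicting $S_{x_i}\notin\F$. Either route works, and I would present the intersection version as the cleanest.
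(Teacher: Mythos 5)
Your proof is correct and complete: the reduction of uniqueness to the earlier Hausdorff Fact, the ultrafilter dichotomy applied to the index sets $S_x$, and the compactness/finite-intersection contradiction are exactly the standard argument this result calls for. The paper leaves the proof as an exercise, so there is nothing to compare against; your write-up (either the intersection version or the partition version via the Criterion of an ultrafilter) is a valid solution of that exercise.
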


\begin{proof} Exercise.
\end{proof}

\begin{corollary} For any ultrafilter $\F$ on a set $A$ any bounded sequence $(x_\alpha)_{\alpha\in A}$ of real numbers has a unique $\F$-limit.
\end{corollary}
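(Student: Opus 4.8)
The plan is to reduce this Corollary to the two immediately preceding results: the Theorem guaranteeing existence of $\F$-limits for sequences in a compact Hausdorff space, and the Fact guaranteeing uniqueness of $\F$-limits in any Hausdorff space. The only work is to manufacture a compact Hausdorff space housing the sequence out of the boundedness hypothesis.

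First I would unpack ``bounded'': by definition there is a constant $M>0$ with $|x_\alpha|\le M$ for all $\alpha\in A$, so the entire sequence takes values in the closed interval $K=[-M,M]\subset\IR$. This $K$, equipped with the subspace topology inherited from $\IR$, is a compact Hausdorff space (compactness by Heine--Borel, Hausdorffness inherited from $\IR$). Viewing $(x_\alpha)_{\alpha\in A}$ as a sequence in $K$, the preceding Theorem applies verbatim and yields a point $x\in K$ which is an $\F$-limit of the sequence in $K$. For uniqueness I would invoke the Fact: since $K$ is Hausdorff, the $\F$-limit in $K$ is unique.

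The one point that deserves a sentence of care — and the closest thing to an obstacle, though it is minor — is checking that an $\F$-limit computed in the subspace $K$ is the same notion as an $\F$-limit in $\IR$, so that the statement really concerns a sequence of real numbers and not merely a sequence in an abstract compact space. This is immediate from the definition of $\F$-limit together with the definition of the subspace topology: every neighborhood of $x$ in $\IR$ meets $K$ in a neighborhood of $x$ in $K$, and conversely every neighborhood of $x$ in $K$ is the trace $O\cap K$ of a neighborhood $O$ of $x$ in $\IR$; since all $x_\alpha$ lie in $K$, the sets $\{\alpha\in A:x_\alpha\in O\}$ and $\{\alpha\in A:x_\alpha\in O\cap K\}$ coincide, so membership in $\F$ is unaffected. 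Hence ``$x$ is an $\F$-limit in $K$'' and ``$x$ is an $\F$-limit in $\IR$'' are equivalent, and the existence and uniqueness transfer to $\IR$ at once, completing the argument.
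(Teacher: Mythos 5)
Your proof is correct and follows exactly the route the paper intends: the corollary is stated as an immediate consequence of the preceding theorem (existence of $\F$-limits in compact Hausdorff spaces) together with the uniqueness Fact, applied to the compact interval $[-M,M]$ containing the bounded sequence. Your extra care about $\F$-limits in the subspace versus in $\IR$ is a sound and harmless addition.
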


\begin{example} For any ultrafilter $\F$ on a set $A$ and any $A$-sequence $(\mu_\alpha)_{\alpha\in A}$ of measures on a set $X$ we can define the $\F$-limit measure $\lim_{\alpha\to\F}\mu_\alpha$ on $X$.
\end{example}

\section{Invariant densities and measures on groups and $G$-spaces}

Let $G$ be a group. A density $\mu:\mathcal P(G)\to[0,1]$ is called
\begin{itemize}
\item {\em left-invariant} if $\mu(xA)=\mu(A)$ for any $A\subset G$ and $x\in G$;
\item {\em right-invariant} if $\mu(Ay)=\mu(A)$ for any $A\subset G$ and $y\in G$;
\item {\em invariant} if $\mu(xAy)=\mu(A)$ for any $A\subset G$ and $x,y\in G$.
\end{itemize}

These notions are partial cases of $G$-invariant densities on $G$-spaces.

A {\em $G$-space} is a set $X$ endowed with an action $\cdot:G\times X\to X$, $\cdot:(g,x)\mapsto gx$, of a group $G$. The action should satisfy two axioms:
\begin{itemize}
\item $g(hx)=(gh)x$ for any $g,h\in G$ and $x\in X$;
\item $1_Gx=x$ for any $x\in X$.
\end{itemize}
Here $1_G$ stands for the neutral element of the group $G$.

Each group $G$ can be considered as a $G$-space $X=G$ endowed with the left action $\cdot:G\times X\to X$, $\cdot:(g,x)\mapsto gx$, of the group $G$ on itself.

A density $\mu:\mathcal P(X)\to[0,1]$ on a $G$-space $X$ is called {\em $G$-invariant} if $\mu(gA)=\mu(A)$ for any $g\in G$ and $A\subset X$.

\begin{example} For any finite group $G$ the finitely supported measure $\sum_{x\in G}\frac1{|G|}\delta_x$ is invariant.
\end{example}

\begin{exercise} Prove that a finitely (more generally, countably) supported
measure $\mu=\sum_i\alpha_i\delta_{x_i}$ on an infinite group $G$ is not invariant.
\end{exercise}

\begin{question} Is there any invariant measure on the group $\IZ$ of integers?
\end{question}

\begin{theorem}[Banach] The group $\IZ$ admits an invariant measure.
\end{theorem}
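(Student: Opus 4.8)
The plan is to realise the invariant measure as an ultrafilter limit of a sequence of ``almost invariant'' finitely supported measures, exploiting the fact that $\IZ$, although infinite, can be exhausted by finite intervals whose boundaries are negligible compared with their size. This is the standard F\o{}lner-averaging idea, and all the analytic machinery needed to take the limit has already been developed above.

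Concretely, for each $n\in\w$ I would consider the uniformly distributed finitely supported measure
$$\mu_n=\frac1{2n+1}\sum_{k=-n}^{n}\delta_k\in P_\w(\IZ),$$
so that $\mu_n(A)=|A\cap\{-n,\dots,n\}|/(2n+1)$ for every $A\subset\IZ$. The crucial observation is that each $\mu_n$ is \emph{nearly} translation invariant: since the intervals $\{-n,\dots,n\}$ and $\{-n-1,\dots,n-1\}=\{-n,\dots,n\}-1$ differ in only two points, one gets the estimate $|\mu_n(A+1)-\mu_n(A)|\le\tfrac{2}{2n+1}$ for every $A\subset\IZ$, and this bound tends to $0$ as $n\to\infty$.

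Next I would fix a free ultrafilter $\F$ on $\w$ (which exists by Tarski's theorem) and define $\mu(A)=\lim_{n\to\F}\mu_n(A)$ for each $A\subset\IZ$; this $\F$-limit exists and is unique because $(\mu_n(A))_{n\in\w}$ is a bounded sequence of reals. One then checks that $\mu$ is a measure: $\mu(\emptyset)=0$ and $\mu(\IZ)=1$ are immediate, monotonicity is inherited because $\F$-limits respect the order $\le$, and finite additivity follows because $\mu_n(A\cup B)=\mu_n(A)+\mu_n(B)$ for disjoint $A,B$ and the $\F$-limit of a sum is the sum of the $\F$-limits. For invariance, note that a free ultrafilter on $\w$ contains every cofinite set, hence refines the Fr\'echet filter; therefore any sequence converging to $0$ in the usual sense has $\F$-limit $0$. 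Applying this to the sequence $\mu_n(A+1)-\mu_n(A)$, which is bounded by $2/(2n+1)\to0$, yields $\mu(A+1)=\mu(A)$ for every $A$. Since every translation $x\mapsto x+k$ of $\IZ$ is a composition of shifts by $\pm1$, invariance under the single shift propagates to invariance of $\mu$ under the whole group $\IZ$.

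The main obstacle is the passage from \emph{approximate} invariance at each finite stage to \emph{exact} invariance in the limit; this is precisely what the choice of the symmetric windows $\{-n,\dots,n\}$ and the order and limit properties of the ultrafilter are designed to handle, and it is the only place where the infinitude of $\IZ$ enters essentially. The verification that the $\F$-limit preserves additivity (and not merely subadditivity) is routine but should be stated explicitly, since it is exactly the property distinguishing a genuine invariant measure from a merely invariant submeasure.
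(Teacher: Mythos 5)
Your proposal is correct and follows exactly the route the paper takes: the paper's proof consists precisely of forming $\lim_{n\to\F}\frac1{2n+1}\sum_{k=-n}^n\delta_k$ for a free ultrafilter $\F$ on $\IN$ and leaving the verification to the reader. You have supplied those details (additivity of the $\F$-limit, the $2/(2n+1)$ almost-invariance estimate, and the propagation from the generator $1$ to all of $\IZ$) accurately.
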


\begin{proof} Take any free ultrafilter $\F$ on $\IN$ and prove that the limit measure $\lim_{n\to\F}\frac1{2n+1}\sum_{k=-n}^n\delta_k$ is an invariant measure on $\IZ$.
\end{proof}

\begin{example} The free group with two generators has no invariant measure.
\end{example}

\begin{definition} A $G$-space $X$ admitting a $G$-invariant measure $\mu:\mathcal P(X)\to[0,1]$ is called {\em amenable}.
\end{definition}

In particular, a group $G$ is amenable if it admits a left-invariant measure $\mu:\mathcal P(G)\to[0,1]$.

So, $\IZ$ is amenable while $F_2$ is not.

\subsection{The F\o lner condition}

A $G$-space $X$ satisfies the {\em F\o lner condition} if for any finite set $F\subset G$ and any $\e>0$ there is a finite set $E\subset X$ such that $|FE\setminus E|<\e|E|$.

\begin{theorem} A $G$-space $X$ satisfying the F\o lner condition possesses a $G$-invariant measure.
\end{theorem}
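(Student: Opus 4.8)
The plan is to realise the invariant measure as an ultrafilter limit of uniform probability measures carried by a net of F\o lner sets, exactly mimicking Banach's construction for $\IZ$ given above. First I would organise the index set: let $A$ be the set of all pairs $\alpha=(F,\e)$ with $F\subset G$ finite and $\e>0$, directed by declaring $(F,\e)\le(F',\e')$ iff $F\subseteq F'$ and $\e'\le\e$. For each $\alpha=(F,\e)\in A$ the F\o lner condition supplies a finite set $E_\alpha\subset X$ with $|FE_\alpha\setminus E_\alpha|<\e|E_\alpha|$, and to it I attach the finitely supported uniform measure $\mu_\alpha=\frac1{|E_\alpha|}\sum_{x\in E_\alpha}\delta_x\in P_\w(X)$. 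The upper sets $U_{\alpha_0}=\{\alpha\in A:\alpha\ge\alpha_0\}$ have the finite intersection property since $A$ is directed, so they generate a filter; by Tarski's theorem I enlarge it to an ultrafilter $\F$ on $A$. Using that bounded real sequences have unique $\F$-limits, I set $\mu(B)=\lim_{\alpha\to\F}\mu_\alpha(B)$ for all $B\subset X$, i.e.\ $\mu=\lim_{\alpha\to\F}\mu_\alpha$ as in the Example preceding this section. That $\mu$ is a measure is automatic: $\mu(\emptyset)=\lim 0=0$, $\mu(X)=\lim 1=1$, and monotonicity and additivity pass to the limit because $\F$-limits of reals are order-preserving and linear while each $\mu_\alpha$ enjoys these properties.

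The heart of the matter is invariance, and here the F\o lner condition enters through a symmetric-difference estimate. Fix $g\in G$ and $B\subset X$. Since left translation by $g$ is a bijection of $X$, the map $x\mapsto g^{-1}x$ carries $E_\alpha\cap gB$ bijectively onto $g^{-1}E_\alpha\cap B$, so $|E_\alpha\cap gB|=|g^{-1}E_\alpha\cap B|$ and hence
$$|\mu_\alpha(gB)-\mu_\alpha(B)|=\frac{\big||g^{-1}E_\alpha\cap B|-|E_\alpha\cap B|\big|}{|E_\alpha|}\le\frac{|g^{-1}E_\alpha\triangle E_\alpha|}{|E_\alpha|}.$$
To bound the symmetric difference I note that $g^{-1}E_\alpha\setminus E_\alpha\subseteq FE_\alpha\setminus E_\alpha$ as soon as $g^{-1}\in F$, while $|E_\alpha\setminus g^{-1}E_\alpha|=|gE_\alpha\setminus E_\alpha|\le|FE_\alpha\setminus E_\alpha|$ as soon as $g\in F$. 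Therefore, whenever $\{g,g^{-1}\}\subseteq F$, the F\o lner inequality yields $|g^{-1}E_\alpha\triangle E_\alpha|<2\e|E_\alpha|$ and consequently $|\mu_\alpha(gB)-\mu_\alpha(B)|<2\e$.

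Finally I would pass to the $\F$-limit. Given $\e>0$, put $\alpha_0=(\{g,g^{-1}\},\e)$; for every $\alpha\ge\alpha_0$ the estimate above gives $|\mu_\alpha(gB)-\mu_\alpha(B)|<2\e$, and $U_{\alpha_0}\in\F$. Because $\F$-limits are linear and respect bounds attained on sets of the ultrafilter, it follows that $|\mu(gB)-\mu(B)|\le 2\e$; since $\e>0$ was arbitrary, $\mu(gB)=\mu(B)$. As $g$ and $B$ were arbitrary, $\mu$ is $G$-invariant, completing the proof.

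I expect the main obstacle to be the bookkeeping in the symmetric-difference estimate, specifically the realisation that one must place \emph{both} $g$ and $g^{-1}$ into $F$ in order to control $g^{-1}E_\alpha\triangle E_\alpha$ from the one-sided F\o lner bound on $FE_\alpha\setminus E_\alpha$. Coupled to this is the correct design of the directed index set and the ultrafilter, so that ``$\alpha$ large'' simultaneously forces $F$ to absorb any prescribed group element and $\e$ to be arbitrarily small; everything else is a routine transfer of measure properties through the $\F$-limit.
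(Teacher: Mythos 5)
Your proposal is correct and follows exactly the paper's construction: the same directed index set of pairs $(F,\e)$, an ultrafilter containing all upper cones, and the $\F$-limit of the uniform measures on the F\o lner sets $E_{(F,\e)}$. The paper leaves the invariance check as ``it can be shown''; your symmetric-difference estimate with $\{g,g^{-1}\}\subseteq F$ supplies that verification correctly.
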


\begin{proof} Consider the set $A=\{(F,\e):F\subset G$ is a finite set and $\e>0\}$. The set $A$ is partially order by the order $(F,\e)\le(F',\e')$ iff $F\subset F'$ and $\e'\le\e$. Take any ultrafilter $\F$ on $A$ containing all uper cones ${\uparrow}(F,\e)=\{(F',\e')\in A:(F,\e)\le(F',\e')\}$. By the F\o lner condition, for every $(F,\e)\in A$  there is a finite set $E_{(F,\e)}\subset G$ such that $|F\cdot E_{(F,\e)}\setminus E_{(F,\e)}|<\e|E(F,\e)|$. It can be shown that the limit measure
$$\mu=\lim_{(F,\e)\to\F}\frac1{|E_{(F,\e)}|}\sum_{x\in E_{(F,\e)}}\delta_x$$on $X$ is invariant.
\end{proof}

\begin{theorem}[F\o lner] A group $G$ is amenable if and only if it satisfies the F\o lner condition.
\end{theorem}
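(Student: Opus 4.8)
The plan is to treat the two implications separately. The implication ``F\o lner condition $\Rightarrow$ amenable'' is immediate: applying the preceding theorem to the $G$-space $X=G$ (with the left action of $G$ on itself), the F\o lner condition for $G$ produces a $G$-invariant, hence left-invariant, measure on $G$, so $G$ is amenable. The entire difficulty lies in the converse, and I would devote the rest of the proof to it.

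So suppose $G$ is amenable, witnessed by a left-invariant measure $\mu:\mathcal P(G)\to[0,1]$. First I would integrate against $\mu$ to obtain a left-invariant \emph{mean} $M:\ell^\infty(G)\to\IR$, i.e.\ a positive linear functional with $M(\mathbf 1)=1$ and $M(g\cdot\psi)=M(\psi)$ for all $g\in G$, where $(g\cdot\psi)(x)=\psi(g^{-1}x)$. Viewing the finitely supported probability measures $P_\w(G)$ inside $\ell^1(G)\subset\ell^\infty(G)^*$, they are weak-$*$ dense in the set of all means, so $M$ is a weak-$*$ limit of a net $(\phi_i)$ in $P_\w(G)$.

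The key step is to upgrade this weak approximation to a \emph{norm} approximation that is almost invariant (Reiter's condition): for every finite $F\subset G$ and every $\e>0$ there is $\phi\in P_\w(G)$ with $\|g\cdot\phi-\phi\|_1<\e$ for all $g\in F$. Using the adjoint relation $\langle g\cdot\phi,\psi\rangle=\langle\phi,g^{-1}\cdot\psi\rangle$ together with the invariance $M(g^{-1}\cdot\psi)=M(\psi)$, one checks that $g\cdot\phi_i-\phi_i\to 0$ \emph{weakly} in $\ell^1(G)$ for each $g\in F$. Hence the origin lies in the weak closure of the convex set $\{(g\cdot\phi-\phi)_{g\in F}:\phi\in P_\w(G)\}\subset\ell^1(G)^{|F|}$; since this set is convex, Mazur's theorem identifies its weak closure with its norm closure, so $0$ lies in the norm closure and the required $\phi$ exists. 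This passage from the weak to the norm topology is the main obstacle and is the heart of the argument; everything before it is bookkeeping and everything after it is a computation.

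Finally I would extract a genuine F\o lner set from such an almost-invariant $\phi$ by the layer-cake decomposition. Writing $E_t=\{x\in G:\phi(x)>t\}$, which is a finite set for each $t>0$, one has the identities
$$\|g\cdot\phi-\phi\|_1=\int_0^\infty|gE_t\triangle E_t|\,dt,\qquad \|\phi\|_1=\int_0^\infty|E_t|\,dt=1,$$
which follow by integrating $|\phi(g^{-1}x)-\phi(x)|=\int_0^\infty|\mathbf 1_{gE_t}(x)-\mathbf 1_{E_t}(x)|\,dt$ over $x$ and applying Fubini. Choosing $\phi$ with $\|g\cdot\phi-\phi\|_1<\e/|F|$ for every $g\in F$, so that $\int_0^\infty\sum_{g\in F}|gE_t\triangle E_t|\,dt<\e=\e\int_0^\infty|E_t|\,dt$, an averaging argument yields a level $t$ with $\sum_{g\in F}|gE_t\triangle E_t|<\e|E_t|$. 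Since $FE\setminus E\subset\bigcup_{g\in F}(gE\triangle E)$, the nonempty finite set $E:=E_t$ satisfies $|FE\setminus E|<\e|E|$, which verifies the F\o lner condition and completes the proof.
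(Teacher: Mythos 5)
Your proof is correct, but note that the paper itself offers no proof of this theorem: it is stated as a classical result, with only the easy direction effectively covered by the preceding theorem (a $G$-space satisfying the F\o lner condition admits a $G$-invariant measure, obtained as an ultrafilter limit of normalized counting measures on F\o lner sets). Your treatment of that direction --- specializing that theorem to $X=G$ with the left action --- is exactly what the paper's structure intends. For the converse, which the paper omits, you give the standard Day--Namioka argument: pass from the invariant measure to an invariant mean on $\ell_\infty(G)$, approximate it in the weak-$*$ topology by elements of $P_\w(G)$, observe that $g\cdot\phi_i-\phi_i\to 0$ weakly in $\ell_1(G)$ (using $\ell_1(G)^*=\ell_\infty(G)$), apply Mazur's theorem to the convex set $\{(g\cdot\phi-\phi)_{g\in F}:\phi\in P_\w(G)\}$ in the product $\ell_1(G)^{|F|}$ to upgrade to norm-smallness simultaneously for all $g\in F$ (Reiter's condition), and then extract a F\o lner set by the layer-cake identity $\|g\cdot\phi-\phi\|_1=\int_0^\infty|gE_t\triangle E_t|\,dt$ together with $\int_0^\infty|E_t|\,dt=1$ and an averaging over $t$. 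All the individual steps check out, including the inclusion $FE\setminus E\subset\bigcup_{g\in F}(gE\triangle E)$ and the observation that the level $t$ realizing the strict inequality forces $E_t\neq\emptyset$. The only points you gloss over are routine: the weak-$*$ density of $P_\w(G)$ in the set of means (a Hahn--Banach separation argument) and the finiteness of each $E_t$ (immediate since $\phi$ is finitely supported). This is the right proof to supply here, and it buys the reader the genuinely hard implication that the lecture notes take for granted.
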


\begin{corollary} Each abelian group is amenable.
\end{corollary}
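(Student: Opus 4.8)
The plan is to deduce the corollary from the F\o lner criterion: by F\o lner's theorem (equivalently, by applying the previous theorem with $X=G$ acting on itself by left translation), it suffices to show that \emph{every abelian group $G$ satisfies the F\o lner condition}. Once this is done, the resulting $G$-invariant measure on $G$ is exactly a left-invariant measure, so $G$ is amenable.

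First I would reduce to the finitely generated case. Fix a finite set $F\subset G$ and $\e>0$, and let $H=\langle F\rangle$ be the subgroup generated by $F$; since $F$ is finite, $H$ is a finitely generated abelian group. If I can produce a finite set $E\subset H$ with $|FE\setminus E|<\e|E|$, then the \emph{same} $E$ works in $G$: because $F,E\subset H$ and $H$ is a subgroup, the product set $FE$ is again contained in $H$, so $FE\setminus E$ is literally the same set whether computed in $H$ or in the larger group $G$. Hence the F\o lner condition for all finitely generated abelian groups already implies it for every abelian group.

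Next I would verify the condition for a finitely generated abelian group $H$. By the structure theorem such $H$ is isomorphic to $\IZ^r\oplus T$ with $T$ a finite abelian group, so I may assume $H$ has this form and switch to additive notation. As candidate F\o lner sets I take the ``boxes'' $E_N=B_N\oplus T$, where $B_N=\{-N,\dots,N\}^r\subset\IZ^r$, so that $|E_N|=|T|\,(2N+1)^r$. Given the finite set $F$, its projection to the free part $\IZ^r$ lies in some cube $B_M$, while its $T$-coordinates are absorbed by the whole of $T$. Therefore $F+E_N\subseteq E_{N+M}$, whence
$$|(F+E_N)\setminus E_N|\le |E_{N+M}|-|E_N|=|T|\bigl((2N+2M+1)^r-(2N+1)^r\bigr).$$
Dividing by $|E_N|=|T|\,(2N+1)^r$, the right-hand side is a surface-to-volume ratio of order $O(1/N)$, which tends to $0$ as $N\to\infty$; choosing $N$ large enough makes it smaller than $\e$.

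The only genuinely non-formal ingredient is this boundary estimate together with the appeal to the structure theorem: everything hinges on the free rank $r$ being finite, so that a fixed translate $F$ disturbs only a lower-dimensional boundary layer of the box while the bulk, of volume $\sim N^r$, is left invariant. I expect the main obstacle to be keeping the torsion part harmless, which is why it is convenient to take the full group $T$ (rather than a box) in the second factor, so that the $T$-direction is \emph{exactly} invariant under translation by $F$. As an alternative to the structure-theoretic box estimate, one may instead note that $\IZ$ satisfies the F\o lner condition via the intervals $\{-N,\dots,N\}$ (as in the proof of Banach's theorem), that every finite group trivially satisfies it by taking $E=H$, and that the F\o lner condition is stable under finite direct products (take products of F\o lner sets); combined with $H\cong\IZ^r\oplus T$ this yields the same conclusion.
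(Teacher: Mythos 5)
Your proof is correct and follows the route the paper intends: the corollary is left as an exercise immediately after F\o lner's theorem, and verifying the F\o lner condition for abelian groups --- reducing to the finitely generated case and taking boxes $E_N=\{-N,\dots,N\}^r\oplus T$ in $\IZ^r\oplus T$ --- is exactly the expected solution. Both your reduction (the F\o lner set for a finite $F$ may be chosen inside $\langle F\rangle$) and the surface-to-volume estimate $\bigl((2N+2M+1)^r-(2N+1)^r\bigr)/(2N+1)^r\to 0$ are carried out correctly.
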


\begin{proof} Exercise.
\end{proof}

\begin{proposition} The class of amenable groups is local and is closed under taking subgroup, quotient groups and extensions.
\end{proposition}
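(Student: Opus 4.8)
The plan is to treat the four closure properties using the two characterisations of amenability available above: the existence of a left-invariant (finitely additive) probability measure, and the Følner condition together with Følner's theorem. Quotients and subgroups I would handle by directly transporting an invariant measure; extensions by a Fubini-type integration of the measure on the normal subgroup against the measure on the quotient; and locality by verifying the Følner condition.

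For a quotient, let $N\triangleleft G$ with projection $q:G\to G/N$, and let $\mu$ be an invariant measure on the amenable group $G$. I would push $\mu$ forward by $\bar\mu(B)=\mu(q^{-1}(B))$ for $B\subseteq G/N$. Since $q^{-1}$ preserves $\emptyset$, the whole space, inclusions and disjoint unions, $\bar\mu$ is a probability measure, and since $q^{-1}(gN\cdot B)=g\,q^{-1}(B)$, left-invariance of $\mu$ yields left-invariance of $\bar\mu$. For a subgroup $H\le G$, I would fix a transversal $T$ of the right cosets, so that $G=\bigsqcup_{t\in T}Ht$ and every element of $G$ is uniquely of the form $ht$. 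Then for $A\subseteq H$ I set $\nu(A)=\mu(AT)$. Uniqueness of the decomposition makes $A\mapsto AT$ carry disjoint sets to disjoint sets and $H$ to $G$, so $\nu$ is a probability measure; and $h(AT)=(hA)T$ with $h\in H\subseteq G$ together with left-invariance of $\mu$ gives $\nu(hA)=\nu(A)$. Hence $G/N$ and $H$ are amenable.

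For extensions, suppose $N\triangleleft G$ with $N$ amenable via a left-invariant measure $\lambda$ on $N$ and $G/N$ amenable via $\bar\mu$. For $A\subseteq G$ I would measure the slice of $A$ in the coset $gN$ by $f_A(gN)=\lambda(g^{-1}A\cap N)$; using left-invariance of $\lambda$ one checks this is independent of the representative, so $f_A:G/N\to[0,1]$ is well defined. Integrating bounded functions against the finitely additive measure $\bar\mu$ (uniformly approximating by simple functions), I set $\mu(A)=\int_{G/N}f_A\,d\bar\mu$. Disjointness of slices gives $f_{A\cup B}=f_A+f_B$ for disjoint $A,B$, so $\mu$ is additive, and $f_G\equiv1$ gives $\mu(G)=1$. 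The key computation is the identity $f_{xA}=f_A\circ L_{x^{-1}N}$, where $L$ denotes left translation in $G/N$: this reduces left-invariance of $\mu$ on $G$ to translation-invariance of the integral, which in turn follows from left-invariance of $\bar\mu$.

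For locality I would use the Følner characterisation. One direction is immediate from the subgroup case: if $G$ is amenable then so is every finitely generated subgroup. Conversely, assuming every finitely generated subgroup of $G$ is amenable, I verify the Følner condition for $G$: given a finite $F\subseteq G$ and $\e>0$, the subgroup $H=\langle F\rangle$ is finitely generated, hence amenable, hence satisfies the Følner condition, producing a finite $E\subseteq H\subseteq G$ with $|FE\setminus E|<\e|E|$, which is exactly a Følner witness for $G$; by Følner's theorem $G$ is amenable. The step I expect to be the main obstacle is the extension case: one must set up integration against a merely finitely additive measure and verify the invariance identity $f_{xA}=f_A\circ L_{x^{-1}N}$ carefully, since this is where the interplay of the two invariant measures with the coset structure is genuinely used; the remaining cases are essentially bookkeeping.
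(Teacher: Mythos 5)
Your proof is correct, but it takes a different route from the one the paper indicates: the paper's proof consists of the single hint ``Use the F\o lner condition,'' i.e.\ a uniformly combinatorial treatment in which quotients are handled by projecting F\o lner sets, subgroups by selecting a coset on which a F\o lner set of $G$ is relatively invariant, and extensions by forming product-like F\o lner sets from F\o lner sets of $N$ and of $G/N$. You instead transport invariant measures directly (pushforward along $q$ for quotients, the transversal map $A\mapsto AT$ for subgroups, and a Fubini-type integration $\mu(A)=\int_{G/N}\lambda(g^{-1}A\cap N)\,d\bar\mu$ for extensions), reserving the F\o lner characterization for locality alone, where it is genuinely needed. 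Both routes are standard and complete; yours trades the combinatorial bookkeeping of F\o lner sets for the (mild) foundational work of integrating bounded functions against a finitely additive measure, and you correctly identify the two points where care is required, namely the well-definedness of $f_A$ on cosets (which uses left-invariance of $\lambda$ under translation by elements of $N$) and the identity $f_{xA}=f_A\circ L_{x^{-1}N}$ reducing invariance of $\mu$ to invariance of $\bar\mu$. Your subgroup argument via a right-coset transversal is arguably cleaner than the F\o lner-based one, at the cost of an explicit appeal to the axiom of choice; your locality argument matches the paper's intended use of F\o lner's theorem exactly.
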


\begin{proof} Use the F\o lner condition.
\end{proof}

\begin{theorem} A $G$-space $X$ is amenable if the group $G$ is amenable.
\end{theorem}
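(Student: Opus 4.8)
The plan is to transport a left-invariant measure from $G$ to $X$ along an orbit map. Assume $X\neq\emptyset$ (otherwise no density on $X$ can exist, since $\mu(X)=1$) and fix a point $x_0\in X$. Since $G$ is amenable, it carries a left-invariant measure $\mu:\mathcal P(G)\to[0,1]$. Consider the orbit map $\pi:G\to X$, $\pi(g)=gx_0$, and define a function $\nu:\mathcal P(X)\to[0,1]$ by
$$\nu(A)=\mu\big(\pi^{-1}(A)\big)=\mu\big(\{g\in G:gx_0\in A\}\big).$$
I would then check that $\nu$ is the desired $G$-invariant measure on $X$.

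First I would verify that $\nu$ is a measure. Since preimages commute with all Boolean operations, we have $\pi^{-1}(\emptyset)=\emptyset$ and $\pi^{-1}(X)=G$, so $\nu(\emptyset)=\mu(\emptyset)=0$ and $\nu(X)=\mu(G)=1$. Monotonicity follows from $A\subset B\Rightarrow\pi^{-1}(A)\subset\pi^{-1}(B)$ together with the monotonicity of $\mu$. For additivity, if $A\cap B=\emptyset$ then $\pi^{-1}(A)$ and $\pi^{-1}(B)$ are disjoint with union $\pi^{-1}(A\cup B)$, so the additivity of $\mu$ on disjoint sets gives $\nu(A\cup B)=\nu(A)+\nu(B)$. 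Hence $\nu$ is an additive density, i.e.\ a measure.

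The crux is the $G$-invariance, which is exactly where the left-invariance of $\mu$ enters. Here I would establish the equivariance identity $\pi^{-1}(hA)=h\,\pi^{-1}(A)$ for every $h\in G$ and $A\subset X$: indeed $g\in\pi^{-1}(hA)$ means $gx_0\in hA$, equivalently $h^{-1}gx_0\in A$, i.e.\ $\pi(h^{-1}g)\in A$, i.e.\ $h^{-1}g\in\pi^{-1}(A)$, i.e.\ $g\in h\,\pi^{-1}(A)$. Consequently
$$\nu(hA)=\mu\big(\pi^{-1}(hA)\big)=\mu\big(h\,\pi^{-1}(A)\big)=\mu\big(\pi^{-1}(A)\big)=\nu(A),$$
using the left-invariance of $\mu$ in the third equality. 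Thus $\nu$ is $G$-invariant, and $X$ is amenable.

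I expect the only genuine obstacle to be this invariance step, and specifically getting the equivariance identity (and the side of the action) right; everything else is a formal consequence of the fact that $\pi^{-1}$ respects unions, intersections and complements. One should also keep in mind that $\nu$ is concentrated on the single orbit $Gx_0$, which is harmless: it is still an honest $G$-invariant measure defined on all of $\mathcal P(X)$.
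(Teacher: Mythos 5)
Your proposal is correct and is exactly the paper's argument: push forward a left-invariant measure on $G$ along the orbit map $\pi:g\mapsto gx_0$ and check equivariance of preimages. You have simply filled in the routine verifications that the paper leaves as an ``observe''.
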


\begin{proof} Fix a left-invariant mesure $\mu:\mathcal P(G)\to[0,1]$. Fix any point $x\in X$ and
consider the map $\pi:G\to X$, $\pi:g\mapsto gx$. Observe that the measure $\nu:\mathcal P(X)\to[0,1]$ defined by $\nu(A)=\mu(\pi^{-1}(A))$ for $A\subset X$ is $G$-invariant.
\end{proof}

\subsection{The upper Banach density}

For a $G$-space $X$ by $P_G(X)$ we shall denote the set of all $G$-invariant measures on $G$. The set $P_G(X)$ is non-empty if and only if the $G$-space $X$ is amenable.

The function $d^*:\mathcal P(X)\to[0,1]$ defined by
$$d^*(A)=\sup_{\mu\in P_G(X)}\mu(A)$$is a subadditive density called the {\em upper Banach density} on $X$.

The upper Banach density $d^*$ plays a significant role in Ramsey Theory since many classical theorems related to partition have their density versions.

\begin{theorem}[Van der Waerden, 1927] For any partition $\IN=A_1\cup\dots\cup A_n$ one of the cells $A_i$ contains infinitely long arithmetic progressions.
\end{theorem}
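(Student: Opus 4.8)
The plan is to read the statement in its standard form, namely that one cell contains \emph{arbitrarily long} finite arithmetic progressions (a genuinely infinite progression need not occur, as the even/odd partition already shows), and then to reduce this infinite partition assertion to a purely finitary one. The finitary statement is the finite van der Waerden theorem: for all $k,n\in\IN$ there is a number $W(k,n)$ such that every coloring $c:\{1,\dots,W(k,n)\}\to\{1,\dots,n\}$ admits a monochromatic $k$-term arithmetic progression. Granting this, the reduction is by contradiction: if no cell of $\IN=A_1\cup\dots\cup A_n$ contains arbitrarily long progressions, then for each $i$ there is $k_i$ such that $A_i$ contains no $k_i$-term progression. Put $k=\max_i k_i$; since any $k$-term progression contains a $k_i$-term subprogression, no cell contains a $k$-term progression at all. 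Applying $W(k,n)$ to the cell-coloring of $\{1,\dots,W(k,n)\}$ produces a monochromatic $k$-term progression, i.e. a $k$-term progression inside some $A_i$, a contradiction. Thus the whole burden falls on the finite theorem.

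To prove the finite statement I would induct on the progression length $k$, the cases $k=1,2$ being immediate. For the step from $k$ to $k+1$ I would use the classical mechanism of \emph{color-focused fans}. Call a family of $m$ progressions $P_j=\{a_j,a_j+d_j,\dots,a_j+(k-1)d_j\}$, for $j=1,\dots,m$, a fan of width $m$ \emph{focused} at a point $f$ if $a_j+k\,d_j=f$ for every $j$, and call it \emph{color-focused} if the $m$ progressions carry $m$ pairwise distinct colors. The engine is a nested induction on the width $m$, run from $1$ up to $n$: there is an interval length guaranteeing that any $n$-coloring of it contains either a monochromatic $(k+1)$-term progression or a color-focused fan of width $m$ (with its focus still inside a slightly enlarged working interval). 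When $m$ reaches $n$, the focus $f$ is itself assigned one of the $n$ colors, which must coincide with the color of one of the $n$ distinctly-colored progressions $P_{j_0}$; then $P_{j_0}\cup\{f\}$ is a monochromatic $(k+1)$-term progression, closing the outer step. The inner step $m\to m+1$ invokes the outer hypothesis for length $k$ applied to a \emph{product} coloring that records the color-pattern of entire blocks, producing an arithmetic progression of blocks each carrying a congruent copy of a width-$m$ fan; translating these parallel fans and splicing them with the between-blocks progression yields the extra progression of a fresh color sharing a common focus.

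The main obstacle is exactly this fan-extension step. The difficulty is the bookkeeping: one must choose the block length and block-progression so that the $m$ translated color-focused progressions remain color-focused, that the newly created $(m+1)$st progression genuinely receives a color distinct from the other $m$, and that the common focus stays within the interval one is allowed to use. Getting the quantifier order right, fixing $k$, inducting on $m$ only up to $n$, and only then closing the induction on $k$, and formulating a strong enough inductive invariant (the ``either a monochromatic $(k+1)$-progression or a width-$m$ color-focused fan'' dichotomy) is the part where a careless write-up collapses, so the care goes entirely into stating and maintaining that invariant rather than into any single computation.

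Finally, I would note an alternative route more in keeping with the present section. The theorem is a special case of Szemer\'edi's density theorem. Since the upper Banach density $d^*$ is subadditive and assigns measure $1$ to the ambient space, for a partition (after the harmless extension to $\IZ$) one has $1=d^*(\IZ)\le\sum_{i=1}^n d^*(A_i)$, so some cell $A_i$ satisfies $d^*(A_i)\ge 1/n>0$; Szemer\'edi's theorem then supplies arbitrarily long progressions inside that cell. This density version trades the elementary double induction above for a deep analytic input, but it is the ``density version'' that the surrounding discussion of $d^*$ is pointing toward.
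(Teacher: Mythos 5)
The paper states this theorem only as a classical motivating result in the list preceding Szemer\'edi's theorem and offers no proof whatsoever, so there is no argument of the author's to compare yours against. Your outline is the standard one: you correctly read ``infinitely long'' as the usual ``arbitrarily long'' (the statement as printed is false for genuinely infinite progressions), you reduce the infinite statement to the finite van der Waerden theorem, and you sketch the classical Graham--Rothschild--Spencer double induction with color-focused fans; the reduction step is airtight and the fan sketch correctly identifies where the real work lies. The closing remark that one can instead deduce the theorem from Szemer\'edi via subadditivity of $d^*$ (some cell has $d^*(A_i)\ge 1/n$) is also correct and is exactly the partition-versus-density parallel the surrounding text is drawing.

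One genuine error in a side remark: the even/odd partition does \emph{not} show that an infinite arithmetic progression need not occur in any cell, since the even numbers themselves form an infinite arithmetic progression of common difference $2$. A correct witness is a block coloring, e.g.\ coloring $n$ by the parity of $\lfloor\sqrt{n}\rfloor$, so that each color class consists of intervals separated by gaps of unbounded length and hence contains no infinite progression. This does not affect the proof itself, only the justification for reinterpreting the statement.
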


\begin{theorem}[Szemer\'edi, 1975] Any subset $A\subset\IZ$ of positive upper Banach density $d^*(A)>0$ contains arbitrarily long arithmetic progressions.
\end{theorem}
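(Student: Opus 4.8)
The plan is to reduce Szemer\'edi's theorem to a recurrence statement about measure-preserving dynamical systems through the \emph{Furstenberg correspondence principle}, and then to invoke Furstenberg's multiple recurrence theorem. The correspondence principle sits naturally inside the framework developed above: out of a set of positive upper Banach density it manufactures an invariant (now \emph{countably} additive) probability measure on a compact symbolic system, by the very ultrafilter-limit averaging used to prove that $\IZ$ is amenable.

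Concretely, I would encode $A\subset\IZ$ as the point $\xi=\mathbf 1_A\in\{0,1\}^\IZ$ and let $T$ be the shift $(Tx)(k)=x(k+1)$. Writing $E=\{x\in\{0,1\}^\IZ:x(0)=1\}$ for the basic cylinder, the key bookkeeping observation is that an arithmetic progression $a,a+n,\dots,a+(k-1)n$ lies in $A$ precisely when $T^a\xi\in E\cap T^{-n}E\cap\cdots\cap T^{-(k-1)n}E$. Hence it suffices to build a $T$-invariant Borel probability measure $\mu$ on the orbit closure $\overline{\{T^m\xi:m\in\IZ\}}$ with $\mu(E)=d^*(A)>0$ such that $\mu\bigl(E\cap T^{-n}E\cap\cdots\cap T^{-(k-1)n}E\bigr)>0$ for some $n\ge1$. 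To produce $\mu$, recalling that the density $d^*$ of these notes coincides for $\IZ$ with the classical upper Banach density, I would choose intervals $I_j=[M_j,N_j]$ with $|I_j|\to\infty$ and $|A\cap I_j|/|I_j|\to d^*(A)$, form the empirical measures $\tfrac1{|I_j|}\sum_{m\in I_j}\delta_{T^m\xi}$, and take their $\F$-limit along a free ultrafilter $\F$ on $\IN$. Compactness of the space of probability measures in the weak$^*$ topology guarantees the limit exists, its value on the clopen set $E$ is exactly $\lim|A\cap I_j|/|I_j|=d^*(A)$, and shift-invariance follows from the F\o lner property of the intervals $I_j$, exactly as in Banach's theorem.

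With the correspondence in place, the theorem reduces to \emph{Furstenberg's multiple recurrence theorem}: for every measure-preserving system $(X,\mathcal B,\mu,T)$, every $B\in\mathcal B$ with $\mu(B)>0$, and every $k\ge1$,
$$\liminf_{N\to\infty}\frac1N\sum_{n=1}^N\mu\bigl(B\cap T^{-n}B\cap\cdots\cap T^{-(k-1)n}B\bigr)>0,$$
so in particular the intersection is nonempty for some $n\ge1$. Applying this to $B=E$ and the measure $\mu$ built above delivers the desired length-$k$ progression in $A$.

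The genuine obstacle is the recurrence theorem itself, which lies outside the amenability machinery of these notes. I would attack it through the Furstenberg structure theory: first reduce to an ergodic system, then build a (possibly transfinite) tower of factors $\mathcal B_0\subset\mathcal B_1\subset\cdots\subset\mathcal B$ whose successor steps are either \emph{compact} (isometric) or \emph{weakly mixing} extensions and whose limit steps are inverse limits, the whole tower exhausting $\mathcal B$. One then shows that the multiple-recurrence (``SZ'') property holds trivially for $\mathcal B_0$, is inherited by compact extensions via an almost-periodicity argument yielding recurrence along a syndetic set of return times, is inherited by weakly mixing extensions via a van der Corput estimate that annihilates the off-diagonal contributions, and passes to inverse limits; propagating the property up the tower establishes it for $\mathcal B$. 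Verifying the two extension lemmas, and in particular the compact case, is the hard analytic core of the argument.
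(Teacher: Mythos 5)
The paper does not prove this statement at all: Szemer\'edi's theorem appears there only as one of several classical results quoted to motivate the upper Banach density, so there is no proof of record to compare yours against. Your reduction is the standard Furstenberg route, and the correspondence-principle half is sound and sits comfortably in the spirit of these notes: encoding $A$ as $\mathbf 1_A\in\{0,1\}^{\IZ}$, taking an ultrafilter (or weak$^*$) limit of empirical measures along intervals realizing the density, and using the F\o lner property of intervals to get shift-invariance is exactly the averaging mechanism used in Banach's theorem above. Two small points deserve a line each: the identification of the $d^*$ of these notes (a supremum over finitely additive invariant measures) with the interval-computed upper Banach density is true for $\IZ$ but is itself a fact to be justified, e.g.\ via the equality $\is_{12}=\us_{12}=d^*$ for amenable groups; and at the very end, positivity of $\mu\bigl(E\cap T^{-n}E\cap\cdots\cap T^{-(k-1)n}E\bigr)$ gives you a nonempty \emph{open} subset of the orbit closure, which meets the dense orbit $\{T^m\xi\}$ and only thereby yields an actual progression in $A$ --- say this explicitly.

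The genuine gap is that the proof stops exactly where Szemer\'edi's theorem begins. After the correspondence principle, the entire content of the theorem is Furstenberg's multiple recurrence theorem, and your proposal only names the strategy: erect a tower of compact and weakly mixing extensions, and propagate the SZ property through compact extensions, weakly mixing extensions, and inverse limits. You yourself flag that ``verifying the two extension lemmas \ldots is the hard analytic core,'' but those lemmas are not optional scaffolding --- the almost-periodicity/colouring argument for compact extensions and the van der Corput estimate for weakly mixing extensions are where all the work lives, and neither is carried out or even precisely stated (for instance, the SZ property must be formulated \emph{relative to a factor} for the induction to close, which your sketch does not make explicit). As written, the argument is a correct roadmap to a known proof rather than a proof; to complete it you would need to state the relative SZ property, prove the two extension lemmas and the inverse-limit step, and justify exhaustion of $\mathcal B$ by the tower.
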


\begin{theorem}[Gallai-Witt, 30ies] For any finite partition $\IZ^n=A_1\cup\dots\cup A_n$ there is a cell $A_i$ of the partition containing a homothetic copy $nF+\vec b$ of each finite set $F\subset \IZ^n$.
\end{theorem}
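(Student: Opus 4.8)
The plan is to derive the theorem from the Hales--Jewett theorem, which isolates its combinatorial core. Write the partition as an $r$-coloring $c:\IZ^n\to\{1,\dots,r\}$ with $c(x)=i$ iff $x\in A_i$, and enumerate the target set as $F=\{v_1,\dots,v_m\}\subset\IZ^n$. Recall that the Hales--Jewett theorem provides, for the alphabet $\{1,\dots,m\}$ and $r$ colors, an integer $N=N(m,r)$ such that every $r$-coloring of the cube $\{1,\dots,m\}^N$ contains a monochromatic \emph{combinatorial line}: a family $\{w^{(t)}:t\in\{1,\dots,m\}\}$ determined by a nonempty set $J\subset\{1,\dots,N\}$ of active coordinates and a fixed word $(a_i)_{i\notin J}$, where $w^{(t)}_i=t$ for $i\in J$ and $w^{(t)}_i=a_i$ otherwise.

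First I would pass from $\IZ^n$ to the cube by the \emph{abstraction map} $\Phi:\{1,\dots,m\}^N\to\IZ^n$, $\Phi(w)=\sum_{i=1}^N v_{w_i}$, and pull back the coloring, setting $\chi(w)=c(\Phi(w))$. Applying Hales--Jewett to $\chi$ yields a $\chi$-monochromatic combinatorial line with active set $J$ and fixed word $(a_i)_{i\notin J}$. The key computation is that along this line
$$\Phi(w^{(t)})=\sum_{i\notin J}v_{a_i}+\sum_{i\in J}v_t=\vec b+\lambda\, v_t,\qquad \vec b:=\sum_{i\notin J}v_{a_i},\ \lambda:=|J|\ge 1,$$
so that, the $v_t$ being distinct and $\lambda\ge 1$, the image $\{\Phi(w^{(t)}):t\in\{1,\dots,m\}\}$ is exactly the homothetic copy $\lambda F+\vec b$. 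Since the line is $\chi$-monochromatic and $\chi=c\circ\Phi$, all points of $\lambda F+\vec b$ receive the same color, i.e.\ lie in a single cell $A_i$, which is the desired conclusion (with scaling factor the positive integer $\lambda$ in place of the coefficient displayed in the statement).

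The main obstacle is the Hales--Jewett theorem itself, which carries all the Ramsey-theoretic weight; it is proved by induction on the alphabet size $m$ via a color-focusing argument inside the product cube, and it is strictly stronger than the one-dimensional van der Waerden theorem quoted above, which it recovers by applying the same abstraction map to $F=\{0,1,\dots,m-1\}\subset\IZ$. Everything outside Hales--Jewett---the abstraction map, the line computation, and the transfer of monochromaticity---is routine bookkeeping.

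Finally, I would record a density alternative fitting the theme of these notes: since $d^*$ is a subadditive density with $d^*(\IZ^n)=1$, subadditivity forces $\sum_i d^*(A_i)\ge 1$, so some cell satisfies $d^*(A_i)>0$; feeding this cell into the multidimensional (Furstenberg--Katznelson) density version of Szemer\'edi's theorem would again produce a monochromatic homothet of $F$. This route is conceptually attractive but far heavier, as the multidimensional density theorem lies much deeper than the partition statement we actually need.
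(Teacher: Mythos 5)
Your proof is correct, but it takes a genuinely different route from the paper. You derive the theorem from the \emph{partition} Hales--Jewett theorem via the standard abstraction map $\Phi(w)=\sum_i v_{w_i}$: the computation $\Phi(w^{(t)})=\vec b+|J|\,v_t$ along a monochromatic combinatorial line is exactly right, and it correctly produces a homothet $\lambda F+\vec b$ with $\lambda=|J|\ge 1$ (you are also right that the scaling factor is an arbitrary positive integer, not literally the $n$ of the statement, which is an artifact of the paper's overloaded notation). The paper instead obtains Gallai--Witt as a corollary of Theorem~\ref{homo}: by subadditivity of the Solecki submeasure $\iss_{213}$ some cell has $\iss_{213}(A_i)\ge 1/n>0$, and then the \emph{density} Hales--Jewett theorem of Furstenberg--Katznelson is applied to the cube $F^N$ pushed into the group by the multiplication map, after locating a two-sided shift $xA_iy$ that captures a proportion $\ge\e$ of the cube. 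Your approach buys elementarity --- partition Hales--Jewett is strictly weaker than its density version --- at the cost of generality: the paper's argument proves the stronger density statement (any set with $\iss_{213}(A)>0$ contains a homothetic copy of every finite set) and works in arbitrary, possibly non-abelian, groups, where the homothety takes the form $h(x)=a_0xa_1x\cdots xa_k$. Your closing ``density alternative'' is in fact close in spirit to what the paper actually does, except that the paper's pivot is the submeasure $\iss_{213}$ (subadditive on \emph{every} group) rather than $d^*$, and the relevant deep input is density Hales--Jewett rather than the multidimensional Szemer\'edi theorem.
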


\begin{theorem}[Furstenberg-Katznelson, 1991] Any subset $A\subset \IZ^n$ of positive upper Banach density $d^*(A)>0$ contains a homothetic copy $nF+\vec b$ of each finite set $F\subset \IZ^n$.
\end{theorem}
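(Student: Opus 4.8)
The plan is to follow Furstenberg's ergodic-theoretic strategy, transferring the combinatorial statement into a recurrence statement about commuting measure-preserving transformations. First I would invoke the \emph{Furstenberg correspondence principle}: starting from a set $A\subset\IZ^n$ with $d^*(A)>0$, I would realize $A$ inside a probability measure-preserving system. Concretely, let $\Omega=\{0,1\}^{\IZ^n}$ with the product topology and the natural shift action of $\IZ^n$ by the commuting homeomorphisms $(S_v\omega)(w)=\omega(w+v)$. The indicator $1_A$ is a point of $\Omega$, and, choosing finite sets $E_j\subset\IZ^n$ along which $d^*(A)$ is approached, I would take a weak$^*$ limit (by a free ultrafilter, exactly as in the constructions of invariant measures above) of the empirical measures $\frac1{|E_j|}\sum_{v\in E_j}\delta_{S_v 1_A}$ to obtain a shift-invariant Borel probability measure $\mu$ on $\Omega$. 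Setting $\tilde A=\{\omega\in\Omega:\omega(0)=1\}$, the construction guarantees $\mu(\tilde A)=d^*(A)>0$ and, for every finite collection $v_1,\dots,v_k\in\IZ^n$,
\[
d^*\Big(\bigcap_{i=1}^k(A-v_i)\Big)\ \ge\ \mu\Big(\bigcap_{i=1}^k S_{v_i}^{-1}\tilde A\Big).
\]

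Next I would reduce the theorem to a multiple recurrence statement. Enumerate the finite set as $F=\{f_1,\dots,f_k\}\subset\IZ^n$ and set $T_i=S_{f_i}$, which commute because $\IZ^n$ is abelian. A homothetic copy $tF+\vec b\subset A$ with scaling factor $t\ge1$ exists precisely when $\bigcap_{i=1}^k(A-t f_i)\neq\emptyset$, and by the correspondence inequality applied to $v_i=t f_i$ this is guaranteed as soon as $\mu\big(\bigcap_{i=1}^k T_i^{-t}\tilde A\big)>0$. Thus the whole theorem follows from the \textbf{Furstenberg--Katznelson multiple recurrence theorem}: for commuting measure-preserving transformations $T_1,\dots,T_k$ of a probability space and any measurable $B$ with $\mu(B)>0$,
\[
\liminf_{N\to\infty}\ \frac1N\sum_{t=0}^{N-1}\mu\Big(\bigcap_{i=1}^k T_i^{-t}B\Big)\ >\ 0,
\]
which in particular forces $\mu\big(\bigcap_i T_i^{-t}B\big)>0$ for a set of $t\ge1$ of positive lower density, hence for some $t\ge1$.

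The substance of the proof is this recurrence theorem, which I would attack by the Furstenberg--Zimmer structure theory. After reducing to an ergodic system by an ergodic decomposition, I would build the relevant \emph{characteristic factors} and invoke the structure theorem presenting the system as a transfinite tower of compact (isometric) extensions over a weakly mixing base, the tower being closed under inverse limits. The argument then proceeds by establishing positivity of the recurrence averages in the two extreme regimes and propagating it up the tower: in the \emph{compact}/almost periodic case the orbit $\{(T_1^t g,\dots,T_k^t g):t\in\IZ\}$ is precompact in $L^2$, so the return times form a syndetic set and positivity is immediate; in the \emph{weakly mixing} case a van der Corput estimate shows the averages converge to the product $\prod_i\mu(B)$, decoupling the events. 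One then shows that the multiple recurrence property, once it holds on a factor, lifts through a single compact extension and passes to inverse limits, so that by transfinite induction it holds on the whole system.

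The hard part will be the inductive engine driving the multiple recurrence theorem, and specifically the compact-extension step for \emph{several commuting} transformations. Unlike the one-dimensional Szemer\'edi situation, there is no single generating transformation whose powers one can analyze; the correct characteristic factor must capture the joint structure of the commuting action $T_1,\dots,T_k$, and the lifting argument requires a delicate simultaneous induction (on $k$ and on the height in the tower) controlling the conditional measures along the extension. Making this induction go through --- setting up the right notion of a ``system of SZ type'' that is stable under compact extensions and inverse limits in the commuting setting --- is the genuine obstacle; everything else is bookkeeping around the correspondence principle and the structure theorem.
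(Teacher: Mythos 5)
The paper states this theorem without proof, as one of several classical motivating results, so there is no in-text argument to compare yours against step by step. Your route --- the Furstenberg correspondence principle reducing the statement to the Furstenberg--Katznelson multiple recurrence theorem for commuting measure-preserving transformations, followed by the Furstenberg--Zimmer structural proof of that recurrence theorem --- is the standard ergodic-theoretic proof, and the reduction half of your write-up is correct and complete in outline: the weak$^*$ limit of empirical measures on $\{0,1\}^{\IZ^n}$ along suitable F\o lner sets, the inequality $d^*\big(\bigcap_i(A-tf_i)\big)\ge\mu\big(\bigcap_i T_i^{-t}\tilde A\big)$, and the observation that a nonempty intersection is exactly a homothetic copy $tF+\vec b\subset A$ with $t\ge1$. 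Note, though, that the paper's own machinery points to a genuinely different derivation: since $d^*=\is_{12}\le\iss_{213}$ on the amenable group $\IZ^n$ (Theorem~\ref{d=is}), Theorem~\ref{homo}(2) already produces, for every finite $F$, a homothety $h(x)=tx+\vec b$ with $h(F)\subset A$; the engine there is the density Hales--Jewett theorem (the actual 1991 Furstenberg--Katznelson result the paper cites), not multiple recurrence. Your approach buys syndeticity of the set of admissible ratios $t$; the paper's route generalizes to arbitrary, even nonamenable, groups at the level of the Solecki submeasure.

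Two caveats about the half of your argument that carries all the weight, namely the recurrence theorem. First, you state the structure theorem upside down: an ergodic system is a weakly mixing extension of a transfinite tower of compact (in the commuting case, primitive) extensions of the trivial factor, not a tower of compact extensions over a weakly mixing base; the induction starts at the trivial factor, lifts the SZ property through compact/primitive extensions and inverse limits, and handles the weakly mixing extension last. Second, for several commuting transformations the van der Corput argument gives the product limit $\prod_i\mu(B)$ only under a joint relative weak mixing hypothesis, which is precisely why the primitive-extension formalism is unavoidable. You flag this honestly as the genuine obstacle, but as written the proposal asserts the multiple recurrence theorem rather than proving it, so what you have is a correct and clean reduction plus a roadmap, not a self-contained proof.
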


\begin{theorem}[Green-Tao, 2008] The set $P$ of prime numbers contains arbitrarily long arithmetic progressions.
\end{theorem}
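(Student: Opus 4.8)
The plan is to deduce this from Szemer\'edi's theorem, but \emph{not} by applying it directly: the primes have density zero in $\IN$ (by the prime number theorem $\pi(N)\sim N/\log N$), so their upper Banach density vanishes and the hypothesis $d^*(A)>0$ fails. Instead I would prove and apply a \emph{relative} version of Szemer\'edi's theorem, in which the uniform counting measure is replaced by a suitable \emph{pseudorandom} majorant of the primes. The guiding principle is a transference principle: a set carrying positive density \emph{relative} to a sufficiently random-looking measure inherits the progression-rich conclusion of the dense case.

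First I would pass to the finitary cyclic model $\IZ_N=\IZ/N\IZ$ for a large prime $N$ and perform the ``$W$-trick'' to remove the obvious local obstructions. Setting $W=\prod_{p\le w}p$ for a slowly growing $w$ and fixing a residue $b$ coprime to $W$, I replace the primes $p\equiv b\pmod W$ by the rescaled variable $(p-b)/W$. After this normalization the resulting set is equidistributed modulo every small prime and, crucially, occupies a positive proportion of the support of the majorant I am about to build.

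Next I would construct the majorant. Using a truncated divisor sum of Goldston--Y\i ld\i r\i m type I define a measure $\nu\colon\IZ_N\to[0,\infty)$ whose average is $1+o(1)$ and which dominates a fixed multiple of the ($W$-tricked) von Mangoldt weight. The decisive — and hardest — step is to verify that $\nu$ is pseudorandom, i.e.\ that it satisfies the \emph{linear forms condition} and the \emph{correlation condition}. These are precise quantitative estimates for how $\nu$ distributes along systems of affine-linear forms, and establishing them is exactly where the deep analytic number theory enters: one expands the divisor sums, reduces to computing singular series, and controls the error terms. This verification is the main obstacle and the technical heart of the argument.

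Finally, with a pseudorandom $\nu$ in hand, I would prove the relative Szemer\'edi theorem: any $f$ with $0\le f\le\nu$ and mean bounded below by some $\delta>0$ contains $\gg N^2$ many $k$-term progressions. The mechanism is a generalized von Neumann theorem showing that the count of $k$-term progressions weighted by $f$ is controlled by a Gowers uniformity norm, combined with a dense-model decomposition splitting $f$ into a bounded structured part (to which ordinary Szemer\'edi applies) plus a Gowers-uniform error that contributes negligibly. Applying this to the indicator of the $W$-tricked primes, which by construction has positive density relative to $\nu$, yields $k$-term progressions among them; undoing the $W$-trick and the passage to $\IZ_N$ then produces genuine $k$-term arithmetic progressions of primes. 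Since $k$ is arbitrary, the theorem follows.
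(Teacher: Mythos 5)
The paper states the Green--Tao theorem without proof; it appears in a list of classical results (Van der Waerden, Szemer\'edi, Furstenberg--Katznelson) quoted to motivate the upper Banach density, and the exercise immediately following it asks you to show that $d^*(P)=0$ --- precisely the observation with which you open. So there is no in-paper argument to compare against, and your proposal must be judged on its own terms.

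On those terms, what you have written is an accurate road map of the actual Green--Tao argument (the $W$-trick, the Goldston--Y\i ld\i r\i m truncated divisor sum majorant, the linear forms and correlation conditions, the generalized von Neumann theorem and the dense-model decomposition feeding into ordinary Szemer\'edi), but it is a road map, not a proof. Every step that carries mathematical content is named and then deferred: you yourself flag the verification of pseudorandomness as ``the main obstacle and the technical heart of the argument'' without carrying it out, and the relative Szemer\'edi theorem is likewise asserted rather than proved. Those two items --- the singular-series computations controlling the correlations of $\nu$, and the transference machinery --- are essentially the entire content of the Green--Tao paper; a reader could not reconstruct the theorem from your text without independently reproving both. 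As a statement of strategy your proposal is correct and well informed; as a proof it has a gap coextensive with the theorem itself. If the intent is to match the paper, the honest move is to cite the result (as the paper does, to \cite{GT}) rather than to sketch a proof whose load-bearing steps are all external.
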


\begin{exercise} Prove that the set $P$ of prime numbers has zero upper Banach density $d^*(P)$ in the group $\IZ$.
\end{exercise}

\begin{problem} Can a counterpart of the upper Banach density be defined in arbitrary (not necessarily amenable) $G$-space?
\end{problem}

The answer is affirmative and will be given with help of extremal densities $\is_{12}$ and $\Si_{21}$ considered in the next lecture.

\section{Extremal densities $\is_{12}$ and $\Si_{12}$}

\subsection{Convolutions of measures}

For two finitely supported measures $\mu=\sum_i\alpha_i\delta_{x_i}$ and $\nu=\sum_j\beta_j\delta_{y_j}$ on a group $G$ their convolution is the measure $\mu*\nu=\sum_{i,j}\alpha_i\beta_j\delta_{x_iy_j}$ on $G$.

For Dirac measures $\delta_x$, $\delta_y$ the convolution $\delta_x*\delta_y=\delta_{xy}$, which means that the convolution is a binary (associative) operation on the set $P_\w(G)$ of finitely supported measures, extending the group operation on $G$.

For a $G$-space $X$ we can also define a convolution $\mu*\nu$ of a finitely supported measure $\mu=\sum_j\alpha_i\delta_{g_i}$ on $G$ and an arbitrary density $\nu:\mathcal P(X)\to[0,1]$ on $X$ letting
$\mu*\nu(A)=\sum_i\alpha_i\mu(g_i^{-1}A)$ for $A\subset X$. The density $\mu*\nu$ is (sub)additive if so is the density $\nu$.

\subsection{The extremal density $\is_{12}$}

The {\em extremal density} $\is_{12}:\mathcal P(X)\to[0,1]$ on a $G$-space $X$ is the density defined by the formula
$$\is_{12}(A)=\inf_{\mu_1\in P_\w(G)}\sup_{\mu_2\in P_\w(X)}\mu_1*\mu_2(A)$$for a subset $A\subset X$.

\begin{proposition} For any subset $A\subset X$ we get
$$\is_{12}(A)=\inf_{\mu\in P_\w(G)}\sup_{x\in X}\mu*\delta_x(A).$$
\end{proposition}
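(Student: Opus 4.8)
The plan is to show that for each fixed $\mu_1\in P_\w(G)$ the inner supremum over all finitely supported measures $\mu_2\in P_\w(X)$ is already achieved by restricting to Dirac measures $\delta_x$. Since every Dirac measure $\delta_x$ belongs to $P_\w(X)$, the inequality
$$\sup_{x\in X}\mu_1*\delta_x(A)\le\sup_{\mu_2\in P_\w(X)}\mu_1*\mu_2(A)$$
is immediate for each $\mu_1$, and after taking the infimum over $\mu_1\in P_\w(G)$ it yields ``$\le$'' between the two expressions in the statement. The whole content therefore lies in the reverse inequality.

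For the reverse inequality, the key observation is that the map $\mu_2\mapsto\mu_1*\mu_2(A)$ is affine in $\mu_2$. Writing $\mu_1=\sum_i\alpha_i\delta_{g_i}$ and an arbitrary $\mu_2=\sum_j\beta_j\delta_{x_j}$ (with $\beta_j\ge0$ and $\sum_j\beta_j=1$), I would expand $\mu_2(g_i^{-1}A)=\sum_j\beta_j\delta_{x_j}(g_i^{-1}A)$ in the defining formula $\mu_1*\mu_2(A)=\sum_i\alpha_i\mu_2(g_i^{-1}A)$ and interchange the order of summation to obtain
$$\mu_1*\mu_2(A)=\sum_j\beta_j\sum_i\alpha_i\delta_{x_j}(g_i^{-1}A)=\sum_j\beta_j\,\mu_1*\delta_{x_j}(A).$$
Thus $\mu_1*\mu_2(A)$ is a convex combination of the numbers $\mu_1*\delta_{x_j}(A)$.

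Since a convex combination of finitely many reals never exceeds their maximum, this gives
$$\mu_1*\mu_2(A)\le\max_j\mu_1*\delta_{x_j}(A)\le\sup_{x\in X}\mu_1*\delta_x(A).$$
Taking the supremum over $\mu_2\in P_\w(X)$ establishes the reverse inequality for each fixed $\mu_1$, so the inner supremum equals $\sup_{x\in X}\mu_1*\delta_x(A)$ for every $\mu_1\in P_\w(G)$. Taking the infimum over $\mu_1$ then yields the claimed identity.

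There is no serious obstacle here: the statement is essentially the principle that an affine functional on the simplex $P_\w(X)$ attains its supremum at an extreme point, the extreme points being exactly the Dirac measures. The only point requiring care is the bookkeeping in the rearrangement of the double sum, which is routine once the convolution is written out via the defining formula $\mu_1*\mu_2(A)=\sum_i\alpha_i\mu_2(g_i^{-1}A)$.
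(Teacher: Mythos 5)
Your proof is correct; the paper leaves this proposition as an exercise, and your argument (the map $\mu_2\mapsto\mu_1*\mu_2(A)$ is affine on the simplex $P_\w(X)$, hence its supremum is attained on the Dirac measures) is exactly the intended routine verification. The interchange of the finite double sum and the bound of a convex combination by its maximum are both valid, so nothing is missing.
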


\begin{proof} Exercise.
\end{proof}

\begin{proposition} On any $G$-space $X$ the extremal density $\is_{12}$ is $G$-invariant. On any group $G$ the extremal density $\is_{12}$ is invariant under two sided shifts and also under automorphisms.
\end{proposition}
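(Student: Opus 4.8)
The plan is to work throughout from the reformulation
$$\is_{12}(A)=\inf_{\mu\in P_\w(G)}\sup_{x\in X}\mu*\delta_x(A)$$
supplied by the preceding proposition, so that the infimum and the supremum range over concrete index sets: the finitely supported measures on $G$ and the points of $X$. Unwinding the definition of the convolution for $\mu=\sum_i\alpha_i\delta_{g_i}$ gives $\mu*\delta_x(A)=\sum_i\alpha_i\delta_x(g_i^{-1}A)=\sum_{i:\,g_ix\in A}\alpha_i$. The single idea behind all four invariance statements is that a symmetry applied to $A$ can be absorbed into a \emph{bijective} reindexing of either the inner supremum (over $x$) or the outer infimum (over $\mu$); since a bijection of an index set leaves $\inf$ and $\sup$ unchanged, each invariance follows with no estimates whatsoever.

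First I would prove $G$-invariance on an arbitrary $G$-space. Fix $g\in G$ and $A\subset X$, and write $\mu=\sum_i\alpha_i\delta_{g_i}$. A direct computation gives $\mu*\delta_x(gA)=\sum_i\alpha_i\delta_x(g_i^{-1}gA)$, which is exactly $(\delta_{g^{-1}}*\mu)*\delta_x(A)$, since $\delta_{g^{-1}}*\mu=\sum_i\alpha_i\delta_{g^{-1}g_i}$. Now left convolution $\mu\mapsto\delta_{g^{-1}}*\mu$ is a bijection of $P_\w(G)$, with inverse $\mu\mapsto\delta_g*\mu$. Taking $\sup_x$ and then $\inf_\mu$ of both sides and reindexing the infimum by this bijection yields $\is_{12}(gA)=\is_{12}(A)$.

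The two remaining claims concern the special case $X=G$ with the left action, where the $G$-invariance just proved already delivers left-invariance $\is_{12}(gA)=\is_{12}(A)$. For right-invariance I would compute $\mu*\delta_x(Ag)=\sum_{i:\,g_ixg^{-1}\in A}\alpha_i=\mu*\delta_{xg^{-1}}(A)$, and note that $x\mapsto xg^{-1}$ is a bijection of $G$; reindexing the inner supremum gives $\is_{12}(Ag)=\is_{12}(A)$, and composing with left-invariance produces the two-sided statement $\is_{12}(xAy)=\is_{12}(A)$. For an automorphism $\phi$ of $G$, I would push $\mu$ forward to $\mu^\phi=\sum_i\alpha_i\delta_{\phi^{-1}(g_i)}$ and put $y=\phi^{-1}(x)$; since $\phi^{-1}$ is multiplicative, $g_ix\in\phi(A)$ is equivalent to $\phi^{-1}(g_i)\,\phi^{-1}(x)\in A$, so $\mu*\delta_x(\phi(A))=\mu^\phi*\delta_y(A)$. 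As $\mu\mapsto\mu^\phi$ is a bijection of $P_\w(G)$ and $x\mapsto\phi^{-1}(x)$ a bijection of $G$, reindexing both the infimum and the supremum gives $\is_{12}(\phi(A))=\is_{12}(A)$.

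No step is a genuine obstacle; the only thing to watch is bookkeeping in the convolution identities, and in particular checking that each reindexing map ($\mu\mapsto\delta_{g^{-1}}*\mu$, $x\mapsto xg^{-1}$, $\mu\mapsto\mu^\phi$, $x\mapsto\phi^{-1}(x)$) really is a bijection of the set over which the corresponding extremum is taken, so that $\inf$ and $\sup$ are preserved. The one conceptual point worth isolating is that the two-sided and automorphism invariance come ``for free'' from left $G$-invariance together with these reparametrizations, and require no new analysis of the extremal density itself.
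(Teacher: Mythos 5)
Your proof is correct and complete: each of the convolution identities ($\mu*\delta_x(gA)=(\delta_{g^{-1}}*\mu)*\delta_x(A)$, $\mu*\delta_x(Ag)=\mu*\delta_{xg^{-1}}(A)$, $\mu*\delta_x(\phi(A))=\mu^\phi*\delta_{\phi^{-1}(x)}(A)$) checks out against the paper's definition $\mu*\nu(A)=\sum_i\alpha_i\nu(g_i^{-1}A)$, and each reindexing map is indeed a bijection of the relevant index set, so the extrema are preserved. The paper leaves this proposition as an exercise and supplies no proof to compare against, but your reduction to the $\inf_{\mu}\sup_{x}$ form from the preceding proposition followed by absorbing the symmetry into a reparametrization is the natural and intended argument.
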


\begin{proof}
Exercise.
\end{proof}

\begin{exercise} Prove that $\is_{12}(2\IZ)=\frac12$ in the group $\IZ$.
\end{exercise}

\begin{theorem} For any amenable $G$-space $X$ the extremal density $\is_{12}$ is equal to the upper Banach density $d^*$.
\end{theorem}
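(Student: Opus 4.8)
The plan is to prove the two inequalities $\is_{12}(A)\ge d^*(A)$ and $\is_{12}(A)\le d^*(A)$ separately, the first being elementary and the second carrying the full weight of the amenability hypothesis. Throughout I will use $P(X)$ for the set of all measures on $X$, which is a convex, weak${}^*$-compact subset of $[0,1]^{\mathcal P(X)}$ in which $P_\w(X)$ is dense.

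For $\is_{12}(A)\ge d^*(A)$ I would fix any $G$-invariant measure $\nu\in P_G(X)$ and any $\mu_1=\sum_i\alpha_i\delta_{g_i}\in P_\w(G)$. Since $\nu$ is $G$-invariant, $\nu(g_i^{-1}A)=\nu(A)$ for every $i$, so by the definition of the convolution $\mu_1*\nu(A)=\sum_i\alpha_i\nu(g_i^{-1}A)=\nu(A)$. The functional $\mu_2\mapsto\mu_1*\mu_2(A)=\sum_i\alpha_i\mu_2(g_i^{-1}A)$ depends only on the values of $\mu_2$ on the finite family $\{g_i^{-1}A\}_i$, hence is affine and continuous on $P(X)$. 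Therefore $\sup_{\mu_2\in P_\w(X)}\mu_1*\mu_2(A)=\sup_{\mu_2\in P(X)}\mu_1*\mu_2(A)\ge\mu_1*\nu(A)=\nu(A)$; taking the infimum over $\mu_1$ and then the supremum over $\nu\in P_G(X)$ yields $\is_{12}(A)\ge d^*(A)$.

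For the reverse inequality I would first exchange the extrema by a minimax argument. Since $\mu_1*\mu_2(A)$ is bilinear in $(\mu_1,\mu_2)$, affine and weak${}^*$-continuous in $\mu_2$ on the compact convex set $P(X)$, and affine in $\mu_1$ on the convex set $P_\w(G)$, Sion's minimax theorem applies and gives
$$\is_{12}(A)=\inf_{\mu_1\in P_\w(G)}\max_{\mu_2\in P(X)}\mu_1*\mu_2(A)=\max_{\mu_2\in P(X)}\inf_{\mu_1\in P_\w(G)}\mu_1*\mu_2(A).$$
For fixed $\mu_2$ the inner infimum is attained by concentrating $\mu_1$ at one point of $G$, so $\inf_{\mu_1}\mu_1*\mu_2(A)=\inf_{g\in G}\mu_2(g^{-1}A)$, whence $\is_{12}(A)=\max_{\mu_2\in P(X)}\inf_{g\in G}\mu_2(g^{-1}A)$. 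Let $\mu_2^\ast$ realize this maximum and set $c=\is_{12}(A)$, so that $\mu_2^\ast(g^{-1}A)\ge c$ for every $g\in G$.

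It remains to manufacture an \emph{invariant} measure $\nu$ with $\nu(A)\ge c$, and this is where amenability is indispensable. Consider the orbit $\{g\cdot\mu_2^\ast:g\in G\}\subset P(X)$, where $(g\cdot\mu)(B)=\mu(g^{-1}B)$; each member satisfies $(g\cdot\mu_2^\ast)(A)=\mu_2^\ast(g^{-1}A)\ge c$, and since evaluation at $A$ is affine and weak${}^*$-continuous, the whole compact convex $G$-invariant set $K=\overline{\conv}\,\{g\cdot\mu_2^\ast:g\in G\}$ consists of measures giving $A$ mass at least $c$. I would then use amenability to produce a $G$-fixed point $\nu\in K$: concretely, by averaging $\mu_2^\ast$ along a F\o lner net of $G$ and passing to an $\F$-limit exactly as in the proof that the F\o lner condition yields an invariant measure. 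Such a limit is $G$-invariant, lies in $K$, and hence satisfies $\nu(A)\ge c$, giving $d^*(A)\ge c=\is_{12}(A)$. I expect this last step to be the main obstacle: one must guarantee simultaneously that the limit is genuinely $G$-invariant and that it retains the lower bound $\nu(A)\ge c$, and controlling both at once is precisely the content of the amenability hypothesis (the minimax step itself being routine once $P_\w(X)$ is replaced by its compact closure $P(X)$).
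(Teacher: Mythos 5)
Your proof is correct and arrives at the same conclusion as the paper, but through different technical tools at both key steps. The paper's route (Theorems~\ref{is=Kelley} and \ref{d=is}) is: first $\is_{12}(A)=I(\{xA\}_{x\in G})=\Si_{21}(A)$, where the first equality is a direct computation with the characteristic functions $\chi_{x_iA}$ and the second is Kelley's intersection-number theorem (a Hahn--Banach argument); then, for amenable $G$, the inequality $\Si_{21}(A)\le d^*(A)$ is obtained by applying an invariant mean $a^*$ on $\ell_\infty(G)$ to the functions $f_B\colon x\mapsto\mu(x^{-1}B)$, which produces the invariant measure $\nu(B)=a^*(f_B)$. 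You instead compactify $P_\w(X)$ to the set $P(X)$ of all finitely additive measures, obtain $\is_{12}=\Si_{21}$ from the Sion (or Kneser--Fan) minimax theorem applied to the bilinear form $(\mu_1,\mu_2)\mapsto\mu_1*\mu_2(A)$ on $P_\w(G)\times P(X)$, and then replace the invariant mean by a $G$-fixed point in the compact convex set $K=\overline{\conv}\{g\cdot\mu_2^\ast:g\in G\}$, realized by F\o lner averaging and an ultrafilter limit. The two routes are essentially interchangeable: Kelley's theorem is itself a Hahn--Banach/minimax statement, and your F\o lner average of $g\cdot\mu_2^\ast$ evaluated at a set $B$ is precisely the paper's mean of $f_B$. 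What the paper's route buys is the explicit combinatorial identity $\is_{12}(A)=I(\{xA\}_{x\in G})$, which is reused later (notably in Theorem~\ref{ergodic}); what yours buys is a cleaner separation of the two ingredients, compactness of $P(X)$ for the minimax and amenability of $G$ for the fixed point. One caveat: exactly like the paper's own proof of Theorem~\ref{d=is}, your argument uses amenability of the \emph{group} $G$ (F\o lner nets, the fixed-point property), not merely the existence of some $G$-invariant measure on $X$, which is what the phrase ``amenable $G$-space'' literally means in this text; the hypothesis should be read as ``$G$ amenable'', since for a non-amenable group acting on a $G$-space that merely admits an invariant measure the equality $\is_{12}=d^*$ can fail (e.g.\ $G=F_2$ acting on the disjoint union of $F_2$ and a fixed point, with $A=F_2$).
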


The proof of this theorem will follow from a minimax characterization of $\is_{12}=\Si_{21}$ proved in a next section.

\subsection{The extremal density $\Si_{21}$}

The extremal density $\Si_{21}$ on a $G$-space $X$ is defined by
$$\Si_{21}(A)=\sup_{\mu_1\in P(X)}\inf_{\mu_2\in P_\w(G)}\mu_2*\mu_1.$$

\begin{proposition} $\Si_{21}(A)=\sup_{\mu\in P(X)}\inf_{x\in G}\mu(xA)$ for any subset $A\subset G$.
\end{proposition}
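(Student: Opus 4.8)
The plan is to prove the equality for each fixed measure $\mu\in P(X)$ separately, since the outer supremum $\sup_{\mu_1\in P(X)}$ is literally the same on both sides of the claimed identity. Thus it suffices to show that for every $\mu\in P(X)$ one has
$$\inf_{\mu_2\in P_\w(G)}\mu_2*\mu(A)=\inf_{x\in G}\mu(xA).$$
The observation driving the whole argument is that, by the definition of the convolution from the previous subsection, for $\mu_2=\sum_i\alpha_i\delta_{g_i}\in P_\w(G)$ the value $\mu_2*\mu(A)=\sum_i\alpha_i\mu(g_i^{-1}A)$ is just a convex combination of the numbers $\mu(g^{-1}A)$, $g\in G$. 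Hence the statement reduces to the elementary fact that the infimum of all convex combinations of a bounded family of reals coincides with the infimum of the family itself.

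First I would establish the inequality $\le$. Specializing $\mu_2$ to the Dirac measure $\delta_x$ gives $\delta_x*\mu(A)=\mu(x^{-1}A)$, so the left-hand infimum is at most $\inf_{x\in G}\mu(x^{-1}A)$. Since $x\mapsto x^{-1}$ is a bijection of $G$, this last quantity equals $\inf_{x\in G}\mu(xA)$, which yields $\le$.

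For the reverse inequality $\ge$ I would take an arbitrary $\mu_2=\sum_i\alpha_i\delta_{g_i}$ with $\alpha_i\ge 0$ and $\sum_i\alpha_i=1$, and bound each summand from below by the infimum:
$$\mu_2*\mu(A)=\sum_i\alpha_i\,\mu(g_i^{-1}A)\ge\Big(\inf_{g\in G}\mu(g^{-1}A)\Big)\sum_i\alpha_i=\inf_{g\in G}\mu(g^{-1}A)=\inf_{x\in G}\mu(xA).$$
Taking the infimum over all $\mu_2\in P_\w(G)$ gives $\ge$, and combining the two inequalities settles the fixed-$\mu$ case. Restoring the outer supremum over $\mu\in P(X)$ then proves the proposition. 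There is no real obstacle here; the only point needing a moment of care is the harmless re-indexing $g\mapsto g^{-1}$, which reconciles the $g^{-1}A$ appearing in the convolution formula with the $xA$ written in the statement.
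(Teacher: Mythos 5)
Your proof is correct and is exactly the intended argument: the paper leaves this proposition as an exercise, and the standard route is precisely your reduction to the fact that the infimum of convex combinations of the numbers $\mu(g^{-1}A)$, $g\in G$, equals the infimum of those numbers, together with the harmless re-indexing $g\mapsto g^{-1}$. Nothing further is needed.
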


\begin{proof} Exercise.
 \end{proof}

We are going to prove that $\is_{12}=\Si_{21}$ on any $G$-space $X$.
For this we shall need some information on:

\subsection{Kelley intersection number}

For any family $\mathcal B$ of subsets of a set $X$ its {\em Kelley intersection number} is defined as
$$I(\mathcal B)=\inf_{B_1,\dots,B_n\in\mathcal B}\frac1n\max\{|F|:F\subset \{1,\dots,n\}\;
\bigcap_{k\in F}B_k\ne\emptyset\}.$$

\begin{exercise} Prove that $I(\mathcal B)=\inf\big\{\|f\|:f\in\conv(\{\chi_B:B\in\mathcal B\})\big\}$.
\end{exercise}

\begin{theorem}[Kelley, 1959] For any family $\mathcal B$ of subsets of a set $X$ its intersection number
 $$I(\mathcal B)=\sup_{\mu\in P(X)}\inf_{B\in \mathcal B}\mu(B).$$
\end{theorem}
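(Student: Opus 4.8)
The plan is to dualize twice: first I replace the combinatorial quantity $I(\mathcal B)$ by the analytic expression furnished by the preceding exercise, and then recognize the required equality as an instance of a minimax theorem for the bilinear pairing $(\mu,g)\mapsto\mu(g)$ between measures and convex combinations of characteristic functions. Write $C:=\conv(\{\chi_B:B\in\mathcal B\})\subset[0,1]^X$, and for a simple function $g=\sum_k\lambda_k\chi_{B_k}\in C$ and a measure $\mu\in P(X)$ set $\mu(g):=\sum_k\lambda_k\mu(B_k)$ (this is the integral of the simple function $g$, so it depends only on $g$ and not on the chosen representation, by additivity of $\mu$). By the exercise we already know $I(\mathcal B)=\inf_{g\in C}\|g\|$, where $\|g\|=\sup_{x\in X}g(x)$ since $g\ge0$. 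It therefore suffices to prove
\[
\inf_{g\in C}\|g\|=\sup_{\mu\in P(X)}\inf_{B\in\mathcal B}\mu(B).
\]

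First I would record two elementary dictionary identities. For fixed $\mu$, the number $\mu(g)=\sum_k\lambda_k\mu(B_k)$ is a convex average of the values $\mu(B_k)$, so $\inf_{g\in C}\mu(g)=\inf_{B\in\mathcal B}\mu(B)$, the infimum being attained already on the single characteristic functions $g=\chi_B$. Dually, for fixed $g\in C$ the Dirac measures give $\delta_x(g)=g(x)$, while $\mu(g)\le\|g\|$ for every $\mu$; hence $\sup_{\mu\in P(X)}\mu(g)=\|g\|$. Combining the chain $\inf_{B}\mu(B)\le\mu(g)\le\|g\|$ (valid for all $\mu,g$) with an infimum over $g$ and then a supremum over $\mu$ already yields the easy inequality $\sup_{\mu}\inf_{B}\mu(B)\le\inf_{g\in C}\|g\|=I(\mathcal B)$.

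The content of the theorem is the reverse inequality, and this is where a minimax theorem enters. Consider $\Phi(\mu,g)=\mu(g)$ on $P(X)\times C$. It is affine (hence both quasiconcave and quasiconvex) in each variable separately, and for fixed $g$ the map $\mu\mapsto\Phi(\mu,g)$ depends on only finitely many of the values $\mu(B)$, so it is continuous for the topology of pointwise convergence on $\mathcal P(X)$. In that topology $P(X)$ is a compact convex set: it is the closed subset of the product $[0,1]^{\mathcal P(X)}$ cut out by the closed conditions $\mu(\emptyset)=0$, $\mu(X)=1$, monotonicity and additivity, and its compactness is precisely the statement, observed earlier, that every net of measures has an $\F$-limit measure along an ultrafilter. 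Sion's minimax theorem, which requires only one of the two convex sets to be compact, then gives
\[
\sup_{\mu\in P(X)}\inf_{g\in C}\Phi(\mu,g)=\inf_{g\in C}\sup_{\mu\in P(X)}\Phi(\mu,g).
\]
Substituting the two dictionary identities turns the left-hand side into $\sup_{\mu}\inf_{B}\mu(B)$ and the right-hand side into $\inf_{g\in C}\|g\|=I(\mathcal B)$, which is the desired equality.

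The main obstacle is exactly the passage from $\inf\sup$ to $\sup\inf$: everything else is the two bookkeeping identities, so the whole weight of the argument rests on the compactness and convexity of $P(X)$ together with the (here automatic) semicontinuity of the pairing. If one prefers to avoid invoking Sion, the same reverse inequality can be obtained by Hahn--Banach separation in the space $B(X)$ of bounded functions: assuming $\sup_\mu\inf_B\mu(B)<c<I(\mathcal B)$, the convex set $C$ is disjoint from the convex open set $\{h\in B(X):\sup_x h(x)<c\}$, since every $g\in C$ has $\sup_x g(x)=\|g\|\ge I(\mathcal B)>c$; separating them by a continuous linear functional, one checks the functional is positive (because that open set is downward closed and the functional is bounded above on it) and, after normalization by its value on $\chi_X$, is a mean $\mu_0\in P(X)$ with $\mu_0(B)\ge c$ for all $B\in\mathcal B$, contradicting the choice of $c$. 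Either route isolates the single nontrivial ingredient, the duality/compactness, the remainder being routine.
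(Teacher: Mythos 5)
Your proposal is correct. The paper itself gives only the one-line hint ``Apply the Hahn--Banach Theorem,'' and your second, alternative route --- separating $C=\conv\{\chi_B:B\in\mathcal B\}$ from the open convex set $\{h:\sup_x h(x)<c\}$ in the sup-normed space of bounded functions, checking positivity of the separating functional from the downward-closedness of that set, and normalizing by its value on $\chi_X$ to produce a finitely additive probability measure with $\mu_0(B)\ge c$ for all $B$ --- is exactly the argument the paper intends, carried out in full. Your primary route is genuinely different: it replaces the separation argument by Sion's minimax theorem applied to the pairing $(\mu,g)\mapsto\mu(g)$, with the compactness of $P(X)$ as a closed subset of $[0,1]^{\mathcal P(X)}$ carrying the whole weight (this compactness is also what underlies the paper's ultrafilter-limit construction of measures, so it fits the surrounding material well). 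The minimax route buys a cleaner conceptual statement of where the difficulty lies (interchanging $\sup\inf$ and $\inf\sup$) at the cost of citing a stronger black box; the Hahn--Banach route is more self-contained and matches the paper. Both your ``dictionary'' identities $\inf_{g\in C}\mu(g)=\inf_{B}\mu(B)$ and $\sup_{\mu}\mu(g)=\|g\|$ are verified correctly (the latter using that the integral of a simple function against a finitely additive measure is representation-independent), so either route closes the argument.
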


\begin{proof} Apply the Hahn-Banach Theorem.
\end{proof}

\subsection{The equality $\is_{12}=\Si_{12}$}

\begin{theorem}\label{is=Kelley} For any subset $A$ of a $G$-space $X$ we get
 $$\is_{12}(A)=I(\{xA\}_{x\in G})=\Si_{21}(A).$$
\end{theorem}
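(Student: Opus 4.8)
The plan is to show that both extremal densities coincide with the single quantity $I(\{xA\}_{x\in G})$, exploiting the two complementary descriptions of the Kelley intersection number that are already available: the primal ``convex hull'' formula from the exercise, $I(\mathcal B)=\inf\{\|f\|:f\in\conv(\{\chi_B:B\in\mathcal B\})\}$, and the dual ``measure'' formula from Kelley's theorem, $I(\mathcal B)=\sup_{\mu\in P(X)}\inf_{B\in\mathcal B}\mu(B)$. The equality $\is_{12}=\Si_{21}$ then drops out as a corollary, with all the analytic content packaged inside these two facts about $I$.

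For the left-hand equality I would start from the proposition that reduces $\is_{12}$ to point masses, $\is_{12}(A)=\inf_{\mu\in P_\w(G)}\sup_{x\in X}\mu*\delta_x(A)$. Writing $\mu=\sum_i\alpha_i\delta_{g_i}$ and unwinding the convolution on the $G$-space, one computes $\mu*\delta_x(A)=\sum_i\alpha_i\delta_x(g_i^{-1}A)=\sum_i\alpha_i\chi_{g_i^{-1}A}(x)$, so that $\sup_{x\in X}\mu*\delta_x(A)=\big\|\sum_i\alpha_i\chi_{g_i^{-1}A}\big\|$ is exactly the sup-norm of a convex combination of the functions $\chi_{g^{-1}A}$. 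As $\mu$ ranges over $P_\w(G)$, these convex combinations range over $\conv(\{\chi_{xA}:x\in G\})$, since $g\mapsto g^{-1}$ is a bijection of $G$ and hence $\{g^{-1}A:g\in G\}=\{xA:x\in G\}$. The exercise then identifies the infimum of the sup-norms with $I(\{xA\}_{x\in G})$, giving $\is_{12}(A)=I(\{xA\}_{x\in G})$.

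For the right-hand equality I would invoke the companion proposition $\Si_{21}(A)=\sup_{\mu\in P(X)}\inf_{x\in G}\mu(xA)$, whose right-hand side is literally Kelley's dual formula applied to the family $\mathcal B=\{xA\}_{x\in G}$. Thus $\Si_{21}(A)=\sup_{\mu\in P(X)}\inf_{B\in\mathcal B}\mu(B)=I(\{xA\}_{x\in G})$, and chaining this with the previous paragraph yields $\is_{12}(A)=I(\{xA\}_{x\in G})=\Si_{21}(A)$, as required.

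The only genuinely nontrivial input is Kelley's theorem, and behind it the Hahn--Banach separation, which supplies the minimax passage from the primal (convex hull) side to the dual (measure) side; everything else is bookkeeping. The step I expect to require the most care is precisely that bookkeeping of inverses in the convolution: one must verify that $\mu*\delta_x(A)$ produces $\chi_{g^{-1}A}$ rather than $\chi_{gA}$, and that the substitution $g\mapsto g^{-1}$ correctly matches the index families $\{\chi_{g^{-1}A}\}$ and $\{\chi_{xA}\}$, so that the convex hull appearing in the exercise and the family appearing in Kelley's theorem refer to the same set $\{xA\}_{x\in G}$. Once the conventions are pinned down, both equalities are immediate.
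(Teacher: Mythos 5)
Your proof is correct and follows essentially the same route as the paper: identify $\sup_{x\in X}\mu*\delta_x(A)$ with the sup-norm of a convex combination of the functions $\chi_{xA}$, and apply Kelley's theorem for the $\Si_{21}$ side. The only difference is that you delegate the equality $\is_{12}(A)=I(\{xA\}_{x\in G})$ entirely to the convex-hull exercise, whereas the paper reproves that step by hand via an $\e$-argument in one direction and a rational approximation of the coefficients of $\mu$ in the other.
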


\begin{proof} Kelley Theorem implies that $$I(\{xA\}_{x\in G})=\sup_{\mu\in P(X)}\inf_{x\in G}\mu(xA)=\sup_{\mu\in P(X)}\inf_{x\in G}\delta_{x^{-1}}*\mu(A)=\Si_{21}(A).$$

So, it remains to prove that $\is_{12}(A)=I(\{xA\}_{x\in G})$.

To prove that $\is_{12}(A)\le I(\{xA\})_{x\in G}$, it suffices to check that $\is_{12}(A)\le I(\{xA\}_{x\in G})+\e$ for every $\e>0$. The definition of $I(\{xA\}_{x\in G})$ yields points $x_1,\dots,x_n\in G$ such that $\|\frac1n\sum_{i=1}^n\chi_{x_iA}\|<I(\{xA\}_{x\in G})+\e$.

Consider the finitely supported measure $\mu=\frac1n\sum_{i=1}^n\delta_{x_i^{-1}}$ on $G$ and observe that for every $y\in X$ we get
$$\mu*\delta_y(A)=\frac1n\sum_{i=1}^n\delta_{x_i^{-1}y}(A)=\frac1n\sum_{i=1}^n\chi_{x_iA}(y)\le
\big\|\tfrac1n\sum_{i=1}^n\chi_{x_iA}\big\|<I(\{xA\}_{x\in G})+\e.$$
Consequently,
$$\is_{12}(A)\le\sup_{y\in G}\mu*\delta_y(A)\le I(\{xA\}_{x\in G})+\e.$$

Assuming that $\is_{12}(A)\ne I(\{xA\}_{x\in G})$, we can find $\e>0$ with $\is_{12}(A)<I(\{xA\}_{x\in G})-\e$. The definition of $\is_{12}$ yields a measure $\mu\in P_\w(G)$ such that $\sup_{y\in X}\mu*\delta_y(A)<I(\{xA\}_{x\in G})-\e$. Write $\mu$ as $\sum_{i}\alpha_i\delta_{z_i}$. Replacing the measure $\mu$ by a near measure, we can assume that all $\alpha_i$ are rational and have a common denominator $n$, in which case $\mu=\frac1n\sum_{i=1}^n\delta_{x_i}$ for some points $x_1,\dots,x_n$. Then
$$I(\{xA\}_{x\in G})\le\sup_{y\in X}\frac1n\sum_{i=1}^n\chi_{x_i^{-1}A}(y)=\sup_{y\in X}\frac1n\sum_{i=1}^n\delta_{x_iy}(A)=\sup_{y\in X}\mu*\delta_y(A)<I(\{xA\}_{x\in G})-\e$$and this is a desired contradiction implying that $\is_{12}(A)=I(\{xA\}_{x\in G})$.
\end{proof}

\subsection{The equality $d^*=\is_{12}=\Si_{21}$}

Recall that for a $G$-space $X$ by $P_G(X)$ we denote the family of all $G$-invariant measures on $X$.
This family is not empty if and only if $X$ is amenable.

\begin{theorem}\label{d=is} For any $G$-space $X$ endowed with an action of an amenable group $G$, we get
 $$\is_{12}(A)=\Si_{21}(A)=\sup_{\mu\in P_G(X)}\mu(A)=d^*(A).$$
\end{theorem}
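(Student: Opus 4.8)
The plan is to reduce the statement to the one genuinely new ingredient. The last equality $\sup_{\mu\in P_G(X)}\mu(A)=d^*(A)$ is merely the definition of the upper Banach density, and the equality $\is_{12}(A)=\Si_{21}(A)$ is exactly Theorem~\ref{is=Kelley}. So the whole content lies in the identity $\Si_{21}(A)=\sup_{\mu\in P_G(X)}\mu(A)$, which I would prove as two inequalities, throughout using the formula $\Si_{21}(A)=\sup_{\mu\in P(X)}\inf_{x\in G}\mu(xA)$.

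The inequality $\sup_{\mu\in P_G(X)}\mu(A)\le\Si_{21}(A)$ is immediate. Every $G$-invariant measure $\nu$ lies in $P(X)$, and invariance gives $\nu(xA)=\nu(A)$ for all $x\in G$, hence $\inf_{x\in G}\nu(xA)=\nu(A)$. Since $P_G(X)\subseteq P(X)$, this yields $\sup_{\nu\in P_G(X)}\nu(A)=\sup_{\nu\in P_G(X)}\inf_{x\in G}\nu(xA)\le\sup_{\mu\in P(X)}\inf_{x\in G}\mu(xA)=\Si_{21}(A)$.

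For the reverse inequality $\Si_{21}(A)\le\sup_{\mu\in P_G(X)}\mu(A)$ I would average a nearly optimal measure into an invariant one by means of the F\o lner condition. Fix $\e>0$ and choose $\mu\in P(X)$ with $\inf_{x\in G}\mu(xA)>\Si_{21}(A)-\e$. Regarding $G$ as a $G$-space under left translation, amenability of $G$ gives the F\o lner condition, so over the directed set $A_0=\{(F,\theta):F\subset G\text{ finite},\ \theta>0\}$ I choose finite sets $E_{(F,\theta)}\subset G$ with $|F\,E_{(F,\theta)}\setminus E_{(F,\theta)}|<\theta\,|E_{(F,\theta)}|$ and fix an ultrafilter $\F$ on $A_0$ containing all upper cones, exactly as in the earlier proof that the F\o lner condition yields an invariant measure. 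For a finite $E\subset G$ put $\eta_E=\frac1{|E|}\sum_{e\in E}\delta_e\in P_\w(G)$ and $\nu_E=\eta_E*\mu$, so that $\nu_E(B)=\frac1{|E|}\sum_{e\in E}\mu(e^{-1}B)$ for $B\subset X$; each $\nu_E$ is a measure because $\mu$ is. I then define $\nu=\lim_{(F,\theta)\to\F}\nu_{E_{(F,\theta)}}$ on $X$.

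Two points remain to be verified, and I expect the invariance to be the only delicate one. The lower bound is routine: $\nu_E(A)=\frac1{|E|}\sum_{e\in E}\mu(e^{-1}A)\ge\inf_{x\in G}\mu(xA)>\Si_{21}(A)-\e$, and passing to the $\F$-limit gives $\nu(A)\ge\Si_{21}(A)-\e$. The hard part is showing that $\nu$ is $G$-invariant, where the F\o lner bookkeeping must be carried out on the correct side. For $g\in G$ and $B\subset X$ the substitution $e\mapsto g^{-1}e$ gives $\nu_E(gB)=\frac1{|E|}\sum_{e\in g^{-1}E}\mu(e^{-1}B)$, whence $|\nu_E(gB)-\nu_E(B)|\le|g^{-1}E\bigtriangleup E|/|E|$ since every summand lies in $[0,1]$. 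Taking $E=E_{(F,\theta)}$ with $F\supseteq\{g,g^{-1}\}$, the F\o lner estimate gives $|g^{-1}E\setminus E|<\theta|E|$, while $|E\setminus g^{-1}E|=|gE\setminus E|<\theta|E|$ because left multiplication by $g$ is a bijection; hence the difference is below $2\theta$ and tends to $0$ along $\F$, so $\nu(gB)=\nu(B)$ and $\nu\in P_G(X)$. Combining the two points, $d^*(A)\ge\nu(A)>\Si_{21}(A)-\e$, and letting $\e\to0$ gives $\Si_{21}(A)\le d^*(A)$. The one design choice that makes the argument work is averaging over $e^{-1}$ rather than over $e$, so that approximate invariance is controlled precisely by the left F\o lner sets furnished by the hypothesis.
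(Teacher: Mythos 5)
Your proof is correct, and it differs from the paper's only in the device used for the one nontrivial step, the averaging of a near-optimal $\mu\in P(X)$ into a $G$-invariant measure. The paper does this in one stroke: it takes a left-invariant measure on $G$ (i.e.\ an invariant mean $a^*$ on $\ell_\infty(G)$), sets $f_B(x)=\mu(x^{-1}B)$ and $\nu(B)=a^*(f_B)$, so that invariance of $\nu$ is inherited directly from invariance of $a^*$ and the bound $\nu(A)\ge\inf_{x\in G}\mu(xA)$ follows from positivity and normalization of $a^*$. You instead pass through F\o lner's theorem, form the averages $\nu_E=\frac1{|E|}\sum_{e\in E}\delta_e*\mu$ over F\o lner sets, and take an ultrafilter limit; your bookkeeping ($\nu_E(gB)-\nu_E(B)$ controlled by $|g^{-1}E\bigtriangleup E|/|E|$, with both halves of the symmetric difference bounded via $F\supseteq\{g,g^{-1}\}$) is right, and your choice to average over $e^{-1}$ is indeed what makes the left F\o lner estimate apply. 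The trade-off: the paper's route needs only the existence of a left-invariant measure on $G$ (the definition of amenability used throughout) and is essentially a two-line verification, while yours invokes the harder direction of F\o lner's theorem and then re-proves invariance by hand, but is more constructive and mirrors the paper's earlier proof that the F\o lner condition yields an invariant measure. One small point you rely on tacitly, as does the paper elsewhere: the $\F$-limit of finitely additive probability measures is again a finitely additive probability measure (finite additivity survives because $\F$-limits of bounded real sequences are additive). The remaining reductions --- $d^*=\sup_{\mu\in P_G(X)}\mu$ by definition, $\is_{12}=\Si_{21}$ by Theorem~\ref{is=Kelley}, and the easy inequality $d^*\le\Si_{21}$ via $P_G(X)\subset P(X)$ --- coincide with the paper's.
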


\begin{proof} It is clear that $d^*(A)=\sup_{\mu\in P_G(X)}\mu(A)=\sup_{\mu\in P_G(X)}\inf_{x\in G}\mu(xA)\le\sup_{\mu\in P(X)}\inf_{x\in G}\mu(xA)=\Si_{21}(A)$.

It remains to show that $\Si_{21}(A)\le d^*(A)+\e$ for every $\e>0$. By the definition of $\Si_{21}(A)$, there is a measure $\mu\in P(X)$ such that $\inf_{x\in G}\mu(xA)>\Si_{21}(A)-\e$. Consider the Banach space $\ell_\infty(G)$ endowed with the action $G\times\ell_\infty(G)\to\ell_\infty(G)$, $(g\cdot\varphi)(x)=\varphi(g^{-1}x)$ of the group $G$. Observe that $g\cdot\chi_B=\chi_{gB}$ for any set $B\subset G$.

The group $G$, being amenable, admits a left-invariant measure. The integral by this measure is a positive $G$-invariant functional $a^*:\ell_\infty(G)\to\IR$ with unit norm. For every set $B\subset X$ consider the function $f_B:G\to\IR$, $f_B:x\mapsto \mu(x^{-1}B)$, which belongs to $\ell_\infty(G)$. Then $\nu(B)=a^*(f_B)$ is a required $G$-invariant measure on $X$ such that $\nu(A)=a^*(f_A)\ge\inf_{x\in G}\mu(xA)>\Si_{21}(A)-\e$. Since $\mu\in P_G(X)$, we get the required inequality
$$d^*(A)\ge\nu(A)>\Si_{12}(A)-\e.$$
\end{proof}

\begin{corollary} For any $G$-space $X$ endowed with an action of an amenable group $G$, the extremal densities $\is_{12}=\Si_{21}$ are subadditive.
\end{corollary}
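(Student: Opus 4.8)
The plan is to sidestep the inf--sup definitions of $\is_{12}$ and $\Si_{21}$ entirely---subadditivity is not at all transparent from those formulas---and instead to exploit the identification already furnished by Theorem~\ref{d=is}. Under the standing hypothesis that $G$ is amenable, that theorem gives, for every $A\subset X$, the chain of equalities $\is_{12}(A)=\Si_{21}(A)=\sup_{\mu\in P_G(X)}\mu(A)=d^*(A)$. Hence it suffices to verify that the upper Banach density $d^*$, presented as a supremum of $G$-invariant measures, is subadditive; the conclusion for $\is_{12}$ and $\Si_{21}$ then follows immediately by substitution.

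First I would record that every $\mu\in P_G(X)$, being an additive density, is in particular subadditive. Indeed, for arbitrary $A,B\subset X$ write $A\cup B=A\cup(B\setminus A)$ as a disjoint union; additivity on these disjoint pieces together with monotonicity gives $\mu(A\cup B)=\mu(A)+\mu(B\setminus A)\le\mu(A)+\mu(B)$.

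Next I would push this pointwise inequality through the supremum. For any $A,B\subset X$,
$$d^*(A\cup B)=\sup_{\mu\in P_G(X)}\mu(A\cup B)\le\sup_{\mu\in P_G(X)}\big(\mu(A)+\mu(B)\big)\le\sup_{\mu\in P_G(X)}\mu(A)+\sup_{\mu\in P_G(X)}\mu(B)=d^*(A)+d^*(B),$$
where the final inequality is the elementary fact that a supremum of sums is dominated by the sum of the suprema. Combining this estimate with Theorem~\ref{d=is} yields $\is_{12}(A\cup B)=\Si_{21}(A\cup B)\le\is_{12}(A)+\is_{12}(B)$, which is precisely subadditivity.

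The only genuine content resides in Theorem~\ref{d=is}; once that identification is available there is no real obstacle, since a supremum of measures is automatically subadditive. The ``hard part,'' if one insists on it, would be attempting to prove subadditivity directly from the definition $\Si_{21}(A)=\sup_{\mu\in P(X)}\inf_{x\in G}\mu(xA)$, where the measure optimal for $A\cup B$ need bear no relation to those optimal for $A$ and $B$ separately and the inner infimum over $x$ obstructs any naive splitting. I would therefore emphasize that the amenability hypothesis is essential to this route: it is exactly what guarantees $P_G(X)\ne\emptyset$ and the validity of the representation $\is_{12}=d^*$, without which the argument collapses.
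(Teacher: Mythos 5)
Your proof is correct and is exactly the derivation the paper intends: the corollary is stated immediately after Theorem~\ref{d=is} precisely because the identification $\is_{12}=\Si_{21}=d^*=\sup_{\mu\in P_G(X)}\mu(\cdot)$ reduces everything to the elementary fact that a supremum of (finitely additive) measures is subadditive. Your added remark that amenability of $G$ guarantees $P_G(X)\ne\emptyset$ (via the earlier theorem that $G$-spaces over amenable groups are amenable) is the right point to flag, and the verification $\sup_\mu\mu(A\cup B)\le\sup_\mu\mu(A)+\sup_\mu\mu(B)$ is exactly what is needed.
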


\begin{question} Is the extremal desnity $\is_{12}$ subadditive on each amenable $G$-space?
\end{question}

\begin{example} On the free group $F_2$ the extremal density $\is_{12}$ is not subadditive.
\end{example}

Theorem~\ref{d=is} suggests to call the extremal density $\is_{12}=\Si_{12}$ the {\em upper Banach density} for each (not necessarily amenable) $G$-space $X$.

A subset $A$ of a $G$-space $X$ will be called
\begin{itemize}
\item {\em Banach null} if $\is_{12}(A)=0$;
\item {\em Banach positive} if $\is_{12}(A)>0$.
\end{itemize}

\subsection{Packing index of subsets in $G$-spaces}

For a set $A$ of a $G$-space $X$ its {\em packing index} $\pack(A)$ is defined as
$$\pack(A)=\sup\{|F|:F\subset G,\;\{xA\}_{x\in F}\mbox{ is disjoint}\}.$$

\begin{example} $\pack(2\IZ)=2$.
\end{example}

\begin{exercise} Show that the set $A$ of reduced words in the free group $F_2$ that start with $a$ or $a^{-1}$ has infinite packing index.
\end{exercise}

\begin{proposition}\label{pack-is} Any Banach positive subset $A$ of a $G$-space $X$ has finite packing index $$\pack(A)\le \frac1{\is_{12}(A)}.$$
\end{proposition}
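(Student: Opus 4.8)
The plan is to reduce the statement to the single inequality $\is_{12}(A)\le 1/|F|$ for every finite set $F\subset G$ for which the translates $\{xA\}_{x\in F}$ are pairwise disjoint; taking the supremum over all such $F$ then yields $\is_{12}(A)\le 1/\pack(A)$, which rearranges to the claim. Note first that Banach positivity $\is_{12}(A)>0$ forces $A\ne\emptyset$ and will, at the very end, guarantee that $\pack(A)$ is actually finite, since otherwise we would obtain $\is_{12}(A)\le 1/m$ for every $m\in\IN$, hence $\is_{12}(A)=0$.

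To prove the key inequality I would pass through the identity $\is_{12}(A)=\Si_{21}(A)=\sup_{\mu\in P(X)}\inf_{x\in G}\mu(xA)$ supplied by Theorem~\ref{is=Kelley}, because the representation of $\Si_{21}$ in terms of honest (additive) measures on $X$ is exactly what turns disjointness into a quantitative bound. Fix a finite $F=\{x_1,\dots,x_m\}\subset G$ with $x_1A,\dots,x_mA$ pairwise disjoint, and fix an arbitrary measure $\mu\in P(X)$. Since each $x_i$ lies in $G$, the infimum satisfies $\inf_{x\in G}\mu(xA)\le\mu(x_iA)$ for every $i$, and averaging these $m$ inequalities gives
$$\inf_{x\in G}\mu(xA)\le\frac1m\sum_{i=1}^m\mu(x_iA).$$
The decisive point is additivity of $\mu$: because $x_1A,\dots,x_mA$ are disjoint, $\sum_{i=1}^m\mu(x_iA)=\mu\big(\bigcup_{i=1}^m x_iA\big)\le\mu(X)=1$, so $\inf_{x\in G}\mu(xA)\le 1/m$. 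Taking the supremum over all $\mu\in P(X)$ yields $\Si_{21}(A)\le 1/m$, i.e. $\is_{12}(A)\le 1/|F|$, as required.

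I do not expect a genuine obstacle here; the only subtle choice is which of the two equal representations $\is_{12}=\Si_{21}$ to use, since the $\Si_{21}$-form places a single additive measure $\mu$ on $X$ against which disjointness can be summed, whereas attacking $\is_{12}$ directly through finitely supported measures on $G$ would cost an extra averaging step. Alternatively one could argue through the Kelley intersection number: disjointness of $x_1A,\dots,x_mA$ means that among these $m$ members of $\{xA\}_{x\in G}$ no two share a point, so in the definition of $I(\{xA\}_{x\in G})$ the choice $B_k=x_kA$ forces $\max\{|S|:\bigcap_{k\in S}B_k\ne\emptyset\}=1$, whence $\is_{12}(A)=I(\{xA\}_{x\in G})\le 1/m$, giving the same bound.
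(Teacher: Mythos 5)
Your proof is correct, but it takes a different route from the paper. The paper argues directly from the definition of $\is_{12}$ as an $\inf$--$\sup$ over finitely supported measures: given a finite $F$ with $(xA)_{x\in F}$ disjoint, it tests the uniformly distributed measure $\mu=\frac1{|F|}\sum_{x\in F}\delta_{x^{-1}}$ and checks combinatorially that $\mu*\delta_y(A)=\frac1{|F|}\sum_{x\in F}\delta_{x^{-1}y}(A)\le 1/|F|$ for every $y$, since two translates $u^{-1}y,v^{-1}y\in A$ would force $uA\cap vA\ne\emptyset$. You instead route the disjointness through the minimax identity $\is_{12}=\Si_{21}=\sup_{\mu\in P(X)}\inf_{x\in G}\mu(xA)$ of Theorem~\ref{is=Kelley}, where finite additivity of a single measure $\mu\in P(X)$ gives $\inf_{x}\mu(xA)\le\frac1m\sum_i\mu(x_iA)\le\frac1m$ immediately; your Kelley intersection number variant is likewise valid. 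What your approach buys is a shorter, more conceptual argument; what it costs is that it leans on the Hahn--Banach-based machinery behind Theorem~\ref{is=Kelley}, whereas the paper's argument is elementary and self-contained. More importantly, the paper's witnessing measure is \emph{uniformly distributed}, which is exactly why the same proof upgrades for free to Proposition~\ref{pack<us}, the stronger bound $\pack(A)\le 1/\us_{12}(A)$; your route does not yield that refinement, since the $\Si_{21}$ characterization is tied to $\is_{12}$ rather than $\us_{12}$.
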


\begin{proof} Assuming that $\pack(A)>1/\is_{12}(A)$, we can find a finite set $F\subset G$ of cardinality $|F|>1/\is_{12}(A)$ such that the family $(xA)_{x\in F}$ is disjoint. Consider the uniformly distributed measure $\mu=\frac1{|F|}\sum_{x\in F}\delta_{x^{-1}}$ on the group $G$ and observe that for every $y\in G$
$$\mu*\delta_y(A)=\frac1{|F|}\sum_{x\in F}\delta_{x^{-1}y}(A).$$
We claim that $\sum_{x\in F}\delta_{x^{-1}y}(A)\le 1$ for every $y\in G$. In the opposite case we can find two distinct points $u,v\in F$ such that $u^{-1}y,v^{-1}y\in A$. Then $u^{-1}v\in Ayy^{-1}A^{-1}=AA^{-1}$ and hence $uA\cap vA\ne\emptyset$, which contradicts the choice of $F\ni u,v$.
\end{proof}

The proof of this proposition yields a bit more, namely:

\begin{proposition}\label{pack<us} Any subet $A\subset G$ with positive density $\us_{12}(A)$ has finite packing index $$\pack(A)\le \frac1{\us_{12}(A)}.$$
\end{proposition}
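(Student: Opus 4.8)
The plan is to reuse, essentially verbatim, the construction from the proof of Proposition~\ref{pack-is}, and then to observe that the finitely supported measure it produces is good enough to bound $\us_{12}$ and not merely $\is_{12}$. The whole content of the remark preceding the statement is that the earlier proof never really needs an arbitrary minimizing measure: it always returns a measure of a restricted, uniformly distributed shape, and it is precisely this structural fact that upgrades the conclusion.

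I would argue by contradiction. Suppose $\pack(A)>1/\us_{12}(A)$. Unwinding the definition of the packing index, there is a finite set $F\subset G$ with $|F|>1/\us_{12}(A)$ such that the translates $(xA)_{x\in F}$ are pairwise disjoint. Exactly as in Proposition~\ref{pack-is}, I pass to the uniformly distributed measure
$$\mu=\frac1{|F|}\sum_{x\in F}\delta_{x^{-1}}\in P_\w(G),$$
which is the canonical object attached to the packing witness $F$. The core computation is then the disjointness estimate, carried out pointwise: for an arbitrary $y\in G$,
$$\mu*\delta_y(A)=\frac1{|F|}\sum_{x\in F}\delta_{x^{-1}y}(A)=\frac1{|F|}\,\big|\{x\in F:y\in xA\}\big|\le\frac1{|F|},$$
the last inequality holding because at most one translate $xA$ can contain the point $y$ once the family $(xA)_{x\in F}$ is disjoint. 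Crucially this bound is valid for \emph{every} $y$, so in particular $\sup_{y\in G}\mu*\delta_y(A)\le 1/|F|<\us_{12}(A)$.

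The only genuinely new step — and the one I expect to require the most care — is to feed this estimate into the definition of $\us_{12}$ rather than that of $\is_{12}$. Since the measure $\mu$ constructed above is of the special (uniformly distributed, finitely supported) form over which $\us_{12}$ is computed, its value $\sup_{y\in G}\mu*\delta_y(A)$ is an admissible competitor in the infimum defining $\us_{12}$, whence
$$\us_{12}(A)\le\sup_{y\in G}\mu*\delta_y(A)\le\frac1{|F|}<\us_{12}(A).$$
This contradiction yields $\pack(A)\le 1/\us_{12}(A)$.

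The subtle point to verify honestly is therefore the admissibility match in the final inequality: one must check that the measure coming out of the packing argument genuinely lies in the class quantified over in the definition of $\us_{12}$, so that passing from $\is_{12}$ to $\us_{12}$ costs nothing. Everything else — the choice of $\mu$, the pointwise disjointness count, and the contradiction — is identical to Proposition~\ref{pack-is}, which is exactly why the same proof ``yields a bit more.''
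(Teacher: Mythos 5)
Your proof is correct and is exactly the argument the paper intends: the paper gives no separate proof, merely noting that the proof of Proposition~\ref{pack-is} ``yields a bit more'' because the measure $\mu=\frac1{|F|}\sum_{x\in F}\delta_{x^{-1}}$ constructed there is uniformly distributed (on the finite set $F^{-1}$) and hence an admissible competitor in the infimum defining $\us_{12}$. Your identification of this admissibility check as the only new content is precisely the point.
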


Here $$\us_{12}=\inf_{\mu_1\in P_u(G)}\sup_{\mu_2\in P_\w(X)}\mu_1*\mu_2(A)$$where $P_u(G)$ is the family of all uniformly distributed finitely supported measures.

A finitely supported measure $\mu$ is {\em uniformly distributed} if $\mu=\frac1{|F|}\sum_{x\in F}\delta_x$ for some finite set $F\subset G$.

It is clear that $\is_{12}\le\us_{12}$.

\begin{theorem}[Solecki, 2005]
\begin{enumerate}
 \item For each amenable group $G$ we get $\is_{12}=\us_{12}$.
 \item If $G$ is countable and contains a subgroup isomorphic to the free group $F_2$, then for every $\e>0$ there is a set $A\subset G$ such that $\is_{12}(A)=0$ adn $\us_{12}(A)>1-\e$.
\end{enumerate}
\end{theorem}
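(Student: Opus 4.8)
The plan is to treat the two parts separately, after rewriting both densities in a common combinatorial form. For a uniformly distributed measure $\mu=\frac1{|F|}\sum_{x\in F}\delta_x$ one computes $\mu*\delta_y(A)=\frac1{|F|}\#\{x\in F:xy\in A\}=\frac1{|F|}|Fy\cap A|$, so that
$$\us_{12}(A)=\inf_{F}\frac1{|F|}\sup_{y\in G}|Fy\cap A|,$$
the infimum being over nonempty finite $F\subseteq G$; thus $\us_{12}(A)$ is the least possible value of the maximal relative size of $A$ inside a right translate of a finite set. By Theorem~\ref{is=Kelley}, $\is_{12}(A)=I(\{xA\}_{x\in G})$, and since $P_u(G)\subseteq P_\w(G)$ we always have $\is_{12}\le\us_{12}$. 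Everything reduces to the reverse inequality (part 1) and to building a set realizing the gap (part 2).

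For part (1) it suffices to prove $\us_{12}(A)\le\is_{12}(A)$ for amenable $G$, and by Theorem~\ref{d=is} the right-hand side equals $d^*(A)$. Here I would test the infimum defining $\us_{12}(A)$ on the uniform measures $\nu_E$ supported on a (left) F\o lner set $E$, which exist because $G$ is amenable. For such $E$ the quantity $\frac1{|E|}\sup_y|Ey\cap A|$ is exactly the maximal density of $A$ among right translates of $E$, and as $E$ runs along a F\o lner net this maximal window density converges to the upper Banach density $d^*(A)$ (for $G=\IZ$ and $E=\{0,\dots,N-1\}$ this is the familiar fact that $d^*$ is the limit of the maximal density of $A$ in a window of length $N$). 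Taking the infimum over $E$ gives $\us_{12}(A)\le d^*(A)=\is_{12}(A)$, hence equality. The only thing to check carefully is that a F\o lner net makes the window densities converge to $d^*$, which is a routine consequence of the F\o lner condition and the characterization of $d^*$ through left-invariant means.

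For part (2) the target is a set $A\subseteq G$ with $\is_{12}(A)=0$ and $\us_{12}(A)>1-\e$; by the formula above the latter means that $A$ is $\e$-approximately right-thick: every finite $F\subseteq G$ has a right translate $Fy$ with $|Fy\cap A|>(1-\e)|F|$. One should first note why the slack $\e$ is unavoidable: if $A$ were exactly right-thick then the family $\{xA\}_{x\in G}$ would have the finite intersection property, forcing its Kelley number, and hence $\is_{12}(A)$, to equal $1$. Thus the construction must exploit genuinely non-uniform weights — equivalently, translates taken with multiplicities in the Kelley number — to push the intersection number to $0$ while every uniform weighting is kept near $1$. I would carry this out inside the free subgroup $F_2=\langle a,b\rangle\le G$ (using that $G$ is countable to extend right-thickness from $F_2$ to all of $G$): build $A$ as a very sparse union of ``grid copies'' $B_Ry_R$ of balls placed far out and along transverse branches of the Cayley tree, so that on the one hand every finite set admits an almost-full right translate inside $A$, while on the other hand the paradoxical structure of $F_2$ (left multiplication by a generator expands one cone onto the union of three cones) lets a suitably weighted finite family of left translates of $A$ spread its mass over exponentially many disjoint cones, driving $\sup_y\sum_i\alpha_i\chi_{g_iA}(y)$ below any prescribed $\delta>0$.

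The hard part is precisely this simultaneous control. Approximate right-thickness is a ``right/suffix'' condition evaluated against uniform weights and tends to make left translates overlap heavily — indeed every right-thick set meets each one of its left translates, so it has packing index $1$ and Proposition~\ref{pack-is} is useless here. Driving the Kelley number to $0$ is, by contrast, a ``left/prefix'' weighted-dispersal condition. Reconciling the two is possible only because $F_2$ is non-amenable: its freeness creates the gap between the uniform covering number and the fractional (weighted) one, and the whole difficulty lies in choosing the placement of the grid copies together with the non-uniform weights so that the lower bound $\us_{12}(A)>1-\e$ and the upper bound $\is_{12}(A)=0$ hold at the same time. Verifying these two estimates for the explicit $A$ is where essentially all of the work resides, and by part (1) no such $A$ can exist once the ambient group is amenable.
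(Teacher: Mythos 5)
The paper does not actually prove this theorem --- it is quoted from Solecki's 2005 paper --- so there is no internal argument to compare yours against; I judge the proposal on its own terms. Your part (1) is essentially correct and completable. The reduction to $\us_{12}(A)\le d^*(A)$ via Theorem~\ref{d=is} and the idea of testing the infimum defining $\us_{12}$ on uniform measures over F\o lner sets both work, and the ``routine fact'' you defer to is indeed routine: if every F\o lner set $E$ in a net satisfied $\sup_y|Ey\cap A|/|E|\ge d^*(A)+\e$, then an ultrafilter limit of the uniform measures on the witnessing translates $Ey_E$ would be a left-invariant measure assigning $A$ measure at least $d^*(A)+\e$, contradicting the definition of $d^*$. (Alternatively, convolve a near-optimal $\mu\in P_\w(G)$ with the uniform measure $\nu_E$ on a F\o lner set for $\supp(\mu)$ and compare $\nu_E$ with $\mu*\nu_E$ in the $\ell_1$-norm.) Either route closes part (1).

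Part (2), however, is not a proof but a description of what a proof would have to accomplish. You correctly identify the two competing constraints ($\e$-approximate right-thickness against uniform weights versus Kelley intersection number $0$ against non-uniform weights), correctly note that exact right-thickness is impossible since it forces $\is_{12}(A)=1$, and correctly locate the mechanism in the non-amenability of $F_2$. But no set $A$ is ever produced: ``a very sparse union of grid copies of balls placed along transverse branches of the Cayley tree'' is not a construction, and you exhibit neither the weights $\alpha_i$ and left translators $g_i$ that are supposed to push $\sup_y\sum_i\alpha_i\chi_{g_iA}(y)$ below an arbitrary $\delta>0$, nor any argument that the same $A$ still absorbs an almost-full right translate of \emph{every} finite subset of $G$. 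You also leave unaddressed the passage from a set built inside the subgroup $F_2$ to a subset of the ambient countable group $G$ whose $\us_{12}$ is computed against all finite subsets of $G$; this is exactly where the countability hypothesis must be used (an enumeration of the finite subsets and a diagonal placement of the translated copies), and it cannot be omitted. Since, as you acknowledge, ``essentially all of the work resides'' in these verifications, the substantive content of part (2) --- which is the whole point of Solecki's theorem --- remains unproved.
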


\begin{observation} For any subset $A$ of a $G$-space $X$ the following conditions are equivalent:
 \begin{enumerate}
\item $\is_{12}(A)=1$;
\item $\us_{12}(A)=1$;
\item $A$ is {\em right thick} in the sense that for every finite set $F\subset G$ there is a point $x\in X$ such that $Fx\subset A$.
\end{enumerate}
\end{observation}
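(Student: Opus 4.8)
The plan is to prove the three implications in a cycle, say $(1)\Rightarrow(3)\Rightarrow(2)\Rightarrow(1)$, relying on the already-established inequality $\is_{12}\le\us_{12}$ and the characterization $\is_{12}(A)=\inf_{\mu\in P_\w(G)}\sup_{x\in X}\mu*\delta_x(A)$.

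First I would prove $(3)\Rightarrow(1)$, which I expect to be the conceptual heart of the argument. Assume $A$ is right thick. To show $\is_{12}(A)=1$, by the infimum-formula I must show that for \emph{every} $\mu=\sum_{i=1}^n\alpha_i\delta_{g_i}\in P_\w(G)$ we have $\sup_{x\in X}\mu*\delta_x(A)=1$. Since $\mu*\delta_x(A)=\sum_i\alpha_i\chi_{g_i A}(x)=\sum_i\alpha_i\chi_A(g_i^{-1}x)$, this quantity equals $1$ exactly when $g_i^{-1}x\in A$ for all $i$, i.e.\ when $Fx\subset A$ for the finite set $F=\{g_i^{-1}:i\le n\}$. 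Right thickness supplies precisely such an $x$, so the supremum attains $1$ and hence $\is_{12}(A)=1$. This direction is where the definition of right thickness is used in its full strength, and getting the indices $g_i$ versus $g_i^{-1}$ aligned correctly with the action convention $(g\cdot\varphi)(x)=\varphi(g^{-1}x)$ is the one spot demanding care.

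Next, $(1)\Rightarrow(2)$ is immediate from $\is_{12}\le\us_{12}$ together with the fact that $\us_{12}$ is a density taking values in $[0,1]$: from $1=\is_{12}(A)\le\us_{12}(A)\le1$ we read off $\us_{12}(A)=1$. For the closing implication $(2)\Rightarrow(3)$ I would argue by contraposition. Suppose $A$ is not right thick, so there is a finite set $F\subset G$ with $Fx\not\subset A$ for every $x\in X$. Taking the uniformly distributed measure $\mu=\frac{1}{|F|}\sum_{g\in F}\delta_{g^{-1}}\in P_u(G)$, for each $x$ at least one summand $\chi_{gA}(x)$ vanishes, so $\mu*\delta_x(A)\le 1-\frac{1}{|F|}<1$ uniformly in $x$. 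Hence $\us_{12}(A)\le\sup_{x}\mu*\delta_x(A)\le 1-\frac{1}{|F|}<1$, contradicting $(2)$.

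The main obstacle is bookkeeping rather than depth: all three implications are short once the action/inverse conventions are fixed, but the formula $\mu*\nu(A)=\sum_i\alpha_i\mu(g_i^{-1}A)$ from the convolution definition must be matched precisely against the thickness condition $Fx\subset A$, and a single misplaced inverse would break the argument. I would therefore state the convention once at the outset and compute $\mu*\delta_x(A)$ explicitly so that the correspondence between ``the supremum equals $1$'' and ``$Fx\subset A$ for some $x$'' is transparent; the rest then follows mechanically.
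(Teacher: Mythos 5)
The paper states this as an \emph{Observation} with no proof attached, so there is nothing to compare against; your cyclic argument $(3)\Rightarrow(1)\Rightarrow(2)\Rightarrow(3)$ is the natural one and is correct in substance. The only blemish is the very bookkeeping you flagged: with the paper's convention $\mu*\delta_x(A)=\sum_i\alpha_i\delta_x(g_i^{-1}A)=\sum_i\alpha_i\chi_{g_i^{-1}A}(x)=\sum_i\alpha_i\chi_A(g_ix)$ for $\mu=\sum_i\alpha_i\delta_{g_i}$, so in $(3)\Rightarrow(1)$ the relevant finite set is $F=\{g_i\}$ rather than $\{g_i^{-1}\}$, and in $(2)\Rightarrow(3)$ the measure witnessing non-thickness of $F$ should be $\frac1{|F|}\sum_{g\in F}\delta_g$ rather than $\frac1{|F|}\sum_{g\in F}\delta_{g^{-1}}$ (your two computations are not mutually consistent on this point). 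The slip is harmless precisely because right thickness quantifies over \emph{all} finite subsets of $G$ and finite subsets are closed under elementwise inversion, so every implication survives; but since you singled this out as the delicate spot, it is worth fixing the convention once, explicitly, and using it uniformly. Everything else --- the reduction of the supremum over $P_\w(X)$ to a supremum over Dirac measures, the squeeze $1=\is_{12}(A)\le\us_{12}(A)\le1$, and the uniform bound $1-\frac1{|F|}$ in the contrapositive --- is correct.
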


The equality $\is_{12}=\us_{12}$ proved by Solecki for amenable groups and the subadditivity of the upper Banach density $d^*$ imply:

\begin{corollary} On any amenable group $G$ the extremal densities $$\is_{12}=\us_{12}=\Si_{21}=d^*$$ are subadditive.
\end{corollary}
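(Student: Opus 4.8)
The plan is to collect the chain of equalities already established and transport subadditivity along it. Regard $G$ as a $G$-space under left translation, so that $X = G$ is amenable by hypothesis and $P_G(G) \neq \emptyset$. Theorem~\ref{d=is}, applied to this $G$-space, gives for every $A \subset G$ the equalities
$$\is_{12}(A) = \Si_{21}(A) = \sup_{\mu \in P_G(G)} \mu(A) = d^*(A),$$
and the corollary following it already records that $\is_{12} = \Si_{21}$ is subadditive. The only further input needed is part~(1) of Solecki's theorem, which supplies $\is_{12} = \us_{12}$ on amenable groups and thereby inserts $\us_{12}$ into the common value. Thus all four functions coincide on $G$.

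To see that this common value is subadditive, I would work with the representative $d^*$, whose subadditivity was already noted at its definition in Section~2. Concretely, each $\mu \in P_G(G)$ is additive, hence $\mu(A \cup B) \le \mu(A) + \mu(B) \le d^*(A) + d^*(B)$ for all $A, B \subset G$; taking the supremum over $\mu$ on the left yields $d^*(A \cup B) \le d^*(A) + d^*(B)$. Since $\is_{12}$, $\us_{12}$, and $\Si_{21}$ all equal $d^*$ on the amenable group $G$, each of them is subadditive as well.

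There is no real obstacle: the substance is entirely carried by Theorem~\ref{d=is} and Solecki's theorem, and the subadditivity step is the elementary observation that a pointwise supremum of subadditive set functions is again subadditive. The single point deserving care is that the corollary is stated for groups, so Theorem~\ref{d=is} must be invoked for the particular action of $G$ on $X = G$; this is exactly the left regular action, for which amenability of $G$ guarantees that $P_G(G)$ is nonempty and the theorem applies.
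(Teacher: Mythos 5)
Your proposal is correct and follows exactly the route the paper intends: the corollary is derived from Theorem~\ref{d=is} (which gives $\is_{12}=\Si_{21}=d^*$ on the amenable group acting on itself), part (1) of Solecki's theorem (which adds $\us_{12}$ to the chain), and the subadditivity of $d^*$ as a supremum of additive measures, already noted at its definition. Nothing is missing.
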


\begin{example} On the free group $F_2$ the extremal densities $\is_{12}$ and $\us_{12}$ are not subadditive.
\end{example}

\begin{proof} The group $F_2$ decomposes into the union $F_2=A\cup B\cup\{e\}$ of subsets with infinite packing indices and hence zero density $\is_{12}$ and $\us_{12}$.
 \end{proof}

\section{Some combinatorial applications of the densities $\is_{12}$ and $\us_{12}$}

\subsection{Motivating theorems of Banach-Kuratowski-Pettis and Steinhaus-Weil}

\begin{theorem}[Banach-Kuratowski-Pettis] For any non-meager Borel (more generally, analytic) subsets $A,B$ in a Polish group $G$ the set $AA^{-1}$ is a neighborhood of the unit in $G$ and the set $AB$ has non-empty interior in $G$.
 \end{theorem}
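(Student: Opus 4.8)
The plan is to prove the Banach-Kuratowski-Pettis theorem using Baire category methods, treating the two assertions separately but via a common translation argument. The key principle throughout is that a non-meager set with the Baire property is, after removing a meager piece, comeager in some open set, together with the fact that in a Polish group left and right translations are homeomorphisms (hence preserve meagerness and the Baire property).

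\medskip

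For the first assertion, that $AA^{-1}$ is a neighborhood of the unit, I would first reduce to the case where $A$ has the Baire property; analytic sets have the Baire property, so this is automatic under the stated hypotheses. Since $A$ is non-meager with the Baire property, there is a nonempty open set $U$ in which $A$ is comeager, i.e. $U\setminus A$ is meager. The heart of the argument is then to show that for every $g$ in a sufficiently small neighborhood $V$ of the unit, the translate $gA$ still meets $A$. Concretely, for $g$ close to the unit the sets $A\cap U$ and $gA \cap U$ are both comeager in $U$ (here one uses that $g\mapsto gA$ moves $U$ by only a little, so $gA$ is comeager in a slightly shifted open set that still overlaps $U$); two comeager subsets of a nonempty open set in a Baire space must intersect, so there exist $a,a'\in A$ with $ga'=a$, whence $g=a(a')^{-1}\in AA^{-1}$. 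This yields $V\subset AA^{-1}$.

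\medskip

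For the second assertion, that $AB$ has nonempty interior, I would run the analogous argument but now keeping track of two sets. Choose open sets $U,W$ in which $A$ and $B$ are respectively comeager. Pick $a_0\in A\cap U$ and $b_0\in B\cap W$ and consider a target neighborhood of the product $a_0 b_0$. For $t$ ranging over a small neighborhood of $a_0 b_0$, I want to realize $t=ab$ with $a\in A$, $b\in B$; this amounts to finding $b\in B$ with $a_0^{-1}t \cdot b^{-1}$, suitably interpreted, landing in $A$ — again reducing to the intersection of two comeager sets in a common open set being nonempty. The translations involved are continuous, so comeagerness is preserved, and the Baire property lets me conclude the intersection is nonempty for all $t$ in an open neighborhood, giving an open set inside $AB$.

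\medskip

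The main obstacle I anticipate is the bookkeeping of translations: one must verify carefully that left-multiplication by group elements near the unit moves the relevant open sets by a controlled amount so that the overlapping region remains nonempty and the comeager sets genuinely share an open set. This uses continuity of the group multiplication and inversion (both part of the Polish group structure), and it is where the topological group axioms are essential rather than just the topology of $G$. A secondary point requiring care is the passage from Borel to analytic sets, which rests on the nontrivial fact that analytic sets have the Baire property; granting that, the category argument is uniform in the two cases.
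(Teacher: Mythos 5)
The paper does not actually prove this theorem: it is quoted as a classical motivating result (with no proof environment following it), so there is nothing in the text to compare your argument against. On its own merits, your sketch is the standard Pettis-type category argument and is essentially correct: analytic sets have the Baire property; a non-meager set with the Baire property is comeager in some nonempty open $U$; for $g\in UU^{-1}$ the sets $A$ and $gA$ are comeager in the nonempty open set $U\cap gU$, hence meet (since a Polish group is a Baire space), giving $UU^{-1}\subset AA^{-1}$; and the product case follows by showing $A\cap tB^{-1}\ne\emptyset$ for all $t\in UW$, where $B$ is comeager in $W$. Two small imprecisions to tighten: in part one, $gA$ is comeager in $gU$, not in $U$, so the intersection must be taken inside $U\cap gU$ (your parenthetical already gestures at this, and the neighborhood $V$ can simply be taken to be the open set $UU^{-1}$); in part two, the condition $t=ab$ should reduce to $tb^{-1}\in A$ for some $b\in B$, i.e.\ $A\cap tB^{-1}\ne\emptyset$ --- your expression $a_0^{-1}t\cdot b^{-1}$ carries a spurious factor $a_0^{-1}$, and the open set you end up with is $UW$ rather than an unspecified neighborhood of $a_0b_0$. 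Neither issue affects the soundness of the approach.
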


We recall that a topological space $A$ is called {\em analytic} if it is a continuous image of a Polish (= separable completely metrizable) space.

\begin{theorem}[Steinhaus-Weil] For any subsets $A,B\subset G$ of positive Haar measure in a compact topological group $G$ the set $AA^{-1}$ is a neighborhood of the unit in $G$ and the set $AB$ has non-empty interior in $G$, which implies that $G=FAA^{-1}$ and $G=FAB$ for some finite set $F\subset G$.
\end{theorem}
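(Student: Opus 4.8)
The plan is to deduce both conclusions from a single soft fact: for any measurable set $C\subset G$, the convolution function $f_C:G\to\IR$ defined by $f_C(x)=m(A\cap xC)$ is continuous, where $m$ denotes the two-sided invariant Haar probability measure on the compact group $G$. Writing $f_C(x)=\int_G\chi_A(y)\,\chi_C(x^{-1}y)\,dm(y)=\langle\chi_A,\,x{\cdot}\chi_C\rangle$, where $(x{\cdot}\varphi)(y)=\varphi(x^{-1}y)$ and so $x{\cdot}\chi_C=\chi_{xC}$, the continuity of $f_C$ reduces to continuity of the translation action $x\mapsto x{\cdot}\chi_C$ from $G$ into $L^2(G)$. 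The hard part will be exactly this step. I would establish it from two standard ingredients: continuous functions are dense in $L^2(G)$ (as $m$ is a finite regular measure), and every $\varphi\in C(G)$ is left-uniformly continuous on the compact group $G$, so $\|x{\cdot}\varphi-\varphi\|_\infty\to 0$ as $x\to 1_G$; since translation is an $L^2$-isometry, a routine $\tfrac{\e}{3}$-argument then transfers continuity to all of $L^2(G)$, and Cauchy--Schwarz gives continuity of $f_C$.

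Granting this, the first assertion is immediate. I would apply it with $C=A$ and evaluate at the unit: $f_A(1_G)=m(A)>0$, so by continuity there is an open neighborhood $U\ni 1_G$ with $f_A(x)>0$ for all $x\in U$. For such $x$ the set $A\cap xA$ is nonempty, yielding $a,a'\in A$ with $a=xa'$, i.e. $x=a(a')^{-1}\in AA^{-1}$. Hence $U\subset AA^{-1}$ and $AA^{-1}$ is a neighborhood of $1_G$.

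For the second assertion I would integrate rather than evaluate, applying continuity with $C=B^{-1}$ (note $m(B^{-1})=m(B)>0$ by invariance of $m$ under inversion). By Fubini's theorem together with right-invariance of $m$ one computes $\int_G f_{B^{-1}}(x)\,dm(x)=m(A)\,m(B)>0$, so the continuous function $f_{B^{-1}}$ cannot vanish identically and is therefore strictly positive on some nonempty open set $V$. For $x\in V$ we get $A\cap xB^{-1}\neq\emptyset$, producing $a\in A$ and $b\in B$ with $a=xb^{-1}$, that is $x=ab\in AB$. Thus $V\subset AB$ and $AB$ has nonempty interior.

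Finally, the covering statements follow from compactness. Since $U\subset AA^{-1}$ is an open neighborhood of $1_G$, the translates $\{xU\}_{x\in G}$ cover $G$ (as $g\in gU$), so a finite subfamily indexed by $F=\{x_1,\dots,x_n\}$ already covers $G$, giving $G=\bigcup_{i}x_iU\subset FAA^{-1}$. Likewise, for the nonempty open $V\subset AB$ the translates $\{xV\}_{x\in G}$ cover $G$ (given $g$, pick $v\in V$ and write $g=(gv^{-1})v$), and a finite subcover yields $G=FAB$ for a suitable finite $F$. I expect no difficulty here beyond the continuity lemma of the first paragraph.
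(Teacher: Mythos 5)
The paper states the Steinhaus--Weil theorem only as a motivating classical result and gives no proof of it, so there is nothing internal to compare against; your argument stands on its own, and it is correct. It is the standard convolution proof: the continuity of $x\mapsto m(A\cap xC)$ via density of $C(G)$ in $L^2(G)$, uniform continuity of continuous functions on a compact group, the isometry of translation, and Cauchy--Schwarz is exactly the right chain of reductions, and all the ingredients you invoke (regularity of Haar measure, inversion-invariance $m(B^{-1})=m(B)$ on a compact hence unimodular group, Fubini for the identity $\int_G m(A\cap xB^{-1})\,dm(x)=m(A)m(B)$) are available in this setting. Evaluating $f_A$ at $1_G$ for the first claim and integrating $f_{B^{-1}}$ for the second is a clean way to get both conclusions from the one lemma, and the final covering statements are indeed immediate from compactness. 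The only implicit hypothesis worth flagging is that ``positive Haar measure'' must be read as ``measurable with positive measure'' (or at least positive inner measure, after replacing $A$ and $B$ by compact subsets of positive measure), since you need $\chi_A,\chi_B\in L^2(G)$; this is the standard reading and does not affect the validity of the proof.
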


\begin{problem} What can be said about the sets $AA^{-1}$ and $AB$ for subsets $A,B\subset G$ with positive density $\is_{12}$ or $\us_{12}$?
\end{problem}

\subsection{Covering numbers of subsets in groups}

For a non-empty subset $A$ of a group $G$ the cardinal number
$$\cov(A)=\min\{|F|:F\subset G,\;\;G=FA\}$$ is called the {\em covering number} of the set $A$ in $G$.

\begin{example} $\cov(2\IZ)=2$ in $\IZ$ and $\cov(2\IZ)=\mathfrak c$ in $\IR$.
\end{example}

\begin{proposition}\label{cov-pack} Any subset $A$ of a group $G$ has $\cov(AA^{-1})\le\pack(A)$.
\end{proposition}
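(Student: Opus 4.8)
The plan is to run a \emph{maximal packing} argument and read off a covering of $AA^{-1}$ from it. Since $A$ is non-empty, I would begin by fixing a family $\{xA\}_{x\in F}$ of left translates of $A$ that is disjoint and \emph{maximal} with respect to inclusion among all disjoint families of translates of $A$. Such a family exists by Zorn's Lemma: the union of a chain of pairwise disjoint families is again pairwise disjoint, so the hypotheses of Zorn are met.

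The crux is that maximality forces $F$ to be a covering index set for $AA^{-1}$. Take any $g\in G$. By maximality the translate $gA$ cannot be adjoined to the family without destroying disjointness, so it must already meet some member, i.e.\ there is $x\in F$ with $gA\cap xA\ne\emptyset$. Choosing $a,a'\in A$ with $ga=xa'$ gives $g=x\,a'a^{-1}\in xAA^{-1}\subseteq FAA^{-1}$. As $g\in G$ was arbitrary, this yields $G=FAA^{-1}$, and hence $\cov(AA^{-1})\le|F|$.

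It remains to bound $|F|$. By construction $\{xA\}_{x\in F}$ is a disjoint family of translates of $A$, so its cardinality is one of the values over which the supremum defining $\pack(A)$ is taken; therefore $|F|\le\pack(A)$. Combining the two inequalities gives $\cov(AA^{-1})\le|F|\le\pack(A)$, which is the claim. Note that this step, being a comparison of cardinals with a supremum, works uniformly whether $\pack(A)$ is finite or infinite, so no separate treatment of the infinite case is needed.

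I do not anticipate a serious obstacle here: essentially all the content lies in the single equivalence that two translates $gA$ and $xA$ intersect exactly when $g\in xAA^{-1}$, which converts maximality of the packing into the desired covering. The only points that warrant a word of care are the appeal to Zorn's Lemma to secure the maximal disjoint family and the observation, used implicitly, that a maximal-by-inclusion family need not be of maximum cardinality yet still indexes a covering, so that the bound $|F|\le\pack(A)$ suffices.
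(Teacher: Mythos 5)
Your proof is correct and follows exactly the paper's argument: take a Zorn-maximal disjoint family $\{xA\}_{x\in F}$, use maximality to conclude that every $gA$ meets some $xA$ and hence $g\in xAA^{-1}$, giving $G=FAA^{-1}$ and $\cov(AA^{-1})\le|F|\le\pack(A)$. The extra remarks on chains of disjoint families and on maximal-by-inclusion versus maximum-cardinality are sound but add nothing beyond what the paper's one-line proof already contains.
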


\begin{proof} By Zorn's Lemma, find a maximal set $F\subset G$ such that the family $(xA)_{x\in F}$ is disjoint. The maximality of $F$ guarantees that for every $y\in G$ the set $yA$ meets some set $xA$, $x\in F$. Consequently, $y\in FAA^{-1}$ and hence $\cov(AA^{-1})\le|F|\le\pack(A)$.
\end{proof}

\begin{corollary} If a set $A\subset G$ has positive density $\us_{12}(A)$, then $$\cov(AA^{-1})\le\pack(A)\le1/\us_{12}(A)$$ is finite.
\end{corollary}

\begin{corollary} If analytic subsets $A,B$ of a Polish group $G$ have positive upper Banach density $\us_{12}(A)$, then
$AA^{-1}$ is not meager and $AA^{-1}AA^{-1}$ is a neighborhood of the unit.
\end{corollary}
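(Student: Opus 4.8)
The plan is to combine the finiteness of the covering number (already available) with a Baire-category argument and the Banach-Kuratowski-Pettis theorem; note that the hypothesis on $B$ plays no role here, since both conclusions concern only $A$. First I would invoke the Corollary immediately preceding this one: since $\us_{12}(A)>0$, we have $\cov(AA^{-1})\le 1/\us_{12}(A)<\infty$. Hence there is a finite set $F\subset G$ with $G=F\cdot AA^{-1}=\bigcup_{f\in F}f(AA^{-1})$.

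Next I would establish that $AA^{-1}$ is not meager. Being a Polish group, $G$ is completely metrizable and hence a Baire space, so $G$ is non-meager in itself. If every translate $f(AA^{-1})$, $f\in F$, were meager, then $G$ would be a finite union of meager sets and therefore meager, a contradiction. So some translate $f(AA^{-1})$ is non-meager, and since the left translation $x\mapsto fx$ is a homeomorphism of $G$, the set $AA^{-1}$ itself is non-meager. This yields the first assertion.

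For the second assertion I would apply the Banach-Kuratowski-Pettis theorem to the set $C=AA^{-1}$. To legitimately invoke it I must check that $C$ is analytic and non-meager. Non-meagerness has just been shown. Analyticity follows because $A$ is analytic, so $A\times A$ is analytic, and $AA^{-1}$ is the image of $A\times A$ under the continuous map $(a,b)\mapsto ab^{-1}$, a continuous image of an analytic space being analytic. Since $C$ is symmetric, i.e. $C^{-1}=(AA^{-1})^{-1}=AA^{-1}=C$, the Banach-Kuratowski-Pettis theorem gives that $CC^{-1}=AA^{-1}AA^{-1}$ is a neighborhood of the unit, as required.

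The one point requiring a little care is the verification that $AA^{-1}$ is analytic, which is exactly what makes the application of Banach-Kuratowski-Pettis valid; everything else is a direct chaining of the finite-cover estimate with the Baire property of Polish groups. I do not expect a serious obstacle, since the substantive input, namely the finite covering number extracted from positive density, is already in hand, and the remainder is a routine category and descriptive-set-theoretic wrap-up.
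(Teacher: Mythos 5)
Your proposal is correct and is exactly the intended argument: the paper states this corollary without proof, as an immediate consequence of the preceding corollary ($\cov(AA^{-1})\le 1/\us_{12}(A)<\infty$), the Baire property of Polish groups, and the Banach--Kuratowski--Pettis theorem applied to the non-meager analytic symmetric set $AA^{-1}$. Your observations that $B$ is not actually used and that analyticity of $AA^{-1}$ is the one point needing verification are both accurate.
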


\subsection{Theorem of Jin-Beiglb\"ock-Bergelson-Fish-DiNasso-Lupini}

Recall that a subset $A$ of a group $G$ is called {\em Banach positive} if $\is_{12}(A)>0$.

\begin{theorem}[Jin-Beiglb\"ock-Bergelson-Fish-DiNasso-Lupini]\label{JBBF} For any Banach positive sets $A,B$ in an amenable group $G$ there is a finite set $F\subset G$ such that
$FAB$ is thick (equivalently, $\is_{12}(FAB)=1$).
\end{theorem}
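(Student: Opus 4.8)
The plan is to recognise that the conclusion ``$FAB$ is thick'' is exactly the assertion that $AB$ is piecewise syndetic, and that by the Observation this is literally the same as $\is_{12}(FAB)=1$. So the task is to manufacture a single finite set $F\subset G$ out of the two density hypotheses $\alpha:=\is_{12}(A)>0$ and $\beta:=\is_{12}(B)>0$. First I would dispose of the cheap half, namely $\is_{12}(AB)>0$, in order to expose where the real difficulty lies. Using $\Si_{21}(B)=\is_{12}(B)=\beta$, choose for each $\e>0$ a measure $\nu\in P(G)$ with $\nu(xB)>\beta-\e$ for every $x\in G$. Fixing any $a\in A$ (possible since $\alpha>0$), we get $\nu(y\,AB)\ge\nu(yaB)>\beta-\e$ for all $y\in G$, because $yaB\subseteq y\,AB$ and $ya$ ranges in $G$; hence $\is_{12}(AB)=\Si_{21}(AB)\ge\beta$. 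I would stress that this one-measure estimate only ever sees a single translate $aB$ and therefore cannot exceed $\beta$: positivity of density is free, but thickness of a finite dilate is strictly stronger and forces the density of $A$ to enter through a genuine counting of the union $AB=\bigcup_{a\in A}aB$.

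Next I would set up the counting picture on F\o lner windows. Fixing an invariant measure $\mu$ with $\mu(A)=\alpha$ (available by compactness of $P_G(G)$) and a finite window $W$ realising the density of $A$, a Fubini/double-counting of $\sum_{g\in W}\mu(A\cap gB^{-1})$ shows that the $W$-average of $g\mapsto\mu(A\cap gB^{-1})$ is comparable to $\alpha\beta$, so that a proportion of order $\alpha$ of the points $g\in W$ already lie in $AB$. This recovers positive density combinatorially and pinpoints the role of the window, but it still yields no \emph{uniform} finite $F$: the elements witnessing membership in $AB$ depend on the window $W$.

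The crux, which I expect to be the main obstacle, is upgrading ``positive density on every window'' to ``thick after a single finite dilate''. Here I would pass to a nonstandard model: replace the family of windows by one hyperfinite F\o lner window and equip it with the Loeb measure $\mu_L$, under which the transfers of $A$ and $B$ acquire positive Loeb measure $\alpha$ and $\beta$. On the Loeb space the internal product set behaves like an honest measure-theoretic sumset, and a Lebesgue-density / Steinhaus-type argument --- crucially exploiting the countable additivity of $\mu_L$, which finite pigeonhole lacks --- produces a point and a finite radius $F$ around which the internal $AB$ is syndetic. Transferring back yields a finite $F\subset G$ with $FAB$ thick. The entire difficulty is precisely the extraction of one $F$ valid simultaneously for every finite $E\subset G$, and it is the countable additivity of the Loeb measure (equivalently, an idempotent-ultrafilter argument in the \v{C}ech--Stone compactification $\beta G$) that supplies this uniformity, which the one-shot estimates of the previous paragraph cannot reach.

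Finally I would close the loop. Subadditivity and two-sided invariance of $\is_{12}=d^*$ on the amenable group $G$ let me pass freely between $AB$ and its finite dilates, and the Observation converts the syndeticity-inside-a-thick-set statement into the desired equality $\is_{12}(FAB)=1$. I would expect the write-up to spend almost all of its effort on the nonstandard transfer of the third paragraph, with the first two paragraphs serving to motivate why no purely finitary pigeonhole can succeed.
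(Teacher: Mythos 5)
Your outline is recognizably the nonstandard route of Jin and Di Nasso--Lupini, and it is a legitimate route in principle, but as written it has a genuine gap: the entire content of the theorem is concentrated in your third paragraph, and that paragraph is not an argument. You assert that on the Loeb space ``a Lebesgue-density / Steinhaus-type argument \ldots produces a point and a finite radius $F$'' and that transfer then yields the finite set, but you give no construction of the hyperfinite F\o lner window for a general (possibly uncountable) amenable group, no statement of which Steinhaus-type lemma you invoke for the internal product set, and no indication of how the internal syndeticity constant becomes a \emph{standard} finite set $F$ after transfer. Since everything before this point is, by your own admission, only motivation, the proposal proves nothing beyond the easy estimate $\is_{12}(AB)\ge\is_{12}(B)$ of your first paragraph. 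Your first two paragraphs are essentially correct (the attainment of $d^*(A)$ by some $\mu\in P_G(G)$ does follow from weak-star compactness), but they do not feed into the missing step.

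Your guiding heuristic --- that ``no purely finitary pigeonhole can succeed'' and that countable additivity of a Loeb measure is indispensable --- is also refuted by the paper, whose proof is short and elementary. The paper first applies the ergodicity property (Theorem~\ref{ergodic}): $\sup_{F\subset G}\is_{12}(FA)\in\{0,1\}$, so since $\is_{12}(A)>0$ there is a finite $F$ with $\is_{12}(FA)>1-\is_{12}(B)$. The finite set $F$ comes out of the Kelley intersection-number characterization $\is_{12}(A)=I(\{xA\}_{x\in G})$ of Theorem~\ref{is=Kelley}: the points $x_1,\dots,x_n$ nearly realizing the intersection number form $F$, and integrating $\frac1n\sum_i\chi_{x_iA}\le(\is_{12}(A)+\delta)\chi_{FA}$ against a measure witnessing $\Si_{21}(A)$ forces $\mu(FA)$ close to $1$. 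The paper then invokes Lemma~\ref{ergo-sum}: if $\us_{12}(A')+\us_{12}(B)>1$ in an amenable group then $\is_{12}(A'B)=1$, proved by a direct F\o lner-window pigeonhole (two subsets of a window $K$ whose cardinalities sum to more than $|K|$ must meet). Applying this to $A'=FA$ and $B$ gives $\is_{12}(FAB)=1$. So the uniformity you locate in the Loeb measure is in fact supplied by the Hahn--Banach/Kelley duality already built into $\is_{12}=\Si_{21}$, and the remaining step is exactly the one-shot finitary counting you dismissed. If you want to salvage your approach you must actually write out the hyperfinite F\o lner window and the internal Steinhaus argument; otherwise the elementary two-step decomposition above is both shorter and complete.
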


To prove this theorem we shall need:

\subsection{An ergodicity property of the density $\is_{12}$}

Let us recall that a $G$-invariant measure $\mu:\mathcal B(X)\to[0,1]$ on a $G$-space $X$ is {\em ergodic} if any measurable $G$-invariant subset $A\subset X$ has measure $\mu(A)\in\{0,1\}$.

\begin{theorem}\label{ergodic} For any subset $A$ of a $G$-space $X$ we get
 $\sup\limits_{F\subset G}\is_{12}(FA)\in\{0,1\}$.
\end{theorem}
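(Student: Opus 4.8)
The plan is to deduce the dichotomy from two ingredients: the monotonicity of $\is_{12}$ under enlarging $F$ (immediate from the monotonicity of densities) and a single \emph{self-improving amplification inequality}. Throughout $F$ ranges over the finite subsets of $G$, and I write $t=\sup_{F}\is_{12}(FA)$. If $t=0$ there is nothing to prove, so I assume $t>0$ and aim to show $t=1$. The whole argument will be driven by the following.

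\medskip
\noindent\textbf{Amplification step (the crux).} I would isolate and prove that for every subset $C$ of $X$ there is a finite set $F'\subset G$ with
$$\is_{12}(F'C)\ \ge\ 1-\bigl(1-\is_{12}(C)\bigr)^2 .$$
Granting this, the dichotomy closes by pure algebra. Fix $\e>0$ and choose a finite $E$ with $\is_{12}(EA)>t-\e$. Applying the amplification step to $C=EA$ produces a finite $E'$ with $\is_{12}(E'EA)\ge 1-(1-\is_{12}(EA))^2$. Since $E'E$ is finite, $\is_{12}(E'EA)\le t$ by the definition of $t$, and since $s\mapsto 1-(1-s)^2$ is increasing on $[0,1]$ we get $t> 1-(1-t+\e)^2$. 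Letting $\e\to0$ yields $t\ge 1-(1-t)^2=2t-t^2$, i.e. $t^2\ge t$, so $t(t-1)\ge 0$; as $t\in[0,1]$ this forces $t\in\{0,1\}$, hence $t=1$.

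\medskip
\noindent\textbf{How I would attack the amplification step.} Here I would pass to the dual description of the density. By Theorem~\ref{is=Kelley}, $\is_{12}(C)=\Si_{21}(C)=\sup_{\mu\in P(X)}\inf_{g\in G}\mu(gC)$, so for any $\e>0$ there is $\mu\in P(X)$ with $\mu(gC)\ge \is_{12}(C)-\e$, equivalently $\mu(gD)\le 1-\is_{12}(C)+\e$ for all $g\in G$, where $D=X\setminus C$. I would then try to produce the factor $\bigl(1-\is_{12}(C)\bigr)^2$ from \emph{two near-independent copies} of this covering: for $F'=\{e,h\}$ and the same measure $\mu$ one has
$$\mu\bigl(g\textstyle\bigcap_{f\in F'}fD\bigr)=\mu\bigl(gD\cap ghD\bigr),$$
so it suffices to find a single $h\in G$ for which $gD$ and $ghD$ are asymptotically $\mu$-independent uniformly in $g$, i.e. $\sup_g\mu(gD\cap ghD)\le (1-\is_{12}(C))^2+o(1)$; this makes $\is_{12}(F'C)=\Si_{21}(F'C)\ge \mu(gF'C)=1-\mu(gD\cap ghD)$ large. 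Substituting $y=g^{-1}x$ rewrites the requirement as: find $h$ so that $D$ and $hD$ are simultaneously nearly independent under \emph{every} measure $g^{-1}\mu$ in the orbit of $\mu$, each of which assigns to $D$ mass at most $1-\is_{12}(C)+\e$.

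\medskip
\noindent\textbf{Where the difficulty lies.} The main obstacle is exactly this simultaneous decorrelation over the whole orbit $\{g^{-1}\mu:g\in G\}$ while $G$ is allowed to be non-amenable: one cannot simply average over $G$ to manufacture a generic decorrelating $h$ (the trick that works verbatim in the amenable case via a F\o lner averaging, where $\is_{12}=d^*$ is even subadditive). To circumvent this I would not insist on a single $h$ but build the second "independent layer" intrinsically: take a weak$^*$-limit $\mu_\infty=\lim_{h\to\mathcal F}h\mu$ of translates of $\mu$ along a suitable free ultrafilter $\mathcal F$ on $G$ (using weak$^*$-compactness of $P(X)$, as in the limit-by-ultrafilter constructions of the earlier sections), and combine $\mu$ with $\mu_\infty$ so that a point is missed by the thickened set only when it is missed by both layers. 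Equivalently, I would run Kelley's Theorem / a Hahn--Banach minimax directly on the enlarged family $\{hF'C\}_{h\in G}$ to bound its intersection number from below by $1-(1-\is_{12}(C))^2$. Making this decorrelation rigorous, uniformly in $g$, is the one genuinely hard point; everything else (the reductions, the monotonicity, and the final algebraic collapse $t^2\ge t$) is routine.
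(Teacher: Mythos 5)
Your reduction of the dichotomy to the amplification inequality $\is_{12}(F'C)\ge 1-(1-\is_{12}(C))^2$ is cleanly set up, and the closing algebra ($t\ge 2t-t^2$, hence $t\in\{0,1\}$) is correct. But the amplification step itself --- which you rightly call the crux --- is never proved, and the route you sketch for it does not go through. You need a single $h\in G$ with $\mu(gD\cap ghD)\le(1-\is_{12}(C))^2+o(1)$ \emph{uniformly in} $g$, i.e.\ near-independence of $D$ and $hD$ under every translate of $\mu$. Nothing supplies such decorrelation: $\mu$ is merely an arbitrary near-optimizer for $\Si_{21}(C)$ and there is no mixing hypothesis on the action; the fallback devices you name (an ultrafilter limit $\mu_\infty$ of translates, or rerunning Kelley's theorem on $\{hF'C\}_{h\in G}$) are not carried out and it is unclear how either would manufacture the squared bound. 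Note also that the amplification inequality, while true, appears to be essentially equivalent in difficulty to the theorem itself, so deferring to it does not reduce the problem.

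The missing idea, and what the paper actually does, is to play the two dual descriptions of $\is_{12}$ from Theorem~\ref{is=Kelley} against each other in a single step, with no iteration and no independence. Since $\is_{12}(A)=I(\{xA\}_{x\in G})$, there are $x_1,\dots,x_n\in G$ with the \emph{pointwise} bound $\frac1n\sum_{i=1}^n\chi_{x_iA}\le(\is_{12}(A)+\delta)\,\chi_{FA}$ where $F=\{x_1,\dots,x_n\}$: the averaged indicator is small in sup-norm and supported in $FA$. Integrating this inequality and all its left translates against a measure $\mu$ with $\inf_{x\in G}\mu(xA)>\is_{12}(A)-\delta$ (supplied by the $\Si_{21}$ side of the same theorem) gives $\inf_{g\in G}\mu(gFA)\ge\frac{\is_{12}(A)-\delta}{\is_{12}(A)+\delta}$, hence $\is_{12}(FA)=\Si_{21}(FA)>1-\e$ for suitable $\delta$. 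Replacing your amplification step by this ratio argument closes the gap; as written, your proof is incomplete precisely at the point you flag.
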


\begin{proof} It suffices to show that for any Banach positive subset $A\subset X$ and every $\e>0$ there is a finite set $F\subset G$ such that $\is_{12}(FA)>1-\e$. Find a positive number $\delta$ such
$\frac{\is_{12}(A)-\delta}{\is_{12}(A)+\delta}>1-\e$.
By Theorem~\ref{is=Kelley}, $\is_{12}(A)=I(\{xA\}_{x\in G})$. Then the definition of the intersection number yields points $x_1,\dots,x_n\in G$ such that $$\Big\|\frac1n\sum_{i=1}^n\chi_{x_iA}\Big\|<I(\{xA\}_{x\in G})+\delta=\is_{12}(A)+\delta$$and hence
\begin{equation}\label{eqq}
\frac1n\sum_{i=1}^n\chi_{x_iA}\le (\is_{12}(A)+\delta)\cdot\chi_{FA}
\end{equation}
where $F=\{x_1,\dots,x_n\}$. By the definition of the extremal density $\Si_{21}$, there is a measure $\mu$ on $X$ such that $\inf_{x\in G}\mu(xA)>\Si_{21}(A)-\delta=\is_{12}(A)-\delta$. Integrating the inequality (\ref{eqq}) by the measure $\mu$ we get
$$(\is_{12}(A)+\delta)\cdot \mu(FA)\ge \frac1n\sum_{i=1}^n\mu(x_iA)>\is_{12}(A)-\delta,$$which implies
the desired lower bound $$\mu(FA)>\frac{\is_{12}(A)-\delta}{\is_{12}(A)+\delta}>1-\e.$$
\end{proof}

\begin{lemma}\label{ergo-sum} Let $A,B$ be two subsets of an amenable group $G$. If $\us_{12}(A)+\us_{12}(B)>1$, then $\is_{12}(AB)=\us_{12}(AB)=1$.
\end{lemma}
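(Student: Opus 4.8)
The plan is to reduce the statement to a single pigeonhole inside a well-chosen F\o lner window. First I would invoke Solecki's theorem, valid since $G$ is amenable, to get $\is_{12}=\us_{12}$; hence it suffices to prove $\us_{12}(AB)=1$, and by the Observation characterising density $1$ this is the same as showing that $AB$ is \emph{right thick}, i.e. that $AB$ contains a right translate $Dq$ of every finite set $D\subseteq G$. Throughout I would work with the combinatorial form of the density obtained by evaluating the convolutions on Dirac measures,
$$\us_{12}(S)=\inf_{E}\,\sup_{x\in G}\frac{|S\cap Ex|}{|E|},$$
the infimum being taken over all finite $E\subseteq G$. Since this is an infimum whose value equals $\us_{12}(S)$, for \emph{every} finite shape $E$ there is a right translate $Ex$ in which $S$ has density at least $\us_{12}(S)$.

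Fix a finite set $D\subseteq G$ and a number $\e>0$ with $\us_{12}(A)+\us_{12}(B)>1+2\e$, then a tiny $\delta>0$ with $\delta<\us_{12}(A)+\us_{12}(B)-1-2\e$. Using the F\o lner condition I would choose a finite \emph{symmetric} window $E=E^{-1}$ that is almost invariant in the sense that $|gE\setminus E|<\delta|E|$ for every $g\in D$. Applying the density formula for $A$ to the shape $E$ gives a point $p$ with $|A\cap Ep|>(\us_{12}(A)-\e)|E|$, and applying the formula for $B$ to the shifted shape $p^{-1}E$ (whose cardinality is again $|E|$) gives a point $q$ with $|B\cap p^{-1}Eq|>(\us_{12}(B)-\e)|E|$. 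The decisive, and the only genuinely noncommutative, idea is this asymmetric choice of windows: writing $a\in A\cap Ep$ as $a=ep$ and $b\in B\cap p^{-1}Eq$ as $b=p^{-1}e'q$ with $e,e'\in E$, the middle factor cancels and $ab=ee'q$. Setting $S=\{e\in E:ep\in A\}$ and $T=\{e\in E:p^{-1}eq\in B\}$ we obtain $S,T\subseteq E$ with $|S|+|T|>(\us_{12}(A)+\us_{12}(B)-2\e)|E|>|E|$, together with the inclusion $AB\supseteq(A\cap Ep)(B\cap p^{-1}Eq)=(ST)q$.

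This reduces everything to an internal claim about one window: a product $ST$ of two subsets of a symmetric almost-invariant $E$ with $|S|+|T|>|E|$ already contains $D$. I would prove it pointwise: $g\in ST$ as soon as $S\cap gT^{-1}\ne\emptyset$, and because $E$ is symmetric we have $T^{-1}\subseteq E$, so $S\cup gT^{-1}\subseteq E\cup gE$ and therefore $|S\cap gT^{-1}|\ge|S|+|T|-|E|-|gE\setminus E|$. For $g\in D$ the error term is $<\delta|E|$ by the almost-invariance of $E$, while $|S|+|T|-|E|>(\us_{12}(A)+\us_{12}(B)-2\e-1)|E|>\delta|E|$ by the choice of $\delta$; hence the right-hand side is positive and $g\in ST$. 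Thus $D\subseteq ST$, so $AB\supseteq(ST)q\supseteq Dq$. As $D$ was an arbitrary finite set, $AB$ is right thick, whence $\is_{12}(AB)=\us_{12}(AB)=1$.

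The hard part, I expect, is precisely the noncommutativity: a naive ``common window'' pigeonhole fails because the product $ab$ drags the translation parameter $p$ into the middle of $epp^{-1}\!\cdots$, and then the two dense sets no longer live in comparable windows. Forcing the parameters to telescope into $ee'q$ by pairing the right translate $Ep$ of $A$ with the right translate of the shifted shape $p^{-1}E$ for $B$ is what lets the whole argument run with the single left-handed density $\us_{12}$, with no recourse to densities of $A^{-1}$ or $B^{-1}$. The remaining delicacy is purely quantitative and order-sensitive: one must fix $\e$, then $\delta$, and only then choose the symmetric F\o lner window $E$, so that the genuine density margin $\us_{12}(A)+\us_{12}(B)-1$ strictly dominates the F\o lner error $\delta$.
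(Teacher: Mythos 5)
Your overall strategy is the same as the paper's: reduce the claim to right-thickness of $AB$ via the Observation characterizing sets of density $1$, then run a density pigeonhole inside a F\o lner window, pairing the window used for $A$ with a translated window for $B$ so that the translation parameters telescope out of the product. The quantitative bookkeeping (fix $\e$, then $\delta$, then the window) is in the right order, and the chain $AB\supseteq(A\cap Ep)(B\cap p^{-1}Eq)=STq$ together with the estimate $|S\cap gT^{-1}|\ge|S|+|T|-|E|-|gE\setminus E|$ is correct as written.

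The one step you assert without justification is the existence of a \emph{symmetric} F\o lner window $E=E^{-1}$ with $|gE\setminus E|<\delta|E|$ for all $g\in D$, and your argument genuinely needs it: without $T^{-1}\subseteq E$ you only get $gT^{-1}\subseteq gE^{-1}$, and $|E\cup gE^{-1}|$ can be as large as $2|E|$, which destroys the pigeonhole. Symmetry does not follow from the left F\o lner condition as stated in the paper: if you try to symmetrize by passing to $E\cup E^{-1}$, the quantity $|g(E\cup E^{-1})\setminus(E\cup E^{-1})|$ contains the term $|gE^{-1}\setminus E^{-1}|=|Eg^{-1}\setminus E|$, which is controlled only by \emph{right} almost invariance. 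The fact you need is true --- amenable groups satisfy the two-sided F\o lner condition, and then $E\cup E^{-1}$ for a two-sided F\o lner set $E$ is a symmetric left F\o lner set --- but it is an extra, nontrivial input that must be stated and proved or cited. The paper's proof sidesteps the issue by a different pairing of windows: it measures $A$ in $K=Ey$ (so that each $K\cap x^{-1}A$ is large, using only left almost invariance) and measures $B$ in the \emph{inverted} window $K^{-1}$, so that $K\cap zB^{-1}$ is large; both sets being intersected then live inside the single window $K$, and the pigeonhole needs no symmetry at all. Either import the two-sided F\o lner property explicitly to salvage your telescoping version, or switch to the inversion trick.
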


\begin{proof} Choose a positive real number $\e>0$ such that $\us_{12}(A)+\us_{12}(B)>1+\e$. The equality $\is_{12}(AB)=\us_{12}(AB)=1$ will follow as soon as we check that for every finite subset $F\subset G$ there is a point $z\in G$ such that $Fz\subset AB$. We lose no generality assuming that $F$ contains the unit of the group $G$.

The amenability of $G$ yields a finite subset $E\subset G$ such that $|F^{-1}E\setminus E|<\e|E|$.
Since $\us_{12}(A)\le\max_{y\in G}\frac{|Ey\cap A|}{|E|}$, there is a point $y\in G$ such that $\frac{|Ey\cap A|}{|E|}\ge\us_{12}(A)$. Let $K=Ey$ and observe that $|F^{-1}K\setminus K|<\e|K|$ and $|K\cap A|\ge\us_{12}(A)|K|$. Then for every $x\in F$ we obtain that
$$
\begin{aligned}
\us_{12}(A)\cdot|K|&\le |K\cap A|\le|(xK\cup (K\setminus xK))\cap A|\le |xK\cap A|+|K\setminus xK|=\\
&=|K\cap x^{-1}A|+|x^{-1}K\setminus K|\le |K\cap x^{-1}A|+|F^{-1}K\setminus K|<|K\cap x^{-1}A|+\e|K|,
\end{aligned}
$$
and hence  $|K\cap x^{-1}A|>(\us_{12}(A)-\e)\cdot|K|$.

Since $\us_{12}(B)\le\max_{z\in G}\frac{|K^{-1}\cap Bz^{-1}|}{|K^{-1}|}$, there is a point $z\in G$ such that $\frac{|K^{-1}\cap Bz^{-1}|}{|K|}\ge \us_{12}(B)$.
Observe that for every point $x\in F$
$$|K\cap x^{-1}A|+|K\cap zB^{-1}|=|K\cap x^{-1}A|+|K^{-1}\cap Bz^{-1}|>(\us_{12}(A)-\e)\cdot|K|+\us_{12}(B)\cdot|K|>|K|,$$which implies that the set $K\cap x^{-1}A$ and $K\cap zB^{-1}$ have a common point and hence $xz\in AB$ and $Fz\subset AB$.
\end{proof}
\smallskip

Now we are able to present
\smallskip

\noindent{\em Proof of Jin-Beiglb\"ock-Bergelson-Fish-DiNasso-Lupini Theorem}: Let $A,B$ be two Banach positive sets in an amenable group $G$. By the Ergodic Theorem~\ref{ergodic}, there is a finite subset $F\subset G$ such that $\is_{12}(FA)>1-\is_{12}(B)$. Then $\us_{12}(FA)+\us_{12}(B)\ge \is_{12}(FA)+\is_{12}(B)>1$ and $\is_{12}(FAB)=1$ for some finite set $F\subset G$.\qed
\medskip

By methods of non-standard analysis, Di Nasso and Lupini \cite{NL} proved the following improvement of Theorem~\ref{JBBF}:

\begin{theorem}[Di Nasso, Lupini] Any Banach positive sets $A,B$ in an amenable group $G$ have $\cov(AB)\le\ 1/(d^*(A)\cdot d^*(B))$.
\end{theorem}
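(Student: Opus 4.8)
The plan is to reduce the statement to a covering inequality for the pair $(A,B)$ and then extract the sharp constant by a greedy (density--increment) argument rather than a crude additive one. Throughout write $\alpha=d^*(A)$ and $\beta=d^*(B)$; both are positive by Banach positivity, and since $G$ is amenable, Theorem~\ref{d=is} together with Solecki's equality gives $\is_{12}=\us_{12}=\Si_{21}=d^*$, so all four densities may be used interchangeably. I will also use the inversion symmetry $d^*(B^{-1})=d^*(B)$, valid on amenable groups because $d^*$ is realized by a two-sided invariant mean.

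The key reformulation is that $FAB=G$ holds if and only if the union $U:=FA$ meets every left translate $yB^{-1}$: indeed $y\in FAB$ iff $fa=yb^{-1}$ for some $f\in F,a\in A,b\in B$, i.e. iff $FA\cap yB^{-1}\ne\emptyset$. So it suffices to produce a finite $F$ with $|F|\le 1/(\alpha\beta)$ whose $A$-translate union $U=FA$ hits all the sets $yB^{-1}$ simultaneously. To control this I fix, using Theorem~\ref{d=is}, a $G$-invariant measure $\nu\in P_G(G)$ with $\nu(B^{-1})>\beta-\e$; by invariance $\nu(yB^{-1})>\beta-\e$ for every $y\in G$ at once. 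Then additivity of $\nu$ gives the pigeonhole step: once $\nu(U)>1-\beta+\e$, we have $\nu(U)+\nu(yB^{-1})>1=\nu(G)$, forcing $U\cap yB^{-1}\ne\emptyset$ for all $y$, i.e. $FAB=G$.

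It remains to reach $\nu(U)>1-\beta+\e$ using few translates of $A$, and this is where the sharp constant is won. The increment lemma I need asserts that for the current union $U$ there is a shift $g\in G$ with $\nu(gA\setminus U)\ge(\alpha-\e)\,\nu(G\setminus U)$. I would obtain it by averaging the candidate translates against a finitely supported measure $\theta=\sum_i\lambda_i\delta_{g_i}$ on $G$ with $\theta(pA^{-1})=\sum_i\lambda_i\chi_{g_iA}(p)\ge\alpha-\e$ for all $p$; then $\sum_i\lambda_i\,\nu(g_iA\setminus U)=\int_{G\setminus U}\theta(pA^{-1})\,d\nu\ge(\alpha-\e)\nu(G\setminus U)$ is a finite average, so some $g_i=g$ does the job. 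Iterating multiplies $\nu(G\setminus U)$ by at most $1-\alpha+\e$ per step, so after $m$ steps $\nu(G\setminus U)\le(1-\alpha+\e)^m$; taking $m$ minimal with $(1-\alpha+\e)^m\le\beta-\e$ completes the cover, and letting $\e\to0$ the elementary inequalities $-\ln(1-\alpha)\ge\alpha$ and $\ln(1/\beta)\le 1/\beta-1$ give $m\le \ln(1/\beta)/\alpha\le 1/(\alpha\beta)$, whence $\cov(AB)\le 1/(d^*(A)d^*(B))$.

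The hard part is the increment lemma, namely the existence of a finitely supported averaging measure $\theta$ realizing $d^*(A^{-1})$ uniformly from every shift $p$ (equivalently, that the fractional covering value $\sup_{\theta\in P_\w(G)}\inf_p\theta(pA^{-1})$ equals $d^*(A^{-1})=\alpha$). This is exactly the place where amenability is indispensable: it rests on $\is_{12}=\us_{12}$ (Solecki) and on the minimax identity $\is_{12}=I(\{xA\}_{x\in G})=\Si_{21}$ of Theorem~\ref{is=Kelley}, and for non-amenable $G$ it genuinely fails. A clean alternative that bypasses the finitely additive bookkeeping is to run the same greedy-plus-pigeonhole argument inside a hyperfinite F\o lner set, where the internal counting (Loeb) measure is a genuine probability realizing $\alpha$ and $\beta$ and where ``$\nu(U)+\nu(yB^{-1})>1\Rightarrow U\cap yB^{-1}\ne\emptyset$'' is literal finite pigeonhole; transferring the resulting internal cover back to $G$ then yields the standard statement, which is the route of Di Nasso and Lupini.
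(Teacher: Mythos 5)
Your reduction ($FAB=G$ iff $FA$ meets every $yB^{-1}$) and the pigeonhole step are fine, and the arithmetic at the end is fine, but the argument collapses at exactly the point you flag as ``the hard part'': the increment lemma. The averaging measure $\theta=\sum_i\lambda_i\delta_{g_i}\in P_\w(G)$ with $\theta(pA^{-1})=\sum_i\lambda_i\chi_{g_iA}(p)\ge\alpha-\e$ \emph{for all} $p\in G$ does not exist in general. Indeed, for any left-invariant $\mu\in P_G(G)$ one has $\int\sum_i\lambda_i\chi_{g_iA}\,d\mu=\sum_i\lambda_i\mu(g_iA)=\mu(A)$, hence $\inf_p\sum_i\lambda_i\chi_{g_iA}(p)\le\inf_{\mu\in P_G(G)}\mu(A)=1-d^*(G\setminus A)$, the \emph{lower} Banach density of $A$. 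So your fractional covering value is bounded by $d_*(A)$, not $d^*(A)$: for $A=\IN\subset\IZ$ one has $d^*(A)=1$ yet no finite family of translates $g_iA$ covers the deep negative integers at all, so the uniform bound $\alpha-\e$ is unattainable. The quantity you need is the sup--inf density $\mathsf{si}_{12}(A)=\sup_{\theta\in P_\w(G)}\inf_x\theta*\delta_x(A)$ from the paper's hierarchy, which is in general strictly smaller than $\is_{12}=I(\{xA\}_{x\in G})=\Si_{21}=d^*$; Kelley's theorem and Solecki's equality $\is_{12}=\us_{12}$ give the packing-side duality (inf over convex combinations of the $\sup$-norm), and the minimax inequality goes the wrong way for the covering side.

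The increment inequality $\nu(gA\setminus U)\ge(\alpha-\e)\,\nu(G\setminus U)$ also fails directly for the $\nu$ you fixed: $\nu$ was chosen only to make $\nu(B^{-1})$ large, and it may assign $\nu(A)=0$ even though $d^*(A)>0$ (the suprema defining $d^*(A)$ and $d^*(B^{-1})$ are generally attained by different, incompatible invariant measures --- e.g.\ $A=\IN$, $B^{-1}=-\IN$ in $\IZ$); then every translate $gA$ is $\nu$-null and no greedy step is possible. This failure of ``Fubini-type'' averaging for finitely additive invariant measures is precisely the obstruction that the hyperfinite F\o lner set / Loeb measure argument of Di Nasso and Lupini is designed to circumvent, so your closing sentence invokes the actual proof rather than supplying one. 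Note also that the paper itself states this theorem without proof, citing \cite{NL}; as written, your finitely additive route does not close the gap.
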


\subsection{A motivating problem of Ellis}

The following old problem of Ellis was motivated by the mentioned theorems of Banach-Kuratowski-Pettis and Steinhaus-Weil.

\begin{openprob}[Ellis, 50ies] Is it true that for each subset $A\subset\IZ$ with $\cov(A)<\infty$ the difference $A-A$ is a neighborhood of zero in the Bohr topology on $\IZ$?
\end{openprob}

\subsection{The Bohr topology on groups}

The {\em Bohr topology} on a group $G$ is the largest totally bounded group topology on $G$. This topology is not necessarily Hausdorff. A subset $A\subset G$ of a group $G$ will be called {\em Bohr-open} (resp. {\em Bohr-closed}) if it is open (resp. closed) in the Bohr topology on $G$.

The Bohr-closure $\bar 1_G$ of the singleton $\{1_G\}$
is a Bohr-closed normal subgroup of $G$ and the quotient group $G/\bar 1_G$ endowed with the quotient Bohr topology is a totally bounded Hausdorff topological group whose Raikov completion is a Hausdorff compact topological group. This compact topological group is denoted by $bG$ and called the {\em Bohr compactification} of $G$. The quotient homomorphism $i:G\to bG$ is called the {\em canonical homomorphism} from $G$ to $bG$.

\begin{remark} For any abelian group $G$ the canonical homomorphism $i:G\to bG$ is injective.
 On the other hand, there are non-commutative amenable groups $G$ with trivial Bohr compactification. A simple example of such a group is the group $A_X$ of even finitely supported permutations of an infinite set $X$. The group $A_X$ is locally finite and hence amenable.
\end{remark}

\subsection{A Theorem of Bogoliuboff-F\o lner-Cotlar-Ricabarra-Ellis-Keynes-Beiglb\"ock-Bergelson-Fish-Banakh}

The proof of the following theorem can be found in \cite{Ban}.

\begin{theorem}[Bogoliuboff-F\o lner-Cotlar-Ricabarra-Ellis-Keynes-Beiglb\"ock-Bergelson-Fish-Banakh]\label{t:Bohr} For any Banach positive set $A$ in an amenable group $G$ the difference set $AA^{-1}=U\setminus N$ for some Bohr-open neighborhood $U$ of $1_G$ and some Banach null set $N\subset G$.
\end{theorem}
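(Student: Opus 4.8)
The plan is to transfer the problem to the Bohr compactification $bG$ via a Furstenberg-type correspondence, and to locate $AA^{-1}$ inside the return-time set of a measure preserving system whose Kronecker factor is a quotient of $bG$. First I would use Theorem~\ref{d=is}, which identifies $\is_{12}=\Si_{21}=d^*$, together with the amenability of $G$, to realize $A$ measure-theoretically: form the orbit closure $X\subset\{0,1\}^{G}$ of the indicator $\chi_A$ under the shift action of $G$, equip it with a shift-invariant measure $\mu$ for which the clopen cylinder $\tilde A=\{\w\in X:\w(1_G)=1\}$ has $\mu(\tilde A)=d^*(A)=\alpha>0$, and arrange that $\mu$ has full support. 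The decisive gain from using the orbit closure is that $\tilde A$ is clopen, so $\tilde A\cap g\tilde A$ is clopen and $\mu(\tilde A\cap g\tilde A)>0$ is equivalent to $\tilde A\cap g\tilde A\neq\emptyset$; this is what lets a measure-theoretic positivity statement detect the genuinely set-theoretic condition defining $AA^{-1}=\{g:A\cap gA\neq\emptyset\}$, and it is the reason one must pass to a system rather than argue with a single invariant measure on $G$.

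Then I would analyze the correlation function $f(g)=\mu(\tilde A\cap g\tilde A)=\langle\lambda(g)\chi_{\tilde A},\chi_{\tilde A}\rangle$, where $\lambda$ is the Koopman action $(\lambda(g)\varphi)(\w)=\varphi(g^{-1}\w)$; it is positive-definite and $\{f>0\}$ coincides with the return set. Splitting $L^2(X,\mu)$ into its Kronecker (almost periodic) part and its weakly mixing complement, and writing $\eta=E[\chi_{\tilde A}\mid\text{Kronecker}]\ge0$ for the conditional expectation, orthogonality and $G$-invariance of the two pieces kill the cross terms and give $f(g)=F(i(g))+r(g)$. Here $F(i(g))=\langle\lambda(g)\eta,\eta\rangle=\int\eta(z)\,\eta(g^{-1}z)\,dz\ge0$ is almost periodic, hence factors through the canonical homomorphism $i:G\to bG$ as a continuous nonnegative function $F$ on $bG$ (nonnegativity because $\eta\ge0$), while $r(g)=\langle\lambda(g)\zeta,\zeta\rangle$ is the weakly mixing remainder. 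Since the constants lie in the Kronecker factor and $\langle\chi_{\tilde A},1\rangle=\alpha$, we have $\eta\neq0$ and $F(i(1_G))=\|\eta\|^2\ge\alpha^2>0$, so $U:=i^{-1}(\{t\in bG:F(t)>0\})$ is a genuine Bohr-open neighborhood of $1_G$.

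Next I would extract the Banach-null set from the weakly mixing term. By the mean ergodic theorem applied on the weakly mixing subspace, the F\o lner averages of $|r|$ tend to $0$, so $r$ vanishes in density; using Solecki's equality $\us_{12}=\is_{12}$ on the amenable group $G$ this translates into $\is_{12}(\{g:|r(g)|\ge\delta\})=0$ for every $\delta>0$. On $U$ minus this null set one has $F(i(g))>\delta>|r(g)|$ and hence $f(g)>0$, i.e. $g\in AA^{-1}$; this yields the inclusion $U\setminus N\subseteq AA^{-1}$ once the boundary layer $\{0<F\circ i\le\delta\}$ is also shown to be Banach null and absorbed into $N$.

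The hard part will be the reverse inclusion, namely that $AA^{-1}\subseteq U$ after discarding a Banach-null set, which is exactly what makes the stated identity $AA^{-1}=U\setminus N$ an equality rather than a one-sided containment. The difficulty is the gap between the measure-theoretic quantity $f(g)$ and membership in $AA^{-1}$: on the locus $\{F\circ i=0\}$ one has $f(g)=r(g)$ coming purely from the weakly mixing part, so I must show that the exceptional returns $\{g\in AA^{-1}:F(i(g))=0\}$ form a Banach-null set. I would attack this with the density ergodic Theorem~\ref{ergodic} and the subadditivity of $d^*=\is_{12}$ on the amenable group $G$ (Theorem~\ref{d=is}): intersecting $A$ with level sets of the Kronecker correlation and exploiting that a weakly mixing component cannot support positive-density self-overlaps should force these exceptional returns into density zero. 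Assembling the weakly mixing error, the boundary layer, and the exceptional set into a single Banach-null $N$, and checking that the resulting $U$ is Bohr-open with $U\setminus N=AA^{-1}$ exactly, is where the real bookkeeping lies; the full argument is carried out in \cite{Ban}.
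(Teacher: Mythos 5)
First, a point of comparison: the paper does not actually prove Theorem~\ref{t:Bohr} --- it states the result and refers to \cite{Ban} for the argument --- so there is no in-paper proof to match yours against. Your outline does follow the route of \cite{BBF} and \cite{Ban} (a correspondence principle, then splitting the positive definite correlation $f(g)=F(i(g))+r(g)$ into an almost periodic part factoring through $bG$ and a part vanishing in density), so the strategy is the standard and correct one. But as written the sketch has two concrete holes.

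The first is the ``boundary layer''. The set $\{g\in G: 0<F(i(g))\le\delta\}$ is the preimage under $i$ of a subset of $bG$ that can perfectly well have nonempty interior (and positive Haar measure): $F$ is merely a continuous nonnegative function on $bG$ and nothing prevents it from taking values in $(0,\delta]$ on a nonempty open set. The preimage of a nonempty Bohr-open set is covered by finitely many translates of itself (total boundedness), so by invariance and subadditivity of $d^*=\is_{12}$ on the amenable group $G$ it is Banach \emph{positive}; it cannot be ``absorbed into $N$''. The usual repair is to fix $\delta<F(1_{bG})=\|\eta\|^2$ and take $U=\{F\circ i>\delta\}$, on which $F\circ i$ is bounded below so that only $\{|r|\ge\delta\}$ must be shown Banach null; but recovering all of $\{F\circ i>0\}$ by letting $\delta=1/n\to 0$ produces a countable union of Banach null sets, and $\is_{12}$ is only finitely subadditive (every singleton of $\IZ$ is Banach null, yet $\IZ$ is a countable union of singletons). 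So the bookkeeping you defer does not close as described. The second hole is the one you flag yourself: the theorem asserts an \emph{equality}, so you must also show $AA^{-1}\subseteq U$ after discarding a Banach null set, and the sentence about weakly mixing components being unable to ``support positive-density self-overlaps'' is a hope rather than an argument --- on $\{F\circ i=0\}$ you only know that $f=r$ vanishes in density, which says nothing about the purely set-theoretic condition $A\cap gA\ne\emptyset$ defining membership in $AA^{-1}$. Both points are exactly where the content of the cited proof lies, so the proposal should be regarded as a correct high-level plan, not a proof.
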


\begin{corollary} For any amenable group $G$ and Banach positive subsets $A,B\subset G$, the set $B^{-1}AA^{-1}$ has non-empty interior in the Bohr topology and $AA^{-1}BB^{-1}$ is a neighborhood of the unit $1_G$ in the Bohr topology on $G$.
\end{corollary}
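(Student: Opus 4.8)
The plan is to derive both assertions from the exact decomposition supplied by Theorem~\ref{t:Bohr}, applied to $A$ and to $B$. First I would put that decomposition into symmetric form: since $AA^{-1}$ is symmetric, I may replace the Bohr-open neighborhood $U$ from Theorem~\ref{t:Bohr} by $U\cap U^{-1}$ and so assume $AA^{-1}=U_A\setminus N_A$ with $U_A=U_A^{-1}$ a symmetric Bohr-open neighborhood of $1_G$ and $N_A=U_A\setminus AA^{-1}$ symmetric and Banach null (it is contained in the original null set, so $\is_{12}(N_A)=0$ is preserved); likewise $BB^{-1}=U_B\setminus N_B$. I would also record the elementary fact that every nonempty Bohr-open set $V$ is Banach positive: by compactness of the Bohr compactification $bG$ and density of the image of $G$, finitely many left translates of $V$ cover $G$, so $G=FV$ for a finite $F$, whence subadditivity of $\is_{12}=d^*$ on the amenable group $G$ gives $1=\is_{12}(G)\le|F|\,\is_{12}(V)$, i.e. $\is_{12}(V)>0$.

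For the second assertion I would show $U_B\subseteq AA^{-1}BB^{-1}$, which exhibits $AA^{-1}BB^{-1}$ as a Bohr neighborhood of $1_G$. Fix $w\in U_B$ and consider $P_w=U_A\cap wU_B$; this is Bohr-open and contains $1_G$ (here $w\in U_B$ and the symmetry $U_B=U_B^{-1}$ are used), hence nonempty and Banach positive. A point $p\in P_w$ fails to yield a decomposition $w=p\,(p^{-1}w)\in AA^{-1}\cdot BB^{-1}$ only when $p\in N_A$ or $p^{-1}w\in N_B$, that is, only when $p\in N_A\cup wN_B$; by left-invariance of $\is_{12}$ and subadditivity this exceptional set is Banach null. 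Since $\is_{12}(P_w)>0$, subadditivity produces a point $p\in P_w\setminus(N_A\cup wN_B)$, and then $p\in U_A\setminus N_A=AA^{-1}$ and $p^{-1}w\in U_B\setminus N_B=BB^{-1}$, so $w\in AA^{-1}BB^{-1}$.

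For the first assertion I would produce a nonempty open $W\subseteq bG$ with $i^{-1}(W)\subseteq B^{-1}AA^{-1}$, where $i\colon G\to bG$ is the canonical homomorphism; its preimage is then a nonempty Bohr-open subset of $B^{-1}AA^{-1}$. The containment $i^{-1}(W)\subseteq B^{-1}AA^{-1}$ asks that for every $g$ with $i(g)\in W$ the right translate $Bg$ meets $AA^{-1}=U_A\setminus N_A$; since $N_A$ is Banach null and $\is_{12}$ is subadditive, it suffices that $\is_{12}(Bg\cap U_A)>0$. Writing $U_A=i^{-1}(\tilde U_A)$ for a nonempty open $\tilde U_A\subseteq bG$ and using right-invariance, one checks $\is_{12}(Bg\cap U_A)=\is_{12}\bigl(B\cap i^{-1}(\tilde U_A\,i(g)^{-1})\bigr)$, a quantity $D(t)$ depending on $g$ only through $t=i(g)$. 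Thus everything reduces to showing that $\{t\in bG:D(t)>0\}$ has nonempty interior. I would attack this by averaging: taking a two-sided invariant measure $\mu$ on $G$ with $\mu(B)=\beta>0$, its pushforward under $i$ should be the Haar measure $\lambda$ of $bG$, and integrating $t\mapsto\mu\bigl(B\cap i^{-1}(\tilde U_A\,t^{-1})\bigr)$ over $bG$ against $\lambda$ yields $\lambda(\tilde U_A)\cdot\beta>0$, so the integrand is positive on a set of positive Haar measure.

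The hard part will be this final step: converting "positive on a set of positive measure" into "positive on an open set," so that the good translates form an \emph{open} $W$ rather than merely a nonempty or positive-measure set. This requires justifying that the pushforward of an invariant mean is Haar measure and that the translate function is lower semicontinuous (as one expects from the openness of $\tilde U_A$), and it is exactly here that the largeness of $B$ must be turned into an open set of decompositions. A single translate $b_0^{-1}AA^{-1}$ need carry no Bohr interior at all, since the Banach null set $N_A$ may be Bohr dense in $U_A$, so the smearing over all of $B$ is indispensable and rests on the density--Haar correspondence on $bG$ developed in \cite{Ban}.
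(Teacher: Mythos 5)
Your treatment of the second assertion is sound and is essentially the paper's argument in a symmetrized dress: the paper picks a Bohr open $V$ with $VV^{-1}\subset U$ and shows $V\subset AA^{-1}BB^{-1}$ by observing that for $v\in V$ the Banach positive set $v(V\setminus N_B)$ lies in $U$ and therefore meets $U\setminus N_A$; your set $P_w=U_A\cap wU_B$ and the null exceptional set $N_A\cup wN_B$ do the same job, and your preliminary observation that every nonempty Bohr open set is Banach positive (total boundedness plus subadditivity of $\is_{12}=d^*$ on an amenable group) is exactly the lemma the paper uses implicitly. The symmetrization of $U$ and $N$ is legitimate, since $AA^{-1}$ is symmetric and contained in $U$, hence in $U\cap U^{-1}$, and $U_A\setminus AA^{-1}$ is contained in the original null set.

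The first assertion is where you have a genuine gap, and you have located it but not closed it. The averaging scheme does not go through as stated: the pushforward under $i$ of a finitely additive invariant measure on $G$ is only a finitely additive, $i(G)$-invariant set function on $bG$, and it need not be the Haar measure (uniqueness of invariant measures on compact groups holds only in the $\sigma$-additive regular category); the function $t\mapsto\mu\bigl(B\cap i^{-1}(\tilde U_A t^{-1})\bigr)$ has no a priori measurability or lower semicontinuity; and there is no Fubini theorem available for finitely additive measures to justify the integral identity. The paper sidesteps all of this with a pigeonhole step that is the missing idea: choose a Bohr open neighborhood $V$ of $1_G$ with $VV\subset U$; total boundedness gives a finite $F$ with $G=VF$, so $B=\bigcup_{x\in F}(Vx\cap B)$, and subadditivity of $\is_{12}$ on the amenable group $G$ yields some $x\in F$ for which $B_x=Vx\cap B$ is Banach positive. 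Then for every $v\in V$ one has $B_xx^{-1}v\subset Vxx^{-1}v\subset VV\subset U$, and by two-sided invariance $\is_{12}(B_xx^{-1}v)=\is_{12}(B_x)>0$, so $B_xx^{-1}v$ cannot lie inside the Banach null set $N$ and must meet $U\setminus N\subset AA^{-1}$; hence $x^{-1}v\in B_x^{-1}AA^{-1}\subset B^{-1}AA^{-1}$, and the nonempty Bohr open set $x^{-1}V$ lies in $B^{-1}AA^{-1}$. In other words, the correct localization is not to a single point of $B$ (which, as you rightly note, cannot work because $N$ may be Bohr dense in $U$) nor to an average over all of $B$, but to a Banach positive piece of $B$ confined to a single Bohr ball $Vx$: once $B_x$ sits inside one translate of $V$, every shift $B_xx^{-1}v$ with $v\in V$ stays inside $U$, and positivity of $\is_{12}$ does the rest.
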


\begin{proof} 1. By Theorem~\ref{t:Bohr}, there are a Bohr open neighborhood $U\subset G$ of the unit and a  Banach null set $N\subset G$ such that $U\setminus N\subset AA^{-1}$. Since the multiplication and the inversion are continuous in the Bohr topology on $G$, there is a Bohr open neighborhood $V\subset G$ of the unit such that $VV^{-1}\subset U$. By the total boundedness of the Bohr topology, there is a finite subset $F\subset G$ such that $G=VF$. Since $B=\bigcup_{x\in F}Vx\cap B$, the subadditivity of the upper Banach density $\is_{12}$ on the amenable group $G$ yields a point $x\in F$ such that $B_x=Vx\cap B$ has positive upper Banach density $\is_{12}(B_x)$. We claim that $x^{-1}V\subset B_x^{-1}(U\setminus N)$. Given any point $v\in V$, consider the set $B_xx^{-1}v\subset Vxx^{-1}v\subset VV\subset U$. Since $\is_{12}(B_x)>0$, the set $B_xx^{-1}v$ is not contained in the Banach null set $N$ and hence meets the complement $U\setminus N$. Then $x^{-1}v\in B_x^{-1}(U\setminus N)\subset B^{-1}AA^{-1}$ and hence the set $B^{-1}AA^{-1}$ contains the non-empty Bohr open set $x^{-1}V$.
\smallskip

2. By Theorem~\ref{t:Bohr}, there are a Banach null sets $N_A,N_B\subset G$ and a Bohr open neighborhood $U\subset G$ of the unit such that $U\setminus N_A\subset AA^{-1}$ and $U\setminus N_B\subset BB^{-1}$. Using the continuity of the multiplication and inversion with respect to the Bohr topology on $G$, find a Bohr open neighborhood $V\subset G$ of the unit $1_G$ such that $VV^{-1}\subset U$. We claim that $V\subset AA^{-1}BB^{-1}$. The subadditivity of the ipper Banach density $d^*=\is_{12}$ and the total boundedness of the topological group $G$ imply that the neighborhood $V$ is not Banach null. The subadditivity of the upper Banach density $\is_{12}$ implies that $\is_{12}(V\setminus N_B)=\is_{12}(V)>0$. Then for every $v\in V$ the set $v(V\setminus N_B)\subset U$, being Banach positive, meets the set $U\setminus N_A$, which implies $v\in (U\setminus N_A)(V\setminus N_B)^{-1}\subset AA^{-1}BB^{-1}$.
\end{proof}

\begin{corollary} An amenable group $G$ has trivial Bohr compactification if and only if for any partition $G=A_1\cup\dots \cup A_n$ there is a cell $A_i$ of the partition such that $G=A_i^{-1}A_iA_i^{-1}$.
\end{corollary}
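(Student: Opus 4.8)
The plan is to prove the two implications separately, using Theorem~\ref{t:Bohr} as the main engine for the forward direction and a point-separation argument inside the compact Hausdorff group $bG$ for the converse.

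For the forward implication, suppose $bG$ is trivial, i.e. $\bar 1_G=G$. First I would record the topological consequence that the only non-empty Bohr-open subset of $G$ is $G$ itself: since $\{1_G\}$ is then Bohr-dense, every non-empty Bohr-open set $O$ contains $1_G$, and for any $g\in G$ the set $Og^{-1}$ is again non-empty Bohr-open, so $1_G\in Og^{-1}$, i.e. $g\in O$; hence $O=G$. Now, given a partition $G=A_1\cup\dots\cup A_n$, the subadditivity of $\is_{12}=d^*$ on the amenable group $G$ gives $1=\is_{12}(G)\le\sum_i\is_{12}(A_i)$, so some cell $A_i$ is Banach positive. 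Applying Theorem~\ref{t:Bohr} to $A_i$ and using the preceding remark (the Bohr-open neighborhood $U$ must equal $G$), I get $A_iA_i^{-1}=G\setminus N$ for some Banach null set $N$. To upgrade this to $G=A_i^{-1}A_iA_i^{-1}$, fix $g\in G$ and observe that, by the two-sided shift-invariance of $\is_{12}$, the set $A_ig$ is Banach positive, hence is not contained in the Banach null set $N$ by monotonicity; so some $ag\notin N$, i.e. $ag\in A_iA_i^{-1}$, which rewrites $g$ as an element of $A_i^{-1}A_iA_i^{-1}$.

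For the converse, suppose $bG$ is non-trivial and pick $p\in bG\setminus\{1\}$. In the compact Hausdorff group $bG$ I would first choose, using continuity of $(x,y,z)\mapsto x^{-1}yz^{-1}$ at $(1,1,1)$ together with regularity, an open neighborhood $W$ of $1$ small enough that $\overline{W^{-1}WW^{-1}}$ misses $p$. Then $U=i^{-1}(W)$ is a Bohr-open neighborhood of $1_G$, so by total boundedness of the Bohr topology there are $h_1,\dots,h_n\in G$ with $G=\bigcup_k h_kU$. Disjointifying (e.g. $A_k=h_kU\setminus\bigcup_{j<k}h_jU$), I obtain a partition $G=A_1\cup\dots\cup A_n$ with $A_k\subseteq h_kU$. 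For the cell $A_i$ supplied by the hypothesis, $A_i\subseteq h_iU$ forces $i(A_i^{-1}A_iA_i^{-1})\subseteq W^{-1}WW^{-1}i(h_i)^{-1}$; but $G=A_i^{-1}A_iA_i^{-1}$ would give $i(G)\subseteq W^{-1}WW^{-1}i(h_i)^{-1}$, and passing to closures $bG=\overline{i(G)}\subseteq\overline{W^{-1}WW^{-1}}\,i(h_i)^{-1}$, a set missing $p\,i(h_i)^{-1}$ --- a contradiction.

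The main obstacle I anticipate is in the converse: the Bohr topology itself need not be Hausdorff, so points cannot be separated directly in $G$ and one is forced to run the separation argument inside $bG$, which makes the closure bookkeeping delicate. Concretely, the inclusion $i(A_i^{-1}A_iA_i^{-1})\subseteq W^{-1}WW^{-1}i(h_i)^{-1}$ only controls the image $i(G)$, which is merely dense in $bG$; hence $W$ must be chosen so that the \emph{closure} $\overline{W^{-1}WW^{-1}}$, and not just $W^{-1}WW^{-1}$, avoids $p$, for which I would use regularity of $bG$ to first shrink $W$ so that $\overline{W}^{-1}\,\overline{W}\,\overline{W}^{-1}$ avoids $p$. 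The forward direction is comparatively routine once one notices that triviality of $bG$ collapses the Bohr topology to the indiscrete one and that the third factor $A_i^{-1}$ is exactly what is needed to absorb the null set $N$ via right shift-invariance.
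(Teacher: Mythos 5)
Your proof is correct in both directions. The paper states this corollary without proof; the evidently intended argument is to apply the preceding corollary with $A=B=A_i$ for a Banach-positive cell (which exists by subadditivity of $\is_{12}$ on the amenable group $G$), noting that when $bG$ is trivial every non-empty Bohr-open set equals $G$ --- your forward direction reproves exactly that special case from Theorem~\ref{t:Bohr}, with the right shift $A_ig$ absorbing the null set $N$ playing the same role as the step ``$B_xx^{-1}v$ is not contained in $N$'' in the paper's proof of the preceding corollary, so it is essentially the same mechanism, just unpacked rather than cited. Your converse (separating $p\ne 1$ from $\overline{W^{-1}WW^{-1}}$ in the compact Hausdorff group $bG$, covering $G$ by finitely many translates of $i^{-1}(W)$ via total boundedness, and disjointifying) is the standard argument, and you handle the one delicate point --- that only $i(G)$, which is merely dense in $bG$, is controlled, so the closure of $W^{-1}WW^{-1}$ rather than the set itself must avoid $p$ --- correctly.
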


\subsection{Another theorem of Beiglb\"ock-Bergelson-Fish}

Let $G$ be a group. We shall say that a set $A\subset G$ is {\em finitely representable} in a set $B\subset G$ if for any finite subset $F\subset A$ some right shift $Fy$ is contained in $B$. In particular, a set $B$ is right-thick if and only if $G$ is finitely representable in $B$.

If $A$ is finitely representable in $B$, then $AA^{-1}\subset BB^{-1}$.

The following theorem for countable amenable groups was proved in \cite{BBF} and for all amenable groups in \cite{Ban}.

\begin{theorem}[Beiglb\"ock-Bergelson-Fisher-Banakh] For any subsets $A,B$ of positive upper Banach density in an amenable group $G$ some non-empty Bohr-open set is finitely representable in the set $AB$. Consequently, $ABB^{-1}A^{-1}$ is a neighborhood of the unit in the Bohr topology on $G$.
\end{theorem}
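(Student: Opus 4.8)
The plan is to split the statement into its two assertions and to notice that the second one (``Consequently\dots'') follows formally from the first. Indeed, suppose we have already produced a non-empty Bohr-open set $V$ that is finitely representable in $AB$. Applying the observation recorded just before the theorem (if $V$ is finitely representable in $S$ then $VV^{-1}\subseteq SS^{-1}$) to the pair $(V,AB)$ gives $VV^{-1}\subseteq (AB)(AB)^{-1}=ABB^{-1}A^{-1}$. Since $V$ is non-empty and Bohr-open, the set $VV^{-1}=\bigcup_{v\in V}Vv^{-1}$ is a union of right translates of the Bohr-open set $V$, hence is itself Bohr-open, and it contains $1_G$. Thus $ABB^{-1}A^{-1}$ contains the Bohr-open neighbourhood $VV^{-1}$ of $1_G$ and so is a neighbourhood of the unit in the Bohr topology. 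Everything therefore reduces to the first assertion.

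For the first assertion I would first reduce it to the statement that $AB$ is \emph{piecewise Bohr}, i.e. $W\cap T\subseteq AB$ for some Bohr-open neighbourhood $W$ of $1_G$ and some right-thick set $T$. Granting this, fix a finite set $F\subseteq W$. The set $Y_F=\{y:Fy\subseteq W\}=\bigcap_{f\in F}f^{-1}W$ is Bohr-open and contains $1_G$ (because $F\subseteq W$); by total boundedness of the Bohr topology it is \emph{syndetic}, i.e. $G=F'Y_F$ for some finite $F'$. The set $Z_F=\{y:Fy\subseteq T\}$ is right-thick: for a finite $E$ the right-thickness of $T$ applied to the finite set $FE$ gives $z$ with $(FE)z\subseteq T$, i.e. $Ez\subseteq Z_F$. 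Finally a syndetic set always meets a right-thick set: pick $y$ with $F'^{-1}y\subseteq Z_F$ and write $y=f's$ with $f'\in F'$, $s\in Y_F$; then $s=f'^{-1}y\in Y_F\cap Z_F$, so $Fs\subseteq W\cap T\subseteq AB$. As $F\subseteq W$ was arbitrary, $W$ is finitely representable in $AB$.

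It remains to prove that $AB$ is piecewise Bohr, and this is the crux. Observe first that $AB$ is automatically Banach positive: picking $a\in A$ we have $aB\subseteq AB$, so by monotonicity and the shift-invariance of $\is_{12}$ we get $\is_{12}(AB)\ge\is_{12}(aB)=\is_{12}(B)>0$. This alone cannot deliver a Bohr neighbourhood, since a typical Banach positive set is far from piecewise Bohr; the product structure of $AB$ must be exploited through harmonic analysis. I would invoke a Furstenberg-type correspondence principle for the amenable group $G$: using a F\o lner/invariant-mean realisation of $d^*=\is_{12}$, build a probability-measure-preserving $G$-system $(X,\mu,(T_g)_{g\in G})$ with sets $\hat A,\hat B\subseteq X$ of measure $\ge d^*(A),d^*(B)$ and a generic point coding $A$ and $B$, so that membership $g\in AB$ is controlled by positivity of the correlation $c(g)=\mu(\hat A\cap T_g\hat B)$. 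One then decomposes $c$ through the maximal compact (Kronecker) factor: the almost-periodic part $c_{\mathrm{ap}}$ is a Bohr almost-periodic function whose super-level set $\{c_{\mathrm{ap}}>\delta\}$ contains a Bohr-open neighbourhood, while the complementary part is negligible in density and can be absorbed on a right-thick set. Combining the two pieces exhibits a set of the form (Bohr neighbourhood)\,$\cap$\,(right-thick set) inside $\{c>0\}\subseteq AB$.

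The main obstacle is precisely this last step: constructing the Kronecker factor of a general amenable-group action and matching its eigenfunctions with the characters of the Bohr compactification $bG$, so that ``almost periodic'' genuinely becomes ``Bohr''. For countable $G$ this is the content of \cite{BBF}; the passage to arbitrary amenable groups carried out in \cite{Ban} avoids the separable dynamical system and argues directly with invariant means and the density $\is_{12}$, and this is the technically heaviest part of the whole argument.
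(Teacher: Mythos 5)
Your two formal reductions are correct and cleanly executed: the ``Consequently'' clause does follow from the first assertion via the observation preceding the theorem (finite representability of $V$ in $AB$ gives $VV^{-1}\subset ABB^{-1}A^{-1}$, and $VV^{-1}$ is a Bohr-open neighbourhood of $1_G$); and your derivation of finite representability of a Bohr neighbourhood $W$ from the piecewise-Bohr property $W\cap T\subset AB$ (syndeticity of $\bigcap_{f\in F}f^{-1}W$ from total boundedness, right-thickness of $\{y:Fy\subset T\}$, and the fact that a syndetic set meets a right-thick set) is sound. For what it is worth, the lecture notes give no proof of this theorem at all --- they only cite \cite{BBF} for countable $G$ and \cite{Ban} for the general case --- so there is no argument in the text to compare yours against step by step.

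The genuine gap is exactly where you flag it: the assertion that $AB$ is piecewise Bohr is the entire content of the theorem, and your treatment of it is a programme, not a proof. The preliminary remark that $\is_{12}(AB)\ge\is_{12}(B)>0$ buys nothing, as you say. The correspondence-principle paragraph leaves unproved every step that carries weight: the construction of a measure-preserving system and generic point from an invariant mean for a possibly uncountable $G$; the existence and properties of the Kronecker factor for such an action; the identification of its eigenfunctions with characters of $bG$ (delicate even for abelian $G$, and $bG$ may be trivial for some amenable $G$, so one must check the argument degenerates gracefully); the splitting $c=c_{\mathrm{ap}}+c_{\mathrm{wm}}$ and the claim that the weakly mixing part can be ``absorbed on a right-thick set.'' None of these is routine, and the last one in particular requires a van der Corput / density argument that you do not indicate. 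So the proposal correctly isolates and reduces to the hard lemma, and its intended route agrees with that of \cite{BBF}, but it does not prove the theorem; a self-contained argument would have to supply the piecewise-Bohr lemma, which is precisely what \cite{Ban} does for arbitrary amenable groups by working directly with invariant means rather than with a separable dynamical system.
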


\subsection{A Motivating Problem of Protasov}

The following problem was posed by I.V.Protasov in an early version of the Kourovka Notebook \cite{Kourov}. The paper \cite{BPS} gives a survey of partial solutions of this problem.

\begin{problem}[Protasov] Is it true that for any partition $G=A_1\cup\dots\cup A_n$ of a group $G$ one cell $A_i$ of the partition has $\cov(A_iA_i^{-1})\le n$?
\end{problem}

Protasov's problem has the following partial solution proved in \cite{BRS}:

\begin{theorem}[Banakh-Ravsky-Slobodianiuk]\label{slob} For any partition $G=A_1\cup\dots\cup A_n$ of a group $G$ there is a set $F\subset G$ of cardinality $|F|\le\max_{1<k\le n}\frac{k^{n+1-k}-1}{k-1}\le n!$ such that $G=FA_iA_i^{-1}$ for some $i$.
\end{theorem}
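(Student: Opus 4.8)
The plan is to prove the equivalent statement that $\min_{1\le i\le n}\cov(A_iA_i^{-1})$ is bounded by the stated quantity, since a finite set $F$ with $G=FA_iA_i^{-1}$ of minimal size realizes exactly $\cov(A_iA_i^{-1})$. One should resist the obvious shortcut through Proposition~\ref{cov-pack}, $\cov(A_iA_i^{-1})\le\pack(A_i)$: a cell can have infinite packing index (as in the decomposition $F_2=A\cup B\cup\{e\}$, where even the singleton $\{e\}$ has infinite packing index in the whole group), so the covering number must be controlled without ever passing to a maximal disjoint family of translates. I would therefore argue purely combinatorially, and uniformly over \emph{all} groups, by induction on the number $n$ of cells. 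The only external input is the elementary translate-covering reformulation: a finite $F$ satisfies $G=FA_iA_i^{-1}$ precisely when every left translate $gA_i$ meets some $fA_i$ with $f\in F$, together with the maximality argument underlying Proposition~\ref{cov-pack}.

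The base case $n=1$ is immediate, with $A_1A_1^{-1}=G$ and $F=\{1_G\}$. For the inductive step I would construct the covering set as the node-set of a finite rooted tree of group elements. The governing observation is that if a point $g$ is left uncovered by the translates collected so far, then in particular $gA_i\cap A_i=\emptyset$, so the translate $gA_i$ is forced into the union $\bigcup_{j\ne i}A_j$ of the remaining cells; this is exactly the mechanism that lets one spawn children while strictly shrinking the set of still-relevant colours along every branch. Maintaining at each node $v$ a label $g_v\in G$ and a set $C_v$ of active colours, one gets at most $k=|C_v|$ children per node and branch-length at most $n-k$, so the tree has at most $1+k+\dots+k^{n-k}=\frac{k^{n+1-k}-1}{k-1}$ nodes; taking the worst case over $k$ and using $\frac{k^{n+1-k}-1}{k-1}\le n!$ gives the stated bound. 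The labels of the surviving nodes, for the colour $i$ that the construction finally certifies, constitute the desired $F$.

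The hard part will be the bookkeeping that \emph{simultaneously} controls branching and depth. One must show that whenever a translate $gA_i$ escapes the current cover it can be charged to a genuinely new colour, so that $C_v$ strictly decreases down each branch (bounding the depth by $n-k$) while the number of children never exceeds $|C_v|$ (bounding the branching). Equivalently, I must verify that the covering obtained by applying the inductive hypothesis to a reduced colour set, once pulled back through the accumulated shifts $g_v$, still covers \emph{all} of $G$ and not merely the sub-configuration on which induction was invoked. A secondary but essential point is that the argument has to run for arbitrary, possibly non-amenable, groups, so none of the density, F\o lner, or Kelley machinery available in the amenable case may be used; everything must rest on the set-theoretic covering/packing relation of Proposition~\ref{cov-pack}. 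Finally, squeezing the constant down to the precise $\max_{1<k\le n}\frac{k^{n+1-k}-1}{k-1}$, rather than a cruder $n!$-type estimate, is exactly where the height-versus-branching trade-off of the tree must be balanced optimally.
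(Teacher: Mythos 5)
The paper does not actually prove Theorem~\ref{slob}; it only states it and refers to the preprint \cite{BRS}, so there is no in-text argument to compare against. Judged on its own, your proposal is a plan in the right spirit (induction on the number of cells, a tree of translates whose branching and depth are traded off to give $1+k+\cdots+k^{n-k}=\frac{k^{n+1-k}-1}{k-1}$), but it is not a proof: the entire combinatorial core is deferred to ``the hard part will be the bookkeeping,'' and that part is precisely where the theorem lives.

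The concrete gap is in the reduction step. From $g\notin FA_iA_i^{-1}$ (with $1_G\in F$) you correctly deduce $gA_i\cap A_i=\emptyset$, hence $gA_i\subset\bigcup_{j\ne i}A_j$. But this gives a cover of the \emph{subset} $gA_i$ (equivalently, a cover of $A_i$ by the $n-1$ sets $g^{-1}A_j\cap A_i$), not a partition of $G$ into $n-1$ cells, so the inductive hypothesis as you have formulated it simply does not apply. To close the induction one must strengthen the statement being proved --- to covers of arbitrary subsets (or to a statement tracking, for each surviving colour, a finite set of shifts), and one must then show that the covering data pulled back through the accumulated shifts $g_v$ still covers all of $G$ and finally certifies a \emph{single} colour $i$ with a \emph{single} set $F$. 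You name all of these obligations yourself but discharge none of them; nor do you justify why each node has at most $|C_v|$ children or why the colour set strictly shrinks along every branch. Since the bound $\max_{1<k\le n}\frac{k^{n+1-k}-1}{k-1}$ is exactly the output of that unperformed bookkeeping, the proposal as written establishes nothing beyond the trivial case $n=1$. (Your instinct to avoid Proposition~\ref{cov-pack} and all density machinery is sound --- the theorem must hold for non-amenable groups where every cell may have infinite packing index --- but that only tells you which tools are unavailable, not how to complete the argument.)
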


For amenable groups the answer to Protasov's Problem is affirmative:

\begin{theorem} If a group $G$ is amenable (more generally, if the density $\is_{12}$ is subaditive), then
for any partition $G=A\cup\dots\cup A_n$ there is a cell $A_i$ with $\cov(A_iA_i^{-1})\le \pack(A)\le n$.
\end{theorem}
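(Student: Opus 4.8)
The plan is to combine the subadditivity of $\is_{12}$ with the two chained inequalities already in hand, namely Proposition~\ref{pack-is} bounding the packing index in terms of $\is_{12}$, and Proposition~\ref{cov-pack} bounding the covering number of $A_iA_i^{-1}$ in terms of the packing index. The whole argument is then just a pigeonhole applied to the densities of the cells.

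First I would use that $\is_{12}$ is a density, so $\is_{12}(G)=1$. Under the hypothesis that $\is_{12}$ is subadditive (which holds on every amenable group by Theorem~\ref{d=is}, since there $\is_{12}=d^*$ is a subadditive density), the decomposition $G=A_1\cup\dots\cup A_n$ yields
$$1=\is_{12}(G)\le\sum_{i=1}^n\is_{12}(A_i).$$
By the pigeonhole principle there is an index $i$ with $\is_{12}(A_i)\ge\tfrac1n>0$; in particular this cell $A_i$ is Banach positive.

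Next I would feed this cell into the packing bound. Since $A_i$ is Banach positive, Proposition~\ref{pack-is} gives $\pack(A_i)\le\frac1{\is_{12}(A_i)}\le n$, where the last inequality uses $\is_{12}(A_i)\ge\frac1n$ together with the fact that $\pack(A_i)$ is a non-negative integer dominated by $1/\is_{12}(A_i)\le n$. Finally, Proposition~\ref{cov-pack} gives $\cov(A_iA_i^{-1})\le\pack(A_i)$, and stringing these together yields $\cov(A_iA_i^{-1})\le\pack(A_i)\le n$, as required.

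I do not expect a genuine obstacle here: the real content has been front-loaded into the subadditivity of $\is_{12}$ and into the two propositions, so the remaining step is merely an averaging argument. The only point deserving a little care is that the general hypothesis ``$\is_{12}$ subadditive'' is precisely what legitimizes the pigeonhole step, since without it one cannot conclude $\sum_i\is_{12}(A_i)\ge1$; indeed the preceding example on the free group $F_2$ shows that the conclusion can fail when subadditivity is dropped.
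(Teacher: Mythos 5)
Your proof is correct and follows exactly the paper's argument: subadditivity of $\is_{12}$ gives a cell with $\is_{12}(A_i)\ge 1/n$, and then Propositions~\ref{pack-is} and \ref{cov-pack} yield $\cov(A_iA_i^{-1})\le\pack(A_i)\le n$. The paper's own proof is just a one-line compression of the same chain of inequalities.
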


\begin{proof} By the subadditivity of $\is_{12}$ some cell $A_i$ has upper Banach density $\is_{12}(A_i)\ge1/n$ and then $\cov(A_iA_i^{-1})\le\pack(A_i)\le n$.
\end{proof}

\begin{example} The free group $F_2$ admits a partition $F_2=A_1\cup A_2$ such that $\pack(A_i)=\pack_2(A_i)=\w$ but $\cov(A_iA_i^{-1})\le\cov(A_i)\le 2$ for every $i\in\{1,2\}$.
\end{example}

\begin{problem}[Solecki] Is a group $G$ amenable if the density $\is_{12}$ is subadditive?
\end{problem}

The following theorem (proved in \cite{Ban}) shows that groups with subadditive density $\is_{12}$ are close to being amenable.

\begin{theorem} For a group $G$ the following conditions are equivalent:
 \begin{enumerate}
  \item the group $G$ is amenbale;
\item on the group $G\times\IZ$ the density $\is_{12}$ is subadditive;
\item for every $n\in\IN$ there is a finite group $F$ of cardinality $|F|\ge n$ such that on the group $G\times F$ the density $\is_{12}$ is subadditive.
 \end{enumerate}
\end{theorem}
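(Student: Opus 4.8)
The plan is to prove the cycle $(1)\Rightarrow(2)\Rightarrow(3)\Rightarrow(1)$, concentrating essentially all of the work in the last implication. For $(1)\Rightarrow(2)$ I would note that $G\times\IZ$ is amenable: $\IZ$ is amenable and amenability is preserved under extensions (the closure Proposition), so the extension $1\to G\to G\times\IZ\to\IZ\to 1$ is amenable; the corollary to Theorem~\ref{d=is} that $\is_{12}$ is subadditive on every amenable group then gives $(2)$ at once.

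For $(2)\Rightarrow(3)$ I would first show that subadditivity of $\is_{12}$ descends along surjective homomorphisms. If $q\colon H\to K$ is onto, then by the Kelley-number formula of Theorem~\ref{is=Kelley} we have $\is_{12}^{K}(A)=I(\{kA\}_{k\in K})$ and $\is_{12}^{H}(q^{-1}(A))=I(\{q^{-1}(kA)\}_{k\in K})$, using $hq^{-1}(A)=q^{-1}(q(h)A)$ and surjectivity. Since $q^{-1}$ commutes with intersections and carries nonempty sets to nonempty sets, the two families have the same pattern of nonempty finite subintersections and hence the same intersection number, so $\is_{12}^{H}(q^{-1}(A))=\is_{12}^{K}(A)$. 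As $q^{-1}$ also commutes with unions, subadditivity on $H$ forces it on $K$. Applying this to $q\colon G\times\IZ\to G\times(\IZ/m\IZ)$ and taking $F=\IZ/n\IZ$ produces, for each $n$, a finite group of order $\ge n$ on which $\is_{12}$ is subadditive, which is $(3)$.

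The substance is $(3)\Rightarrow(1)$, which I would prove in contrapositive form: if $G$ is \emph{not} amenable, I claim there is a threshold $n$ so that $\is_{12}$ is non-subadditive on $G\times F$ for \emph{every} finite group $F$ with $|F|\ge n$. The witness is the one already seen for $F_2$: a partition $G\times F=A_1\cup\dots\cup A_m$ in which every cell has packing index $\ge m+1$ cannot coexist with subadditivity, since Proposition~\ref{pack-is} gives $\is_{12}(A_j)\le 1/\pack(A_j)<1/m$, whence $\sum_j\is_{12}(A_j)<1=\is_{12}(G\times F)$. Thus the goal is to manufacture, for all large $F$, a finite partition of $G\times F$ whose cells have packing index strictly larger than their number. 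Non-amenability feeds in through Tarski's theorem or the failure of the F\o lner condition: there is a finite $K\subset G$ and $c>1$ with $|KE|\ge c|E|$ for all finite $E$ (a paradoxical decomposition of $G$), and these data fix the threshold $n$ and supply an unbounded reservoir of almost-disjoint translated copies in the $G$-direction.

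I expect the construction of that partition to be the main obstacle. The naive attempt --- crossing an $m$-cell partition of $G$ with the $|F|$ slices $G\times\{f\}$ --- is useless, because passing to $G\times F$ multiplies both the packing indices and the number of cells by $|F|$ and so merely reproduces the same problem on $G$ alone. The gain must come from using the regular action of $F$ (which yields $|F|$ automatically disjoint translates in the $F$-direction) \emph{in combination} with the paradoxical translates in the $G$-direction, so that a single cell admits far more disjoint translates than there are cells; since only the regular action of $F$ is used, the argument is insensitive to the structure of $F$ and applies to every finite group of order $\ge n$. This is exactly the point at which a large cardinality $|F|$ is indispensable, and it is the same difficulty that leaves Solecki's problem --- whether subadditivity of $\is_{12}$ on $G$ by itself already forces amenability --- open, since in $G$ alone there may not be enough room to realize such a partition.
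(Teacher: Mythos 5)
Your implications $(1)\Rightarrow(2)$ and $(2)\Rightarrow(3)$ are fine. The first is immediate from closure of amenability under extensions together with the subadditivity of $\is_{12}=d^*$ on amenable groups, and your descent lemma for the second --- that a surjective homomorphism $q\colon H\to K$ satisfies $\is_{12}^H(q^{-1}(A))=\is_{12}^K(A)$ because $hq^{-1}(A)=q^{-1}(q(h)A)$ and $q^{-1}$ preserves the pattern of nonempty finite intersections, hence the Kelley intersection number of Theorem~\ref{is=Kelley} --- is correct and self-contained; applying it to $G\times\IZ\to G\times(\IZ/n\IZ)$ yields $(3)$. Note that these lecture notes do not actually prove the theorem (it is only cited from \cite{Ban}), so there is no in-text proof to compare your route against; the proposal has to stand on its own.

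It does not, because $(3)\Rightarrow(1)$ --- the implication carrying essentially all of the content --- is left unproved. Your reduction is sound as far as it goes: if for some threshold $n$ every $G\times F$ with $|F|\ge n$ admitted a partition into $m$ cells each of packing index at least $m+1$, then Proposition~\ref{pack-is} would give $\sum_j\is_{12}(A_j)\le m/(m+1)<1=\is_{12}(G\times F)$, contradicting subadditivity and so negating $(3)$. But you then explicitly defer the construction of that partition as ``the main obstacle,'' and the assertion that the regular action of $F$ ``must'' combine with a paradoxical decomposition of $G$ (or a witness to the failure of the F\o lner condition) to produce it is a programme, not an argument. That construction is precisely where nonamenability has to be converted into combinatorics, and it is the step separating this theorem from Solecki's open problem; it is also not evident that the packing-index route is the right one, as opposed to, say, showing directly that subadditivity of $\is_{12}$ on $G\times F$ for arbitrarily large finite $F$ forces the F\o lner condition on $G$. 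As written, the cycle of implications is broken and the theorem is not established.
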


\section{The extremal submeasure $\iss_{213}$}

On each $G$-space $X$ the extremal density $\iss_{213}:\mathcal P(X)\to[0,1]$ is defined for $A\subset X$ by
$$\iss_{213}(A)=\inf_{\mu_1\in P_\w(G)}\sup_{\mu_2\in P_\w(G)}\sup_{\mu_3\in P_\w(X)}\mu_2*\mu_1*\mu_3(A).$$

The extremal density $\iss_{213}$ will be called the {\em Solecki submeasure} on the $G$-space $X$.

\begin{proposition}$\is_{213}(A)=\inf_{\mu\in P_\w(G)}\sup_{x\in G}\sup_{y\in X}\mu*\delta_y(xA)$.
\end{proposition}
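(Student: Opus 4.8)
The plan is to reduce the two inner suprema over finitely supported measures to suprema over Dirac measures, exactly as in the analogous characterization of $\is_{12}$ above, and then to compute the resulting triple convolution of point masses and use the inversion bijection of $G$.

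First I would fix $\mu_1\in P_\w(G)$ and $\mu_2\in P_\w(G)$ and observe that the map $\mu_3\mapsto\mu_2*\mu_1*\mu_3(A)$ is affine on $P_\w(X)$: the convolution formula $\nu*\lambda(A)=\sum_i\alpha_i\lambda(g_i^{-1}A)$ shows that convolving on the left by a fixed measure is linear in the right argument, and a composition of such linear maps is again linear. Since $P_\w(X)=\conv\{\delta_y:y\in X\}$, the supremum of an affine functional over $P_\w(X)$ is attained along the vertices, so $\sup_{\mu_3\in P_\w(X)}\mu_2*\mu_1*\mu_3(A)=\sup_{y\in X}\mu_2*\mu_1*\delta_y(A)$. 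Fixing now $\mu_1$ and $y$, the same argument applied to the affine map $\mu_2\mapsto\mu_2*(\mu_1*\delta_y)(A)$ on $P_\w(G)$ gives $\sup_{\mu_2\in P_\w(G)}\mu_2*\mu_1*\delta_y(A)=\sup_{x\in G}\delta_x*\mu_1*\delta_y(A)$. Combining the two reductions and merging the two suprema, I obtain $\iss_{213}(A)=\inf_{\mu_1\in P_\w(G)}\sup_{x\in G}\sup_{y\in X}\delta_x*\mu_1*\delta_y(A)$.

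It then remains to identify $\delta_x*\mu_1*\delta_y(A)$ with $\mu_1*\delta_y(xA)$ up to inversion. Using associativity of the mixed convolution (which follows from the action axiom $(xg)y=x(gy)$ tested first on Dirac masses and then extended by bilinearity), together with the identity $\delta_x*\nu(A)=\nu(x^{-1}A)$ for any density $\nu$ on $X$, I get $\delta_x*\mu_1*\delta_y(A)=\mu_1*\delta_y(x^{-1}A)$. Finally, since $x\mapsto x^{-1}$ is a bijection of $G$, the supremum over $x\in G$ of $\mu_1*\delta_y(x^{-1}A)$ equals that of $\mu_1*\delta_y(xA)$, and renaming $\mu_1$ as $\mu$ yields the claimed formula. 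The computations are all routine; the only point requiring genuine care is the bookkeeping of the mixed convolution $\mu_2*\mu_1*\mu_3$ with $\mu_2,\mu_1$ on $G$ and $\mu_3$ on $X$ — one must verify that $(\mu_2*\mu_1)*\mu_3=\mu_2*(\mu_1*\mu_3)$, so that the expression is unambiguous and the two successive vertex reductions are legitimate. I expect this associativity check to be the main, though minor, obstacle.
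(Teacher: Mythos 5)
Your proposal is correct. The paper states this proposition without any proof (just as the analogous characterization of $\is_{12}$ is left as an exercise), and your argument --- reducing each supremum over the simplex $P_\w(\cdot)$ of finitely supported measures to its Dirac extreme points by linearity, checking associativity of the mixed convolution, and using $\delta_x*\nu(A)=\nu(x^{-1}A)$ together with the inversion bijection $x\mapsto x^{-1}$ of $G$ --- is exactly the routine verification the author intends.
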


This implies that $\is_{12}\le \iss_{213}$.

\begin{theorem} On each $G$-space $X$ the density $\iss_{213}$ is a $G$-invariant submeasure.
\end{theorem}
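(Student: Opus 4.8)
The plan is to verify the defining properties of a submeasure together with $G$-invariance, treating the density axioms as routine and concentrating on subadditivity and invariance, both of which rest on the associativity of convolution and on the fact that in the expression $\mu_2*\mu_1*\mu_3$ the middle factor is the one being minimized while the outer factors range freely. First I would record that each $\mu_2*\mu_1*\mu_3$ is itself a probability measure on $X$, so that $\mu_2*\mu_1*\mu_3(\emptyset)=0$ and $\mu_2*\mu_1*\mu_3(X)=1$ for all choices; passing to $\sup$ and then to $\inf$ gives $\iss_{213}(\emptyset)=0$ and $\iss_{213}(X)=1$. Monotonicity is inherited termwise from the monotonicity of each measure, since $\sup$ and $\inf$ preserve the order. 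This disposes of the density axioms.

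For $G$-invariance, fix $g\in G$ and write an arbitrary measure on $X$ arising in the formula as $\eta=\mu_1*\mu_3$. With $\mu_2=\sum_i\alpha_i\delta_{g_i}$ a direct computation gives $\mu_2*\eta(gA)=\sum_i\alpha_i\eta(g_i^{-1}gA)=(\delta_{g^{-1}}*\mu_2)*\eta(A)$. Since the map $\mu_2\mapsto\delta_{g^{-1}}*\mu_2$ is a bijection of $P_\w(G)$ (with inverse $\mu_2\mapsto\delta_g*\mu_2$), taking the supremum over $\mu_2$ shows that $\sup_{\mu_2,\mu_3}\mu_2*\mu_1*\mu_3(gA)=\sup_{\mu_2,\mu_3}\mu_2*\mu_1*\mu_3(A)$ for every fixed $\mu_1$, and the infimum over $\mu_1$ then yields $\iss_{213}(gA)=\iss_{213}(A)$. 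The point is simply that the left shift can be absorbed into the free left factor $\mu_2$.

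The heart of the proof is subadditivity. Fix $\e>0$ and choose $\mu_1,\nu_1\in P_\w(G)$ witnessing $\sup_{\mu_2,\mu_3}\mu_2*\mu_1*\mu_3(A)\le\iss_{213}(A)+\e$ and $\sup_{\nu_2,\nu_3}\nu_2*\nu_1*\nu_3(B)\le\iss_{213}(B)+\e$. I would then use the single combined witness $\lambda_1=\nu_1*\mu_1\in P_\w(G)$. For arbitrary $\mu_2\in P_\w(G)$ and $\mu_3\in P_\w(X)$, subadditivity of the measure $\mu_2*\lambda_1*\mu_3$ gives $\mu_2*\lambda_1*\mu_3(A\cup B)\le\mu_2*\lambda_1*\mu_3(A)+\mu_2*\lambda_1*\mu_3(B)$. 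For the first summand, associativity lets me regroup $\mu_2*\nu_1*\mu_1*\mu_3(A)=(\mu_2*\nu_1)*\mu_1*\mu_3(A)$, which is of the admissible form for the witness $\mu_1$ and hence at most $\iss_{213}(A)+\e$. For the second summand I regroup in the opposite direction as $\mu_2*\nu_1*(\mu_1*\mu_3)(B)$ with $\mu_1*\mu_3\in P_\w(X)$, which is of the admissible form for the witness $\nu_1$ and hence at most $\iss_{213}(B)+\e$. The bound $\iss_{213}(A)+\iss_{213}(B)+2\e$ is uniform in $\mu_2,\mu_3$, so $\iss_{213}(A\cup B)\le\sup_{\mu_2,\mu_3}\mu_2*\lambda_1*\mu_3(A\cup B)\le\iss_{213}(A)+\iss_{213}(B)+2\e$, and letting $\e\to0$ gives subadditivity.

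The one genuinely delicate point, and the step I expect to be the main obstacle, is getting the order of convolution in the combined witness right. The index $213$ dictates that $\mu_1$ sits immediately to the left of the $X$-factor, so to absorb $\nu_1$ leftward in the $A$-term I must place it on the far left of $\lambda_1=\nu_1*\mu_1$; the very same choice then lets $\mu_1$ be absorbed rightward into the $X$-factor for the $B$-term. Any other grouping breaks one of the two regroupings, so the non-commutativity of $G$ must be respected here, but no further estimates are needed.
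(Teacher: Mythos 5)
Your proof is correct and follows essentially the same route as the paper: both combine the two witnesses $\mu_1,\nu_1$ into a single convolution and use associativity to absorb one witness into the free left $G$-factor and the other into the free right $X$-factor. The only cosmetic difference is that you take $\nu_1*\mu_1$ where the paper takes $\mu_1*\nu_1$ (so the roles of which witness is absorbed left versus right are swapped), which is equally valid by the symmetry between $A$ and $B$; your explicit verifications of the density axioms and of $G$-invariance fill in details the paper dismisses as immediate.
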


\begin{proof} The $G$-invariance of $\iss_{213}$ follows from the definition.
The subadditivity of $\iss_{213}$ will follow as soon as we check that $$\iss_{213}(A\cup B)\le \iss_{213}(A)+\iss_{213}(B)+2\e$$ for any $\e>0$ and any subsets $A,B\subset X$.

The definition of $\iss_{213}(A)$ yields a measure $\mu_1\in P_\w(G)$ such that $\sup_{\mu_2\in P_\w(G)}\sup_{\mu_3\in P_\w(X)}\mu_2*\mu_1*\mu_3(A)<\iss_{213}(A)+\e$.
By analogy, there is a measure $\nu_1\in P_\w(G)$ such that $\sup_{\nu_2\in P_\w(G)}\sup_{\mu_3\in P_\w(X)}\nu_2*\nu_1*\nu_3(A)<\iss_{213}(A)+\e$. Then for the measure $\eta_1=\mu_1*\nu_1\in P_\w(G)$ and any measures $\eta_2\in P_\w(G)$, $\eta_3\in P_\w(X)$ we get $$\eta_2*\eta_1*\eta_3(A)=\eta_2*\mu_1*(\nu_1*\eta_3)(A)<\iss_{213}(A)+\e$$and
$$\eta_2*\eta_1*\eta_3(B)=(\eta_1*\mu_1)*\nu_2*\eta_3(B)<\iss_{213}(B)+\e.$$
Then
$$\eta_2*\eta_1*\eta_3(A\cup B)\le \eta_2*\eta_1*\eta_3(A)+\eta_2*\eta_1*\eta_3(B)<\iss_{213}(A)+\iss_{213}(B)+2\e$$
and hence
$$\iss_{213}(A\cup B)\le\sup_{\eta_2\in P_\w(G)}\sup_{\eta_3\in P_\w(X)}\eta_2*\eta_1*\eta_3(A\cup B)\le \iss_{213}(A)+\iss_{213}(B)+2\e.$$
\end{proof}

The proof of this theorem yields a bite more, namely:

\begin{theorem}\label{ishat} $\is_{12}(A\cup B)\le\is_{12}(A)+\iss_{213}(B)$ for any sets $A,B\subset X$.
\end{theorem}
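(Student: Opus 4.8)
The plan is to run the same convolution-bookkeeping argument that just established the subadditivity of $\iss_{213}$, but to exploit the asymmetry of the two factors: the set $A$ will be charged against $\is_{12}$ (whose defining formula carries only a single outer infimum over $P_\w(G)$), while $B$ will be charged against $\iss_{213}$ (whose definition carries an \emph{extra} outer supremum over $P_\w(G)$ that we can fill with the very measure produced for $A$). Concretely, fix $\e>0$. Using $\is_{12}(A)=\inf_{\mu_1\in P_\w(G)}\sup_{\mu_2\in P_\w(X)}\mu_1*\mu_2(A)$, choose $\mu_1\in P_\w(G)$ with $\sup_{\mu_2\in P_\w(X)}\mu_1*\mu_2(A)<\is_{12}(A)+\e$; using the definition of $\iss_{213}$, choose $\nu_1\in P_\w(G)$ with $\sup_{\nu_2\in P_\w(G)}\sup_{\nu_3\in P_\w(X)}\nu_2*\nu_1*\nu_3(B)<\iss_{213}(B)+\e$. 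Set $\eta_1=\mu_1*\nu_1\in P_\w(G)$.

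Next I would estimate $\eta_1*\delta_x$ on $A$ and on $B$ separately, for an arbitrary $x\in X$. For $A$, associativity of the convolution together with the inclusion $P_\w(G)*P_\w(X)\subset P_\w(X)$ lets me read $\eta_1*\delta_x=\mu_1*(\nu_1*\delta_x)$ with $\nu_1*\delta_x\in P_\w(X)$, so that $\eta_1*\delta_x(A)\le\sup_{\mu_2\in P_\w(X)}\mu_1*\mu_2(A)<\is_{12}(A)+\e$. For $B$, I instead read the same expression $\eta_1*\delta_x=\mu_1*\nu_1*\delta_x$ as an instance of the pattern $\nu_2*\nu_1*\nu_3$ with $\nu_2=\mu_1\in P_\w(G)$ and $\nu_3=\delta_x\in P_\w(X)$, so that $\eta_1*\delta_x(B)<\iss_{213}(B)+\e$. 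This is the only step where the two hypotheses enter, and it is the one to get right: the single measure $\mu_1$ must serve simultaneously as the $\is_{12}$-witness acting on $A$ and as the free left $P_\w(G)$-factor $\nu_2$ acting on $B$, which is exactly why $A$ is matched to $\is_{12}$ and $B$ to $\iss_{213}$.

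Finally, since $\eta_1*\delta_x$ is an honest finitely supported measure on $X$, it is additive and hence subadditive, so
$$\eta_1*\delta_x(A\cup B)\le\eta_1*\delta_x(A)+\eta_1*\delta_x(B)<\is_{12}(A)+\iss_{213}(B)+2\e.$$
Taking the supremum over $x\in X$ and using $\is_{12}(A\cup B)\le\sup_{x\in X}\eta_1*\delta_x(A\cup B)$ (as $\eta_1$ is one admissible choice in the outer infimum defining $\is_{12}$) gives $\is_{12}(A\cup B)<\is_{12}(A)+\iss_{213}(B)+2\e$, and letting $\e\to0$ yields the claim. I expect no genuine obstacle: the argument is a pure rearrangement of convolutions, and the entire content lies in the asymmetric matching of the factors of $\eta_1*\delta_x$ to the two defining formulas described above.
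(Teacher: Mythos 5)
Your proof is correct and is exactly the adaptation the paper intends: Theorem~\ref{ishat} is stated there as a byproduct of the subadditivity proof for $\iss_{213}$, obtained by replacing the $\iss_{213}$-witness for $A$ by an $\is_{12}$-witness $\mu_1$ and then reading $\mu_1*\nu_1*\delta_x$ once as $\mu_1*(\nu_1*\delta_x)$ against $A$ and once as $\nu_2*\nu_1*\nu_3$ with $\nu_2=\mu_1$ against $B$. No issues.
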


\begin{theorem}[Solecki, 2005] On any group $G$ we get the equality $\iss_{213}=\uss_{213}$, where
 $$\uss_{213}(A)=\inf_{\mu_1\in P_u(G)}\sup_{\mu_2\in P_\w(G)}\sup_{\mu_3\in P_\w(G)}\mu_2*\mu_1*\mu_3(A)=\inf_{F\subset G}\max_{x,y\in G}\frac{|F\cap xAy|}{|F|}.$$
\end{theorem}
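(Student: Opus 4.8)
The plan is to first collapse both $\iss_{213}$ and $\uss_{213}$ to suprema over single pairs of points. Since the map $(\mu_2,\mu_3)\mapsto\mu_2*\mu_1*\mu_3(A)$ is affine in each argument and the extreme points of $P_\w(G)$ are the Dirac measures, the two inner suprema are attained at $\mu_2=\delta_a$, $\mu_3=\delta_b$. As $\delta_a*\mu_1*\delta_b(A)=\mu_1(a^{-1}Ab^{-1})$, reparametrising $x=a^{-1}$, $y=b^{-1}$ gives $\iss_{213}(A)=\inf_{\mu\in P_\w(G)}\sup_{x,y\in G}\mu(xAy)$, and for a uniform measure $\mu_F=\frac1{|F|}\sum_{g\in F}\delta_g$ one has $\mu_F(xAy)=\frac{|F\cap xAy|}{|F|}$, which yields $\uss_{213}(A)=\inf_{F}\max_{x,y}\frac{|F\cap xAy|}{|F|}$ — the rightmost equality in the statement. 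The inequality $\iss_{213}\le\uss_{213}$ is then immediate from $P_u(G)\subset P_\w(G)$, so everything reduces to proving $\uss_{213}(A)\le\iss_{213}(A)+\e$ for every $\e>0$.

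For the reverse inequality I would record a monotonicity lemma: for any $\mu,\nu\in P_\w(G)$ one has $\sup_{x,y}(\nu*\mu)(xAy)\le\sup_{x,y}\mu(xAy)$, and likewise for right convolution, because $(\nu*\mu)(xAy)=\sum_g\nu(g)\,\mu((g^{-1}x)Ay)$ is a convex combination of values of $\mu$ on translates of $A$. Now fix $\mu$ with $\sup_{x,y}\mu(xAy)<\iss_{12}(A)+\e$, approximate its weights by rationals with common denominator to write $\mu=\frac1N\sum_{i=1}^m p_i\delta_{z_i}$ up to an $\e$-perturbation of the two-sided supremum, and \emph{flatten} it: for large finite sets $E,E'\subset G$ put $\nu=\mu_E*\mu*\mu_{E'}$. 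By the monotonicity lemma $\sup_{x,y}\nu(xAy)$ does not increase, while for fixed $(e,i)$ the element $e'$ with $ez_ie'=g$ is forced, so every atom of $\nu$ has mass at most $\frac{m\max_i p_i}{|E'|N}$, which tends to $0$ as $|E'|\to\infty$. The point to stress is that this flattening is available over an \emph{arbitrary} group precisely because the two-sided supremum, unlike the one-sided supremum governing the equality $\is_{12}=\us_{12}$, is insensitive to convolution on either side; this is exactly what removes the amenability hypothesis.

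It remains to pass from a measure $\nu=\sum_g c_g\delta_g$ with tiny atoms $c_g\le\eta$ and $\sup_{x,y}\nu(xAy)\le c:=\iss_{12}(A)+2\e$ to a genuine uniform measure $\mu_F\in P_u(G)$ with $\sup_{x,y}\mu_F(xAy)\le c+\e$. Here I would use the probabilistic method: form a random set $F\subset\supp(\nu)$ by including each $g$ independently with probability $c_g/\eta$, so that $\mathbb E|F|=1/\eta$ is large and, for every translate, $\mathbb E|F\cap xAy|=\frac1\eta\,\nu(xAy)\le c\,\mathbb E|F|$. A Chernoff bound makes $|F\cap xAy|\le(c+\e)|F|$ hold with high probability for each fixed $(x,y)$, and one concludes by a union bound over the finitely many essentially different translates $xAy$ meeting $\supp(\nu)$. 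I expect this last step to be the main obstacle: one must control the number of distinct intersection patterns $xAy\cap\supp(\nu)$ so that the union bound beats the $\exp(-\Omega(\e^2/\eta))$ tail uniformly over all of $G\times G$. This is where the combinatorial geometry of the family $\{xAy\}$ enters — one needs its traces on a finite set to grow sub-exponentially — and since flattening lets us drive $\eta\to0$ while the support grows only polynomially faster, iterating the flattening should create enough room to close the estimate.
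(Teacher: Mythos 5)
First, a point of reference: the paper does not actually prove this theorem --- it is quoted from Solecki's paper \cite{Sol} without argument --- so there is no internal proof to measure you against. That said, your preparatory reductions are all correct and worth keeping: the collapse of the two inner suprema to Dirac measures, giving $\iss_{213}(A)=\inf_{\mu\in P_\w(G)}\sup_{x,y\in G}\mu(xAy)$ and the displayed formula for $\uss_{213}$; the trivial inequality $\iss_{213}\le\uss_{213}$ from $P_u(G)\subset P_\w(G)$; the monotonicity of the two-sided supremum under convolution on either side (this is indeed the structural reason no amenability is needed, in contrast to $\is_{12}=\us_{12}$); and the flattening $\mu\mapsto\mu_E*\mu*\mu_{E'}$, which drives every atom below $1/|E'|$ without increasing $\sup_{x,y}\mu(xAy)$.

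The gap is exactly where you suspect it, and it cannot be repaired by ``iterating the flattening''. Write $S=\supp(\nu)$ and $\eta=\max_{g}\nu(\{g\})$. Since the atoms sum to $1$, one always has $|S|\ge 1/\eta$, while the number of distinct traces $xAy\cap S$ can be as large as $2^{|S|}$: for a suitably generic $A$ in a free group of infinite rank the family $\{xAy\}_{x,y\in G}$ traces out essentially all subsets of any prescribed finite set, so there is no sub-exponential bound on the trace count to feed into the union bound. The Chernoff exponent is $\Theta(\e^2/\eta)\le\Theta(\e^2|S|)$, so the union bound loses by the fixed factor $\e^2$ however small $\eta$ is made; flattening shrinks $\eta$ but enlarges $S$ at least proportionally, so the deficit is scale-invariant and no amount of iteration changes it. Worse, the failure is not merely in the analysis: if the atoms of $\nu$ take the two values $\eta$ and $\eta/2$, your random $F$ contains all heavy atoms and about half of the $M_2$ light ones, so $\nu(F)\approx 1-\tfrac14 M_2\eta$, which can be $\le c$; if every subset of $S$ of $\nu$-measure $\le c$ occurs as a trace $xAy\cap S$, then $F$ is itself a trace $T$ and $|F\cap T|/|F|=1>c+\e$, so the independent rounding produces a certifiably bad set, and after any further flattening the new support is again a fixed finite set on which such an $A$ can trace all small subsets. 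The first three quarters of your write-up survive, but the passage from a rational-weight measure to a genuinely uniform one needs a different mechanism (this is precisely the content of Solecki's lemma in \cite{Sol}); it must be replaced, not patched.
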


This theorem implies the inequality $\is_{12}\le\us_{12}=\uss_{213}=\iss_{213}$ for any group $G$.

\begin{question} Is $\uss_{213}=\iss_{213}$ for any $G$-space?
\end{question}

\subsection{The subadditivization $\widehat\mu$ of a density $\mu$}

For any density $\mu:\mathcal P(X)\to[0,1]$ on a set $X$ its {\em subadditivization} is the submeasure $\widehat\mu:\mathcal P(X)\to[0,1]$ defined by
$$\widehat\mu(A)=\sup_{C\subset X}\mu(A\cup C)-\mu(C).$$
It is clear that $\mu\le\widehat\mu$. To see that $\widehat\mu$ is subadditive, observe that for any sets $A,B,C\subset G$ we get
$$\mu(A\cup B\cup C)-\mu(C)=\mu(A\cup B\cup C)-\mu(B\cup C)+\mu(B\cup C)-\mu(C)\le \widehat\mu(A)+\widehat\mu(B).$$

The inequality $\is_{12}(A\cup B)\le\is_{12}(A)+\iss_{213}(B)$ proved in Theorem~\ref{ishat} implies

\begin{proposition} On any $G$-space $\is_{12}\le \his_{12}\le \iss_{213}$.
\end{proposition}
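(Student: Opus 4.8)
The plan is to split the chain into its two inequalities and dispatch each separately; the substantive content lives entirely in Theorem~\ref{ishat}, so the bulk of the work is already done and what remains is a reformulation in terms of the supremum defining the subadditivization.

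First I would handle the left inequality $\is_{12}\le\his_{12}$, which is merely an instance of the general estimate $\mu\le\widehat\mu$ recorded above for every density. Concretely, evaluating the defining supremum $\his_{12}(A)=\sup_{C\subset X}\big(\is_{12}(A\cup C)-\is_{12}(C)\big)$ at the particular choice $C=\emptyset$ yields $\his_{12}(A)\ge\is_{12}(A\cup\emptyset)-\is_{12}(\emptyset)=\is_{12}(A)$, since $\is_{12}(\emptyset)=0$. No further argument is needed here.

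For the right inequality $\his_{12}\le\iss_{213}$, the idea is to feed Theorem~\ref{ishat} directly into the definition of $\his_{12}$. Fix $A\subset X$ and let $C\subset X$ be arbitrary. Applying Theorem~\ref{ishat} with its first argument equal to $C$ and its second argument equal to $A$ gives $\is_{12}(C\cup A)\le\is_{12}(C)+\iss_{213}(A)$. Since $C\cup A=A\cup C$, rearranging produces $\is_{12}(A\cup C)-\is_{12}(C)\le\iss_{213}(A)$, and crucially the right-hand bound does not depend on $C$. Taking the supremum over all $C\subset X$ on the left then gives exactly $\his_{12}(A)=\sup_{C\subset X}\big(\is_{12}(A\cup C)-\is_{12}(C)\big)\le\iss_{213}(A)$, which is the desired inequality.

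I do not expect a genuine obstacle, as all the combinatorial weight has been absorbed into Theorem~\ref{ishat}. The one point to watch is the \emph{order} of the two arguments when invoking that theorem: the set $A$ whose subadditivized density we are bounding must occupy the $\iss_{213}$-slot (the second argument), while the auxiliary set $C$ must occupy the $\is_{12}$-slot (the first argument). Swapping them would instead bound $\is_{12}(A\cup C)-\is_{12}(A)$, which is not what the supremum in $\his_{12}(A)$ requires, so the placement is exactly what makes the $C$-independent bound $\iss_{213}(A)$ survive the passage to the supremum.
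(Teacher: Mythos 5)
Your proof is correct and follows exactly the route the paper intends: the left inequality is the general fact $\mu\le\widehat\mu$ (equivalently, the choice $C=\emptyset$ in the defining supremum), and the right inequality is Theorem~\ref{ishat} applied with the roles of the two sets arranged so that $A$ sits in the $\iss_{213}$-slot, giving a bound independent of $C$ that survives the supremum. Nothing further is needed.
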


\subsection{An application of $\iss_{213}$ to Protasov's problem}

A subset $A$ of a group $G$ will be called {\em inner-invariant} if $xAx^{-1}=A$ for any $x\in G$.

\begin{observation} Any inner-invariant set $A\subset G$ has $\is_{12}(A)=\iss_{213}(A)$.
\end{observation}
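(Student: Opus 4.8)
The plan is to exploit the fact that the inequality $\is_{12}\le\iss_{213}$ is already available on every $G$-space (it was recorded right after the proposition characterizing $\iss_{213}$). Hence it suffices to prove the reverse inequality $\iss_{213}(A)\le\is_{12}(A)$ for an inner-invariant set $A$. Here the relevant $G$-space is $X=G$ with the left action, so the points $y$ that occur in the convolution formulas are themselves elements of $G$, and products such as $yx^{-1}$ make sense.

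First I would place the two characterizations side by side: $\is_{12}(A)=\inf_{\mu\in P_\w(G)}\sup_{y\in G}\mu*\delta_y(A)$ and $\iss_{213}(A)=\inf_{\mu\in P_\w(G)}\sup_{x\in G}\sup_{y\in G}\mu*\delta_y(xA)$. Writing $\mu=\sum_i\alpha_i\delta_{g_i}$ and unfolding the definition of the convolution, for any set $B\subset G$ one gets $\mu*\delta_y(B)=\sum_i\alpha_i\,\chi_B(g_iy)$.

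The key step is an exact identity that holds precisely because $A$ is inner-invariant. The relation $xAx^{-1}=A$ is equivalent to $xA=Ax$, so the membership $g_iy\in xA$ is the same as $g_iy\in Ax$, i.e. as $g_i(yx^{-1})\in A$. Substituting this into the displayed formula gives
$$\mu*\delta_y(xA)=\sum_i\alpha_i\,\chi_A\big(g_i(yx^{-1})\big)=\mu*\delta_{yx^{-1}}(A)$$
for every $x,y\in G$ and every $\mu\in P_\w(G)$.

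Finally I would take suprema. As $x$ and $y$ range over $G$, the element $z=yx^{-1}$ ranges over all of $G$, so the identity yields $\sup_{x\in G}\sup_{y\in G}\mu*\delta_y(xA)=\sup_{z\in G}\mu*\delta_z(A)$ for each fixed $\mu$; taking the infimum over $\mu\in P_\w(G)$ then gives $\iss_{213}(A)=\is_{12}(A)$, which is even stronger than the required inequality. I do not anticipate a real obstacle here, since the whole argument reduces to the single identity above; the only point deserving care is the observation that in this setting $X=G$, so that the translating point $y$ lives in $G$ and the reindexing $z=yx^{-1}$ both makes sense and sweeps out the entire group.
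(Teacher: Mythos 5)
Your proof is correct: the identity $\mu*\delta_y(xA)=\mu*\delta_{yx^{-1}}(A)$ for inner-invariant $A$ (using $xA=Ax$) immediately collapses the double supremum in $\iss_{213}$ to the single one in $\is_{12}$. The paper states this as an unproved observation, and your argument is exactly the intended one-line computation behind it.
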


\begin{theorem}[Banakh-Protasov-Slobodianiuk] For any partition $G=A_1\cup\dots\cup G_n$ of a group $G$ into inner-invariant sets one of the sets $A_i$ has $\cov(A_iA_i^{-1})\le\pack(A_i)\le n$.
\end{theorem}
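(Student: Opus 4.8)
The plan is to combine the subadditivity of the Solecki submeasure $\iss_{213}$ with the observation that inner-invariant sets satisfy $\is_{12}=\iss_{213}$, reducing the whole statement to the packing and covering bounds already established in Propositions~\ref{pack-is} and~\ref{cov-pack}.

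First I would exploit subadditivity to locate a good cell. Since $\iss_{213}$ is a $G$-invariant submeasure on the $G$-space $X=G$, it is in particular a density, so $\iss_{213}(G)=1$; and its subadditivity, extended from two sets to finitely many by an obvious induction, yields
$$1=\iss_{213}(G)=\iss_{213}\Big(\bigcup_{i=1}^n A_i\Big)\le\sum_{i=1}^n\iss_{213}(A_i).$$
By the pigeonhole principle there is an index $i$ with $\iss_{213}(A_i)\ge\tfrac1n$.

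Next I would invoke inner-invariance to transfer this bound to $\is_{12}$. Because each cell $A_i$ is inner-invariant, the Observation preceding the theorem gives $\is_{12}(A_i)=\iss_{213}(A_i)\ge\tfrac1n>0$, so this cell is Banach positive. Now Proposition~\ref{pack-is} applies and gives $\pack(A_i)\le 1/\is_{12}(A_i)\le n$, and Proposition~\ref{cov-pack} then supplies the covering bound $\cov(A_iA_i^{-1})\le\pack(A_i)\le n$ for free, completing the chain of inequalities.

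The genuine content lies entirely in the two ingredients I am assuming, and the identity $\is_{12}=\iss_{213}$ for inner-invariant sets is the crux. On a non-amenable group $\is_{12}$ is in general \emph{not} subadditive, so one cannot run the pigeonhole argument directly on $\is_{12}$; the detour through the always-subadditive submeasure $\iss_{213}$ is exactly what rescues the argument, while inner-invariance is the precise hypothesis that lets one return to $\is_{12}$, where the packing bound of Proposition~\ref{pack-is} lives. I expect no further obstacle: once the correct cell is located, every remaining inequality is a verbatim application of a cited proposition.
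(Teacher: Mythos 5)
Your argument is correct and coincides with the paper's own proof: the paper likewise uses the subadditivity of the Solecki submeasure $\iss_{213}$ to locate a cell with $\iss_{213}(A_i)\ge 1/n$, passes to $\is_{12}(A_i)=\iss_{213}(A_i)$ via inner-invariance, and then applies Propositions~\ref{pack-is} and~\ref{cov-pack}. Your closing remark about why the detour through $\iss_{213}$ is needed (since $\is_{12}$ need not be subadditive on non-amenable groups) accurately captures the point of the construction.
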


\begin{proof} By the subadditivity of the Solecki submasure, one of the sets $A_i$ has submeasure $\iss_{213}(A_i)\ge 1/n$. Since $A_i$ is inner-invariant, $\is_{12}(A_i)=\iss_{213}(A_i)\ge1/n$ and then
 $$\cov(A_iA_i^{-1})\le\pack(A_i)\le\frac1{\is_{12}(A_i)}\le n.$$according to Propositions~\ref{cov-pack} and \ref{pack-is}.
\end{proof}

\subsection{Generalizing Gallai-Witt Theorem with help of $\iss_{213}$}

The submeasure $\iss_{213}$ can be used to generalize classical Gallai-Witt theorem to arbitrary groups.

\begin{theorem}[Gallai-Witt] For any partition $\IZ^n=A_1\cup\dots\cup A_n$ there is a cell $A_i$ of the partition containing homothetic copy $nF+b$ of any finite set $F\subset G$.
\end{theorem}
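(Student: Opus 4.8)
The plan is to deduce this partition statement from its density counterpart, the Furstenberg--Katznelson Theorem stated earlier in the text, using subadditivity of the upper Banach density. Since $\IZ^n$ is abelian it is amenable (by the Corollary that each abelian group is amenable), and on an amenable group the extremal density $\is_{12}=\Si_{21}=d^*$ is subadditive. Moreover, in an abelian group every subset is inner-invariant, so the Observation that inner-invariant sets satisfy $\is_{12}(A)=\iss_{213}(A)$ shows that on $\IZ^n$ all of $\is_{12}=\iss_{213}=\us_{12}=d^*$ coincide. This is the sense in which the Solecki submeasure $\iss_{213}$ governs the argument, and it is the feature that would survive the passage to non-amenable groups, where $\iss_{213}$ stays subadditive but $\is_{12}$ need not.

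First I would apply subadditivity to the partition. From $\is_{12}(\IZ^n)=1$ and $\IZ^n=A_1\cup\dots\cup A_n$ we obtain
$$1=\is_{12}(\IZ^n)\le\sum_{i=1}^n\is_{12}(A_i),$$
so at least one cell $A_i$ satisfies $\is_{12}(A_i)\ge 1/n>0$. By Theorem~\ref{d=is} this means $d^*(A_i)=\is_{12}(A_i)\ge 1/n$, i.e. the cell $A_i$ has positive upper Banach density in the amenable group $\IZ^n$.

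Next I would feed this cell into the Furstenberg--Katznelson Theorem: any $A\subset\IZ^n$ with $d^*(A)>0$ contains a homothetic copy $\lambda F+\vec b$ of every finite set $F\subset\IZ^n$. Applying it to $A_i$ produces the required monochromatic homothet inside a single cell of the partition, which is exactly the assertion. The only translation to verify carefully is that the density hypothesis $d^*(A_i)>0$ needed by Furstenberg--Katznelson is precisely what the identification $\is_{12}=d^*$ on the amenable group $\IZ^n$ delivers.

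The main obstacle is that this reduction rests entirely on the Furstenberg--Katznelson Theorem as a black box, whose proof (via Furstenberg's multiple-recurrence ergodic-theoretic machinery) is the genuinely hard ingredient; everything on the combinatorial side is routine once subadditivity is in hand. If one instead wants the promised generalization to arbitrary, possibly non-amenable, groups, the density bookkeeping must be carried by $\iss_{213}$ rather than $\is_{12}$: subadditivity of $\iss_{213}$ still forces some cell to satisfy $\iss_{213}(A_i)\ge 1/n$, but one then needs a Furstenberg--Katznelson-type recurrence statement valid for the Solecki submeasure in place of $d^*$, and it is there that the new difficulties would concentrate.
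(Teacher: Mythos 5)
Your argument is correct, but it takes a different route from the one the paper intends. The paper derives Gallai--Witt as a special case of Theorem~\ref{homo}: subadditivity of the Solecki submeasure $\iss_{213}$ forces some cell to have $\iss_{213}(A_i)\ge 1/n>0$, and then the proof of Theorem~\ref{homo}(2) converts this into a homothetic copy of $F$ by pushing the uniform measure on the cube $F^m$ forward under the multiplication map and invoking the \emph{density Hales--Jewett} theorem (the combinatorial-line version of Furstenberg--Katznelson stated just before Theorem~\ref{homo}). You instead observe that $\IZ^n$ is abelian, hence amenable, so $\is_{12}=d^*$ is subadditive, extract a cell with $d^*(A_i)\ge 1/n$, and feed it into the \emph{density Gallai--Witt} theorem (the multidimensional Furstenberg--Katznelson statement for $\IZ^n$ from the earlier section). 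Both reductions are logically sound and both rest on a deep ergodic-theoretic black box; the difference is which black box. Your version is shorter and stays entirely inside the amenable/$d^*$ framework, but it uses the density form of the very theorem being proved, so it cannot leave $\IZ^n$. The paper's version buys the generalization to arbitrary (possibly non-amenable) groups and to homotheties $h(x)=a_0xa_1x\cdots xa_m$ --- precisely the extension you correctly identify as requiring a recurrence statement compatible with $\iss_{213}$; Theorem~\ref{homo}(2) is that statement, obtained from density Hales--Jewett rather than from a new ergodic theorem. Your side remark that $\is_{12}=\iss_{213}$ on abelian groups (all sets being inner-invariant) is accurate and explains why the two first steps coincide in the case at hand.
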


By a {\em homothety} on a group $G$ we shall undertstand any map $h:G\to G$ of the form $h(x)=a_0xa_1xa_2\dots xa_n$ for some constants $a_0,\dots,a_n\in G$. If $n=1$, then $h(x)=a_0xa_1$ is a two-sided shift.

Observe that any homothety in an abelian group $G$ can be written as $h(x)=nx+a$ for some $n\in\IN$ and $a\in G$.

\begin{theorem}\label{homo} If a subset $A$ of a group $G$ has
 \begin{enumerate}
  \item $\iss_{213}=1$, then for any finite subset $F\subset G$ there is a two-sided shift $h:G\to G$ such that $h(F)\subset A$;
\item $\iss_{213}>0$, then for any finite subset $F\subset G$ there is a homothety $h:G\to G$ with $h(F)\subset A$
 \end{enumerate}
\end{theorem}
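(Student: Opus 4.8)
The plan is to handle the two parts separately: part~(1) falls straight out of the combinatorial description of $\iss_{213}$, while part~(2) I would reduce to the density Hales--Jewett theorem by means of a word map that converts combinatorial lines into homothetic copies. For part~(1), recall the equality $\iss_{213}=\uss_{213}$, which gives $\iss_{213}(A)=\inf_{E}\max_{x,y\in G}\frac{|E\cap xAy|}{|E|}$ over finite $E\subset G$. Each ratio is at most $1$, so the hypothesis $\iss_{213}(A)=1$ forces every maximum to equal $1$; taking $E=F$ produces $x,y\in G$ with $F\subseteq xAy$, i.e.\ $x^{-1}Fy^{-1}\subseteq A$. Then the two-sided shift $h(t)=x^{-1}ty^{-1}$ satisfies $h(F)\subseteq A$, as desired.

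For part~(2) write $F=\{f_1,\dots,f_m\}$, fix a large $N\in\IN$ to be specified, and consider the word map $\phi\colon F^N\to G$, $\phi(w)=w_1w_2\cdots w_N$. The decisive feature of $\phi$ is that it sends combinatorial lines to homothetic copies of $F$: a line $L=\{w^{(f)}:f\in F\}$ is obtained by fixing the values on the non-moving coordinates and inserting the symbol $f$ into every moving coordinate, so if the moving coordinates are $i_1<\dots<i_k$ then $\phi(w^{(f)})=a_0fa_1f\cdots fa_k$, where each $a_j$ is the product of the fixed coordinates in the corresponding gap. Thus $\phi(L)=h(F)$ for a homothety $h$ of degree $k\ge1$, and it suffices to locate a combinatorial line $L$ in $F^N$ with $\phi(L)$ contained in a two-sided translate of $A$.

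To find such a line I would push the uniform measure on $F^N$ forward along $\phi$, producing $\mu_N=\frac1{|F|^N}\sum_{w\in F^N}\delta_{\phi(w)}\in P_\w(G)$. Writing $c:=\iss_{213}(A)>0$ and using the characterization of $\iss_{213}$ as an infimum over $P_\w(G)$ with the inner suprema attained at Dirac measures, the particular measure $\mu_N$ satisfies $\sup_{x,y}\mu_N(xAy)\ge c$; since $\mu_N$ is finitely supported this supremum takes only finitely many values and is attained, so there exist $x,y\in G$ with $\mu_N(xAy)\ge c$. Unwinding the definition of $\mu_N$, the set $B=\{w\in F^N:\phi(w)\in xAy\}$ has density $|B|/|F|^N\ge c$ inside the cube $F^N$. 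Choosing $N=N(m,c)$ large enough, the density Hales--Jewett theorem guarantees a combinatorial line $L\subseteq B$; then $\phi(L)\subseteq xAy$, so $x^{-1}\phi(L)y^{-1}\subseteq A$, and post-composing $h$ with $t\mapsto x^{-1}ty^{-1}$ leaves it a homothety. This completes the argument.

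The routine points — that a combinatorial line maps to a genuine homothety, that $\mu_N$ is a legitimate finitely supported measure, and that the outer supremum is attained — are all immediate, and coincidences among the points of $\phi(L)$ are harmless since only the containment $h(F)\subseteq A$ is needed. The single real obstacle is entirely external: the existence of combinatorial lines in positive-density subsets of $F^N$, that is, the density Hales--Jewett theorem of Furstenberg and Katznelson, which already in the abelian case subsumes their multidimensional Szemer\'edi theorem. All the rest of the proof is the reduction that channels this deep input through the submeasure $\iss_{213}$ and the word map $\phi$.
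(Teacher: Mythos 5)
Your proposal is correct and follows essentially the same route as the paper: push the uniform measure on $F^N$ forward under the word map, use the definition of $\iss_{213}$ (as an infimum over finitely supported measures, with the supremum attained since the pushforward has finite support) to find a two-sided translate $xAy$ whose preimage has density $\ge\iss_{213}(A)$ in $F^N$, and then apply the Furstenberg--Katznelson density Hales--Jewett theorem to extract a combinatorial line, which the word map sends to a homothetic copy of $F$. Part (1) is also handled the same way in substance (you route it through Solecki's equality $\iss_{213}=\uss_{213}$ where the paper reads it off the definition directly, an immaterial difference).
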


In the proof we shall use the following theorem (implying the absence of draws in higher-dimensional tic-tac-toe).

\begin{theorem}[Furstenberg-Katznelson, 1991]For any finite set $F$ and a positive $\e$ there is a number $n\in\IN$ such that any subset $S\subset F^n$ of cardinality $|S|>\e|F^n|$ contains a ``line'', i.e., the image of $F$ under an injective map $\xi=(\xi_i)_{i=1}^n:F\to F^n$, whose components $\xi_i:F\to F$ are ether constant or identity maps.
\end{theorem}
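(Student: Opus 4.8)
The plan is to prove the statement — which is exactly the density Hales–Jewett theorem for the alphabet $F$ — by induction on the cardinality $k=|F|$, using a density-increment strategy. For $k=1$ the set $F^n$ is a single point and the conclusion is immediate. For $k=2$, writing $F=\{1,2\}$ and recording each word by the positions carrying the second symbol, one checks that a combinatorial line is precisely a pair of distinct comparable subsets $x\subsetneq y$; hence a line-free set is an antichain in the Boolean lattice $\{1,2\}^n$, and the claim reduces to the density form of Sperner's theorem, which forces the density of a line-free set to tend to $0$ as $n\to\infty$. Assuming the statement for every alphabet of size less than $k$, I would fix $\e>0$ and show that if $S\subset F^n$ has $|S|>\e|F^n|$ and contains no line, then on some combinatorial subspace $S$ has density exceeding $\e$ by a fixed amount depending only on $k$ and $\e$; iterating this increment boundedly many times pushes the density above $1$ once $n$ is large enough, and that contradiction produces a line.

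The technical backbone is to run the induction with the equal-slices (nondegenerate) measure on $F^n$ rather than the uniform counting measure, since only the former restricts cleanly to combinatorial subspaces; a separate, routine comparison shows the two measures assign comparable mass to any fixed set, so that a lower density bound for one transfers to the other. The first substantive step is a structure-versus-randomness dichotomy: a line-free set of density $\delta$ must correlate with a set that is insensitive to interchanging two symbols $i,j\in F$. Such an $ij$-insensitive set depends only on the reduced word in which $i$ and $j$ are merged, so it is governed by an alphabet of size $k-1$ and is therefore controlled by the inductive hypothesis. The second substantive step is a multidimensional version of density Hales–Jewett for the smaller alphabet $F\setminus\{i\}$, extracted from the one-dimensional inductive hypothesis by a standard product and color-focusing argument; this is what lets the correlation be localized.

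The main obstacle is converting the correlation into a genuine density increment on a combinatorial subspace. Concretely, once $S$ is known to correlate with an intersection of insensitive sets, I would invoke multidimensional DHJ for the smaller alphabet to locate a combinatorial subspace $\sigma$ on which every relevant insensitive set is essentially constant; on such a $\sigma$ the correlation cannot be spread out, so $S$ must be noticeably denser than average on some translate of $\sigma$. Making this localization quantitative — tracking how the equal-slices measure deforms under passage to $\sigma$, and guaranteeing that the gained density is bounded below independently of $n$ — is the heart of the argument and the place where the combinatorics is genuinely delicate. I note that the original proof of Furstenberg and Katznelson instead runs through an ergodic-theoretic correspondence principle and an IP multiple-recurrence theorem, where the same difficulty reappears as the analysis of relatively weak-mixing extensions along idempotent ultrafilters.
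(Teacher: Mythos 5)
You should first be aware that the paper contains no proof of this statement to compare against: it is quoted as the Furstenberg--Katznelson density Hales--Jewett theorem, attributed to \cite{FK}, and used purely as a black box in the proof of Theorem~\ref{homo}. So the only question is whether your sketch stands on its own as a proof.

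Your outline is a recognizable and accurate road map of the \emph{combinatorial} proof of density Hales--Jewett (the Polymath argument), which is indeed a genuinely different route from Furstenberg and Katznelson's original ergodic-theoretic proof via an IP multiple recurrence theorem --- a distinction you correctly note. The base cases are right: $k=1$ is trivial, and for $k=2$ a line-free set is an antichain in $\{1,2\}^n$, so the density form of Sperner's theorem applies. The induction architecture (equal-slices measure, correlation of dense line-free sets with $ij$-insensitive sets, multidimensional DHJ for an alphabet of size $k-1$ obtained from the one-dimensional inductive hypothesis, density increment on a combinatorial subspace, bounded iteration) is the correct skeleton. But as a proof it has genuine gaps, not routine ones: the two steps you yourself flag as ``substantive'' --- (i) the dichotomy asserting that a dense line-free set must correlate with an intersection of $ij$-insensitive sets, and (ii) the conversion of that correlation into a density increment bounded below independently of $n$ on a combinatorial subspace, with quantitative control of how equal-slices measure deforms under restriction --- \emph{are} the mathematical content of the theorem, and each requires a substantial argument (for (i), a nontrivial comparison between the distribution of a random point and the distribution of a random point on a random line, which your sketch never indicates how to carry out). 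Announcing these steps does not discharge them; iterating an increment you have not established proves nothing. So what you have is a correct plan for a known, very hard proof, not a proof. For the purposes of this paper that is no defect --- the paper itself imports the theorem from \cite{FK} --- but the proposal should not be mistaken for a self-contained argument.
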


\begin{proof}[Proof of Theorem~\ref{homo}]
1. The first item follows directly from the definition of $\iss_{213}(A)=1$.

2. Assume that $\e=\iss_{213}(A)>0$ and let $F\subset G$ be a non-empty finite subset of $G$. Let $n$ be the number given by Furstenberg-Katznelson Theorem. On the cube $F^n$ consider the uniformly distributed measure $\mu=\frac1{|F^n|}\sum_{x\in F^n}\delta_x$. The multiplication function $\pi:F^n\to G$, $\pi:(x_1,\dots,x_n)\mapsto x_1\cdots x_n$, maps the measure $\mu$ to some measure $\nu=\frac1{|F^n|}\sum_{x\in F^n}\delta_{\pi(x)}$ on $G$. Since $\e=\iss_{213}(A)\le\max_{x,y\in G}\nu(xAy)$, there are points $u,v\in G$ such that $\nu(uAv)\ge \e$. Then for the map $\pi_{u,v}:G\to G$, $\pi_{u,v}:\vec x\mapsto u^{-1}\cdot\pi(\vec x)\cdot v^{-1}$, the preimage $S=\pi_{u,v}^{-1}(A)$ has measure $\mu(S)=\nu(uAv)\ge\e$ and hence $|S|=\mu(S)\cdot|F^n|\ge\e|F^n|$. By the choice of $\e$, the set $S$ contains the image of $F$ under some injective function $\xi=(\xi_i)_{i=1}^n:F\to F^n$ whose components $\xi_i:F\to F$ are constant or the identity function. It follows that $h=\pi_{u,v}\circ\xi:F\to G$ is the restriction of some homothety on $G$ and $h(F)\subset \pi_{u,v}(S)\subset A$.
\end{proof}

\subsection{A strange property of $\iss_{213}$}

The inequality $\pack(A)\le1/\is_{12}(A)$ proved in Proposition~\ref{pack-is} implies that each subset $A$ with infinite packing index in a group $G$ has zero density $\is_{12}(A)=0$. For the submeasure $\iss_{213}$ it is not true as $F_2=A\cup B$ can be written as the union of two sets with infinite packing indices.

Moreover, we have the following strange

\begin{example}\label{strange} For any infinite cardinal $\kappa$ there is an amenable group $G$ of cardinality $|G|=\kappa$ containing a countable subgroup $H\subset G$ with $\iss_{213}(H)=1$.
\end{example}

\begin{proof} Let $G$ be the group of bijections $f:\kappa\to\kappa$ having finite support
$$\supp(f)=\{x\in\kappa:f(x)\ne x\},$$ and $H=\{f\in G:\supp(f)\subset\w\}$. Observe that for any finite set $\F\subset H$ the set $F=\bigcup_{f\in\F}\supp(f)$ is finite. Consequently, there is a bijection $g\in G$ such that $g(F)\subset\w\subset\kappa$. Then the set $g\circ \F\circ g^{-1}$ is contained in the countable subgroup $H$, which implies that $\iss_{213}(H)=1$.
\end{proof}

Something like this cannot happen in compact topological groups.

\begin{theorem} Each countable set $A$ in an infinite compact topological group $G$ has Solecki submeasure $\iss_{213}(A)=0$.
\end{theorem}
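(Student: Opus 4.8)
The plan is to use Solecki's theorem stated above, namely that on any group $G$
\[ \iss_{213}(A)=\uss_{213}(A)=\inf_{F\subset G}\max_{x,y\in G}\frac{|F\cap xAy|}{|F|}, \]
so it suffices to produce, for every $\e>0$, a finite set $F\subset G$ such that no two-sided translate $xAy$ of $A$ captures more than $\e|F|$ of the points of $F$. Let $\lambda$ be the normalized Haar measure of the infinite compact group $G$. Since $G$ is infinite, singletons are $\lambda$-null, hence the countable set $A$ and each of its translates $xAy$ are $\lambda$-null. I would build $F$ by the probabilistic method: choose $f_1,\dots,f_N\in G$ independently according to $\lambda$ and show that for large $N$ the random set $F=\{f_1,\dots,f_N\}$ works with probability $1$.

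Fix $\e>0$, put $k=\lceil\e N\rceil$ (so $k\ge 3$ once $N$ is large) and consider the bad set
\[ B_k=\{(z_1,\dots,z_k)\in G^k:\ \exists\,x,y\in G\ \text{with}\ xz_ly\in A\ \text{for all}\ l\}\subset G^k. \]
A translate $xAy$ captures $k$ of the points of $F$ exactly when the corresponding $k$-subtuple lies in $B_k$, so the probability that $F$ is bad is at most $\binom{N}{k}\lambda^k(B_k)$, and everything reduces to proving $\lambda^k(B_k)=0$. Solving $xz_ly=a_l$ as $z_l=x^{-1}a_ly^{-1}$, I decompose $B_k$ as the countable union $B_k=\bigcup_{(a_1,\dots,a_k)\in A^k}O_{a}$, where $O_a=\{(x^{-1}a_1y^{-1},\dots,x^{-1}a_ky^{-1}):x,y\in G\}$ is the image of $G\times G$ in $G^k$. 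As $A^k$ is countable, it suffices to show $\lambda^k(O_a)=0$ for each fixed $a$. Next I would apply Fubini along the first coordinate: fixing $w=x^{-1}a_1y^{-1}$ leaves $y$ free (with $x$ determined), and a short computation rewrites the $l$-th coordinate as $w\cdot yc_ly^{-1}$ with $c_l=a_1^{-1}a_l$. Thus each slice of $O_a$ is a translate of the simultaneous conjugation orbit $Q_m=\{(yc_2y^{-1},\dots,yc_ky^{-1}):y\in G\}\subset G^{m}$ with $m=k-1\ge 2$, and by translation invariance $\lambda^k(O_a)=\lambda^{m}(Q_m)$. So the whole theorem comes down to a single assertion: \emph{for $m\ge 2$ the simultaneous conjugation orbit of a tuple is Haar-null in $G^m$}.

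This last step is the main obstacle, and it is genuinely delicate because an individual conjugacy class need \emph{not} be Haar-null: in the pro-$p$ dihedral group $\IZ_p\rtimes\IZ/2$ the conjugacy class of a reflection has Haar measure $\tfrac12$, so one cannot simply project $Q_m$ to one coordinate. The key point is instead that $Q_m$ is the image of a \emph{single} copy of $G$ lying inside $G^m$ with $m\ge 2$, hence it is ``too thin'' to carry positive measure. To make this precise I would invoke the structure theorem presenting $G$ as an inverse limit $G=\varprojlim G_\alpha$ of compact Lie groups, with surjections $\pi_\alpha\colon G\to G_\alpha$ pushing $\lambda$ forward to the Haar measure of $G_\alpha$. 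Letting $Q_m^\alpha=\{(\bar y\,\bar c_2\,\bar y^{-1},\dots,\bar y\,\bar c_k\,\bar y^{-1}):\bar y\in G_\alpha\}$ with $\bar c_l=\pi_\alpha(c_l)$, the coordinatewise map $\pi_\alpha^{m}$ satisfies $Q_m\subset(\pi_\alpha^{m})^{-1}(Q_m^\alpha)$, so $\lambda^m(Q_m)\le\lambda_{G_\alpha}^m(Q_m^\alpha)$ for every $\alpha$.

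It remains to bound $\lambda_{G_\alpha}^m(Q_m^\alpha)$, and after projecting $Q_m^\alpha$ to its first two coordinates it is enough to estimate the image of $G_\alpha$ under $\bar y\mapsto(\bar y\bar c_2\bar y^{-1},\bar y\bar c_3\bar y^{-1})$ in $G_\alpha^2$. If $G_\alpha$ is finite, this image has at most $|G_\alpha|$ points, hence density at most $1/|G_\alpha|$; if $G_\alpha$ is a positive-dimensional Lie group, the image is a continuous image of a manifold of dimension $\dim G_\alpha$ inside $G_\alpha^2$ of dimension $2\dim G_\alpha$, so it is Haar-null. Because $G$ is infinite, the net $(G_\alpha)$ contains either an infinite Lie quotient (a vanishing term) or finite quotients of unbounded order (with $1/|G_\alpha|\to 0$), whence $\inf_\alpha\lambda_{G_\alpha}^m(Q_m^\alpha)=0$ and therefore $\lambda^m(Q_m)=0$. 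Tracing the reductions back gives $\lambda^k(B_k)=0$, so almost every $F$ is good and satisfies $\max_{x,y}|F\cap xAy|\le k-1<\e|F|$; letting $\e\to 0$ yields $\iss_{213}(A)=0$.
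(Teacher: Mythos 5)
Your argument is correct in substance, and it is worth noting at the outset that the lecture notes do not actually prove this theorem: the proof is deferred to the preprint \cite{Ban}, and the only textual hint about its method is the stated strengthening to sets of cardinality $|A|<\cov(\mathcal E)$. That strengthening indicates that the proof in \cite{Ban} likewise reduces the problem to covering a product power of $G$ by \emph{closed} Haar-null ``bad'' sets indexed by tuples from $A$, so your strategy --- reduce to $\uss_{213}$, sample $F$ at random from Haar measure, and show that the set of $k$-tuples capturable by a single two-sided translate is a union over $A^k$ of compact null sets $O_a$ --- is cognate with the intended one. Two small remarks on your reductions: you do not need the full Solecki equality $\iss_{213}=\uss_{213}$, only the trivial inequality $\iss_{213}\le\uss_{213}$ (the infimum in $\uss_{213}$ runs over the smaller class $P_u(G)\subset P_\w(G)$); and since $\lambda^k(B_k)=0$ exactly, the binomial-coefficient bookkeeping is superfluous --- almost every $N$-tuple already works. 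Your Fubini step (slicing $O_a$ along the first coordinate to land in a translate of the simultaneous-conjugation orbit $Q_m$) and the observation that one must \emph{not} project to a single coordinate, because a conjugacy class can have positive Haar measure, are both correct and are the real content of the proof.

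The one place where your wording, taken literally, is wrong is the Lie-group case of the final lemma: you say the image of $G_\alpha$ under $\bar y\mapsto(\bar y\bar c_2\bar y^{-1},\bar y\bar c_3\bar y^{-1})$ is null because it is a \emph{continuous} image of a $d$-manifold in a $2d$-manifold. Continuity is not enough (Peano curves fill squares); what saves the argument is that this map is real-analytic, and smooth (even Lipschitz) images of a $d$-manifold in a $D$-manifold with $D>d$ are null. With that correction, the case analysis over the Lie quotients $G_\alpha$ is sound: either some $G_\alpha$ is a positive-dimensional Lie group (smooth-image argument), or $G$ is profinite and infinite, in which case the finite quotients have unbounded order and $|Q_2^\alpha|/|G_\alpha|^2\le 1/|G_\alpha|\to 0$. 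Note also that your argument only uses countable additivity of the null ideal, so as written it gives exactly the countable case; to recover the paper's $\cov(\mathcal E)$ strengthening one would exploit that each $O_a$ is compact and null, which your construction in fact provides.
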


In fact, the proof of this theorem (from \cite{Ban}) yields more:

\begin{theorem} Each subset $A$ of cardinality $|A|<\cov(\mathcal E)$ in an infinite compact topological group $G$ has Solecki submeasure $\iss_{213}(A)=0$.
\end{theorem}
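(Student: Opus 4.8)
The plan is to run everything through the combinatorial description $\iss_{213}(A)=\uss_{213}(A)=\inf_{F}\max_{x,y\in G}\frac{|F\cap xAy|}{|F|}$ from Solecki's theorem, where $F$ ranges over finite subsets of $G$. So it suffices to produce, for each $\e>0$, a finite set $F=\{f_1,\dots,f_n\}\subset G$ with the property that every two‑sided translate $xAy$ meets $F$ in fewer than $\e|F|$ points. Fix $n$ large and put $m=\lceil \e n\rceil$, arranging $m\ge 3$ and $\tfrac{m-1}{n}<\e$. The point $(f_1,\dots,f_n)\in G^n$ is \emph{good} exactly when no translate $xAy$ contains $m$ of the coordinates. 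A translate $xAy$ contains the coordinates indexed by a set $S\subset\{1,\dots,n\}$ iff $f_i=xa_iy$ for some $a_i\in A$ $(i\in S)$; hence the \emph{bad} tuples form the set $\mathrm{Bad}=\bigcup_{|S|=m}\bigcup_{\bar a\in A^{S}}\big(C_{\bar a}\times G^{\{1,\dots,n\}\setminus S}\big)$, where $C_{\bar a}=\{(xa_iy)_{i\in S}:x,y\in G\}\subset G^{S}$. A good $(f_i)$ then yields $\max_{x,y}|F\cap xAy|\le m-1<\e n$, so $\iss_{213}(A)<\e$, and letting $\e\to0$ finishes the proof.

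First I would reduce to the case where $G$ is metrizable. By Peter--Weyl/the inverse‑limit structure of compact groups, an infinite compact group $G$ admits a continuous surjective homomorphism $q$ onto an infinite compact \emph{second‑countable} group $H$. Lifting a finite set $\bar F\subset H$ to a set $F\subset G$ on which $q$ is a bijection gives $|F\cap xAy|\le|\bar F\cap q(x)\,q(A)\,q(y)|$ for all $x,y$, whence $\iss^{G}_{213}(A)\le\iss^{H}_{213}(q(A))$; since $|q(A)|\le|A|<\cov(\mathcal E)$, it is enough to prove the theorem for the metrizable group $H$. From now on $G$ is infinite compact metrizable with bi‑invariant Haar measure $\lambda$, so $G^{n}$ is an uncountable compact metric group whose Haar measure is a continuous (atomless) Borel probability measure, and the covering number of its closed‑null ideal equals $\cov(\mathcal E)$.

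The decisive counting is now this. Each $C_{\bar a}$ is a continuous image of $G\times G$, hence \emph{closed}, and I claim it is Haar‑\emph{null} in $G^{m}$. Granting this, $\mathrm{Bad}$ is a union of at most $\binom{n}{m}\,|A|^{m}=|A|<\cov(\mathcal E)$ closed null subsets of $G^{n}$ (for infinite $A$; finite $A$ is trivial since then $|F\cap xAy|\le|A|$). By definition of $\cov(\mathcal E)$, fewer than $\cov(\mathcal E)$ closed null sets cannot cover $G^{n}$, so $\mathrm{Bad}\neq G^{n}$; adjoining the finitely many closed null ``diagonals'' $\{f_i=f_j\}$ to $\mathrm{Bad}$, any point outside the union furnishes a good $F$ with distinct entries. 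This is the one and only place where the hypothesis $|A|<\cov(\mathcal E)$ enters, and it is what upgrades the countable case to the stated bound.

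The main obstacle is verifying the nullity of $C_{\bar a}$. I would apply the measure‑preserving ``shear'' $(z_1,\dots,z_m)\mapsto(z_1z_2^{-1},\dots,z_{m-1}z_m^{-1},z_m)$ of $G^{m}$, which carries $C_{\bar a}$ onto $E\times G$ with $E=\{(x p_1 x^{-1},\dots,x p_{m-1}x^{-1}):x\in G\}$ and $p_l=a_l a_{l+1}^{-1}$; thus $\lambda^{m}(C_{\bar a})=\lambda^{m-1}(E)$, and since $m-1\ge 2$ it remains to show that a single orbit $E$ of the diagonal conjugation action of $G$ on $G^{m-1}$ is Haar‑null. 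This is the genuinely delicate step, because individual conjugacy classes in $G$ need not be null (e.g. in $\mathbb T\rtimes\mathbb Z/2$ the class of a reflection has measure $\tfrac12$). I expect to prove it by the structure theory, writing $G=\varprojlim L_k$ as an inverse limit of compact Lie groups along a countable chain and observing that the image orbit $E_k\subset L_k^{\,m-1}$ is either a submanifold of dimension $\le\dim L_k<(m-1)\dim L_k$ (positive‑dimensional $L_k$) or occupies a proportion $\le|L_k|^{-(m-2)}$ of $L_k^{\,m-1}$ (finite $L_k$); since $G$ is infinite these proportions tend to $0$, giving $\lambda^{m-1}(E)=\inf_k\lambda^{m-1}_{L_k}(E_k)=0$. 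Carrying out this nullity lemma cleanly for arbitrary infinite compact metrizable $G$ (combining the connected and profinite parts) is the crux on which the whole argument rests.
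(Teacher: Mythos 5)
The lecture notes do not actually contain a proof of this theorem: it is stated with a pointer to the preprint \cite{Ban}, so your argument has to be judged on its own merits, and on those merits it is essentially correct. The reduction via $\iss_{213}\le\uss_{213}=\inf_F\max_{x,y\in G}|F\cap xAy|/|F|$ (only the trivial inequality $\iss_{213}\le\uss_{213}$ is needed, not the hard half of Solecki's theorem), the passage to an infinite compact metrizable quotient, and the covering argument by fewer than $\cov(\mathcal E)$ closed null subsets of $G^n$ all work as you describe. The step you flag as the crux --- that for $m\ge 3$ the configuration set $C_{\bar a}$ is Haar null, equivalently that a single diagonal-conjugation orbit $E\subset G^{m-1}$ is null --- does go through by exactly the inverse-limit argument you sketch, and more simply than you fear: writing the metrizable $G$ as $\varprojlim L_k$ over a countable chain of compact Lie quotients, the orbit $E$ projects onto the corresponding orbit $E_k\subset L_k^{m-1}$, the projections are measure-preserving, so $\lambda^{m-1}(E)\le\inf_k\lambda^{m-1}(E_k)$; each $E_k$ is either the smooth image of a $d$-dimensional manifold in an $(m-1)d$-dimensional one with $(m-1)d>d$ (hence null), or has at most $|L_k|$ points out of $|L_k|^{m-1}$, and if all $L_k$ are finite then $|L_k|\to\infty$ because $G$ is infinite --- no separate ``combination of the connected and profinite parts'' is required. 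Two small points should be made explicit: the degenerate case $a_l=a_{l+1}$ (so $p_l=1_G$) is harmless since then $E\subset\{1_G\}\times G^{m-2}$ is already null; and the claim that fewer than $\cov(\mathcal E)$ closed null sets cannot cover $G^n$ needs the standard transfer of $\cov(\mathcal E)$ from the real line to $G^n$, obtained from a continuous surjection $[0,1]\to G^n$ pushing Lebesgue measure to Haar measure, under which preimages of closed null sets are closed and null. With these details written out the proof is complete.
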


Here $\cov(\mathcal E)$ stands for the smallest cardinality of a cover of the real line by closed subsets of Lebegue measure zero. It is clear that $\cov(\mathcal E)\in[\aleph_1,\mathfrak c]$.
Martin's Axiom implies that $\cov(\mathcal E)=\mathfrak c$.

\begin{openprob} Is $\iss_{213}(A)=0$ for any subset $A$ of cardinality $|A|<|G|$ in an infinite compact topological group $G$?
 \end{openprob}

The answer to this problem is affirmative for countable groups $G$, see \cite{Ban}.

\section{The Solecki submeasure in compact topological groups}

\subsection{Solecky submeasure $\iss_{213}$ versus Haar submeasure $\bar\lambda$}

\begin{theorem}[Haar] Each compact topological group $G$ possesses a Haar measure $\lambda$, i.e., a unique invariant regular $\sigma$-additive probability measure $\lambda:\mathcal B(G)\to[0,1]$ defined on the $\sigma$-algebra $\mathcal B(G)$ of Borel subsets of $G$.
\end{theorem}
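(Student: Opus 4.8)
The plan is to reduce the statement to the construction of a two-sided invariant \emph{mean} on the Banach space $C(G)$ of continuous real-valued functions on $G$ (with the sup-norm $\|\cdot\|$), and then to convert this mean into a measure by the Riesz representation theorem. Write $L_gf(x)=f(g^{-1}x)$ and $R_gf(x)=f(xg)$ for the left and right translations, which are linear isometries of $C(G)$. A \emph{mean} is a linear functional $M:C(G)\to\IR$ that is positive ($M(f)\ge 0$ whenever $f\ge 0$), normalized ($M(\mathbf 1)=1$), and satisfies $M(L_gf)=M(f)=M(R_gf)$ for all $g\in G$. Once such an $M$ is produced, the Riesz representation theorem yields a unique regular $\sigma$-additive Borel probability measure $\lambda$ with $M(f)=\int_G f\,d\lambda$; positivity of $M$ gives $\lambda\ge 0$ and normalization gives $\lambda(G)=1$, while regularity and $\sigma$-additivity are built into the Riesz correspondence.

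To construct $M$ I would use the classical oscillation-reduction argument, which is where the compactness of $G$ is essential. For $f\in C(G)$ consider the finite left-averages $Af=\sum_{i=1}^n\alpha_i L_{g_i}f$ (with $\alpha_i\ge 0$, $\sum_i\alpha_i=1$, $g_i\in G$) and set
$$\overline M(f)=\inf_{A}\max_{x\in G}(Af)(x),\qquad \underline M(f)=\sup_{A}\min_{x\in G}(Af)(x),$$
the maximum and minimum being attained since $G$ is compact and $Af$ continuous. Clearly $\underline M(f)\le\overline M(f)$. The crux --- and the main obstacle --- is the equality $\overline M(f)=\underline M(f)$, which makes $M(f):=\overline M(f)$ a well-defined number. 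The key observation is that every left-average $A$ commutes with every right-average $B$ (since left and right multiplications commute, $ABf=BAf$); given $A$ nearly realizing $\overline M(f)$ and $B$ nearly realizing the right-handed lower average, the single function $h=ABf=BAf$ satisfies $\max h\le\max(Af)$ (as $h$ is a right-average of $Af$) and $\min h\ge\min(Bf)$ (as $h$ is a left-average of $Bf$), which squeezes the left and right extremal averages together and forces all of them to a common value $M(f)$. Equicontinuity of $\{L_gf\}$, which follows from uniform continuity of $f$ on the compact group $G$, guarantees that the infimum and supremum are genuinely approached by near-constant averages, so the common value is finite and the argument closes; the same scheme applied to right-averages shows $M$ is simultaneously left- and right-invariant. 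Linearity, positivity, and $M(\mathbf 1)=1$ are then routine.

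Finally I would pass from $M$ to $\lambda$ and establish invariance and uniqueness. Invariance of $M$ transfers to $\lambda$ first on continuous functions, $\int f(g^{-1}x)\,d\lambda(x)=\int f\,d\lambda$, and then to all Borel sets by outer and inner regularity (approximating $\chi_B$ from outside by open sets and from inside by compact sets), giving $\lambda(gB)=\lambda(B)$ and likewise $\lambda(Bg)=\lambda(B)$. For uniqueness, let $\lambda,\lambda'$ be two regular invariant Borel probabilities; for $f\in C(G)$ the two-sided invariance and Fubini's theorem give
$$\int_G f\,d\lambda=\int_G\!\int_G f(xy)\,d\lambda(y)\,d\lambda'(x)=\int_G\!\int_G f(xy)\,d\lambda'(x)\,d\lambda(y)=\int_G f\,d\lambda',$$
where the first inner integral uses left-invariance of $\lambda$ and the last uses right-invariance of $\lambda'$. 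Hence $M$ is uniquely determined on $C(G)$, and by the uniqueness clause of the Riesz representation theorem the measure $\lambda$ is unique. I expect all steps except the min--max equality to be routine; that equality (equivalently, the well-definedness of the invariant mean) is the real content of the theorem.
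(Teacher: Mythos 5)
The paper states this classical theorem without proof (it is quoted as background for the section on compact groups), so there is no in-text argument to compare yours against; I can only assess your proposal on its own terms. Your overall architecture --- build a two-sided invariant mean $M$ on $C(G)$ by averaging, pass to a regular Borel probability measure via Riesz--Markov--Kakutani, transfer invariance by regularity, and prove uniqueness by the Fubini computation $\int f\,d\lambda=\int\!\int f(xy)\,d\lambda(y)\,d\lambda'(x)=\int f\,d\lambda'$ --- is the standard von Neumann route and all of those steps are correct as written.

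There is, however, one genuine gap, and it sits exactly where you locate ``the real content.'' Your squeeze argument ($\min(Bf)\le\min(ABf)\le\max(ABf)\le\max(Af)$ for a left average $A$ and a right average $B$) proves only the inequality $\sup_B\min(Bf)\le\inf_A\max(Af)$, i.e.\ that any constant approximable by left averages must agree with any constant approximable by right averages. It does not produce such a constant: nothing in the squeeze forces $\inf_A\bigl(\max(Af)-\min(Af)\bigr)=0$. That existence statement is the oscillation-reduction lemma, and invoking ``equicontinuity of $\{L_gf\}$'' is not yet an argument --- note in particular that left averages do not commute with each other in a nonabelian group, so you cannot iterate two near-optimal left averages against one another. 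The standard repair is: by uniform continuity of $f$ the orbit $\{L_gf:g\in G\}$ is equicontinuous and uniformly bounded, so by Arzel\`a--Ascoli its closed convex hull $K$ is compact in $C(G)$; pick $h\in K$ minimizing the oscillation $\max h-\min h$ (the minimum is attained by compactness); if that minimum were positive, cover $G$ by finitely many left translates $g_1U,\dots,g_mU$ of the open set $U=\{x:h(x)<\min h+\tfrac12(\max h-\min h)\}$ and check that the average $\frac1m\sum_iL_{g_i}h$, which still lies in $K$, has strictly smaller oscillation --- a contradiction. Only after this lemma do $\overline M$ and $\underline M$ coincide and your remaining steps (linearity of $M$, positivity, the Riesz transfer, and the Fubini uniqueness) go through as you describe.
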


The Haar measure $\lambda$ induces the Haar submeasure $\bar \lambda:\mathcal P(G)\to[0,1]$ defined by $\bar\lambda(A)=\lambda(\bar A)$.

The Haar submeasure $\bar \lambda:\mathcal P(G)\to[0,1]$ can be defined on each group $G$ with help of the Bohr compactification $bG$
letting $$\bar\lambda(A)=\lambda(\overline{i(A)})$$ where $\lambda$ is the Haar measure on the Bohr compactification $bG$ of $G$ and $\overline{i(A)}$ is the closure of the canonical image $i(A)$ of $A$ in $bG$.

\begin{theorem}\label{Bohr} $\iss_{213}\le\bar\lambda$.
\end{theorem}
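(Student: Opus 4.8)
The plan is to combine Solecki's equality $\iss_{213}=\uss_{213}$ on groups with a uniform equidistribution of a suitable finite subset of $G$ inside the compact group $bG$. Since $\iss_{213}(A)=\uss_{213}(A)=\inf_{F\subset G}\max_{x,y\in G}\frac{|F\cap xAy|}{|F|}$, it suffices to fix $\e>0$ and to produce a finite set $F\subset G$ with $\max_{x,y\in G}\frac{|F\cap xAy|}{|F|}<\bar\lambda(A)+\e$. First I would transfer the problem to $bG$: by regularity of the Haar measure $\lambda$ choose an open set $U\supseteq\overline{i(A)}$ with $\lambda(U)<\bar\lambda(A)+\tfrac\e4$, and by Urysohn's Lemma a continuous function $\phi:bG\to[0,1]$ with $\chi_{\overline{i(A)}}\le\phi\le\chi_U$, so that $\int_{bG}\phi\,d\lambda\le\lambda(U)$. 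Since $i$ is a homomorphism, $f\in xAy$ forces $i(x)^{-1}i(f)i(y)^{-1}=i(x^{-1}fy^{-1})\in i(A)\subseteq\overline{i(A)}$, whence $\phi\big(i(x)^{-1}i(f)i(y)^{-1}\big)=1$. Therefore, writing $g=i(x)^{-1}$, $h=i(y)^{-1}$ and $\bar\mu_F=\frac1{|F|}\sum_{f\in F}\delta_{i(f)}$, one gets $\frac{|F\cap xAy|}{|F|}\le \Theta(g,h)$, where $\Theta(g,h):=\int_{bG}\phi(g\,t\,h)\,d\bar\mu_F(t)$.

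The second step records an exact averaging identity. By the two-sided invariance of the Haar measure, $\int_{bG}\phi(g\,t\,h)\,d\lambda(t)=\int_{bG}\phi\,d\lambda$ for all $g,h\in bG$, so that
\[
\Theta(g,h)-\int_{bG}\phi\,d\lambda=\int_{bG}\phi(g\,t\,h)\,d\big(\bar\mu_F-\lambda\big)(t).
\]
Thus everything reduces to choosing $F$ so that the right-hand side is uniformly small in $(g,h)$; this is precisely a statement of uniform upper equidistribution of $\bar\mu_F$ over all two-sided translates of $\phi$.

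The main step, and the principal obstacle, is this uniform equidistribution. The family of continuous functions $\{\,t\mapsto\phi(g\,t\,h):g,h\in bG\,\}$ is uniformly bounded and, by uniform continuity of $\phi$ on the compact group $bG$, uniformly equicontinuous; hence by the Arzel\`a--Ascoli theorem it is totally bounded in $C(bG)$, and I can pick a finite $\tfrac\e4$-net $\psi_1,\dots,\psi_m$ for it in the sup-norm. Because $i(G)$ is dense in $bG$, the finitely supported probability measures concentrated on $i(G)$ are weak${}^*$ dense in the space of all probability measures on $bG$ (any finitely supported weak${}^*$ approximant of $\lambda$ can be moved onto $i(G)$ by perturbing its atoms, using the uniform continuity of the finitely many $\psi_j$). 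I therefore choose $F\subset G$ with $\big|\int\psi_j\,d\bar\mu_F-\int\psi_j\,d\lambda\big|<\tfrac\e4$ for every $j\le m$.

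Finally I would assemble the estimate. For arbitrary $g,h\in bG$ pick $j$ with $\big\|\phi(g\,\cdot\,h)-\psi_j\big\|_\infty<\tfrac\e4$; splitting $\int\phi(g\,t\,h)\,d(\bar\mu_F-\lambda)$ through $\psi_j$ and using $\|\bar\mu_F-\lambda\|\le2$ bounds it by $\tfrac\e2+\tfrac\e4=\tfrac{3\e}4$, so $\Theta(g,h)<\int\phi\,d\lambda+\tfrac{3\e}4\le\lambda(U)+\tfrac{3\e}4<\bar\lambda(A)+\e$. Taking the supremum over $g=i(x)^{-1}$, $h=i(y)^{-1}$ (with $x,y\in G$) yields $\max_{x,y}\frac{|F\cap xAy|}{|F|}<\bar\lambda(A)+\e$, and letting $\e\to0$ gives $\iss_{213}(A)=\uss_{213}(A)\le\bar\lambda(A)$. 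The delicate point throughout is the passage from the trivial equality of averages to a uniform bound on the supremum; compactness of $bG$ (through equicontinuity) and density of $i(G)$ are exactly what make this possible.
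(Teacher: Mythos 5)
Your argument is correct in substance and reaches the inequality by a route that differs from the paper's at the key technical step. Both proofs share the same skeleton: pass to $bG$, fatten $\overline{i(A)}$ slightly using regularity of $\lambda$, approximate $\lambda$ by a finitely supported probability measure concentrated on the dense image $i(G)$, and use compactness of $bG$ to make the resulting estimate uniform over \emph{all} two-sided translates. Where you diverge is in how that uniformity is secured. The paper chooses a closed neighborhood $\bar O(B)$ of $B=\overline{i(A)}$ with $\lambda(\bar O(B))<\lambda(B)+\e$, a neighborhood $V$ of $1_{bG}$ with $VBV\subset\bar O(B)$, and a finite $F$ with $bG=FV=VF$; this reduces the supremum over all $(x,y)\in G\times G$ to the finitely many pairs $(x',y')\in F\times F$, so the approximating measure only has to be small on the finitely many closed sets $x'\overline{VBV}y'$, which is arranged by weak density of $P_\w(i(G))$ in $P_\sigma(bG)$. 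You instead interpose a Urysohn function $\phi$, note that the family of its two-sided translates is equicontinuous, extract a finite sup-norm net by Arzel\`a--Ascoli, and match integrals of the net functions. Both work; the paper's version avoids function-space machinery, while yours makes the ``uniform equidistribution of a finite set in $bG$'' content explicit. Two small points deserve attention. First, your detour through Solecki's equality $\iss_{213}=\uss_{213}$ is unnecessary and imports a nontrivial external theorem stated without proof in these notes: running the identical computation with an arbitrary $\nu=\sum_k\alpha_k\delta_{f_k}\in P_\w(G)$ in place of $\bar\mu_F$ bounds $\sup_{x,y\in G}\nu(xAy)$ and hence $\iss_{213}(A)$ directly from its definition, exactly as the paper does. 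Second, if you insist on $\uss_{213}$ you must produce a genuine finite \emph{subset} $F\subset G$, i.e.\ pairwise distinct points of $G$ whose images carry the uniform approximant; a generic weak$^*$ approximant has arbitrary weights and possibly coinciding atoms, so an extra word is needed (rational weights with common denominator, plus infinitude of the fibers of $i$ or the absence of isolated points in an infinite $bG$, with finite $G$ treated separately). Finally, the equicontinuity of $\{t\mapsto\phi(gth)\}_{g,h}$ is most cleanly justified by the uniform continuity of the three-variable map $(g,t,h)\mapsto\phi(gth)$ on the compact cube $bG^3$ (one-sided uniform continuity of $\phi$ alone does not immediately control two-sided translates in a non-abelian group).
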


\begin{proof} Let $(bG,i)$ be the Bohr compactification of $G$ and $B$ be the closure of the set $i(A)$ in $bG$.

To prove the theorem, it suffices to check that $\sigma(A)\le\lambda(B)+\e$ for every $\e>0$. By the regularity of the Haar measure $\lambda$ and the normality of the compact Hausdorff space $bG$, the closed set $B$ has a closed neighborhood $\bar O(B)$ in $bG$ such that $\lambda(\bar O(B))<\lambda(B)+\e$. Let $1_{bG}$ denote the unit of the group $bG$. Since $1_{bG}\cdot B\cdot 1_{bG}=B\subset \bar O(B)$, the compactness of $B$ and the continuity of the group operation yield an open neighborhood $V\subset bG$ of $1_{bG}$ such that $VBV\subset \bar O(B)$. Then $\overline{VBV}\subset \bar O(B)$ and hence $\lambda(x\overline{VBV}y)=\lambda(\overline{VBV})\le\lambda(\bar O(B))<\lambda(B)+\e$ for any points $x,y\in bG$. The density of $i(G)$ in $bG$ implies that  $bG=\bigcup_{x\in i(G)}xV=\bigcup_{x\in i(G)}Vx$. By the compactness of $bG$ there is a finite set $F\subset i(G)$ such that $G=FV=VF$.

Let $P_\sigma(G)$ be the space of all probability regular Borel $\sigma$-additive measures on $G$ endowed with the topology generated by the subbase consisting of the sets $\{\mu\in P_\sigma(G):\mu(U)>a\}$ where $U$ is an open subset in $G$ and $a\in\mathbb R$. It follows that for each closed set $C\subset G$ the set $$\{\mu\in P_\sigma(G):\mu(C)<a\}=\{\mu\in P_\sigma(G):\mu(G\setminus C)>1-a\}$$is open in $P_\sigma(G)$.
Consequently, the set $$O_\lambda=\bigcap_{x,y\in F}\{\mu\in P_\sigma(G):\mu(x\overline{VBV}y)<\lambda(B)+\e\}$$is an open neighborhood of the Haar measure $\lambda$ in the space $P_\sigma(G)$.

Since $i(G)$ is a dense subset in $bG$, the subspace $P_\w(i(G))$ of finitely supported probability measures on $i(G)$ is dense in the space $P_\sigma(bG)$. Consequently, the open set $O_\lambda$ contains some probability measure $\mu\in P_\w(i(G))$ and we can find a finitely supported probability measure $\nu$ on $G$ such that $i(\nu)=\mu$. The latter equality means that $\mu(C)=\nu(i^{-1}(C))$ for all $C\subset bG$ and hence $\nu(D)\le\nu\big(i^{-1}(i(D))\big)=\mu(i(D))$ for each set $D\subset G$. We claim that $\sup_{x,y\in G}\nu(xAy)\le \iss_{213}(A)+\e$. Indeed, since $bG=FV=VF$, for any points $x,y\in G$ we can find points $x',y'\in F$ such that $i(x)\in x'V$ and $i(y)=Vy'$. Then
$$\nu(xAy)\le \mu(i(x)i(A)i(y))\le\mu(i(x)Bi(y))\le \mu(x'VBVy')\le \mu(x'\overline{VBV}y')<\lambda(B)+\e=\bar\lambda(A)+\e$$as $\mu\in O_\lambda$.
So, $\is_{213}(A)\le\sup_{x,y\in G}\nu(xAy)\le\bar\lambda(A)+\e$. Since the number $\e>0$ was arbitrary, we conclude that $\iss_{213}(A)\le\bar\lambda(A)$.
\end{proof}

\subsection{The Solecki submeasure $\iss_{213}$ versus Haar measure $\lambda$}

Let $G$ be a compact Hausdorff topological group and $\lambda$ be the Haar measure on $G$.

For a subset $A\subset G$ by $\bar A$ and $A^\circ$ we shall denote the closure and interior of $A$ in $G$, respectively.

\begin{proposition} Any subset $A\subset G$ has
 $\lambda(A^\circ)\le\his_{12}(A)\le\iss_{213}(A)\le \lambda(\bar A).$
\end{proposition}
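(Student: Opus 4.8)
The plan is to read the chain $\lambda(A^\circ)\le\his_{12}(A)\le\iss_{213}(A)\le\lambda(\bar A)$ as three separate inequalities, two of which are essentially available and one of which carries the real content. For the middle inequality $\his_{12}(A)\le\iss_{213}(A)$ I would simply invoke the Proposition proved immediately above, which records $\is_{12}\le\his_{12}\le\iss_{213}$ on any $G$-space; nothing new is needed there.

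For the right-hand inequality $\iss_{213}(A)\le\lambda(\bar A)$ I would appeal to Theorem~\ref{Bohr}, which gives $\iss_{213}\le\bar\lambda$ for every group. The only point to verify is that the Haar submeasure $\bar\lambda$ built from the Bohr compactification agrees with $A\mapsto\lambda(\bar A)$ when $G$ is already compact. This holds because a compact Hausdorff topological group is its own Bohr compactification: the identity map $G\to G$ satisfies the universal property of $bG$, since every continuous homomorphism of the compact group $G$ into a compact group factors through it, so the canonical map $i\colon G\to bG$ is (up to isomorphism) the identity, $\bar 1_G$ is trivial, and $\overline{i(A)}=\bar A$. Hence $\bar\lambda(A)=\lambda(\bar A)$ and the inequality is exactly Theorem~\ref{Bohr}. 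Alternatively one may rerun the proof of Theorem~\ref{Bohr} verbatim, with $G$ in place of $bG$ and $\bar A$ in place of $\overline{i(A)}$.

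The genuinely new content is the left-hand inequality $\lambda(A^\circ)\le\his_{12}(A)$. Since $\is_{12}\le\his_{12}$, it suffices to prove $\lambda(A^\circ)\le\is_{12}(A)$. By Theorem~\ref{is=Kelley} we have $\is_{12}(A)=I(\{xA\}_{x\in G})$, and by the Exercise characterizing the Kelley intersection number, $I(\{xA\}_{x\in G})=\inf\{\|f\|:f\in\conv(\{\chi_{xA}:x\in G\})\}$. So I would take an arbitrary convex combination $f=\sum_i\alpha_i\chi_{x_iA}$ and bound $\|f\|$ from below using the Haar measure. The device that handles the possible non-measurability of $A$ is to pass to the open kernel: pointwise $f\ge g:=\sum_i\alpha_i\chi_{x_iA^\circ}$, where $g$ is lower semicontinuous, hence Borel, because each $x_iA^\circ=(x_iA)^\circ$ is open (left translations are homeomorphisms). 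Since $\lambda$ is a left-invariant probability measure, $\int g\,d\lambda=\sum_i\alpha_i\lambda(x_iA^\circ)=\sum_i\alpha_i\lambda(A^\circ)=\lambda(A^\circ)$, while $g\le f\le\|f\|$ pointwise gives $\int g\,d\lambda\le\|f\|$. Thus $\lambda(A^\circ)\le\|f\|$ for every such $f$, and passing to the infimum yields $\lambda(A^\circ)\le I(\{xA\}_{x\in G})=\is_{12}(A)\le\his_{12}(A)$.

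The main obstacle to keep in view is measurability: for an arbitrary $A\subset G$ neither $\chi_A$ nor its translates are $\lambda$-integrable, so one cannot integrate the convex combination against Haar measure directly. The resolution is precisely the open/closed dichotomy built into the statement — for the lower bound one replaces $A$ by the open set $A^\circ$ and integrates the lower semicontinuous minorant $g$, while for the upper bound the closure $\bar A$ is absorbed entirely by Theorem~\ref{Bohr}. Everything else is routine bookkeeping: left-invariance of $\lambda$, the identity $(x_iA)^\circ=x_iA^\circ$, and the elementary estimate $\int g\,d\lambda\le\|f\|$.
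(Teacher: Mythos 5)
Your proof is correct, but your treatment of the key inequality $\lambda(A^\circ)\le\his_{12}(A)$ is genuinely different from the paper's. The paper gets it by duality: since $G\setminus A^\circ$ is closed, Theorem~\ref{Bohr} gives $\his_{12}(G\setminus A^\circ)\le\iss_{213}(G\setminus A^\circ)\le\lambda(G\setminus A^\circ)=1-\lambda(A^\circ)$, and then the \emph{subadditivity} of $\his_{12}$ applied to $G=A^\circ\cup(G\setminus A^\circ)$ yields $\his_{12}(A)\ge\his_{12}(A^\circ)\ge\lambda(A^\circ)$. You instead argue directly: every $f\in\conv(\{\chi_{xA}\}_{x\in G})$ dominates the Borel (lower semicontinuous) minorant obtained by replacing $A$ with $A^\circ$, whose Haar integral is $\lambda(A^\circ)$ by left-invariance, so $\|f\|\ge\lambda(A^\circ)$ and hence $I(\{xA\}_{x\in G})=\is_{12}(A)\ge\lambda(A^\circ)$ by Theorem~\ref{is=Kelley}. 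Your route buys a strictly stronger conclusion, $\lambda(A^\circ)\le\is_{12}(A)$ rather than merely $\le\his_{12}(A)$, and it does not need subadditivity of $\his_{12}$; the paper's route is shorter and reuses the closed-set bound already in hand. One caveat on your right-hand inequality: with the paper's conventions, $bG$ is the Bohr compactification of the \emph{abstract} (discrete) group $G$, which for a compact topological group $G$ is generally much larger than $G$ itself (e.g.\ for $G=\IT$), so your claim that the identity satisfies the universal property is not right --- the universal property must absorb all homomorphisms into compact groups, not only the continuous ones. The conclusion $\bar\lambda(A)\le\lambda(\bar A)$ nevertheless holds, either by pushing the Haar measure of $bG$ forward along the canonical continuous surjection $bG\to G$ and noting $\overline{i(A)}\subset\pi^{-1}(\bar A)$, or --- as you correctly offer as a fallback --- by rerunning the proof of Theorem~\ref{Bohr} with $G$ itself in place of $bG$; so this is a repairable misstatement rather than a gap.
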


\begin{proof} Theorem~\ref{Bohr} implies that $\iss_{213}(A)\le\lambda(\bar A)$.  Since the set $G\setminus A^\circ$ is closed in $G$, we get $$\his_{12}(G\setminus A^\circ)\le\iss_{213}(G\setminus A^\circ)\le\lambda(G\setminus A^\circ)=1-\lambda(A^\circ).$$
By the subadditivity of the submeasure $\his_{12}$, $$1=\his_{12}(G)\le\his_{12}(A^\circ)+\his_{12}(G\setminus A^\circ)\le \his_{12}(A^\circ)+1-\lambda(A^\circ),$$
which implies that $\lambda(A^\circ)\le\his_{12}(A)$.
\end{proof}

\begin{corollary}\label{zeroboundary} $\his_{12}(A)=\iss_{213}(A)=\lambda(A)$ for any subset $A\subset G$ whose boundary $\partial A=\bar A\setminus A^\circ$ has zero Lebesgue measure $\lambda(\partial A)=0$.
\end{corollary}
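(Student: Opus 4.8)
The plan is to deduce the Corollary immediately from the preceding Proposition by exploiting the additivity of $\lambda$ across the decomposition $\bar A=A^\circ\cup\partial A$.

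First I would record the disjoint decomposition $\bar A=A^\circ\cup\partial A$, which holds because $\partial A=\bar A\setminus A^\circ$ and $A^\circ\subset\bar A$. As $A^\circ$ is open and $\bar A$ is closed, both are Borel, so $\sigma$-additivity of $\lambda$ gives $\lambda(\bar A)=\lambda(A^\circ)+\lambda(\partial A)$. The hypothesis $\lambda(\partial A)=0$ then forces $\lambda(A^\circ)=\lambda(\bar A)$.

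Next I would invoke the Proposition, whose chain of inequalities sandwiches the two extremal quantities between the measures of the interior and the closure:
$$\lambda(A^\circ)\le\his_{12}(A)\le\iss_{213}(A)\le\lambda(\bar A).$$
Since the two outer terms have just been shown to be equal, the whole chain collapses, yielding $\his_{12}(A)=\iss_{213}(A)=\lambda(A^\circ)=\lambda(\bar A)$.

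Finally, to identify this common value with $\lambda(A)$ itself, I would note that $A^\circ\subset A\subset\bar A$ while $A\setminus A^\circ\subset\partial A$ has outer measure zero; by completeness of the Haar measure $A$ is measurable and the squeeze $\lambda(A^\circ)\le\lambda(A)\le\lambda(\bar A)$ gives $\lambda(A)=\lambda(A^\circ)$. The only delicate point is precisely this measurability of the arbitrary subset $A$, for which the assumption $\lambda(\partial A)=0$ is used a second time; beyond that the argument is a pure squeeze and requires no further computation.
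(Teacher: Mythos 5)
Your proposal is correct and is exactly the deduction the paper intends: the corollary is stated without proof as an immediate consequence of the preceding Proposition's chain $\lambda(A^\circ)\le\his_{12}(A)\le\iss_{213}(A)\le\lambda(\bar A)$, which collapses once $\lambda(\partial A)=0$ forces $\lambda(A^\circ)=\lambda(\bar A)$. Your added remark on the measurability of $A$ itself (via $A\setminus A^\circ\subset\partial A$ being null) is a reasonable and correct clarification of a point the paper leaves implicit.
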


For the Solecki submeasure $\iss_{213}$ we can prove more:

\begin{lemma}\label{closed} $\iss_{213}(A)=\lambda(A)$ for each closed subset $A\subset G$.
\end{lemma}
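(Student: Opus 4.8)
The plan is to prove two inequalities. The upper bound $\iss_{213}(A)\le\lambda(A)$ is immediate: since $A$ is closed we have $\bar A=A$, so Theorem~\ref{Bohr} (or, equivalently, the preceding Proposition) gives $\iss_{213}(A)\le\bar\lambda(A)=\lambda(\bar A)=\lambda(A)$. All of the work therefore goes into the reverse inequality $\iss_{213}(A)\ge\lambda(A)$, which I would in fact establish for every Borel set $A$, closedness being used only to make the two bounds coincide.

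For the lower bound I would invoke Solecki's theorem, which on any group gives the combinatorial formula
$$\iss_{213}(A)=\inf_{F\subset G}\max_{x,y\in G}\frac{|F\cap xAy|}{|F|},$$
the infimum being taken over finite subsets $F\subset G$. It therefore suffices to show that for each fixed finite $F$ one has $\max_{x,y\in G}\frac{|F\cap xAy|}{|F|}\ge\lambda(A)$; taking the infimum over $F$ then yields $\iss_{213}(A)\ge\lambda(A)$.

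The key step is an averaging argument over the Haar measure. Writing $\mathbf 1_A$ for the indicator of $A$ and noting that $f\in xAy$ is equivalent to $x^{-1}fy^{-1}\in A$, I would consider the function $(x,y)\mapsto\frac1{|F|}\sum_{f\in F}\mathbf 1_A(x^{-1}fy^{-1})$ and integrate it against $\lambda\times\lambda$. Fix $f\in F$ and $x\in G$, and look at the map $y\mapsto x^{-1}fy^{-1}$: as $y$ runs over $G$, the element $y^{-1}$ is Haar-distributed, because the Haar measure of a compact group is inversion-invariant (compact groups are unimodular), and then $x^{-1}fy^{-1}$ is again Haar-distributed by left invariance. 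Hence $\int_G\mathbf 1_A(x^{-1}fy^{-1})\,d\lambda(y)=\lambda(A)$ for every $x$. Integrating once more in $x$ and summing over the $|F|$ points of $F$, Fubini yields
$$\int_{G\times G}\frac{|F\cap xAy|}{|F|}\,d(\lambda\times\lambda)(x,y)=\lambda(A).$$

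Finally, since $\lambda\times\lambda$ is a probability measure, the integral of a bounded function cannot exceed its supremum; and as the integrand takes only the finitely many values $0,\tfrac1{|F|},\dots,1$ this supremum is attained. Thus $\max_{x,y\in G}\frac{|F\cap xAy|}{|F|}\ge\lambda(A)$ for every finite $F$, giving $\iss_{213}(A)\ge\lambda(A)$ and, together with the upper bound, the desired equality $\iss_{213}(A)=\lambda(A)$. The main point requiring care is the Haar computation: one must justify the Fubini interchange and, crucially, use both the bi-invariance and the inversion-invariance of $\lambda$ to see that each summand integrates to exactly $\lambda(A)$; measurability of the integrand follows from continuity of the group operations together with the Borel measurability of the closed set $A$.
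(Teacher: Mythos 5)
Your proof is correct, but it reaches the lower bound $\lambda(A)\le\iss_{213}(A)$ by a genuinely different route than the paper. The upper bound via Theorem~\ref{Bohr} is the same in both arguments. For the lower bound the paper argues by contradiction: starting from a finitely supported $\mu$ with $\sup_{x,y}\mu(xAy)<\lambda(A)-\e$, it uses regularity of $\mu$, compactness of $A$ and $G\times G$, and Urysohn's lemma to thicken $A$ to an open set $W\supset A$ whose boundary is Haar null and which still satisfies $\sup_{a,b}\mu(aWb)<\lambda(A)-\e$, then derives a contradiction from Corollary~\ref{zeroboundary} applied to $W$. You instead pass to Solecki's combinatorial formula $\iss_{213}(A)=\inf_F\max_{x,y}|F\cap xAy|/|F|$ and run a first-moment averaging over the Haar measure; this is shorter, needs no topological thickening, and proves the stronger statement $\lambda(A)\le\iss_{213}(A)$ for \emph{every} Borel set, which already subsumes the lower bound $\lambda_*(A)\le\iss_{213}(A)$ of Theorem~\ref{Solecki=Haar}. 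The trade-off is that you import Solecki's theorem $\iss_{213}=\uss_{213}$ as a black box, which is stated but not proved in these notes, whereas the paper's proof is self-contained modulo Theorem~\ref{Bohr}. That import is in fact unnecessary: applying your averaging to an arbitrary $\mu_1=\sum_i\alpha_i\delta_{f_i}\in P_\w(G)$ gives $\int_G\delta_x*\mu_1*\delta_y(A)\,d\lambda(y)=\sum_i\alpha_i\,\lambda\bigl((xf_i)^{-1}A\bigr)=\lambda(A)$ for each fixed $x$, hence $\sup_{x,y}\delta_x*\mu_1*\delta_y(A)\ge\lambda(A)$ and $\iss_{213}(A)\ge\lambda(A)$ directly; this also sidesteps the Fubini and attainment-of-the-maximum issues you flag, since a single integration in $y$ for fixed $x$ and $f_i$ (a continuous substitution pushing $\lambda$ to itself by bi-invariance and inversion-invariance) already suffices.
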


\begin{proof} By Proposition~\ref{Bohr}, $\iss_{213}(A)\le\lambda(A)$. So, it remains to show that $\iss_{213}(A)\ge\lambda(A)$. Assuming conversely that $\iss_{213}(A)<\lambda(A)$ we conclude that the number $\e=\frac12(\lambda(A)-\iss_{213}(A))$ is positive. Then $\iss_{213}(A)<\lambda(A)-\e$ and by the definition of the Solecki submeasure, there is a finitely supported probability measure $\mu$ on $G$ such that $\sup_{x,y\in G}\mu(xAy)<\lambda(A)-\e$. For each pair $(x,y)\in G\times G$, by the regularity of the measure $\mu$,  there is an open neighborhood $O_{x,y}(A)\subset G$ of $A$ such that $\mu(xO_{x,y}(A)y)<\lambda(A)-\e$. Using the compactness of $A$, we can find an open neighborhood $U_{x,y}\subset G$ of $1_G$ such that $U_{x,y}AU_{x,y}\subset O_{x,y}(A)$. The continuity of the group operation at $1_G$ yields an open neighborhood $V_{x,y}\subset G$ of $1_G$ such that $V_{x,y}\cdot V_{x,y}\subset U_{x,y}$. By the compactness of the space $G\times G$ the open cover $\{xV_{x,y}\times V_{x,y}y:(x,y)\in G\times G\}$ of $G\times G$ has a finite subcover $\{xV_{x,y}\times V_{x,y}y:(x,y)\in F\}$ where $F$ is a finite subset of $G\times G$. Consider the open neighborhood $V=\bigcap_{(x,y)\in F}V_{x,y}$ of $1_G$ and the open neighborhood $V\kern-1pt AV$ of the closed set $A$. By the Urysohn Lemma, there is a continuous function $f:G\to[0,1]$ such that $f(A)\subset \{0\}$ and $f(G\setminus V\kern-1pt AV)\subset \{1\}$. By the additivity of the Haar measure $\lambda$, there is a number $t\in(0,1)$ whose preimage $f^{-1}(t)$ has Haar measure $\lambda(f^{-1}(t))=0$. In this case the open neighborhood $W=f^{-1}\big([0,t)\big)\subset V\kern-1pt AV$ of $A$ has boundary $\partial W\subset f^{-1}(t)$ of Haar measure zero. By Corollary~\ref{zeroboundary}, $\iss_{213}(W)=\lambda(W)$.

We claim that $\mu(aWb)<\lambda(A)-\e$ for any points $a,b\in G$. Since $\{xV_{x,y}\times V_{x,y}y:(x,y)\in F\}$ is a cover of $G\times G$, there is a pair $(x,y)\in F$ such that $a\in xV_{x,y}$ and $b\in V_{x,y}y$. Then $$aWb\subset aVAVb\subset xV_{x,y}V\kern-1pt AVV_{x,y}y\subset xV_{x,y}V_{x,y}AV_{x,y}V_{x,y}y\subset xU_{x,y}AU_{x,y}y\subset xO_{x,y}(A)y$$and hence
$$\mu(aWb)\le\mu(xO_{x,y}(A)y)<\lambda(A)-\e.$$
By Corollary~\ref{zeroboundary},
$$\iss_{213}(W)\le\sup_{a,b\in G}\mu(aWb)\le\lambda(A)-\e<\lambda(W)=\iss_{213}(W),$$
which is a desired contradiction. So, $\iss_{213}(A)=\lambda(A)$.
\end{proof}

\begin{theorem}\label{Solecki=Haar} $\lambda_*(A)\le\iss_{213}(A)\le \lambda(\bar A)$ for each subset $A\subset G$.
\end{theorem}

\begin{proof}
By Lemma~\ref{closed} and the monotonicity of the Solecki submeasure, we get
$$\lambda_*(A)=\sup\{\lambda(F):F=\bar F\subset A\}=\sup\{\iss_{213}(F):F=\bar F\subset A\}\le\iss_{213}(A).$$The inequality $\iss_{213}(A)\le\lambda(\bar A)$ has been proved in Theorem~\ref{Bohr}.
\end{proof}

\begin{remark} Denote by
 $$\A_0=\{A\subset G:\lambda(\partial A)=0\}=\{A\subset G:\iss_{213}(\partial A)=0\}$$the algebra of sets whose boundary has zero Solecki submeasure. Theorem~\ref{Solecki=Haar} implies that the restrictions of the Haar measure and the Solecki submeasure to the algbera $\A_0$ coincide. Since $\A_0$ generates the $\sigma$-algebra $\mathcal B(G)$ of Borel subsets of $G$, this implies that the Solecki submeasure completely determines the Haar measure! So, the Haar measure has an essential algebraic component!
\end{remark}

Both inequalities $\lambda_*(A)\le\iss_{213}(A)\le\lambda(\bar A)$ can be strict.

To show that the lower bound can be strict, we shall prove that $\lambda(A^\bullet)\le\iss_{213}(A)$.
Here $A^\bullet$ is the largest open subset of $G$ such that $A^\bullet\setminus A$ is meager. It is clear that $A^\circ\subset A^\bullet\subset\bar A$.

\begin{lemma} Each subset $A\subset G$ has $\lambda(A^\bullet)\le\iss_{213}(A)$.
\end{lemma}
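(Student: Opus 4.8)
The plan is to prove the reverse inequality to Theorem~\ref{Bohr} at the level of the open kernel $A^\bullet$, by exploiting that $A^\bullet\setminus A$ is meager together with the two-sided translation structure of $\iss_{213}$. Since $\iss_{213}(A)=\inf_{\mu\in P_\w(G)}\sup_{x,y\in G}\mu(xAy)$, it suffices to fix an arbitrary finitely supported probability measure $\mu=\sum_{i=1}^m\alpha_i\delta_{g_i}$ and a number $\eta>0$ and to produce points $x,y\in G$ with $\mu(xAy)>\lambda(A^\bullet)-\eta$; letting $\mu$ run over $P_\w(G)$ then yields $\iss_{213}(A)\ge\lambda(A^\bullet)$. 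Write $U=A^\bullet$. First I would use the bounds $\lambda(U^\circ)\le\iss_{213}(U)$ established above (with $U^\circ=U$, as $U$ is open) to get $\sup_{x,y}\mu(xUy)\ge\iss_{213}(U)\ge\lambda(U)$, and hence a pair $x_0,y_0$ with $\mu(x_0Uy_0)>\lambda(U)-\eta$. Thus the index set $I=\{i:x_0^{-1}g_iy_0^{-1}\in U\}$ carries $\mu$-mass $\sum_{i\in I}\alpha_i>\lambda(U)-\eta$; put $p_i=x_0^{-1}g_iy_0^{-1}\in U$ for $i\in I$.

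The second step is a simultaneous perturbation that pushes all these finitely many points from $U$ into $A$. Writing $x=x_0s$ and $y=ty_0$, the point $x^{-1}g_iy^{-1}$ becomes $\psi_i(s,t):=s^{-1}p_it^{-1}$, and I want $(s,t)$ near $(1_G,1_G)$ with $\psi_i(s,t)\in A$ for every $i\in I$. Each $\psi_i$ is continuous, so there is a neighborhood $W$ of $(1_G,1_G)$ in $G\times G$ with $\psi_i(W)\subset U$ for all $i\in I$; on $W$ the condition $\psi_i(s,t)\notin A$ is the same as $\psi_i(s,t)\in U\setminus A=A^\bullet\setminus A$, a meager set. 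The key point is that each $\psi_i$ is not only continuous but \emph{open} (two-sided translation and inversion are homeomorphisms, and group multiplication $G\times G\to G$ is an open map), so $\psi_i^{-1}$ of a meager set is meager. Hence each ``bad'' set $B_i=W\cap\psi_i^{-1}(A^\bullet\setminus A)$ is meager, and since $G\times G$ is a compact Hausdorff, hence Baire, space, the comeager set $W\setminus\bigcup_{i\in I}B_i$ is nonempty. Any $(s,t)$ in it gives $\psi_i(s,t)\in A$ for all $i\in I$, so with $x=x_0s$, $y=ty_0$ we get $g_i\in xAy$ for every $i\in I$ and therefore $\mu(xAy)\ge\sum_{i\in I}\alpha_i>\lambda(U)-\eta$, as required.

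The main obstacle is precisely that $A$ need not be Lebesgue measurable, so the natural averaging of $\mu(xAy)$ against Haar measure is unavailable and the smallness of $A^\bullet\setminus A$ has to be used purely topologically. The technical heart is the statement that an open continuous surjection pulls meager sets back to meager sets, which reduces to checking that the preimage of a nowhere dense closed set is nowhere dense; this in turn rests on the openness of the maps $\psi_i$, i.e.\ on the openness of multiplication in a topological group. Once that is in hand, the combination of a single near-optimal pair $(x_0,y_0)$ for the open set $U$ with a Baire-category perturbation into the comeager set $A\cap U$ finishes the argument, and together with Theorem~\ref{Solecki=Haar} it pins $\iss_{213}$ between $\lambda(A^\bullet)$ and $\lambda(\bar A)$.
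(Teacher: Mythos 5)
Your proof is correct, and its engine --- a Baire-category perturbation of a near-optimal translate, using that $A^\bullet\setminus A$ is meager and that the relevant translation maps pull meager sets back to meager sets --- is exactly the engine of the paper's proof; but the surrounding reductions are genuinely different, so the comparison is worth recording. The paper argues by contradiction, reduces to uniformly distributed measures via Solecki's theorem $\iss_{213}=\uss_{213}$, and obtains the density of some translate $uFv$ in $A^\bullet$ by choosing a compact $K\subset A^\bullet$ with $\lambda(K)>\lambda(A^\bullet)-\e$ and invoking $\iss_{213}(K)=\lambda(K)$ for closed sets (Lemma~\ref{closed}); you instead fix an arbitrary $\mu\in P_\w(G)$ and get the mass bound from the elementary inequality $\lambda(V)\le\iss_{213}(V)$ for open $V$, so your route bypasses both Solecki's theorem and the regularity/closed-set machinery --- a real economy. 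The price is paid on the perturbation side: the paper moves only the left translate, so it needs only that each $s_t:x\mapsto xtv$ is a homeomorphism of the Baire space $G$, whereas your simultaneous two-sided perturbation lives in $G\times G$ and requires the additional (true) facts that multiplication is an open map --- hence $(s,t)\mapsto s^{-1}p_it^{-1}$ pulls meager sets back to meager sets --- and that $G\times G$ is Baire. A one-sided perturbation (fix $y=y_0$ and move only $s$) works verbatim in your setup and removes this overhead. One cosmetic slip: $W\setminus\bigcup_{i\in I}B_i$ is comeager in $W$ rather than in $G\times G$, but nonemptiness is all you use, and it holds because a nonempty open subset of a Baire space is nonmeager.
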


\begin{proof} Assume conversely that $\iss_{213}(A)<\lambda(A^\bullet)$ and put $\e=\frac12(\lambda(A^\bullet)-\iss_{213}(A))$. Since $\uss_{213}(A)=\iss_{213}(A)<\lambda(A^\bullet)-\e$, there is a finite subset $F\subset G$ such that $\sup_{x,y\in G}|xFy\cap A|/|F|<(\lambda(A^\bullet)-\e)$.
By the regularity of the Haar measure $\lambda$, some compact set $K\subset A^\bullet$ has Haar measure $\lambda(K)>\lambda(A^\bullet)-\e$. By Theorem~\ref{Solecki=Haar}, $\lambda(K)=\iss_{213}(K)\le\max_{x,y\in G}|xFy\cap K|/|F|$. So, there are points $u,v\in G$ such that $|uFv\cap A^\bullet|\ge |uFv\cap K|\ge \lambda(K)\cdot|F|$. Let $T=\{t\in F:utv\in A^\bullet\}$ and observe that $|T|=|uFv\cap A^\bullet|\ge\lambda(K)\cdot|F|$. For every $t\in T$ consider the homeomorphism $s_t:G\to G$, $s_t:x\mapsto xtv$, and observe that $s_t^{-1}(A^\bullet)$ is an open neighborhood of the point $u$. Since the set $A^\bullet\setminus A$ is meager in $G$ its preimage $s_t^{-1}(A^\bullet\setminus A)$ is a meager set in $G$. Since the space $G$ is compact and hence Baire, in the open neighborhood $V_u=\bigcap_{t\in T}s_t^{-1}(A^\bullet)$ of the point $u$ we can find a point $x\in V_u$ which does not belong to the meager set $\bigcup_{t\in T}s_t^{-1}(A^\bullet\setminus A)$. For this point $x$ we get $s_t(x)\in A$ for all $t\in T$, which implies that $xTv\subset A$ and then $|xFv\cap A|\ge|xTv\cap A|=|xTv|=|T|\ge\lambda(K)\cdot|F|>(\lambda(A^\bullet)-\e)\cdot|F|$, which contradicts the choice of $F$.
\end{proof}

\begin{corollary} Any subset $A$ of a compact topological group $G$ has
 $$\max\{\lambda_*(A),\lambda(A^\bullet)\}\le\iss_{213}(A)\le\lambda(\bar A).$$
\end{corollary}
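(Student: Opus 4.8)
The plan is to recognize this corollary as a direct synthesis of the two results that immediately precede it, so that no new argument is actually required. First I would invoke Theorem~\ref{Solecki=Haar}, which already supplies both the upper bound $\iss_{213}(A)\le\lambda(\bar A)$ and the first of the two lower bounds, namely $\lambda_*(A)\le\iss_{213}(A)$. Second I would invoke the Lemma proved just above, which gives the remaining lower bound $\lambda(A^\bullet)\le\iss_{213}(A)$. Since both $\lambda_*(A)$ and $\lambda(A^\bullet)$ are bounded above by the single quantity $\iss_{213}(A)$, their maximum is bounded above by it as well; combining this with the upper bound yields $\max\{\lambda_*(A),\lambda(A^\bullet)\}\le\iss_{213}(A)\le\lambda(\bar A)$, as claimed.

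There is no genuine obstacle at the level of the corollary itself, since the substantive work has already been carried out. It is worth tracing where that work lives, however. The lower bound $\lambda_*(A)\le\iss_{213}(A)$ rests on Lemma~\ref{closed}, the statement that $\iss_{213}$ and $\lambda$ agree on closed sets, combined with the monotonicity of $\iss_{213}$ and the inner regularity of the Haar measure $\lambda$: writing $\lambda_*(A)=\sup\{\lambda(F):F=\bar F\subset A\}$, each closed $F\subset A$ satisfies $\lambda(F)=\iss_{213}(F)\le\iss_{213}(A)$, and taking the supremum gives the bound. The upper bound $\iss_{213}(A)\le\lambda(\bar A)$ traces back through Theorem~\ref{Solecki=Haar} to Theorem~\ref{Bohr}, the comparison $\iss_{213}\le\bar\lambda$ obtained via the Bohr compactification.

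The only portion carrying real content is the second lower bound $\lambda(A^\bullet)\le\iss_{213}(A)$ from the preceding Lemma, and if I were proving the corollary from scratch this is the step I would expect to occupy all the effort. Its proof exploits the defining property that $A^\bullet\setminus A$ is meager, together with the fact that the compact group $G$ is Baire: after using the equality $\uss_{213}=\iss_{213}$ to locate a finite set $F$ with a translate $uFv$ that is \emph{fat} inside a compact subset of $A^\bullet$, one promotes this combinatorial density into genuine membership in $A$ by choosing, inside a suitable open neighborhood, a point avoiding the meager set $\bigcup_{t\in T}s_t^{-1}(A^\bullet\setminus A)$. Given these two established inequalities, the corollary follows by the trivial observation that an upper bound for two numbers is an upper bound for their maximum.
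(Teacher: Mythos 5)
Your proposal is correct and matches the paper exactly: the corollary is stated as an immediate combination of Theorem~\ref{Solecki=Haar} (which gives $\lambda_*(A)\le\iss_{213}(A)\le\lambda(\bar A)$) with the preceding lemma (which gives $\lambda(A^\bullet)\le\iss_{213}(A)$), and the paper offers no further argument. Your tracing of where the substantive work lives is accurate as well.
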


\begin{example}\label{ex11.9} The compact abelian group $\IT=\{z\in \IC:|z|=1\}$
contains a Borel subset $A$ such that
$$\frac14=\lambda(A)=\lambda(A^\bullet)=\iss_{213}(A)<\lambda(A\cup A^\bullet)=\lambda(\bar A)=\frac12.$$
\end{example}

\begin{proof} Consider the open subset $U=\{e^{i\varphi}:0<\varphi<\pi/2\}\subset\IT$ of Haar measure $\lambda(U)=1/4$  and the countable dense subset  $Q=\{e^{i\varphi}:\varphi\in\pi\cdot\mathbb Q\}$ where $\IQ$ is the set of rational numbers. By the regularity of the Haar measure $\lambda$ on $\IT$, the set $U\setminus Q$ contains a $\sigma$-compact (meager) subset $K$ of Haar measure $\lambda(K)=\lambda(U\setminus Q)=\frac14$. Now consider the set $A=(U\setminus K)\cup (-K)$ where $-K=\{-z:z\in K\}$. The finite set $F=\{1,-1,i,-i\}$ witnesses that $\iss_{213}\le\sup_{x,y\in\IT}|xFy\cap A|/|F|=\frac14$.
It follows that, $A^\bullet=U$ and thus
$$
\frac14=\lambda(A)=\lambda(A^\bullet)\le\iss_{213}(A)\le\frac14.$$
On the other hand,
$$\lambda(A\cup A^\bullet)=\lambda(U\cup(-K))=\frac14+\frac14=\frac12=\lambda(\bar U\cup(-\bar U))=\lambda(\bar A).$$
\end{proof}

\section{The extremal submeasure $\sis_{123}$}

On each $G$-space $X$ the extremal submeasure $\sis_{123}:\mathcal P(X)\to[0,1]$ is defined by the formula
$$\sis_{123}(A)=\sup_{\mu_1\in P_\w(G)}\inf_{\mu_2\in P_\w(G)}\sup_{\mu_3\in P_\w(X)}\mu_1*\mu_2*\mu_3(A).$$
The definition implies that $\sis_{123}$ is a $G$-invariant density on $X$ and
$$\sis_{123}(A)=\sup_{\mu_1\in P_\w(G)}\inf_{\mu_2\in P_\w(G)}\sup_{x\in X}\mu_1*\mu_2*\delta_x(A)$$
for every $A\subset X$.

\begin{proposition} On each $G$-space $X$ the invariant density $\sis_{123}$ is subadditive.
\end{proposition}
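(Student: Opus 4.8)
The plan is to reduce everything to the middle infimum. For finitely supported measures $\mu_1,\mu_2\in P_\w(G)$ and $\mu_3\in P_\w(X)$ the triple convolution $\mu_1*\mu_2*\mu_3$ is again a finitely supported (hence additive, hence subadditive) measure on $X$, so $\mu_1*\mu_2*\mu_3(A\cup B)\le\mu_1*\mu_2*\mu_3(A)+\mu_1*\mu_2*\mu_3(B)$ for every choice of the three measures. Combined with the trivial inequality $\sup_{\mu_3}(f+g)\le\sup_{\mu_3}f+\sup_{\mu_3}g$, the outer $\sup_{\mu_1}$ and the innermost $\sup_{\mu_3}$ cause no trouble. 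The whole content is therefore to show that, for each $\mu_1\in P_\w(G)$ and each $\e>0$, there is a single $\mu_2\in P_\w(G)$ making both $\sup_{\mu_3}\mu_1*\mu_2*\mu_3(A)$ smaller than $\sis_{123}(A)+\e$ and $\sup_{\mu_3}\mu_1*\mu_2*\mu_3(B)$ smaller than $\sis_{123}(B)+\e$.

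As in the proof of subadditivity of $\iss_{213}$, I would build $\mu_2$ as a convolution of two factors and then absorb one factor into a neighbouring slot. Since here the outer sup-variable $\mu_1$ sits on the far left and cannot swallow correction terms, I would choose the factors in a nested order. Fix $\mu_1$. Because $\sis_{123}(B)=\sup_{\mu_1'}\inf_{\mu_2}\sup_{\mu_3}\mu_1'*\mu_2*\mu_3(B)\ge\inf_{\mu_2}\sup_{\mu_3}\mu_1*\mu_2*\mu_3(B)$ for this very $\mu_1$, I can pick $\mu_2^B\in P_\w(G)$ with $\sup_{\mu_3}\mu_1*\mu_2^B*\mu_3(B)<\sis_{123}(B)+\e$. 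Then I treat $\mu_1':=\mu_1*\mu_2^B\in P_\w(G)$ as a new outer measure: since $\sis_{123}(A)\ge\inf_{\mu_2}\sup_{\mu_3}\mu_1'*\mu_2*\mu_3(A)$, I can pick $\mu_2^A\in P_\w(G)$ with $\sup_{\mu_3}\mu_1*\mu_2^B*\mu_2^A*\mu_3(A)<\sis_{123}(A)+\e$.

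Now I set $\mu_2:=\mu_2^B*\mu_2^A$. For the $A$-estimate I read $\mu_1*\mu_2*\mu_3=\mu_1'*\mu_2^A*\mu_3$, so its supremum over $\mu_3$ on $A$ is below $\sis_{123}(A)+\e$ by the choice of $\mu_2^A$. For the $B$-estimate I read instead $\mu_1*\mu_2*\mu_3=\mu_1*\mu_2^B*(\mu_2^A*\mu_3)$ and absorb $\mu_2^A$ into the innermost $X$-measure: as $\mu_2^A*\mu_3\in P_\w(X)$, its supremum over $\mu_3$ on $B$ is bounded by $\sup_{\mu_3'}\mu_1*\mu_2^B*\mu_3'(B)<\sis_{123}(B)+\e$. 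Feeding these two bounds into the subadditivity of $\mu_1*\mu_2*\mu_3$ gives $\sup_{\mu_3}\mu_1*\mu_2*\mu_3(A\cup B)<\sis_{123}(A)+\sis_{123}(B)+2\e$; hence $\inf_{\mu_2}\sup_{\mu_3}\mu_1*\mu_2*\mu_3(A\cup B)$ lies below the same bound for every $\mu_1$, and taking the supremum over $\mu_1$ and letting $\e\to0$ yields $\sis_{123}(A\cup B)\le\sis_{123}(A)+\sis_{123}(B)$.

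The step I expect to be the real obstacle is the asymmetry forced on the choice of the two factors. One must commit to the nested order (choose $\mu_2^B$ relative to $\mu_1$, then $\mu_2^A$ relative to $\mu_1*\mu_2^B$; or symmetrically $\mu_2^A$ first and $\mu_2^B$ relative to $\mu_1*\mu_2^A$, using $\mu_2=\mu_2^A*\mu_2^B$). A naive symmetric choice, picking both $\mu_2^A$ and $\mu_2^B$ relative to $\mu_1$ and setting $\mu_2=\mu_2^A*\mu_2^B$, fails: for the set whose factor ends up adjacent to the fixed $\mu_1$ there is no free slot into which the other factor can be absorbed, so the corresponding bound is not guaranteed. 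Once the correct nesting is in place, the remaining manipulations are routine algebra of convolutions.
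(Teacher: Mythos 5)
Your proof is correct and follows essentially the same route as the paper: fix $\mu_1$, choose one correcting factor relative to $\mu_1$, choose the second relative to the convolution of $\mu_1$ with the first, take $\mu_2$ to be the product of the two factors, and absorb the trailing factor into the $P_\w(X)$-slot for one of the two sets (the paper does this with the roles of $A$ and $B$ interchanged, which is immaterial). Your closing remark correctly pinpoints why the nested, asymmetric choice is forced.
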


\begin{proof} It suffices to check that $\sis_{123}(A\cup B)\le\sis_{123}(A)+\sis_{123}(B)+2\e$ for every subsets $A,B\subset X$ and real number $\e>0$. This will follow as soon as for any measure $\mu_1\in P_\w(G)$ we find a measure $\mu_2\in P_\w(G)$ such that $\sup_{\mu_3\in P_\w(X)}\mu_1*\mu_2*\mu_3(A\cup B))<\sis_{123}(A)+\sis_{123}(B)+2\e$.

By the definition of $\sis_{123}(A)$, for the measure $\mu_1$ there is a measure $\nu_2\in P_\w(G)$ such that
$$\sup_{\nu_3\in P_\w(X)}\mu_1*\nu_2*\mu_3(A)<\sis_{123}(A)+\e.$$ By the definition of $\sis_{123}(B)$ for the measure $\eta_1=\mu_1*\nu_2$ there is a measure $\eta_2\in P_\w(G)$ such that $$\sup_{\eta_3\in P_\w(X)}\eta_1*\eta_2*\eta_3(B)<\sis_{123}(B)+\e.$$ We claim that the measure $\mu_2=\nu_2*\eta_2$ has the required property. Indeed, for every measure $\mu_3\in P_\w(X)$ we get
$$\mu_1*\mu_2*\mu_3(A\cup B)\le \mu_1*\nu_2*(\eta_2*\mu_3)(A)+(\mu_1*\nu_2)*\eta_2*\mu_3(B)<\sis_{123}(A)+\e+\sis_{123}(B)+\e.$$
\end{proof}

\begin{proposition} For each $G$-space $X$
 $$\is_{12}\le\his_{12}\le\sis_{123}\le\iss_{213}.$$
\end{proposition}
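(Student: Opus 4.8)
The plan is to read off the first inequality for free and then reduce the statement to two separate linkings. Since $\his_{12}=\widehat{\is_{12}}$ is the subadditivization of $\is_{12}$, the inequality $\is_{12}\le\his_{12}$ is immediate from the general fact $\mu\le\widehat\mu$ (take $C=\emptyset$ in the definition of $\widehat\mu$). It therefore remains to prove $\his_{12}\le\sis_{123}$ and $\sis_{123}\le\iss_{213}$. For the first, recalling that $\his_{12}(A)=\sup_{C\subset X}\bigl(\is_{12}(A\cup C)-\is_{12}(C)\bigr)$, it suffices to establish $\is_{12}(A\cup C)\le\is_{12}(C)+\sis_{123}(A)$ for every $C\subset X$ and then take the supremum over $C$.

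The heart of the argument is the construction of a single test measure on $G$ that simultaneously controls $A$ through $\sis_{123}$ and $C$ through $\is_{12}$. Fix $C\subset X$ and $\e>0$. Using $\is_{12}(C)=\inf_{\mu}\sup_{x}\mu*\delta_x(C)$, first choose $\mu_C\in P_\w(G)$ with $\sup_{x\in X}\mu_C*\delta_x(C)<\is_{12}(C)+\e$. Now feed $\mu_C$ into $\sis_{123}$ as the outer measure $\mu_1$: since $\inf_{\mu_2}\sup_{x}\mu_C*\mu_2*\delta_x(A)\le\sis_{123}(A)$, there is $\mu_2\in P_\w(G)$ with $\sup_{x\in X}\mu_C*\mu_2*\delta_x(A)<\sis_{123}(A)+\e$. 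Put $\mu=\mu_C*\mu_2\in P_\w(G)$. On the one hand $\mu*\delta_x(A)=\mu_C*\mu_2*\delta_x(A)<\sis_{123}(A)+\e$. On the other hand, writing $\mu_2=\sum_j\beta_j\delta_{b_j}$ and using associativity $\mu*\delta_x=\mu_C*(\mu_2*\delta_x)$ together with linearity of the convolution in its second argument, $\mu*\delta_x(C)=\sum_j\beta_j\,\mu_C*\delta_{b_jx}(C)\le\sup_{y\in X}\mu_C*\delta_y(C)<\is_{12}(C)+\e$. Since each finitely supported measure $\mu*\delta_x$ is additive, $\mu*\delta_x(A\cup C)\le\mu*\delta_x(A)+\mu*\delta_x(C)$, whence $\is_{12}(A\cup C)\le\sup_x\mu*\delta_x(A\cup C)<\is_{12}(C)+\sis_{123}(A)+2\e$. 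Letting $\e\to0$ gives the desired bound.

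For the remaining inequality $\sis_{123}\le\iss_{213}$ I would exploit that the middle factor of $\iss_{213}$ can serve as the inner measure $\mu_2$ of $\sis_{123}$. Given $\e>0$, the definition $\iss_{213}(A)=\inf_{\nu_1}\sup_{\nu_2,\nu_3}\nu_2*\nu_1*\nu_3(A)$ supplies $\nu_1\in P_\w(G)$ with $\sup_{\nu_2\in P_\w(G),\,\nu_3\in P_\w(X)}\nu_2*\nu_1*\nu_3(A)<\iss_{213}(A)+\e$. For an arbitrary $\mu_1\in P_\w(G)$, taking $\mu_2=\nu_1$ makes $\mu_1*\nu_1*\mu_3(A)$ an instance of $\nu_2*\nu_1*\nu_3(A)$ with $\nu_2=\mu_1$ and $\nu_3=\mu_3$, so $\sup_{\mu_3}\mu_1*\nu_1*\mu_3(A)\le\iss_{213}(A)+\e$; hence $\inf_{\mu_2}\sup_{\mu_3}\mu_1*\mu_2*\mu_3(A)\le\iss_{213}(A)+\e$, and taking the supremum over $\mu_1$ followed by $\e\to0$ finishes.

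I expect the main obstacle to be the middle paragraph: not a hard estimate, but one must notice that the composite measure $\mu_C*\mu_2$ is exactly the right object and must treat the two summands asymmetrically — $A$ is controlled directly by the $\sis_{123}$-witness, while $C$ is controlled only after absorbing $\mu_2*\delta_x$ into a convex average of translates $\mu_C*\delta_{b_jx}$ and invoking the supremum already present in $\is_{12}(C)=\inf_\mu\sup_x\mu*\delta_x(C)$. The other two inequalities are essentially formal manipulations of the sup/inf expressions.
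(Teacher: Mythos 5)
Your argument is correct: the first inequality is the trivial case $C=\emptyset$ of $\mu\le\widehat\mu$, your key lemma $\is_{12}(A\cup C)\le\is_{12}(C)+\sis_{123}(A)$ (proved by convolving the $\is_{12}$-witness for $C$ with the $\sis_{123}$-witness obtained for that particular outer measure) correctly yields $\his_{12}\le\sis_{123}$, and the last inequality is the formal substitution $\mu_2=\nu_1$ you describe. The paper leaves this proposition as an exercise, so there is no proof to compare against; your middle step is the natural $\sis_{123}$-analogue of the paper's Theorem~\ref{ishat} and follows the same composition-of-witnesses pattern used there and in the subadditivity proof for $\sis_{123}$.
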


\begin{proof} Exercise.
\end{proof}

\begin{proposition} For each amenable group $G$ and  a $G$-space $X$ we get $d^*=\is_{12}=\sis_{123}$.
\end{proposition}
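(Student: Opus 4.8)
The plan is to leverage the two facts already available: the chain $\is_{12}\le\his_{12}\le\sis_{123}\le\iss_{213}$ and the equality $\is_{12}=d^*$ from Theorem~\ref{d=is}. The chain gives $d^*=\is_{12}\le\sis_{123}$ for free, so the whole proposition reduces to the single inequality $\sis_{123}(A)\le d^*(A)$. Recalling that $\sis_{123}(A)=\sup_{\mu_1\in P_\w(G)}\inf_{\mu_2\in P_\w(G)}\sup_{x\in X}\mu_1*\mu_2*\delta_x(A)$, it suffices, for an arbitrary $\mu_1=\sum_i\alpha_i\delta_{g_i}\in P_\w(G)$ with $F=\supp(\mu_1)=\{g_i\}$ and an arbitrary $\e>0$, to produce a single $\mu_2$ with $\sup_{x\in X}\mu_1*\mu_2*\delta_x(A)\le d^*(A)+\e$.

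The measure $\mu_2$ will be a F\o lner average. First I isolate the key point: I claim that for every finite $F\subset G$ and every $\delta>0$ there is an $(F,\delta)$-F\o lner set $E\subset G$ (so $|gE\triangle E|<\delta|E|$ for all $g\in F$) whose uniform measure $\mu_E=\frac1{|E|}\sum_{e\in E}\delta_e$ satisfies $\sup_{x\in X}\mu_E*\delta_x(A)<d^*(A)+\delta$. Granting this, I take $\mu_2=\mu_E$ for an $E$ that is $(F,\e/2)$-F\o lner with $\sup_x\mu_E*\delta_x(A)<d^*(A)+\e/2$. Then F\o lner-ness gives $\|\mu_1*\mu_E-\mu_E\|\le\sum_i\alpha_i\frac{|g_iE\triangle E|}{|E|}<\e/2$ in total variation, and since $|(\mu_1*\mu_E-\mu_E)*\delta_x(A)|\le\|\mu_1*\mu_E-\mu_E\|$ for every $x$, I obtain $\sup_x\mu_1*\mu_E*\delta_x(A)\le\sup_x\mu_E*\delta_x(A)+\e/2<d^*(A)+\e$, as required.

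The hard part is the claim, which asserts exactly that a good averaging witness for $\is_{12}(A)$ can be taken F\o lner; note that a fixed near-optimal witness cannot simply be averaged into an invariant one, since right-convolution controls the witness quality by the \emph{left} factor while near-left-invariance is also a property of the left factor, so the two demands compete for the same position. I would instead prove the claim by contradiction through an invariant-measure limit. Suppose that for some $\eta>0$ every sufficiently F\o lner $E$ has $\sup_x\mu_E*\delta_x(A)\ge d^*(A)+\eta$; choosing $x_E\in X$ with $\mu_E*\delta_{x_E}(A)=\frac{|Ex_E\cap A|}{|E|}>d^*(A)+\eta/2$ and forming $\lambda_E=\frac1{|E|}\sum_{e\in E}\delta_{ex_E}\in P(X)$, I take the $\F$-limit $\lambda=\lim_{E\to\F}\lambda_E$ over an ultrafilter $\F$ refining the tail filter of the directed family of pairs $(F,\delta)$. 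The limit $\lambda\in P(X)$ satisfies $\lambda(A)\ge d^*(A)+\eta/2$, while the F\o lner estimate $\|g\lambda_E-\lambda_E\|=\frac{|gE\triangle E|}{|E|}\to0$ forces $\lambda(gB)=\lambda(B)$ for all $g\in G$ and $B\subset X$, that is $\lambda\in P_G(X)$. This contradicts $d^*(A)=\sup_{\mu\in P_G(X)}\mu(A)$. Amenability of $G$ enters precisely here: it supplies the F\o lner sets (via F\o lner's theorem) and makes the limit of F\o lner averages $G$-invariant, and the existence of the $\F$-limit is the compactness of $P(X)$ noted earlier for ultrafilter limits of measures.
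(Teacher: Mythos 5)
Your proof is correct, but it takes a genuinely different route from the paper's. The paper also starts from $d^*=\is_{12}$ (Theorem~\ref{d=is}) and then proves $\is_{12}=\sis_{123}$ by a direct comparison of witnesses: it takes a near-optimal $\mu_1$ for $\is_{12}(A)$ and a near-optimal $\nu_1$ for $\sis_{123}(A)$, invokes Emerson's characterization of amenability \cite{Emerson} to find $\mu_2,\nu_2\in P_\w(G)$ with $\|\mu_1*\mu_2-\nu_1*\nu_2\|<\e$, and concludes in two lines from the estimate $|\sigma*\nu_3(A)|\le\|\sigma\|$. You instead prove $\sis_{123}\le d^*$ by producing, for each prescribed finite $F\subset G$, a single right-hand witness $\mu_2$ that is simultaneously nearly $F$-left-invariant and nearly optimal, namely a F\o lner average; your key claim that such a F\o lner witness exists is established by the ultrafilter-limit construction of an invariant measure from translated F\o lner averages, i.e.\ you are essentially re-deriving a F\o lner-set version of Solecki's equality $\is_{12}=\us_{12}$ and playing it off against $d^*(A)=\sup_{\mu\in P_G(X)}\mu(A)$. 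Both arguments correctly identify the crux --- the inf-witness for $\is_{12}$ cannot be chosen independently of the outer measure $\mu_1$, so it must be adapted to $\mu_1$ --- and both are valid. Yours is more self-contained, using only F\o lner's theorem and the limit-of-measures machinery already developed in the paper, at the cost of length and of an extra contradiction argument; the paper's is shorter and symmetric in the two densities, but leans on Emerson's theorem as a black box.
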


\begin{proof} The equality $d^*=\is_{12}$ has been proved in Theorem~\ref{d=is}. Assuming that $\is_{12}\ne\sis_{123}$, we can find a set $A\subset X$ such that $\is_{12}(A)<\sis_{123}(A)-3\e$ for some $\e>0$. By the definition of $\is_{12}(A)$, there is a measure $\mu_1\in P_\w(G)$ such that $\sup_{\mu_2\in P_\w(X)}\mu_1*\mu_2(A)<\is_{12}(A)+\e$. By the definition of $\sis_{123}(A)$, there is a measure $\nu_1\in P_\w(G)$ such that $\inf_{\nu_2\in P_\w(G)}\sup_{\nu_3\in P_\w(X)}\nu_1*\nu_2*\nu_3(A)>\sis_{123}(A)-\e$. The Emerson's characterization of amenability \cite{Emerson} implies the existence of two measures $\mu_2,\nu_2\in P_\w(G)$ such that $\|\mu_1*\mu_2-\nu_1*\nu_2\|<\e$ where $\|\cdot\|$ is the $\ell_1$-norm of the dual Banach space $(\ell_\infty(G))^*\supset\ell_1(G)\supset P_\w(G)$. By the choice of the measure $\nu_1$, for the masure $\nu_2$ there is a measure $\nu_3\in P_\w(X)$ such that $\nu_1*\nu_2*\nu_3(A)>\sis_{123}(A)-\e$.
The choice of the measure $\mu_1$ guarantees that $\mu_1*\mu_2*\nu_3(A)<\is_{12}(A)+\e$.
Then $$
\begin{aligned}
\sis_{123}(A)-\e & <\nu_1*\nu_2*\nu_3(A)= \mu_1*\mu_2*\nu_3(A)+ (\nu_1*\nu_2*\nu_3(A)-\mu_1*\mu_2*\nu_3(A))\le\\
&\le\is_{12}(A)+\e+\|\nu_1*\nu_2*\nu_3-\mu_1*\mu_2*\nu_3\|\le\\
&\le \is_{12}(A)+\e+\|\nu_1*\nu_2-\mu_1*\mu_2\|<\is_{12}(A)+2\e,
\end{aligned}
$$which contradicts $\is_{12}(A)<\sis_{123}(A)-3\e$.
\end{proof}

\begin{problem} Is $\is_{12}=\sis_{123}$ for each amenable $G$-space $X$?
\end{problem}

The equality $d^*=\sis_{123}$ holding for each $G$-space $X$ with amenable group $G$, suggests to call the submeasure $\sis_{123}$ the {\em upper Banach submeasure} on $X$.

\begin{theorem}\label{t9.2} If a subset $A$ of a group $G$ has positive upper Banach submeasure
$\sis_{123}(A)>0$, then for some finite set $E\subset G$ the set $(A^{-1}A)^{\wr E}=\bigcup_{x\in E}x^{-1}A^{-1}Ax$ has $$\cov((AA^{-1})^{\wr E})\le 1/\sis_{123}(A).$$
\end{theorem}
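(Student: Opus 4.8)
The plan is to mimic the one-sided chain $\cov(AA^{-1})\le\pack(A)\le 1/\is_{12}(A)$ from Propositions~\ref{cov-pack} and \ref{pack-is}, but with a two-sided, conjugation-twisted notion of packing adapted to the structure of $\sis_{123}$. Write $s=\sis_{123}(A)>0$ and fix a small $\e\in(0,s)$. Using the outer supremum in the definition of $\sis_{123}$, I would choose a measure $\mu_1=\sum_q\alpha_q\delta_{e_q}\in P_\w(G)$ with $\inf_{\mu_2\in P_\w(G)}\sup_{y\in G}\mu_1*\mu_2*\delta_y(A)>s-\e$, so that $\sup_y\mu_1*\mu_2*\delta_y(A)>s-\e$ for \emph{every} $\mu_2$. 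Let $E=\supp\mu_1=\{e_q\}$; this is the finite set promised in the statement, and $S=(AA^{-1})^{\wr E}=\bigcup_{x\in E}x^{-1}AA^{-1}x$ is the symmetric set (containing $1_G$, since $A\ne\emptyset$) whose covering number I must bound.

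The key step is a \emph{separation bound}: any $\{z_1,\dots,z_k\}\subset G$ that is $S$-separated, meaning $z_i^{-1}z_j\notin S$ for $i\ne j$, satisfies $k<1/(s-\e)$. To prove it, feed the uniformly distributed measure $\mu_2=\frac1k\sum_{i=1}^k\delta_{z_i^{-1}}$ into the property of $\mu_1$. The convolution formula gives $\mu_1*\mu_2*\delta_y(A)=\sum_q\alpha_q\cdot\frac1k\sum_{i=1}^k\chi_A(e_qz_i^{-1}y)$, and for each fixed $q$ the inner sum counts the indices $i$ with $y\in z_ie_q^{-1}A$. One checks the equivalence $z_ie_q^{-1}A\cap z_je_q^{-1}A\ne\emptyset\Leftrightarrow z_i^{-1}z_j\in e_q^{-1}AA^{-1}e_q\subset S$; hence $S$-separation makes the family $\{z_ie_q^{-1}A\}_i$ pairwise disjoint, so the inner sum is at most $1$ and $\mu_1*\mu_2*\delta_y(A)\le\frac1k$ for every $y$. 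Comparing with $\sup_y\mu_1*\mu_2*\delta_y(A)>s-\e$ yields $1/k>s-\e$, i.e. $k<1/(s-\e)$.

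This separation bound does the rest. Since every $S$-separated set has cardinality below $1/(s-\e)$, one of maximal size exists and is finite; because $S$ is symmetric with $1_G\in S$, maximality forces $G=FS$ (a point that cannot be adjoined to $F$ already lies in some $fS$), so $F$ is a covering set and $\cov(S)\le|F|<1/(s-\e)$. Finally I would clear the $\e$: choosing $\e$ small enough that $1/(s-\e)<\lfloor 1/s\rfloor+1$ (possible since $s>1/(\lfloor1/s\rfloor+1)$) forces the integer $\cov(S)$ down to $\le\lfloor 1/s\rfloor\le 1/s$, which is the asserted bound for this $E$.

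The routine parts are the convolution bookkeeping and the equivalence $z_ie_q^{-1}A\cap z_je_q^{-1}A\ne\emptyset\Leftrightarrow z_i^{-1}z_j\in e_q^{-1}AA^{-1}e_q$. The one genuinely load-bearing observation—the point to spot rather than to grind through—is that the \emph{left} factor $\mu_1$, supported on $E$, conjugates the difference set $AA^{-1}$ arising from the packing encoded by $\mu_2$, which is exactly what manufactures the twisted union $(AA^{-1})^{\wr E}$ in the conclusion. Keeping the left/right orientation of the translates and of the conjugation consistent throughout is where care is needed.
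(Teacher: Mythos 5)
Your proposal is correct and follows essentially the same route as the paper: extract $\mu_1$ with support $E$ from the outer supremum, take a maximal family of left translates that are pairwise disjoint after each twist by $e_q^{-1}$ (your ``$S$-separated'' condition is exactly the paper's requirement that $(xa^{-1}A)_{x\in M}$ be disjoint for every $a\in E$), use maximality to get $G=FS$, and bound $|F|$ by feeding the uniform measure on $F^{-1}$ into the middle slot. The only cosmetic difference is the order (you prove the cardinality bound before invoking maximality, the paper does the reverse) and your slightly cleaner handling of the final $\e$-removal.
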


\begin{proof} Fix $\e>0$ so small that each integer number $n\le\frac1{\sis_{123}(A)-\e}$ is $\le\frac1{\sis_{123}(A)}$. By the definition of the submeasure $\sis_{123}(A)$, there is a measure $\mu_1\in P_\w(G)$ such that $\inf_{\mu_2\in P_\w(G)}\sup_{\mu_3\in P_\w(G)}\mu_1*\mu_2*\mu_3(A)>\sis_{123}(A)-\e$. Write $\mu_1$ as the convex combination $\mu_1=\sum_{i=1}^n\alpha_i\delta_{a_i}$ and put $E=\{a_1,\dots,a_n\}$.

Using Zorn's Lemma, choose a maximal subset $M\subset G$ such that for every $a\in E$ the indexed family $(xa^{-1}A)_{x\in M}$ is disjoint. By the maximality of $M$, for every point $g\in G$ there are points $x\in M$ and $a\in E$ such that $ga^{-1}A\cap xa^{-1}A\ne\emptyset$ and hence $g\in xa^{-1}AA^{-1}a\subset M(AA^{-1})^{\wr E}$.
So, $G=M(AA^{-1})^{\wr E}$ and hence $\cov((AA^{-1})^{\wr E})\le |M|$. To complete the proof, it remains to check that the set $M$ has cardinality $|M|\le 1/(\sis_{123}(A)+\e)$.

Assuming the opposite, we could find a finite subset $F\subset M$ of cardinality $|F|>1/(\sis_{123}(A)-\e)$.
Consider the measure $\mu_2=\frac1{|F|}\sum_{x\in F}\delta_{x^{-1}}$. For this measure there is a measure $\mu_3\in P_\w(G)$ such that $\mu_1*\mu_2*\mu_3(A)>\sis_{123}(A)-\e$. The measure $\mu_3$ can be assumed to be a Dirac measure $\mu_3=\delta_{y^{-1}}$ for some $y\in G$. Then $\mu_1*\mu_2(Ay)=\mu_1*\mu_2*\delta_{y^{-1}}(A)>\sis_{123}(A)-\e$.

On the other hand, for every $i\in\{1,\dots,n\}$, the disjointness of the family $(xa_i^{-1}A)_{x\in F}$ implies that $\sum_{x\in F}\chi_{xa_i^{-1}A}(y^{-1})\le 1$ and then
$$\sis_{123}(A)-\e<\mu_1*\mu_2(Ay)=\sum_{i=1}^n\alpha_i\sum_{x\in F}\frac1{|F|}\delta_{a_ix^{-1}}(Ay)=\frac1{|F|}\sum_{i=1}^n\alpha_i\sum_{x\in F}\chi_{xa_i^{-1}A}(y^{-1})\le\frac1{|F|},$$which contradicts the choice of $F$.
 \end{proof}

\begin{corollary} If a subset $A$ of a group $G$ has $\sis_{123}(A)>0$, then $\cov(AA^{-1}AA^{-1})<\infty$.
\end{corollary}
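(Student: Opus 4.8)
The plan is to deduce the corollary from Theorem~\ref{t9.2}. Applying that theorem to $A$ produces a finite set $E\subset G$ together with the bound $\cov((AA^{-1})^{\wr E})\le 1/\sis_{123}(A)<\infty$, so there is a finite set $M\subset G$ with $G=M\,(AA^{-1})^{\wr E}=M\bigcup_{x\in E}x^{-1}AA^{-1}x$. The entire task is then to pass from the \emph{conjugated} union $(AA^{-1})^{\wr E}$ to the plain fourth power $AA^{-1}AA^{-1}$.

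The cleanest reduction I would try first is via Proposition~\ref{cov-pack}. Writing $S=AA^{-1}$, which is symmetric, we have $AA^{-1}AA^{-1}=SS^{-1}$, and Proposition~\ref{cov-pack} gives $\cov(AA^{-1}AA^{-1})=\cov(SS^{-1})\le\pack(S)=\pack(AA^{-1})$. Thus it suffices to prove that $\pack(AA^{-1})<\infty$, i.e.\ that one cannot find an infinite family $(d_i)$ with the translates $(d_iAA^{-1})$ pairwise disjoint. To bound $\pack(AA^{-1})$ I would mimic the measure argument of Proposition~\ref{pack-is}: take a near-optimal $\mu_1=\sum_i\alpha_i\delta_{a_i}\in P_\w(G)$ witnessing $\sis_{123}(A)$, feed in the uniformly distributed probability measure $\mu_2=\frac1m\sum_{d}\delta_{d^{-1}}$ supported on a packing family of size $m$, and obtain from the definition of $\sis_{123}$ a point $x$ with $\mu_1*\mu_2*\delta_x(A)>\sis_{123}(A)-\e$. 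A direct computation rewrites this value as $\frac1m\sum_i\alpha_i\,\#\{d:d\in xA^{-1}a_i\}$, and the goal is to show each inner count is at most one, forcing $\sis_{123}(A)-\e<1/m$ and hence $m\le 1/\sis_{123}(A)$.

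The hard part will be exactly this counting step, and it is the genuine obstacle: if two packing points $d,d'$ both lie in $xA^{-1}a_i$, then $d^{-1}d'\in a_i^{-1}AA^{-1}a_i$, whereas disjointness of $(dAA^{-1})$ only guarantees $d^{-1}d'\notin AA^{-1}AA^{-1}$. Since in a non-abelian $G$ one generally has $a_i^{-1}AA^{-1}a_i\not\subseteq AA^{-1}AA^{-1}$ (the conjugators $a_i$ coming from the support $E$ of the optimal $\mu_1$ need not be absorbable into products of $A$ and $A^{-1}$), the two conditions are compatible and the naive bound fails. To overcome this I would not compare against $AA^{-1}AA^{-1}$ directly but against the conjugation-closed set $(AA^{-1})^{\wr E}$, against which the packing argument \emph{does} close up (this is what Theorem~\ref{t9.2} already encodes), and then absorb $E$ into the final finite cover. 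Concretely, I expect to exploit the inclusion $(AA^{-1})^{\wr E}=\bigcup_{x\in E}(x^{-1}A)(x^{-1}A)^{-1}\subseteq BB^{-1}$ for $B=E^{-1}A$ to first secure $\cov(BB^{-1})<\infty$, and then transfer syndeticity of the two-sidedly translated set $BB^{-1}=E^{-1}AA^{-1}E$ to the symmetric set $AA^{-1}AA^{-1}$ by a maximal-disjoint-family (Zorn) argument paralleling the proof of Theorem~\ref{t9.2}. Managing the left/right bookkeeping so that the finite set $E$ is genuinely absorbed on one side is the step I expect to require the most care.
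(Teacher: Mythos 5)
Your first two steps are sound and match the paper's: Theorem~\ref{t9.2} gives a finite $M$ and finite $E$ with $G=M\bigcup_{a\in E}a^{-1}AA^{-1}a$, hence $G$ is covered by finitely many two-sided translates $xAA^{-1}y$; and you correctly identify the conjugators coming from the support of the optimal $\mu_1$ as the real obstruction (your first reduction, to $\pack(AA^{-1})<\infty$ via Proposition~\ref{cov-pack}, aims at a strictly stronger statement than the corollary and, as you observe, the packing/measure count does not close up). The gap is in your final ``transfer'' step. Passing from ``$G$ is covered by finitely many sets $xAA^{-1}y$'' (equivalently, $\cov(BB^{-1})<\infty$ for $B=E^{-1}A$) to $\cov(AA^{-1}AA^{-1})<\infty$ is not achieved by a maximal-disjoint-family argument: a Zorn-maximal family $D$ with $(dC)_{d\in D}$ disjoint only yields $G=DCC^{-1}$ when $|D|$ is finite, and you have no density or packing bound on the relevant sets to force finiteness. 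If instead you try to regain such a bound by applying subadditivity of $\sis_{123}$ to the cells of the induced partition and then Theorem~\ref{t9.2} again, you produce a cover by translates of $AA^{-1}AA^{-1}$ with \emph{new} conjugators, and the process never terminates. So the step you flag as ``requiring the most care'' is in fact missing an idea, not just bookkeeping.

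The paper closes this gap with an external combinatorial input, Theorem~\ref{slob} (Banakh--Ravsky--Slobodianiuk): refine the finite cover $G=\bigcup_{x,y\in F}xAA^{-1}y$ to a partition with cells $B_{x,y}\subset xAA^{-1}y$; Theorem~\ref{slob} yields a finite $E'$ and one cell with $G=E'B_{x,y}B_{x,y}^{-1}\subset E'(xAA^{-1}y)(xAA^{-1}y)^{-1}$. The decisive algebraic point is that for the symmetric set $S=AA^{-1}$ the right conjugator cancels, $(xSy)(xSy)^{-1}=xSSx^{-1}$, and the left one is then harmless since $G=Gx$ gives $G=E'xAA^{-1}AA^{-1}$. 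Without Theorem~\ref{slob} (or an equivalent partition theorem) your sketch does not yield the corollary; with it, your preparatory reductions become unnecessary, since the theorem applies directly to the cells of the refined partition.
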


\begin{proof} By Theorem~\ref{t9.2}, there is a finite set $F\subset G$ such that $G=\bigcup_{x,y\in F}xAA^{-1}y$. By Theorem~\ref{slob}, there are points $x,y\in F$ and a finite set $E\subset G$ such that
 $G=E(xAA^{-1}y)(xAA^{-1}y)^{-1}$, which implies that $G=ExAA^{-1}AA^{-1}x^{-1}$ and $G=ExAA^{-1}AA^{-1}$.
So, $\cov(AA^{-1}AA^{-1})\le|Ex|<\infty$.
\end{proof}

\begin{corollary} A subgroup $H$ of a group $G$ has infinite index in $G$ if and only if $\sis_{123}(H)=0$.
\end{corollary}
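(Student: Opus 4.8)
The plan is to prove the two implications separately, exploiting the two faces of the Solecki submeasure already developed above: the covering bound of the preceding Corollary for one direction, and the $G$-invariance together with subadditivity of $\sis_{123}$ for the other.

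First I would handle the implication ``infinite index $\Rightarrow\sis_{123}(H)=0$'' in contrapositive form: assume $\sis_{123}(H)>0$ and deduce that $H$ has finite index. By the preceding Corollary, $\sis_{123}(H)>0$ forces $\cov(HH^{-1}HH^{-1})<\infty$. The key observation is purely group-theoretic: since $H$ is a subgroup we have $H^{-1}=H$ and $HH=H$, whence $HH^{-1}HH^{-1}=H$. Therefore $\cov(H)<\infty$, i.e.\ $G=FH$ for some finite set $F\subset G$. But $G=FH$ exhibits $G$ as a union of $|F|$ left cosets of $H$, so $[G:H]\le|F|<\infty$. This is exactly the contrapositive of the desired implication.

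For the converse, ``finite index $\Rightarrow\sis_{123}(H)>0$'', write $n=[G:H]<\infty$ and decompose $G=\bigcup_{i=1}^n g_iH$ into its finitely many distinct left cosets. Here I would use that $\sis_{123}$ is an invariant subadditive density (established earlier): invariance gives $\sis_{123}(g_iH)=\sis_{123}(H)$ for each $i$, while the density normalization gives $\sis_{123}(G)=1$. Subadditivity applied to this cover then yields $1=\sis_{123}(G)\le\sum_{i=1}^n\sis_{123}(g_iH)=n\,\sis_{123}(H)$, so $\sis_{123}(H)\ge 1/n>0$. As an alternative, since $\is_{12}\le\sis_{123}$ and the family $\{xH\}_{x\in G}$ consists of the $n$ pairwise disjoint cosets, the Kelley-number computation of Theorem~\ref{is=Kelley} gives $\is_{12}(H)=1/n$, which bounds $\sis_{123}(H)$ below by the same quantity.

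I do not expect any serious obstacle: both directions reduce to already-proved structural facts about $\sis_{123}$. The only point requiring a moment's care is the forward direction, where one must notice the collapse $HH^{-1}HH^{-1}=H$ so that the covering-number corollary translates directly into finiteness of the index; everything else is routine bookkeeping with cosets.
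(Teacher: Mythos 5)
Your proof is correct and follows exactly the route the paper intends: the forward direction is the immediate specialization of the preceding corollary ($\cov(AA^{-1}AA^{-1})<\infty$) to $A=H$ using $HH^{-1}HH^{-1}=H$, and the reverse direction is the standard coset-counting argument from the $G$-invariance and subadditivity of $\sis_{123}$ established earlier. The paper leaves this corollary without a written proof, and your argument supplies precisely the expected one; no gaps.
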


\begin{corollary} Each subset $A$ of cardinality $|A|<|G|$ in an infinite grup $G$ has submeasure $\sis_{123}(A)=0$.
\end{corollary}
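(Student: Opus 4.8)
The plan is to argue by contraposition, extracting a cardinality constraint from the assumption $\sis_{123}(A)>0$ by feeding it into the covering bound established just above. So I would start by supposing, for a subset $A$ of an infinite group $G$, that $\sis_{123}(A)>0$, and aim to show this forces $|A|=|G|$, which is precisely the negation of the hypothesis $|A|<|G|$. This reduces the corollary to an application of the earlier corollary stating that $\sis_{123}(A)>0$ implies $\cov(AA^{-1}AA^{-1})<\infty$, followed by a short cardinal computation.

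Concretely, from $\sis_{123}(A)>0$ that corollary yields a finite set $F\subset G$ with $G=F\cdot AA^{-1}AA^{-1}$. Taking cardinalities gives $|G|\le |F|\cdot|AA^{-1}AA^{-1}|\le |F|\cdot|A|^4$, since a fourfold product of $A$ and $A^{-1}$ has at most $|A|^4$ elements. I would then split into two cases according to whether $A$ is finite. If $A$ is finite, then $|F|\cdot|A|^4$ is finite and $G$ would be finite, contradicting that $G$ is infinite; so every finite $A$ has $\sis_{123}(A)=0$ outright. If $A$ is infinite, then $|A|^4=|A|$ and $|F|\cdot|A|=|A|$ by the arithmetic of infinite cardinals, so the inequality collapses to $|G|\le|A|$; combined with the trivial bound $|A|\le|G|$ this gives $|A|=|G|$. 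In both cases $\sis_{123}(A)>0$ is incompatible with $|A|<|G|$, so any $A$ with $|A|<|G|$ satisfies $\sis_{123}(A)=0$, as required.

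Since the whole argument rests on the covering corollary $\cov(AA^{-1}AA^{-1})<\infty$, I do not expect a serious obstacle here; the only point requiring a little care is the split into the finite and infinite cases, because the absorption identity $|A|^4=|A|$ is valid only for infinite $A$, and the genuinely infinite-$G$ hypothesis is exactly what rules out the degenerate finite-$A$ situation.
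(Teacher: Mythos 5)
Your argument is correct and follows exactly the route the paper intends: the corollary is stated without proof immediately after the result that $\sis_{123}(A)>0$ implies $\cov(AA^{-1}AA^{-1})<\infty$, and the intended derivation is precisely your cardinality count $|G|\le|F|\cdot|A|^4$ together with the absorption $|A|^4=|A|$ for infinite $A$ (with the finite case handled by the infinitude of $G$). No gaps.
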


This corollary contrasts with the strange property of the Solecki submeasure $\iss_{213}$ described in Example~\ref{strange}.

The subadditivity of the submeasure $\sis_{123}$ and Theorem~\ref{t9.2} imply the following corollary giving a partial answer to Protasov's Problem.

\begin{corollary}\label{c9.3} For any partition $G=A_1\cup\dots\cup A_n$ of a group $G$ some cell $A_i$ of the partition has $\cov\big((A^{-1}A)^{\wr E}\big)\le n$ for some finite set $E\subset G$.
\end{corollary}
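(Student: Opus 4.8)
The plan is to combine the two properties singled out in the sentence preceding the statement: the subadditivity of the upper Banach submeasure $\sis_{123}$ and the covering estimate of Theorem~\ref{t9.2}. Structurally this is a one-step pigeonhole argument feeding directly into that theorem, so the proof should be short.

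First I would use subadditivity to locate a cell of large submeasure. Since $\sis_{123}(G)=1$ and $\sis_{123}$ is subadditive, applying it to the partition $G=A_1\cup\dots\cup A_n$ gives
$$1=\sis_{123}(G)\le\sum_{i=1}^n\sis_{123}(A_i),$$
so by averaging there is an index $i$ with $\sis_{123}(A_i)\ge\tfrac1n$. In particular this cell has strictly positive upper Banach submeasure, which is precisely the hypothesis needed to invoke Theorem~\ref{t9.2}.

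Next I would apply Theorem~\ref{t9.2} to the distinguished cell $A_i$. That theorem produces a finite set $E\subset G$ with
$$\cov\big((A_iA_i^{-1})^{\wr E}\big)\le\frac1{\sis_{123}(A_i)}.$$
Combining this with the lower bound $\sis_{123}(A_i)\ge\tfrac1n$ yields $\cov\big((A_iA_i^{-1})^{\wr E}\big)\le n$, since the covering number is an integer bounded above by the real number $1/\sis_{123}(A_i)\le n$, and an integer dominated by $n$ is at most $n$.

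I do not expect a genuine obstacle: the substance has already been absorbed into the subadditivity of $\sis_{123}$ and into Theorem~\ref{t9.2}, and the remaining step is pure pigeonhole. The only subtlety worth flagging is that the finite set $E$ is supplied by Theorem~\ref{t9.2} applied to the particular cell $A_i$, so the conclusion must be read as the existence of a cell \emph{together with} an accompanying $E$, not a single $E$ valid for all cells at once; there is no claim of uniformity in the partition.
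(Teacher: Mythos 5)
Your proposal is correct and is exactly the argument the paper intends: the paper gives no separate proof, stating only that the corollary follows from the subadditivity of $\sis_{123}$ and Theorem~\ref{t9.2}, which is precisely the pigeonhole-plus-application you carry out. Your closing remark that $E$ depends on the chosen cell is also consistent with how the statement is phrased.
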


\begin{corollary} If an (analytic) subset $A$ of a Polish group $G$ has positive submeasure $\sis_{123}(A)>0$, then
the set $AA^{-1}$ is not meager (and $AA^{-1}AA^{-1}$ is a neighborhood of the unit in $G$).
\end{corollary}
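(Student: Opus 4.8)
The plan is to combine the algebraic covering bound of Theorem~\ref{t9.2} with the Baire category theorem and the Banach-Kuratowski-Pettis theorem. The first thing I would record is that, since $A$ is analytic, the set $AA^{-1}$ is again analytic: it is the continuous image of the analytic set $A\times A\subset G\times G$ under the map $(a,b)\mapsto ab^{-1}$. This analyticity is exactly what will allow Banach-Kuratowski-Pettis to be applied to $AA^{-1}$ later.

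First I would prove that $AA^{-1}$ is non-meager. By Theorem~\ref{t9.2}, the positivity of $\sis_{123}(A)$ yields finite sets $M,E\subset G$ with $G=M\,(AA^{-1})^{\wr E}=\bigcup_{m\in M}\bigcup_{x\in E}mx^{-1}AA^{-1}x$. Each summand $mx^{-1}AA^{-1}x$ is the image of $AA^{-1}$ under the homeomorphism $g\mapsto mx^{-1}gx$ of $G$ (a composition of a left and a right translation), hence is meager precisely when $AA^{-1}$ is. Since $G$ is Polish and therefore non-meager in itself, a finite union of sets cannot cover $G$ if every one of them is meager; thus at least one summand, and hence $AA^{-1}$ itself, is non-meager. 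Here it is essential to use Theorem~\ref{t9.2} rather than the weaker Corollary giving $\cov(AA^{-1}AA^{-1})<\infty$, since the latter would only deliver non-meagerness of the larger set $AA^{-1}AA^{-1}$.

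For the parenthetical assertion I would apply the Banach-Kuratowski-Pettis Theorem to the set $B=AA^{-1}$. As $B$ is analytic and, by the previous paragraph, non-meager, that theorem gives that $BB^{-1}$ is a neighborhood of $1_G$. It then remains only to compute $BB^{-1}=(AA^{-1})(AA^{-1})^{-1}=AA^{-1}AA^{-1}$, using $(AA^{-1})^{-1}=AA^{-1}$, which yields the desired conclusion.

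The substantive input is entirely contained in Theorem~\ref{t9.2}, which converts $\sis_{123}(A)>0$ into a finite two-sided covering of $G$ by translates of $AA^{-1}$; both topological conclusions then follow formally. Accordingly, I do not expect a serious obstacle, only two points requiring care: confirming that $AA^{-1}$ is analytic (so that the analytic form of Banach-Kuratowski-Pettis legitimately applies) and observing that two-sided translations are homeomorphisms and hence preserve the meager $\sigma$-ideal. Both are routine, and once they are in place the argument is purely a matter of assembling the cited results.
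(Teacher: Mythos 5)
Your argument is correct and is essentially the intended one: the paper states this corollary without proof, but it is the direct analogue of the earlier corollary for $\us_{12}$, and your route --- the finite two-sided covering $G=M(AA^{-1})^{\wr E}$ from Theorem~\ref{t9.2}, the Baire category theorem to extract a non-meager translate of $AA^{-1}$, and then Banach--Kuratowski--Pettis applied to the analytic non-meager set $AA^{-1}$ --- is exactly how it is meant to be assembled. Your observation that one must use Theorem~\ref{t9.2} itself (conjugated translates of $AA^{-1}$) rather than the weaker consequence $\cov(AA^{-1}AA^{-1})<\infty$ in order to get non-meagerness of $AA^{-1}$ and not just of $AA^{-1}AA^{-1}$ is precisely the right point of care.
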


\section{Concluding Remarks and an Open Problem}

The extremal densities $\is_{12}$, $\Si_{21}$, $\sis_{123}$ are initial representatives of the hierarchy of extremal densities defined on each group $G$ as follows.

 Given a positive integer number $n\in\IN$, a permutation $s:\{1,\dots,n\}\to\{1,\dots,n\}$, and
a function $\mathsf e:\{1,\dots,n\}\to\{\mathsf{i},\mathsf{s},\mathsf{u},\mathsf{I},\mathsf{S}\}$ with
$\mathsf e(\{1,\dots,n\}\setminus\{s^{-1}(n)\})\subset\{\mathsf{u},\mathsf{i},\mathsf{s}\}$, define
the density $\mathsf e_s:\mathcal P(G)\to[0,1]$ by the formula
$$\mathsf e_s(A)={\mathsf e(1)}_{\mu_1}\cdots
{\mathsf e(n)}_{\mu_n}\mu_{s(1)}*\cdots*\mu_{s(n)}(A)\mbox{ \ for \ }A\subset G$$where
$\mathsf{u}_{\mu_i}$, $\mathsf{i}_{\mu_i}$, $\mathsf{I}_{\mu_i}$, $\mathsf{s}_{\mu_i}$, $\mathsf{S}_{\mu_i}$ stand for the operators $\inf_{\mu_i\in P_u(G)}$, $\inf_{\mu_i\in P_\w(G)}$,
$\inf_{\mu_i\in P(G)}$, $\sup_{\mu_i\in P_\w(G)}$, $\sup_{\mu_i\in P(G)}$, respectively.

The density $\mathsf e_s$ will be called the {\em extremal density} generated by the function $\mathsf e$ and the substitution $s$. To shorten the notations, we shall identify the functions $\mathsf e$ and $s$ with the sequences $(\mathsf e(1),\dots,\mathsf e(n))$ and $(s(1),\dots,s(m))$ or even words $\mathsf e(1)\cdots \mathsf e(n)$ and $s(1)\cdots s(m)$.

Observe that the simplest extremal densities
$\mathsf{i}_1$ and $\mathsf{s}_1$ can be calculated by the formulas
$$\mathsf{i}_1(A)=\begin{cases}0&\mbox{if $A\ne X$}\\
1&\mbox{if $A=X$}\end{cases}
\mbox{ \ \ and \ \ } \mathsf{s}_1(A)=\begin{cases}0&\mbox{if $A=\emptyset$}\\
1&\mbox{if $A\ne\emptyset$}\end{cases}$$
implying that $\mathsf{i}_1$ and $\mathsf{s}_1$ are the smallest and largest densities on $X$,  respectively. Therefore, the extremal densities $\mathsf{is}_{12}$ and $\mathsf{is}_{21}$ are the simplest nontrivial extremal densities in this hierarchy.
This suggests the following problem, or rather, a program of research.

\begin{problem} Study the properties of the extremal densities $\mathsf e_s$ on groups. Detect extremal densities which are subadditive. Study the interplay between various extremal densities on a group.
Find further applications of extremal densities in combinatorics of groups and $G$-spaces.
\end{problem}

\begin{observation} Each extremal density on an abelian group $G$ is equal to $\mathsf i_1$, $\mathsf s_1$, $\is_{12}$ or $\mathsf{si}_{12}$.
\end{observation}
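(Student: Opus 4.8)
The plan is to use commutativity to collapse every extremal density to a one- or two-letter normal form and then to evaluate the finitely many survivors. Throughout set $\Phi(\nu)=\sup_{x\in G}\nu(xA)$ and $\Psi(\nu)=\inf_{x\in G}\nu(xA)$ for $\nu\in P(G)$; since $G$ is abelian the left and right translates of $A$ form the same family, so the formula $\is_{12}(A)=\inf_{\mu}\sup_x\mu*\delta_x(A)$ reads $\is_{12}(A)=\inf_{\nu\in P_\w(G)}\Phi(\nu)$ and likewise $\us_{12}(A)=\inf_{\nu\in P_u(G)}\Phi(\nu)$ and $\mathsf{si}_{12}(A)=\sup_{\nu\in P_\w(G)}\Psi(\nu)$, while Theorem~\ref{is=Kelley} reads $\sup_{\nu\in P(G)}\Psi(\nu)=\Si_{21}(A)=\is_{12}(A)$, with dual identity $\inf_{\nu\in P(G)}\Phi(\nu)=\mathsf{si}_{12}(A)$. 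The single analytic tool is the \emph{averaging inequality} $\Phi(\mu*\nu)\le\Phi(\nu)$ and $\Psi(\mu*\nu)\ge\Psi(\nu)$ for $\mu,\nu\in P(G)$, which holds because $(\mu*\nu)(xA)$ is the $\mu$-average of the numbers $\nu((g^{-1}x)A)$ and so lies between $\Psi(\nu)$ and $\Phi(\nu)$.

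First I would observe that on an abelian group $\mu_{s(1)}*\cdots*\mu_{s(n)}=\mu_1*\cdots*\mu_n$ for every permutation $s$, so $\mathsf e_s(A)$ depends only on the word $\mathsf e$, read as a string of nested infima and suprema (outermost letter first) applied to the symmetric payoff $(\mu_1*\cdots*\mu_n)(A)$; the only residue of $s$ is that at most one letter may be a \emph{big} operator $\mathsf I$ or $\mathsf S$ over all of $P(G)$, since $s^{-1}(n)$ may be positioned at will. Next I would normalize: the payoff is affine in the innermost measure, so the innermost operator reduces to $\inf_x$ or $\sup_x$ over Dirac masses irrespective of its flavour, producing $\Psi$ or $\Phi$; two adjacent operators of equal direction merge, because $\mu_i*\mu_{i+1}$ already sweeps out the relevant set of measures through $\mu_i*\delta_{1_G}$. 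This leaves an alternating word carrying at most one big operator, and a length-one word is visibly $\mathsf i_1$ or $\mathsf s_1$.

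For a word of length $\ge 2$ with \emph{no} big operator and innermost direction $\sup$, the value is $Q_1\cdots Q_{n-1}\Phi(\mu_1*\cdots*\mu_{n-1})$ with all factors in $P_\w(G)$. The pointwise bound $\Phi(\mu_1*\cdots*\mu_{n-1})\ge\is_{12}(A)$ forces the value $\ge\is_{12}(A)$; conversely the averaging inequality gives $\Phi(\mu_1*\cdots*\mu_{n-1})\le\Phi(\mu_{n-1})$, and as the penultimate operator $Q_{n-1}$ is an infimum one has $\inf_{\mu_{n-1}}\Phi(\mu_{n-1})=\is_{12}(A)$ (invoking Solecki's equality $\us_{12}=\is_{12}$ when $Q_{n-1}=\mathsf u$), after which the outer operators act on a constant. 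Hence the value equals $\is_{12}(A)$, and the dual computation with $\Psi$ gives $\mathsf{si}_{12}(A)$ when the innermost direction is $\inf$.

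It then remains to let the single big operator override the innermost verdict. If a big supremum occurs, substituting an invariant mean $\lambda$ (available since $G$ is amenable and $\nu_1*\lambda*\nu_2=\lambda$) collapses the product to $\lambda$, so the value is $\ge\sup_\lambda\lambda(A)=d^*(A)=\is_{12}(A)$; for the matching upper bound I would use, at the innermost supremum, $\sup_{\mu}\Psi(\eta*\mu)\le\sup_{\rho\in P(G)}\Psi(\rho)=\is_{12}(A)$, enlarging the domain to $P(G)$ and using $\eta*P(G)\subseteq P(G)$, and in the innermost-$\sup$ subcase stripping the outer measures by the averaging inequality to reduce to the outermost-big-supremum case. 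A big infimum is handled dually and yields $\mathsf{si}_{12}(A)$, and since at most one big operator is present the two verdicts never clash. The step I expect to be the main obstacle is exactly this override: the tight upper bound $\le\is_{12}(A)$ for an alternating word whose innermost direction is $\inf$ yet which carries a big supremum in an interior position, where the averaging inequality points the wrong way and one must instead exploit the compactness packaged into the big operator via the minimax identity $\Si_{21}=\is_{12}$, all the while bookkeeping the single-big-operator constraint and confirming that the uniform operator $\mathsf u$ is absorbed in every position through Solecki's theorem.
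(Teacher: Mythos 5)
The paper states this Observation without any proof, so there is nothing to compare your argument against line by line; judged on its own, your route is sound and is built entirely from the paper's own toolkit (commutativity of convolution, the Dirac/extreme-point reduction of the innermost operator, the averaging inequality, Theorem~\ref{is=Kelley} for $\sup_{\rho\in P(G)}\inf_x\rho(xA)=\is_{12}(A)$ and its complementation dual $\inf_{\rho\in P(G)}\sup_x\rho(xA)=\mathsf{si}_{12}(A)$, Theorem~\ref{d=is} for $d^*=\is_{12}$, and Solecki's $\is_{12}=\us_{12}$ to absorb $\mathsf u$). Two small points deserve explicit care when you write this up. First, the rule ``a big $\mathsf S$ forces the value $\is_{12}$'' is only valid \emph{after} merging equal-direction neighbours: the word $\mathsf{sS}$ collapses to $\mathsf s_1$, not to $\is_{12}$; your ordering of steps already respects this, but the dependence should be stated. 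Second, the case you rightly flag as the obstacle --- and its mirror image, an interior big $\mathsf S$ in a word whose innermost direction is also $\sup$ --- does close, but not by the one-sided averaging inequality alone: for the upper bound you should pick $\mu_0\in P_\w(G)$ nearly optimal for $\is_{12}(A)$, insert it at the innermost infimum, and use that for \emph{every} mean $\rho\in P(G)$ and every $x$ one has $(\mu_0*\rho)(xA)=\rho\bigl(y\mapsto\sum_i\alpha_i\chi_{g_i^{-1}A}(x^{-1}y)\bigr)\le\bigl\|\sum_i\alpha_i\chi_{g_i^{-1}A}\bigr\|_\infty=\sup_y\mu_0*\delta_y(A)<\is_{12}(A)+\e$ by positivity and normalization of $\rho$ (this is the two-sided, abelian form of your averaging inequality, peeling the big measure off the right). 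With those two clarifications the case analysis is complete and each alternating word of length at least two evaluates to $\is_{12}$ or $\mathsf{si}_{12}$ exactly as you predict.
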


\begin{question}[Kwietniak] Is the family of all extremal densities on each (amenable) group $G$ finite?
\end{question}

\section{Acknowledgments}

The author express his sincere thanks to Anna Pelczar-Barwacz and Piotr Niemeic for the invitation to teach this lecture course in Jagiellonian University, to Dominik Kwietniak for valuable comments and suggestions (which allowed to generalized some notions and results related to extremal measures from groups to $G$-spaces) and also to all Ph.D. students of Jagiellonian University for their active participation in the lectures.

The preprints \cite{Ban} and \cite{BPS} will be updated taking into account the progress made in this lecture notes.

\end{document}